\documentclass[12pt, a4paper]{article}
\usepackage[a4paper,top=2cm,bottom=2cm,left=2.5cm,right=2.5cm,marginparwidth=1.75cm]{geometry}
\usepackage{amsmath, amsthm, amssymb, amscd}
\usepackage{bm, mathrsfs, dsfont}
\usepackage{cases}
\usepackage{bbding}
\usepackage{hyperref}
\usepackage[all,pdf]{xy}
\usepackage{upgreek}
\usepackage{xcolor}
\usepackage{graphicx}
\usepackage{tikz}
\newtheorem{definition}{Definition}[section]
\newtheorem{question}{Question}[section]
\newtheorem{conjecture}{Conjecture}[section]
\newtheorem{lemma}{Lemma}[section]
\newtheorem{proposition}{Proposition}[section]
\newtheorem{theorem}{Theorem}[section]
\newtheorem{corollary}{Corollary}[section]
\newtheorem{remark}{Remark}[section]
\newtheorem{example}{Example}[section]
\newtheorem{construction}{Construction}[section]
\newtheorem{slogan}{Slogan}[section]
\title{De Rham-Higgs comparison for mixed Hodge modules in positive characteristic }
\author{Zebao Zhang}

\begin{document}
\theoremstyle{definition}
\newcommand{\Gr}{\mathrm{Gr}}
\newcommand{\dR}{\mathrm{dR}}
\newcommand{\Hig}{\mathrm{Hig}}
\newcommand{\Fil}{\mathrm{Fil}}

\maketitle
\begin{abstract}
Using the nonabelian Hodge theory in positive characteristic established by Ogus, Vologodsky and Schepler \cite{OV,Schepler}, we propose a generalization of the decomposition theorem of Deligne and Illusie \cite{DI} from the perspective of mixed Hodge modules. We confirm this generalization in certain special cases by extending the method in Sheng and the author \cite{SZ2}, thereby obtaining some geometric byproducts.
\end{abstract}
\section{Introduction}
Let $k$ be a perfect field of characteristic $p>0$, and let $X$ be a smooth variety over $k$ endowed with a simple normal crossing divisor $D$. Denote by $X',D'$ the Frobenius twist of $X,D$, respectively. Let $F:X\to X'$ be the relative Frobenius morphism. Suppose that $(X,D)/k$ is $W_2(k)$-liftable. In their famous paper \cite{DI}, Deligne and Illusie proved a remarkable theorem asserting that the truncated de Rham Frobenius pushforward $\tau_{\leq p}F_*\Omega^*_{X/k}(\log D)$ is decomposable. This theorem has inspired numerous non-trivial extensions in subsequent work. In what follows, we focus on some of these developments. The first is the positive characteristic analogue of a result of Kontsevich and Barannikov on twisted de Rham complexes. The second is an observation due to Drinfeld, which implies that the two-sided truncation $\tau_{[a,a+p-1]}F_*\Omega^*_{X/k}$ is decomposable. The third is the recent work of Petrov \cite{P2}, which shows that  the full complex $F_*\Omega^*_{X/k}$ is decomposable for quasi-$F$-split varieties $X/k$, even when $\dim X>p$. Neverthless, as shown in \cite{P1}, the truncation $\tau_{\leq p}$ is generally necessary. The fourth is a broad generalization to twisted coefficients, grounded in the non-abelian Hodge theory in positive characteristic developed by Ogus-Vologodsky \cite{OV} and Schepler \cite{Schepler}. The final is the work of Sheng and the author \cite{SZ2}, which incorporates the theory of mixed Hodge modules into the fourth extension.

Throughout this paper, denote by $(E,\theta)$ a nilpotent Higgs bundle on $(X',D')/k$ of level $\leq\ell$ $(\ell<p)$ and by $(H,\nabla):=C^{-1}_{(\tilde X,\tilde D)/W_2(k)}(E,\theta)$ its inverse Cartier transform. Motivated by the above achievements, we propose the following.

\begin{question}\label{Question 1}
Let $K^*_\mathrm{dR}\subset\Omega^*(H,\nabla)$ and $K^*_\mathrm{Hig}\subset\Omega^*(E,\theta)$ be subcomplexes inspired by the theory of mixed Hodge modules, and let $f\in\Gamma(X,\mathcal O_X)$ be a suitable global function on $X$. Do we have a de Rhm-Higgs comparison of two-sided truncations
\begin{eqnarray}\label{de-H com for MHM}
\tau_{[a,a+p-\ell-1]}F_*(K^\bullet_\mathrm{dR},\nabla+df\wedge)\cong\tau_{[a,a+p-\ell-1]}(K^\bullet_\mathrm{Hig},\theta-df'\wedge)~\mathrm{in}~D(X')?
\end{eqnarray}
Furthermore, under what geometric conditions on the triple \( (X, D, f) \) does there exist a non-truncated de Rham-Higgs comparison
\begin{eqnarray}\label{de-H com for MHM non-truncated}
F_*(K^\bullet_\mathrm{dR},\nabla+df\wedge)\cong(K^\bullet_\mathrm{Hig},\theta-df'\wedge)~\mathrm{in}~D(X')?
\end{eqnarray}
\end{question}
By default, the trivial examples of $K_{\mathrm{dR}}^*$ and $K^*_\mathrm{Hig}$ are the full complexes $\Omega^*(H,\nabla)$ and $\Omega^*(E,\theta)$, respectively. The non-trivial examples considered in this paper fall into one of the following three types: the weight filtration, the intersection subcomplexes, and the Kontsevich subcomplexes. We answer this question in several special cases below.

\subsection{Partial answers}
 This paper builds on two main findings: the first is an analogue of the $\alpha$-transform (see Ogus \cite{O24}); the second is a connection between splittings of the logarithmic cotangent bundle $\Omega^1_{X/k}(\log D)$ and isomorphisms in $\mathrm{Hom}_{D(X')}(F_*K^*_\mathrm{dR},K^*_\mathrm{Hig})$.

\subsubsection{Answers provided by the first finding}
 Let $x\in X$ be a closed point, and let $(\hat H,\hat \nabla),(\hat E,\hat\theta)$ be completions of $(H,\nabla),(E,\theta)$ along $x$ and $x'$, respectively. We observe that the inverse Cartier transform applies to $(\hat E, \hat \theta - df' \wedge)$, but the resulting bundle with integrable connection is not isomorphic to $(\hat H, \hat \nabla + df \wedge)$. However, we find that there exists another Higgs field $\Theta$ on $\hat E$, which may be viewed as a variant of the $\alpha$-transform of $\hat \theta - df' \wedge$, such that its inverse Cartier transform is isomorphic to $(\hat H, \hat \nabla + df \wedge)$, and its associated Higgs complex is isomorphic to that of $(\hat E, \hat \theta - df' \wedge)$. This leads to the following.

\begin{theorem}\label{de Rham-Higgs local}
For any closed point $x\in X$ and any $f\in\Gamma(X,\mathcal O_X)$, we have an isomorphism of complexes
$$
F_*(K^\bullet_{\mathrm{dR}},\nabla+df\wedge)\otimes\hat{\mathcal O}_{X',x'}\cong(K^\bullet_{\mathrm{Hig}},\theta-df'\wedge)\otimes\hat{\mathcal O}_{X',x'}.
$$
\end{theorem}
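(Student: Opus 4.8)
The plan is to descend to the complete local ring at $x$ and transport the known de Rham--Higgs comparison across the inverse Cartier transform, using the analogue of the $\alpha$-transform to absorb the extra twist by $df$. After base change to $\hat{\mathcal O}_{X,x}$ and $\hat{\mathcal O}_{X',x'}$ one chooses a lift $\tilde F$ of the relative Frobenius over $(\tilde X,\tilde D)/W_2(k)$ (which exists Zariski-locally, hence on the completion) and a lift $\tilde f$ of $f$; these yield the local splitting $\zeta\colon\Omega^1_{X'}\to Z^1_X$ of the Cartier operator underlying $C^{-1}$. Because $C^{-1}$ is compatible with tensor products and $(\hat E,\hat\theta-df'\wedge)=(\hat E,\hat\theta)\otimes(\hat{\mathcal O}_{X',x'},-df'\wedge)$, the naive candidate is $C^{-1}(\hat E,\hat\theta-df'\wedge)=(\hat H,\hat\nabla-\zeta(df')\wedge)$; since $\zeta(df')$ agrees modulo exact forms with $\pm f^{p-1}\,df$ and not with $df$, this is \emph{not} $(\hat H,\hat\nabla+df\wedge)$, so a genuine correction is unavoidable.

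First I would invoke the first finding, the analogue of Ogus's $\alpha$-transform \cite{O24}: there is a Higgs field $\Theta$ on $\hat E$, again nilpotent of level $<p$ so that $C^{-1}$ applies, with $C^{-1}(\hat E,\Theta)\cong(\hat H,\hat\nabla+df\wedge)$ and with $\Omega^\bullet(\hat E,\Theta)$ isomorphic, as a complex of $\hat{\mathcal O}_{X',x'}$-modules, to $\Omega^\bullet(\hat E,\hat\theta-df'\wedge)$. Granting this, the local de Rham--Higgs comparison of \cite{OV,Schepler}, applied to $(\hat E,\Theta)$ and $C^{-1}(\hat E,\Theta)=(\hat H,\hat\nabla+df\wedge)$, gives an isomorphism of complexes $F_*\Omega^\bullet(\hat H,\hat\nabla+df\wedge)\cong\Omega^\bullet(\hat E,\Theta)$; composing with $\Omega^\bullet(\hat E,\Theta)\cong\Omega^\bullet(\hat E,\hat\theta-df'\wedge)$ settles the case of the full complexes $K^\bullet=\Omega^\bullet$.

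It then remains to check that this isomorphism restricts to $K^\bullet_{\mathrm{dR}}$ and $K^\bullet_{\mathrm{Hig}}$ in the three non-trivial cases (weight filtration, intersection subcomplex, Kontsevich subcomplex). Here I would fix coordinates at $x$ adapted to $D$ (and, in the Kontsevich case, to the auxiliary map $g$), note that $df$ and $df'$ lie in the lowest step of each defining filtration/subsheaf --- so that the twisted differentials $\nabla+df\wedge$ and $\theta-df'\wedge$ preserve every $K^\bullet$ for the same combinatorial reason the untwisted ones do --- and verify that $\tilde F$, $\zeta$, the $\alpha$-transform modification and the Ogus--Vologodsky--Schepler isomorphism can all be taken to respect these conditions; for the untwisted comparison this is exactly the compatibility established in \cite{SZ2}, and the twist does not interfere.

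I expect the main obstacle to be the construction and analysis of $\Theta$, i.e.\ making the $\alpha$-transform work in the present logarithmic, possibly higher-level, exponentially twisted setting. The difficulty is intrinsic: $df$ is exact and hence killed by the Cartier operator, so it is invisible to the naive inverse Cartier transform, which only produces twists by $\zeta$-images of closed forms; the $\alpha$-transform is precisely the device that uses the lift $\tilde f$ over $W_2(k)$, together with the hypothesis $\ell<p$ (which keeps the relevant truncated exponential of the nilpotent Higgs field meaningful), to bridge this gap. One must check that it is compatible with $\hat\theta$ and with the log poles along $D$, and that the associated Higgs complex really is unchanged up to isomorphism.
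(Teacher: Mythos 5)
Your overall skeleton is the paper's: replace $\hat\theta-df'\wedge$ by a corrected Higgs field $\Theta$ (the $\alpha$-transform variant; in the paper $\Theta=\hat\theta-\sum_{\underline i}\sum_{j\geq 0}f_{\underline i}^{\,p^j-1}df_{\underline i}\wedge$ for a Frobenius-adapted decomposition $f=\sum_{\underline i} f_{\underline i}$), show via Artin--Hasse exponentials that $C^{-1}_{\tilde F}(\hat E,\Theta)\cong(\hat H,\hat\nabla+df\wedge)$, identify $\Omega^\bullet(\hat E,\Theta)$ with $\Omega^\bullet(\hat E,\hat\theta-df'\wedge)$, and then compare $F_*\Omega^\bullet C^{-1}_{\tilde F}(\hat E,\Theta)$ with $\Omega^\bullet(\hat E,\Theta)$, checking compatibility with the MHM-type subcomplexes. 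The genuine gap is in your middle step: you assert that $\Theta$ is again nilpotent of level $<p$ and then quote the Ogus--Vologodsky/Schepler comparison for $(\hat E,\Theta)$. That assertion is false and the citation does not cover this situation. The correction terms are multiplication operators by $\sum_{j\geq0}(t_q\partial_q f)^{p^j}$, which are only \emph{topologically} nilpotent on the completion; already for $(E,\theta)=(\mathcal O_X,0)$ and $f=t_1$ one gets multiplication by $-\sum_j t_1^{p^j}$, which is nilpotent of no finite level. Hence $(\hat E,\Theta)$ lies outside the category $\mathrm{HIG}_{\leq p-1}$ for which \cite{OV,Schepler} (and their untruncated, Frobenius-lifted local variants such as \cite{SZ2}) are proved, and a limit argument over $E/\mathcal I^mE$ does not rescue the citation because the nilpotency level grows past $p$. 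This is exactly the step the paper must prove by hand: with the coordinate-adapted lifting $\tilde F(\tilde t_i)=\tilde t_i^p$ it decomposes $F_*\Omega^\bullet C^{-1}_{\tilde F}(\hat E,\Theta)$ into subcomplexes $K^*_v$ indexed by $v\in\mathbb F_p^n$, identifies $K^*_0$ with the image of the explicit Cartier map $\varphi_{\tilde F}$ on $\Omega^*(\hat E,\Theta)$, and shows each $K^*_v$ with $v\neq0$ is an acyclic Koszul complex because some operator $v_i\,\mathrm{id}+t_i^{\epsilon_i}\Theta_i$ is an automorphism, using precisely the topological nilpotence of $\Theta_i$. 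Without this (or an equivalent argument), your proof does not close.

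Two further points in the compatibility step are glossed over. The isomorphism $\Omega^*(\hat E,\hat\theta-df'\wedge)\cong\Omega^*(\hat E,\Theta)$ built from the natural automorphisms $\vartheta_i$ does \emph{not} automatically restrict to the Kontsevich subcomplex $\Omega^*_{g^{-1}}$; the paper has to build a second isomorphism $\psi'$ in the basis $d\log g, d\log t_2,\dots$ adapted to the semistable map. Similarly, the weight-filtration case of the Cartier comparison is not obtained from the eigenspace decomposition alone but by induction on the weight via the residue exact sequences and the 5-lemma. These repairs are in the spirit of your ``fix adapted coordinates and verify,'' but they do not follow from the untwisted compatibility of \cite{SZ2} together with the remark that $df$ sits in the lowest filtration step.
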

This theorem gives a satisfactory answer to the above question in a formal neighborhood of a closed point of $X'$. It also sheds light on the global situation discussed below.

\begin{corollary}
The supports of both sides of \eqref{de-H com for MHM} coincide, namely
$$\mathrm{Supp}~\mathcal H^j(K^\bullet_{\mathrm{dR}},\nabla+df\wedge)=\mathrm{Supp}~\mathcal H^j(K^\bullet_{\mathrm{Hig}},\theta-df'\wedge),~\forall j.$$
In particular, if the right hand side support consisting of isolated closed points of $X$ for any $j$, then
$$
F_*(K^\bullet_{\mathrm{dR}},\nabla+df\wedge)\cong(K^\bullet_{\mathrm{Hig}},\theta-df'\wedge)~\mathrm{in}~D(X').
$$
\end{corollary}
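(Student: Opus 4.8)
The plan is to bootstrap Theorem~\ref{de Rham-Higgs local} using only the faithful flatness of $\mathcal{O}_{X',x'}\to\hat{\mathcal{O}}_{X',x'}$ and some standard d\'evissage. Write $\mathcal{C}^\bullet_{\mathrm{dR}}:=F_*(K^\bullet_{\mathrm{dR}},\nabla+df\wedge)$ and $\mathcal{C}^\bullet_{\mathrm{Hig}}:=(K^\bullet_{\mathrm{Hig}},\theta-df'\wedge)$; since $K^\bullet_{\mathrm{dR}}$ and $K^\bullet_{\mathrm{Hig}}$ are bounded complexes of coherent sheaves and $F$ is finite, both $\mathcal{C}^\bullet_{\mathrm{dR}}$ and $\mathcal{C}^\bullet_{\mathrm{Hig}}$ belong to $D^b_{\mathrm{coh}}(X')$. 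Fix a closed point $x'\in X'$. For either complex $\mathcal{C}^\bullet$, flat base change along $\mathcal{O}_{X',x'}\to\hat{\mathcal{O}}_{X',x'}$ identifies $\mathcal{H}^j(\mathcal{C}^\bullet)_{x'}\otimes_{\mathcal{O}_{X',x'}}\hat{\mathcal{O}}_{X',x'}$ with $\mathcal{H}^j\bigl(\mathcal{C}^\bullet\otimes\hat{\mathcal{O}}_{X',x'}\bigr)$, and Theorem~\ref{de Rham-Higgs local} identifies the completed complexes $\mathcal{C}^\bullet_{\mathrm{dR}}\otimes\hat{\mathcal{O}}_{X',x'}$ and $\mathcal{C}^\bullet_{\mathrm{Hig}}\otimes\hat{\mathcal{O}}_{X',x'}$. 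Hence $\mathcal{H}^j(\mathcal{C}^\bullet_{\mathrm{dR}})_{x'}$ and $\mathcal{H}^j(\mathcal{C}^\bullet_{\mathrm{Hig}})_{x'}$ have isomorphic completions; since completion is faithfully flat, one vanishes precisely when the other does, so $\mathrm{Supp}\,\mathcal{H}^j(\mathcal{C}^\bullet_{\mathrm{dR}})$ and $\mathrm{Supp}\,\mathcal{H}^j(\mathcal{C}^\bullet_{\mathrm{Hig}})$ have the same closed points. Being closed subsets of the Jacobson scheme $X'$, they are equal, which is the first assertion.

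For the second assertion, assume that for each $j$ the set $\mathrm{Supp}\,\mathcal{H}^j(\mathcal{C}^\bullet_{\mathrm{Hig}})$ consists of isolated closed points; being a closed discrete subset of the Noetherian space $X'$ it is finite, and since the complexes are bounded, the first part shows that
\[
Z\ :=\ \bigcup_{j}\mathrm{Supp}\,\mathcal{H}^j(\mathcal{C}^\bullet_{\mathrm{dR}})\ =\ \bigcup_{j}\mathrm{Supp}\,\mathcal{H}^j(\mathcal{C}^\bullet_{\mathrm{Hig}})
\]
is a finite set of closed points. Thus $\mathcal{C}^\bullet_{\mathrm{dR}}$ and $\mathcal{C}^\bullet_{\mathrm{Hig}}$ are objects of $D^b_{\mathrm{coh}}(X')$ whose cohomology is supported on $Z$, with finite-length stalks at the points of $Z$. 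The step I would single out is the following local-algebraic fact: for any $\mathcal{A}^\bullet,\mathcal{B}^\bullet\in D^b_{\mathrm{coh}}(X')$ with cohomology supported on $Z$, the completion maps induce an isomorphism
\[
\mathrm{Hom}_{D(X')}(\mathcal{A}^\bullet,\mathcal{B}^\bullet)\ \xrightarrow{\ \sim\ }\ \bigoplus_{x'\in Z}\mathrm{Hom}_{D(\hat{\mathcal{O}}_{X',x'})}\bigl(\mathcal{A}^\bullet\otimes\hat{\mathcal{O}}_{X',x'},\ \mathcal{B}^\bullet\otimes\hat{\mathcal{O}}_{X',x'}\bigr),
\]
and moreover a morphism on the left is an isomorphism in $D(X')$ if and only if its image is. Granting this, the isomorphisms of complexes provided by Theorem~\ref{de Rham-Higgs local} at the points of $Z$ assemble, via the above isomorphism of $\mathrm{Hom}$-groups, to a morphism $\mathcal{C}^\bullet_{\mathrm{dR}}\to\mathcal{C}^\bullet_{\mathrm{Hig}}$ in $D(X')$ which becomes an isomorphism over $\hat{\mathcal{O}}_{X',x'}$ for every $x'\in Z$, hence is an isomorphism in $D(X')$; this is the desired comparison.

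To establish the local-algebraic fact I would use d\'evissage along the truncation triangles of $\mathcal{A}^\bullet$ and $\mathcal{B}^\bullet$: the standard spectral sequence computes $\mathrm{Hom}_{D(X')}(\mathcal{A}^\bullet,\mathcal{B}^\bullet)$ from the groups $\mathrm{Ext}^p_{X'}\bigl(\mathcal{H}^i(\mathcal{A}^\bullet),\mathcal{H}^{i'}(\mathcal{B}^\bullet)\bigr)$, all of which are $\mathrm{Ext}$'s between coherent sheaves supported on $Z$. For such sheaves $\mathcal{F},\mathcal{G}$ the local-to-global $\mathrm{Ext}$ spectral sequence degenerates --- the $\mathrm{Ext}$-sheaves being supported on the finite set $Z$, which has no higher cohomology --- giving $\mathrm{Ext}^p_{X'}(\mathcal{F},\mathcal{G})=\bigoplus_{x'\in Z}\mathrm{Ext}^p_{\mathcal{O}_{X',x'}}(\mathcal{F}_{x'},\mathcal{G}_{x'})$. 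Each summand is itself of finite length, hence annihilated by a power of $\mathfrak{m}_{x'}$, and therefore unchanged upon tensoring with $\hat{\mathcal{O}}_{X',x'}$ by flat base change; comparing $E_2$-pages then yields the displayed isomorphism of $\mathrm{Hom}$-groups, the comparison being the one induced by completion. The isomorphism criterion follows by passing to the mapping cone: its cohomology sheaves are coherent and supported on $Z$, hence of finite length, hence coincide with their own completions, so the cone is acyclic if and only if it becomes acyclic after completion at each point of $Z$.

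The only genuinely geometric input here is Theorem~\ref{de Rham-Higgs local}; everything else is formal homological and commutative algebra. The point needing care --- and the reason the hypothesis on the support is imposed --- is that the comparisons of Theorem~\ref{de Rham-Higgs local} are available only over the formal neighbourhoods $\mathrm{Spec}\,\hat{\mathcal{O}}_{X',x'}$, not over Zariski or \'etale neighbourhoods, so they cannot be glued by a naive Mayer--Vietoris argument on $X'$. I expect this gluing to be the main obstacle; it is dissolved rather than overcome by the zero-dimensionality of the support, which is precisely what lets the formal data descend to a global morphism in $D(X')$.
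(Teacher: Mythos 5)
Your argument is correct. For the first assertion it is essentially the paper's own proof: complete at a closed point, invoke Theorem~\ref{de Rham-Higgs local}, use flatness of completion to identify $\mathcal H^j(-)_{x'}\otimes\hat{\mathcal O}_{X',x'}$ with the cohomology of the completed complex, and conclude by faithful flatness that the stalks vanish simultaneously. For the second assertion your route genuinely differs from the paper's. The paper argues directly with a zig-zag: since the cohomology sheaves are finite-length skyscrapers supported at the (finitely many) points in question, the natural completion maps $K^\bullet_{\mathrm{Hig}}\to K^\bullet_{\mathrm{Hig}}\otimes\hat{\mathcal O}$ and $F_*K^\bullet_{\mathrm{dR}}\to F_*K^\bullet_{\mathrm{dR}}\otimes\hat{\mathcal O}$ are themselves quasi-isomorphisms, so the formal comparison $\varphi_{\tilde F}$ can be transported through these inverted arrows to produce the isomorphism in $D(X')$ at once. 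You instead prove a full-faithfulness statement for completion on $\mathrm{Hom}$-groups in $D^b_{\mathrm{coh}}$ for complexes with cohomology supported on the finite set $Z$ (via dévissage, the local-to-global $\mathrm{Ext}$ spectral sequence, flat base change for $\mathrm{Ext}$, and finiteness of length), and then descend the formal isomorphisms and check invertibility on the mapping cone. Your machinery is heavier than what is strictly needed — the paper's observation that the completion maps are already quasi-isomorphisms dissolves the problem in one line — but it is sound, it handles a finite support set uniformly without the paper's reduction to a single point, and it yields a canonical (indeed unique, by your $\mathrm{Hom}$-comparison) descended morphism, which is a small bonus the paper's zig-zag does not make explicit.
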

When $(E,\theta)=(\mathcal O_X,0)$, $K^*_{\mathrm{dR}}=F_*\Omega^*_{X/k}$, $K^*_{\mathrm{Hig}}=\bigoplus_{i=0}^\infty\Omega^i_{X/k}[-i]$, and $f$ has only isolated singularities, this corollary was mentioned in Ogus \cite{Ogus04}.

Next, let $\tilde{f}: (\tilde{X},\tilde D) \to (\mathbb{A}^1_{W_2(k)},\tilde\Delta)$ be a semistable family over $W_2(k)$. Assume that $\Delta=\sum_{i=1}^m[\tilde \lambda_i]$, where $\tilde\lambda_i\in W_2(k)$. Let $f,\Delta,\lambda_i$ be the mod reductions of $\tilde f,\tilde\Delta,\tilde\lambda_i$, respectively. 

Set $K^*_{\mathrm{dR}} \subset \Omega^*_{X/k}(\log D)$ to be the subcomplex defined as follows: on $X-D$, define $K^*_{\mathrm{dR}} := \Omega^*_{X/k}$; around a fiber $f^{-1}(\lambda_i)$, define $K^*_{\mathrm{dR}} := W_j\Omega^*_{X/k}(\log D)$ or $\Omega^*_{(f - \lambda_i)^{-1}}$ (see \S2.6 for more details). Define $K^*_{\mathrm{Hig}}$ similarly.

\begin{corollary}\label{intro tau<p structure sheaf}
Notation and assumptions as above. Then
$$
\tau_{<p}F_*(K^\bullet_\mathrm{dR},d+df\wedge)\cong \tau_{<p}(K^\bullet_\mathrm{Hig},-df'\wedge)~\mathrm{in}~D(X').
$$
\end{corollary}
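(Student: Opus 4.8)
The plan is to run the Deligne--Illusie globalization machinery, fed by the two findings recalled in the introduction. Note first that in the situation of the corollary we are in the special case $(E,\theta)=(\mathcal O_X,0)$, $(H,\nabla)=(\mathcal O_X,d)$, so the level is $\ell=0$ and the two-sided truncation $\tau_{[a,a+p-\ell-1]}$ with $a=0$ is exactly $\tau_{<p}$; the inverse Cartier transform of $(\mathcal O_X,0)$ is $(\mathcal O_X,d)$, and the interaction of $C^{-1}$ with the twist $-df'\wedge$ is governed by the analogue of the $\alpha$-transform (the first finding), which is precisely what makes Theorem~\ref{de Rham-Higgs local} available here. The target is a quasi-isomorphism between $\tau_{<p}(K^\bullet_{\mathrm{Hig}},-df'\wedge)$ and $\tau_{<p}F_*(K^\bullet_{\mathrm{dR}},d+df\wedge)$ in $D(X')$; since the Higgs side still carries the nonzero differential $-df'\wedge$, this is an isomorphism of two genuinely non-formal complexes, not a direct-sum decomposition.

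\textbf{Local comparison.} On a sufficiently fine affine cover $\{U_\alpha\}$ of $X$ adapted to $\tilde D$ and $\tilde f$ (so that on each $U_\alpha$ the components of $D$ and, where relevant, the fibre $f^{-1}(\lambda_i)$ are coordinate hyperplanes and $\tilde f_\alpha$ is, up to a unit, a coordinate monomial), one picks $W_2$-liftings $\tilde U_\alpha$ and Frobenius liftings $\tilde F_\alpha\colon\tilde U_\alpha\to\tilde U'_\alpha$; together with $\tilde f_\alpha$ these produce, by the construction underlying Theorem~\ref{de Rham-Higgs local}, a chain map
$$\psi_\alpha\colon (K^\bullet_{\mathrm{Hig}},-df'\wedge)|_{U'_\alpha}\longrightarrow F_*(K^\bullet_{\mathrm{dR}},d+df\wedge)|_{U'_\alpha}$$
which is a quasi-isomorphism. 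Here it is essential that $K^\bullet_{\mathrm{dR}},K^\bullet_{\mathrm{Hig}}$ are taken in the local, coordinate description of \S2.6 ($W_j\Omega^\bullet_{X/k}(\log D)$, resp.\ the Kontsevich complex $\Omega^\bullet_{(f-\lambda_i)^{-1}}$): one checks that the Deligne--Illusie map built from $\tilde F_\alpha$ and $\tilde f_\alpha$ is filtered for the weight filtration and preserves the Kontsevich subcomplex (this uses that $df\wedge$ preserves $\Omega^\bullet_{f^{-1}}$ by the very definition of the Kontsevich complex, together with $\tilde F_\alpha^*\tilde f-\tilde f^{\,p}=p\,u_\alpha$, which makes the class of $du_\alpha$ well-defined mod $p$), while on $X-D$ it reduces to the classical twisted Deligne--Illusie/Ogus--Vologodsky comparison for $(\mathcal O_X,0)$.

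\textbf{Globalization.} The $\psi_\alpha$ are not canonical, but over each overlap $U'_{\alpha\beta}$ their discrepancy is a \v{C}ech $1$-cochain valued in a sheaf of the form $\underline{\mathrm{Hom}}^{-1}$ of the relevant truncated complexes, and the dependence on the choices $\tilde U_\alpha,\tilde F_\alpha,\tilde f_\alpha$ is controlled, via the second finding, by splittings of $\Omega^1_{X/k}(\log D)$ (more precisely, of the extension attached to the $W_2$-lift). The semistable $W_2$-lift $\tilde f$ together with $\tilde D$ supplies a global such splitting near the fibres $f^{-1}(\lambda_i)$, and the ordinary Deligne--Illusie splitting supplies one on $X-D$; feeding the resulting global splitting into the second finding yields a global morphism $F_*(K^\bullet_{\mathrm{dR}},d+df\wedge)\to(K^\bullet_{\mathrm{Hig}},-df'\wedge)$ in $D(X')$ that agrees with a quasi-inverse of $\psi_\alpha$ locally. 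It is therefore a quasi-isomorphism after $\tau_{<p}$ --- the truncation being forced exactly as in \cite{DI}, because a splitting of the log cotangent bundle extends to a (multiplicative) quasi-isomorphism on the de Rham complex only through degree $p-1$.

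\textbf{Main obstacle.} I expect the crux to be the local comparison together with the compatibility bookkeeping: one must simultaneously (a) keep the Deligne--Illusie homotopies inside $K^\bullet$, whose local definition changes from $\Omega^\bullet_{X/k}$ on $X-D$ to $W_j\Omega^\bullet_{X/k}(\log D)$ or $\Omega^\bullet_{(f-\lambda_i)^{-1}}$ near the singular fibres, and check that these descriptions patch on the overlap; and (b) track the twist $df\wedge$ through the inverse Cartier transform, i.e.\ make precise --- using the $\alpha$-transform --- how $C^{-1}(\hat E,\hat\theta-df'\wedge)$, which is \emph{not} $(\hat H,\hat\nabla+df\wedge)$ on the nose, nonetheless has the same Higgs complex, so that the $\psi_\alpha$ land in the correct twisted de Rham complex. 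Once (a) and (b) are in place, the vanishing of the gluing obstruction in the range $[0,p-1]$ is the standard Deligne--Illusie computation, and the corollary follows.
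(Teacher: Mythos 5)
Your plan breaks down at the very first step: the chain maps $\psi_\alpha$ you want to glue do not exist Zariski-locally. Theorem \ref{de Rham-Higgs local} (i.e.\ Theorem \ref{main thm s2}) produces a quasi-isomorphism between $(K^\bullet_{\mathrm{Hig}},\theta-df'\wedge)$ and $F_*(K^\bullet_{\mathrm{dR}},\nabla+df\wedge)$ only after tensoring with $\mathcal O_{\mathfrak X}$, the formal completion along the zero locus of the non-$p$-power part of $f$: the comparison is assembled from multiplication, in degree $j$ of the Higgs complex, by powers of $\sum_{i\ge 0}(f-\lambda)^{p^i-1}$ (the ``$\alpha$-transform'' step) and from multiplication by the Artin--Hasse exponential $\mathrm{AH}(f-\lambda)$ on the de Rham side, and both are formal power series in $f-\lambda$ that only make sense where $f-\lambda$ is topologically nilpotent. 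On a Zariski open $U_\alpha$ meeting $f^{-1}(\lambda_i)$ the element $f-\lambda_i$ is not nilpotent, so no such $\psi_\alpha$ exists; and on opens contained in $X-D$ there is no natural chain map either (there both complexes are merely acyclic). Hence the \v{C}ech gluing cannot start. A secondary misreading: for a $\tau_{<p}$ statement the gluing requires the full tower of higher homotopies (the $\mathcal L$-indexed $\infty$-homotopy of \S4), not a single $1$-cochain discrepancy, and the splittings of $\Omega^1_{X/k}(\log D)$ from the second finding serve to remove the truncation, not to enable the $\tau_{<p}$ gluing (the trivial splitting suffices for that).

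The paper's argument (proof of Theorem \ref{compatibility MHM}, type $(\mathrm{\uppercase\expandafter{\romannumeral4}},0)$) supplies exactly the step you are missing: one first shows both twisted complexes are acyclic outside $D$ --- the Higgs side because $df'$ is nowhere vanishing off $D$ for a semistable $f$, the de Rham side by the support comparison of Corollary \ref{corollary 2}, itself an application of the formal-local theorem --- so that both are quasi-isomorphic to their completions along $D$. Only on this completion does one multiply by $\bigl(\sum_{i\ge0}(f-\lambda_i)^{p^i-1}\bigr)^j$ and by $\mathrm{AH}(f-\lambda_i)$, converting the pair into the formal $g$-transform $(K^*_{\mathrm{Hig},f},K^*_{\mathrm{dR},f})$, for which the de Rham side is the genuine inverse Cartier transform of the Higgs side with respect to Frobenius liftings satisfying $\tilde F^*\tilde f'=\tilde f^p$; the $\mathcal L_{\mathrm{\uppercase\expandafter{\romannumeral4}}}$-indexed $\infty$-homotopy, checked to preserve the weight and Kontsevich subcomplexes, then yields the isomorphism after $\tau_{<p}$. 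To salvage your outline, replace ``Zariski-local comparison plus $1$-cochain gluing'' by ``global reduction to the completion along $D$, then the $\infty$-homotopy machinery for $f$-adapted liftings''.
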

As mentioned at the beginning, when \( K^*_{\mathrm{dR}} = \Omega^*_{X/k} \), the above corollary was previously obtained by \cite{OV} and \cite{ACH}, under different technical assumptions on \( f \). 

\subsubsection{Answers provided by the second finding}
 We now proceed to the second finding and its related consequences. Using the stacky approach, Drinfeld observed that there is a $\mu_p$-action on the de~Rham complex $\Omega^\ast_{X/k}$, which induces a decomposition of the two-sided truncation $\tau_{[a,a+p-1]}F_*\Omega^\ast_{X/k}$ for any $a \geq 0$. A complete proof of this result was subsequently given by Bhatt--Lurie~\cite{BL}, Li--Mondal~\cite{LM}, and Ogus~\cite{O24}. Using the theory of abstract Koszul complexes, Achinger-Suh~\cite{AS} established a slightly weaker version of the aforementioned decomposition theorem, proving that the truncated complex $\tau_{[a,a+p-2]} F_*\Omega^\ast_{X/k}$ admits a decomposition for all $a \geq 0$. Moreover, by adapting a variant of the \v{C}ech construction from~\cite{DI}, they also provided an alternative proof of the decomposition of the two-term truncation $\tau_{[a,a+1]} F_*\Omega^*_{X/k}$ in the same paper.

By further investigating the \v{C}ech construction in~\cite{AS}, we find that a much more general result underlies their construction. More precisely, we establish a connection between splittings of the logarithmic cotangent bundle $\Omega^*_{X/k}(\log D)$ and quasi-isomorphisms from $K^*_\mathrm{Hig}$ to $\check{\mathcal C}(\mathcal U',F_*K^*_\mathrm{dR})$, where $\mathcal U$ is an open affine covering of $X$.

\begin{proposition}\label{intro thm quasi-iso}
Let $\Omega^1_{X/k}(\log D)=\bigoplus_{i=1}^\beta\Omega_i$ be a splitting, and set $\underline\Omega:=(\Omega_1,\cdots,\Omega_\beta)$.
Let $\mathcal U$ be an open affine covering of $X$. For suitable subcomplexes 
$K^*_\mathrm{Hig}\subset\Omega^*(E,\theta)$ and $K^*_\mathrm{dR}\subset\Omega^*(H,\nabla)$, there is a quasi-isomorphism
\begin{eqnarray}\label{intro cech construction omega}
\varphi_{(\tilde X,\tilde D),\underline\Omega}:\tau_{<q}K^*_\mathrm{Hig}\to\check{\mathcal C}(\mathcal U',\tau_{<q}F_*K^*_\mathrm{dR}),
\end{eqnarray}
where $q=\infty$ if $\mathrm{rank}(\Omega_i)<p-\ell$ holds for any $1\leq i\leq\beta$ and $p-\ell$ otherwise. In particular, it induces an isomorphism
$$
\Phi_{(\tilde X,\tilde D),\underline\Omega}\in\mathrm{Hom}_{D(X')}(\tau_{<q}F_*K^*_\mathrm{dR},\tau_{<q}K^*_\mathrm{Hig}).
$$
\end{proposition}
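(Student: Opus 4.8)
The plan is to construct the \v{C}ech--Alexander type morphism $\varphi_{(\tilde X,\tilde D),\underline\Omega}$ by hand and then to deduce $\Phi_{(\tilde X,\tilde D),\underline\Omega}$ formally. Recall that for the affine open covering $\mathcal U'$ of $X'$ the augmentation
$$
\epsilon:\tau_{<q}F_*K^*_\mathrm{dR}\longrightarrow\check{\mathcal C}(\mathcal U',\tau_{<q}F_*K^*_\mathrm{dR})
$$
is a quasi-isomorphism of complexes of sheaves, since after totalising, the augmented \v{C}ech complex of each individual sheaf is exact. Hence, once $\varphi_{(\tilde X,\tilde D),\underline\Omega}$ is known to be a quasi-isomorphism, the composite $\Phi_{(\tilde X,\tilde D),\underline\Omega}:=\varphi_{(\tilde X,\tilde D),\underline\Omega}^{-1}\circ\epsilon$ in $D(X')$ is the desired isomorphism, which proves the last sentence of the statement. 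So the whole problem is to build $\varphi_{(\tilde X,\tilde D),\underline\Omega}$ and to check it is a quasi-isomorphism.

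\textbf{Local input.} First I would fix, for each member $U\in\mathcal U$, the $W_2(k)$-lift $\tilde U:=\tilde X\times_X U$ and a lift of Frobenius $\tilde F_U:\tilde U\to\tilde U'$. By the local form of the inverse Cartier transform of Ogus--Vologodsky and Schepler, refined to the subcomplexes under consideration as in \cite[\S3]{SZ2}, such a choice yields a chain map
$$
\psi_U:\tau_{<q}K^*_\mathrm{Hig}|_{U'}\longrightarrow\tau_{<q}F_*K^*_\mathrm{dR}|_{U'}
$$
that is a quasi-isomorphism and preserves the relevant subcomplex structure. On a double overlap $U=U_0\cap U_1$ the two Frobenius lifts coincide modulo $p$, so $h_{U_0U_1}:=(\tilde F_{U_0}^*-\tilde F_{U_1}^*)/p\bmod p$ is a well-defined $\mathcal O_{X'}$-linear map $\Omega^1_{X'/k}(\log D')\to F_*\mathcal O_X$, and $(h_{U_0U_1})$ is a \v{C}ech $1$-cocycle; together with the auxiliary gluing datum recorded in \cite{SZ2} it governs the discrepancy between $\psi_{U_0}$ and $\psi_{U_1}$ over $U'$.

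\textbf{The \v{C}ech assembly.} Next I would assemble the $\psi_U$ and the $h_{U_iU_j}$ into $\varphi_{(\tilde X,\tilde D),\underline\Omega}$ following the multiplicative/shuffle recipe of \cite[\S2]{DI} (in the form of the \v{C}ech construction of \cite{AS}): on the $r$-th \v{C}ech component over $(U_0,\dots,U_r)$ one starts from $\psi_{U_0}$ and inserts, with alternating signs and an antisymmetrisation over the wedge arguments, $r$ contraction operators built from $h_{U_0U_1},\dots,h_{U_{r-1}U_r}$. Here the splitting enters decisively: via the decomposition $\Omega^1_{X/k}(\log D)=\bigoplus_i\Omega_i$ (equivalently its Frobenius twist) one writes $\Omega^*_{X'/k}(\log D')=\bigotimes_i\bigwedge^\bullet\Omega_i'$, and the verification that the above recipe yields an honest morphism into the total \v{C}ech complex reduces, summand by summand in this tensor product, to the acyclicity of the abstract Koszul complexes attached to each $\Omega_i'$ studied in \cite{AS}. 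Combined with the level-$\le\ell$ nilpotency of $\theta$ built into the OV/Schepler local model --- which is also the source of the shift from $p$ to $p-\ell$ --- the binomial coefficients occurring in these Koszul complexes are invertible modulo $p$ in all degrees exactly when $\mathrm{rank}(\Omega_i)<p-\ell$ for every $i$ (so that no truncation is needed, $q=\infty$), and only in degrees $<p-\ell$ otherwise (forcing $q=p-\ell$). Compatibility with $K^*$ at every step is the assertion that the local maps $\psi_U$ and the contraction operators preserve $\Omega^*_\mathrm{int}$, $W_\bullet\Omega^*(\cdot)$, and $\Omega^*_{g^{-1}}$, which is the content of \cite[\S2--\S3]{SZ2}. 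Finally, to see that $\varphi_{(\tilde X,\tilde D),\underline\Omega}$ is a quasi-isomorphism it suffices to restrict to each $U'_\alpha$ with $U_\alpha\in\mathcal U$: there the member $U_\alpha$ trivialises the \v{C}ech complex, under which $\varphi_{(\tilde X,\tilde D),\underline\Omega}$ is identified up to homotopy with $\psi_{U_\alpha}$, already a quasi-isomorphism.

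\textbf{Main obstacle.} I expect the technical heart to be the check that the antisymmetrised correction formula really closes up to a chain map into the \v{C}ech totalisation in the present twisted, subcomplex setting: one must trace how $\nabla$ (resp. $\theta$) acting on the $h$-contracted terms cancels against the \v{C}ech differential and the differentials of $K^*_\mathrm{dR}$ (resp. $K^*_\mathrm{Hig}$), uniformly within the window prescribed by $q$. This is precisely where the sign and shuffle bookkeeping of \cite{DI,AS} must be fused with the explicit local model of \cite{OV,Schepler} (the parameter $\ell$ enters through the nilpotency exponent of $\theta$ and is responsible for the passage from $p$ to $p-\ell$) and with the subcomplex bookkeeping of \cite{SZ2}; pinning down the exact range of validity --- i.e.\ justifying the dichotomy $q=\infty$ versus $q=p-\ell$ --- is the delicate point. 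Everything else, namely the deduction of $\Phi_{(\tilde X,\tilde D),\underline\Omega}$ and the quasi-isomorphism property, is then formal.
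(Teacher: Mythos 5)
Your outline matches the paper's architecture at the top level: local Cartier quasi-isomorphisms $\varphi_{\tilde F_U}$ attached to Frobenius liftings, higher \v{C}ech corrections built from the differences $h_{ij}=(\tilde F_i^*-\tilde F_j^*)/p$, and the formal deduction of $\Phi_{(\tilde X,\tilde D),\underline\Omega}$ from the augmentation once the quasi-isomorphism is in hand (the paper phrases the assembly via an $\mathcal L$-indexed $\infty$-homotopy $\mathrm{Ho}_{\underline\Omega}:\Delta_*(\mathcal L)\to\mathcal Hom^*(\Omega^*(E,\theta),F_*\Omega^*(H,\nabla))$, equivalent to the \v{C}ech morphism by \cite[Proposition 3.2]{SZ2}). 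However, there is a genuine gap at exactly the point you flag as the "main obstacle," and it is not a routine verification: the antisymmetrised shuffle recipe of Deligne--Illusie/\cite{SZ2} that you propose to fuse with the local model inserts divided-power factors $1/q!$ on $q$-forms, and this is precisely what forces the truncation $\tau_{<p-\ell}$; it cannot give the $q=\infty$ case of the proposition. The paper's actual content is a \emph{new} higher-homotopy formula (Construction \ref{construction higher homotopy}) whose terms are indexed by the ordered splitting through the function $D:\{1,\dots,n\}\to\{1,\dots,\beta\}$ and whose coefficients $C(\underline i,\underline S,\underline{\overline j})$ are products of inverses of partial sums $[(j^q+s'_q)+\cdots+(j^{a_i}+s'_{a_i})]^{-1}$ taken \emph{within each factor} $\Omega_i$; invertibility of these denominators mod $p$ is exactly where $\mathrm{rank}(\Omega_i)<p-\ell$ enters, and the long computation verifying the homotopy identity \eqref{infty homotopy relation} (together with the basis-independence via the section $\mathrm{sec}_{\underline\Omega}$ of $\tilde\Omega^{\otimes(r+s)}\to\Omega^{r+s}$, which again uses only per-factor factorials) is the proof. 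Your proposed replacement — reducing "summand by summand" to the acyclicity of the abstract Koszul complexes of \cite{AS} — does not substitute for this: for $\theta\neq0$ the twisted complexes are not in the scope of the \cite{AS} machinery (the paper notes that \cite{AS} contains this construction only for $(E,\theta)=(\mathcal O_X,0)$, $s\le 2$, rank-one summands), so the dichotomy $q=\infty$ versus $q=p-\ell$ is left unproved in your argument rather than delicate-but-deducible.

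A secondary inaccuracy: the compatibility with the subcomplexes $K^*_{\mathrm{Hig}},K^*_{\mathrm{dR}}$ is not simply "the content of \cite[\S2--\S3]{SZ2}." In the split setting it requires an additional hypothesis relating the ordered splitting $\underline\Omega$ to the divisor $D$ (resp.\ to the semistable map $g$ for the Kontsevich-type complexes), and the paper has to prove (Theorem \ref{compatibility MHM}) that the new construction preserves the subcomplexes under these conditions, restricting also the sheaf of admissible Frobenius liftings (e.g.\ $\tilde F(\tilde g)=\tilde g^p$). Your concluding local argument for the quasi-isomorphism property is fine and agrees with the paper, but as written the proposal establishes only the truncated case already available from \cite{SZ2}, not the statement at issue.
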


Similar to Petrov's result in \cite{P2}, we highlight a special case of the above proposition.

\begin{corollary}
Suppose that the logarithmic cotangent bundle of the $W_2(k)$-liftable pair $(X,D)/k$ splits as a direct sum of subbundles of rank less than $p$. Then the complex $F_*\Omega^*_{X/k}(\log D)$ is decomposable. In particular, if $A$ is an abelian variety over $k$, then $F_*\Omega^*_{A/k}$ is decomposable.
\end{corollary}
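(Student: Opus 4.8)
The plan is to read off both assertions from Proposition~\ref{intro thm quasi-iso}, applied to the trivial coefficient system. Concretely, I would take $(E,\theta)=(\mathcal O_{X'},0)$, which is nilpotent of level $\leq\ell$ with $\ell=0$ and whose inverse Cartier transform is $(H,\nabla)=(\mathcal O_X,d)$, and for the subcomplexes the trivial (hence admissible) ones $K^*_{\mathrm{dR}}=\Omega^*(H,\nabla)=\Omega^*_{X/k}(\log D)$ and $K^*_{\mathrm{Hig}}=\Omega^*(E,\theta)=\bigoplus_{i\geq0}\Omega^i_{X'/k}(\log D')[-i]$. Fix a $W_2(k)$-lift $(\tilde X,\tilde D)$ of $(X,D)$, write the hypothesised splitting as $\Omega^1_{X/k}(\log D)=\bigoplus_{i=1}^\beta\Omega_i$, and set $\underline\Omega=(\Omega_1,\dots,\Omega_\beta)$. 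Since $\ell=0$ and $\mathrm{rank}(\Omega_i)<p$ for every $i$ by hypothesis, Proposition~\ref{intro thm quasi-iso} applies with $q=\infty$ and yields an isomorphism
$$
\Phi_{(\tilde X,\tilde D),\underline\Omega}\in\mathrm{Hom}_{D(X')}\Bigl(F_*\Omega^*_{X/k}(\log D),\ \bigoplus_{i\geq0}\Omega^i_{X'/k}(\log D')[-i]\Bigr).
$$

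Next I would identify the target of $\Phi_{(\tilde X,\tilde D),\underline\Omega}$ with the direct sum of the cohomology sheaves of $F_*\Omega^*_{X/k}(\log D)$. By the logarithmic Cartier isomorphism one has $\mathcal H^i\bigl(F_*\Omega^*_{X/k}(\log D)\bigr)\cong\Omega^i_{X'/k}(\log D')$ for all $i$, so the codomain above is precisely $\bigoplus_{i\geq0}\mathcal H^i\bigl(F_*\Omega^*_{X/k}(\log D)\bigr)[-i]$, and hence $\Phi_{(\tilde X,\tilde D),\underline\Omega}$ exhibits $F_*\Omega^*_{X/k}(\log D)$ as decomposable. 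For an abelian variety $A/k$ of dimension $g$ one takes $D=\emptyset$: then $\Omega^1_{A/k}\cong\mathcal O_A^{\oplus g}$ splits as a direct sum of $g$ subbundles of rank $1<p$, and $A$ lifts to an abelian scheme over $W_2(k)$ (indeed over $W(k)$), so the first part applies and $F_*\Omega^*_{A/k}$ is decomposable, with no restriction on $g$ and even for $p=2$.

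The real content lies entirely in Proposition~\ref{intro thm quasi-iso}; beyond it, the only points needing a moment's care are that the full de Rham and Higgs complexes count as ``suitable'' subcomplexes for the \v{C}ech construction underlying that proposition --- which is automatic, since $\Omega^*(H,\nabla)$ and $\Omega^*(E,\theta)$ are the built-in trivial examples --- and the bookkeeping of the Frobenius twist, ensuring that the target of $\Phi_{(\tilde X,\tilde D),\underline\Omega}$ is literally $\bigoplus_i\mathcal H^i[-i]$ rather than merely abstractly isomorphic to it, which the Cartier isomorphism takes care of. I would also verify that the normalisation in Proposition~\ref{intro thm quasi-iso} does give $q=\infty$ exactly under the bound $\mathrm{rank}(\Omega_i)<p-\ell$ specialised to $\ell=0$, matching the hypothesis of the corollary.
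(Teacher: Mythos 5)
Your proposal is correct and follows essentially the same route as the paper: the corollary is stated there precisely as the specialization of Proposition~\ref{intro thm quasi-iso} (equivalently Theorem~\ref{de-Hig without truncation 2}) to $(E,\theta)=(\mathcal O_{X'},0)$, $\ell=0$, $(H,\nabla)=(\mathcal O_X,d)$, with $q=\infty$ because each $\mathrm{rank}(\Omega_i)<p-\ell=p$, and the identification of the Higgs complex with $\bigoplus_i\mathcal H^i(F_*\Omega^*_{X/k}(\log D))[-i]$ via the Cartier isomorphism. Your handling of the abelian variety case (trivial cotangent bundle split into line bundles, unobstructed lifting to $W_2(k)$, no bound on $\dim A$) matches the intended application.
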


According to Yobuko \cite{Y}, an abelian variety $A$ of dimension $g$ over $k$ is quasi-$F$-split if and only if its $p$-rank is $g$ or $g-1$. In particular, if its $p$-rank less than $g-1$, then $A$ is not quasi-$F$-split. Consequently, in that case, the decomposition of $\Omega^*_{A/k}$ cannot be deduced directly from \cite{P2}.

A priori, the construction of $\varphi_{(\tilde X,\tilde D),\underline{\Omega}}$ depends on the \emph{ordered splitting} $\underline{\Omega}$. For example, when $\beta = 2$, we generally have $\varphi_{(\tilde X,\tilde D),(\Omega_1, \Omega_2)} \neq \varphi_{(\tilde X,\tilde D),(\Omega_2, \Omega_1)}$. Nevertheless, the following proposition shows that $\varphi_{(\tilde X,\tilde D),\underline{\Omega}}$ is independent of the choice of $\underline{\Omega}$ up to homotopy. When $\beta = 1$, we simply denote the morphism $\varphi_{(\tilde X,\tilde D),\underline{\Omega}}$ by $\varphi_{(\tilde X,\tilde D)}$, which has already been constructed in \cite{SZ1}.

\begin{proposition}\label{intro thm homotopy}
 Notation and assumptions as in the above proposition. Then $\varphi_{\underline{\Omega}}$ is homotopic to $\varphi_{\mathrm{SZ}}$ for $q = p - \ell$. Let 
\[
\underline{\Omega}' := (\cdots, \Omega_{o-1}, \Omega_o \oplus \Omega_{o+1}, \Omega_{o+2}, \cdots)
\]
for some \(1 \leq o < \beta\). Assume that $\mathrm{rank}(\Omega_i) < p - \ell$ holds for any \(1 \leq i \leq \beta\), and that 
\[
\mathrm{rank}(\Omega_o) + \mathrm{rank}(\Omega_{o+1}) < p - \ell,
\]
then $\varphi_{(\tilde X,\tilde D),\underline{\Omega}}$ is homotopic to $\varphi_{(\tilde X,\tilde D),\underline{\Omega}'}$ for $q=\infty$.
\end{proposition}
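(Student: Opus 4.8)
The plan is to unwind the explicit \v{C}ech construction behind $\varphi_{(\tilde X,\tilde D),\underline\Omega}$ and to compare the data attached to $\underline\Omega$ and $\underline\Omega'$ one layer at a time. An ordered splitting $\underline\Omega=(\Omega_1,\dots,\Omega_\beta)$ produces the flag $0=\mathcal G_0\subset\mathcal G_1\subset\cdots\subset\mathcal G_\beta=\Omega^1_{X/k}(\log D)$, $\mathcal G_j=\Omega_1\oplus\cdots\oplus\Omega_j$, hence a finite filtration on $\Omega^*(E,\theta)$, on $\Omega^*(H,\nabla)$ and on their \v{C}ech complexes against $\mathcal U$; by construction $\varphi_{(\tilde X,\tilde D),\underline\Omega}$ is assembled from the Frobenius lifts $\{\tilde F_\alpha\}$ attached to $(\tilde X,\tilde D)$ by treating the directions $\Omega_1,\dots,\Omega_\beta$ in this order, using the one-step twisted logarithmic Deligne--Illusie--Achinger--Suh \v{C}ech map attached to each quotient $\mathcal G_j/\mathcal G_{j-1}\cong\Omega_j$ (defined on $\tau_{<p-\ell}$ for any $\mathrm{rank}(\Omega_j)$, and on the whole complex once $\mathrm{rank}(\Omega_j)<p-\ell$). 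The flag of $\underline\Omega'$ is that of $\underline\Omega$ with the single intermediate bundle $\mathcal G_o$ deleted, so the two constructions agree except that $\varphi_{(\tilde X,\tilde D),\underline\Omega}$ performs the passage $\mathcal G_{o-1}\subset\mathcal G_{o+1}$ through the intermediate $\mathcal G_o$, whereas $\varphi_{(\tilde X,\tilde D),\underline\Omega'}$ does it in a single step.

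I would first prove the local core of the second assertion: for a two-step filtered piece $\mathcal G_{o-1}\subset\mathcal G_o\subset\mathcal G_{o+1}$ with $\mathrm{rank}(\mathcal G_{o+1}/\mathcal G_{o-1})<p-\ell$, the \v{C}ech data that processes $\mathcal G_o/\mathcal G_{o-1}$ and then $\mathcal G_{o+1}/\mathcal G_o$ is chain homotopic to the one that processes $\mathcal G_{o+1}/\mathcal G_{o-1}$ directly. Both are built from the same lifts $\tilde F_\alpha$ — only the way the \v{C}ech cochain $\tfrac1p(d\tilde F_\alpha-d\tilde F_\gamma)$ in the $\mathcal G_{o+1}/\mathcal G_{o-1}$-direction is split into its $\mathcal G_o/\mathcal G_{o-1}$- and $\mathcal G_{o+1}/\mathcal G_o$-parts changes — so the comparison is a regrouping in the \v{C}ech bicomplex (one direction from $\mathcal U$, one from the splitting), and the homotopy is produced by the standard mechanism of \cite{DI,AS} converting two compatible Frobenius/Koszul data into a homotopy: the two cochains agree on the sub-direction, differ on overlaps by an explicit coboundary, and contracting that coboundary against the \v{C}ech differential yields the homotopy operator. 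Its divided-power/Koszul denominators are units precisely when $\mathrm{rank}(\mathcal G_{o+1}/\mathcal G_{o-1})<p-\ell$, which is why outside the truncated range this rank bound is needed. One then verifies that the homotopy is compatible with passage to overlaps, so that it is an honest homotopy of \v{C}ech complexes of sheaves, and that it preserves the subcomplexes $K^*_{\mathrm{Hig}}\subset\Omega^*(E,\theta)$ and $K^*_{\mathrm{dR}}\subset\Omega^*(H,\nabla)$, which is essentially automatic since it is built $\mathcal O$-linearly from the very data defining $\varphi$.

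Granting the local core, both assertions follow formally. For the second: under the hypotheses every $\Omega_i$, and also $\Omega_o\oplus\Omega_{o+1}$, has rank $<p-\ell$, so all one-step maps and the homotopy are defined on the full complexes; splicing the local homotopies over $\mathcal U$ gives a global homotopy $\varphi_{(\tilde X,\tilde D),\underline\Omega}\simeq\varphi_{(\tilde X,\tilde D),\underline\Omega'}$ valid for $q=\infty$, since the only layer that changed stays within the admissible range. For the first: iterate the merging step, each time in the range $q=p-\ell$, where no rank bound is needed because the obstruction to defining a one-step map or its homotopy on a merged layer lives in cohomological degree $\geq p-\ell$ and is killed by $\tau_{<p-\ell}$; after $\beta-1$ merges one reaches $\beta=1$, where $\varphi_{(\tilde X,\tilde D),\underline\Omega}$ is by definition the morphism $\varphi_{(\tilde X,\tilde D)}=\varphi_{\mathrm{SZ}}$ of \cite{SZ1}, whence $\varphi_{\underline\Omega}\simeq\varphi_{\mathrm{SZ}}$.

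The main obstacle is the local core: writing the homotopy between the two-step and one-step \v{C}ech data \emph{explicitly} and checking — with the Koszul signs, the logarithmic poles along $D$, and the twist by the level-$\leq\ell$ Higgs field all carried along — that it is a genuine homotopy of complexes of sheaves (not merely a derived-category statement), that it glues over $\mathcal U$, and that it respects the mixed-Hodge-module subcomplexes $K^*$. This is the bookkeeping of \cite{AS} done in two \v{C}ech directions at once, and the point where the number $p-\ell$ rather than $p-1$ is forced is in controlling the interaction of the divided powers with the Higgs twist.
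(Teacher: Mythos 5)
There is a genuine gap, and it sits exactly where you flag "the main obstacle": the homotopy is never actually produced, and the "standard mechanism of \cite{DI,AS} converting two compatible Frobenius/Koszul data into a homotopy" that you invoke does not apply here. In the DI/AS setting homotopies arise because two constructions use \emph{different} Frobenius lifts, and the cochains $h_{\alpha\beta}=\frac1p(\tilde F^*_\beta-\tilde F^*_\alpha)$ mediate between them; in the present situation $\varphi_{(\tilde X,\tilde D),\underline\Omega}$ and $\varphi_{(\tilde X,\tilde D),\underline\Omega'}$ are built from the \emph{same} lifts $\{\tilde F_\alpha\}$, and what differs is the section $\mathrm{sec}_{\underline\Omega}$ versus $\mathrm{sec}_{\underline\Omega'}$ of $\tilde\Omega^{\otimes(r+s)}\to\Omega^{r+s}$, i.e.\ the pattern of divided-power coefficients and the ordering constraints $\lambda_1\geq\cdots\geq\lambda_r$ on the layers. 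Their difference is not visibly a coboundary of any obvious cochain, and no DI/AS lemma contracts it. What the paper does is construct, by hand, an explicit $\mathcal L$-indexed homotopy $\psi(r,s)_{\tilde F_0,\cdots,\tilde F_r}$ (Construction 4.3) whose coefficients $C_o(q,i,\underline S,\underline i,\underline{\overline j})$ involve products of reciprocals of partial sums of the $j^u+s'_u$, and then verify the identity $\nabla\psi(r,s)+\sum_q(-1)^{s+q+1}\psi(r-1,s+1)_{\hat q}+\psi(r,s+1)\theta=\mathrm{Ho}^{-r}_{\underline\Omega}-\mathrm{Ho}^{-r}_{\underline\Omega'}$ by a lengthy coefficient comparison ($c^L$ versus $c^R$) resting on nontrivial combinatorial identities; this is the whole content of the proposition, and your proposal replaces it with an appeal to a mechanism that does not exist in this setting.

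A second, related problem is your structural premise that $\varphi_{(\tilde X,\tilde D),\underline\Omega}$ is "assembled" by processing the flag $\mathcal G_1\subset\cdots\subset\mathcal G_\beta$ through one-step maps attached to the graded quotients, so that comparing $\underline\Omega$ and $\underline\Omega'$ is a regrouping in a bicomplex. That is not how the construction is defined: $\varphi_{\underline\Omega}$ is a single global formula (Construction 4.1) in which the splitting enters through the function $D$, the monotonicity constraint on the $\lambda_q$'s and the coefficients $C(\underline i,\underline S,\underline{\overline j})$; a factorization into layerwise maps is only established in the K\"unneth situation where $(\tilde X,\tilde D)$, $(E,\theta)$ and $\mathcal U$ themselves split as products, so using it in general is an unproven (and in general unavailable) reduction. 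Your outer scaffolding — merge adjacent layers, iterate merges in the truncated range $q=p-\ell$ down to $\beta=1$ where the construction coincides with $\varphi_{\mathrm{SZ}}$ — is consistent with the paper's strategy, and your remark that compatibility with the subcomplexes $K^*$ is "essentially automatic" is also too quick (the paper needs compatibility hypotheses between $\underline\Omega$ and $D$, resp.\ $g$, as in its Theorem 4.3); but as it stands the core homotopy, hence the proposition, is not proved.
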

Combing Propositions \ref{intro thm quasi-iso} and \ref{intro thm homotopy}, we have the following generalization of Drinfeld's observation. 
\begin{theorem}
$(1)$ Suppose that the nilpotent level of $(E, \theta)$ is strictly less than $p - 2$, namely, $\ell < p - 2$. Then for any $a \geq 0$, we have

$$
\tau_{[a,a+1]}F_*K^*_\mathrm{dR}\cong\tau_{[a,a+1]}K^*_\mathrm{Hig}~\mathrm{in}~D(X').
$$
$(2)$ Under the notation and assumptions above Corollary~\ref{intro tau<p structure sheaf}, we have

$$
\tau_{[a,a+1]}F_*(K^\bullet_\mathrm{dR},d+df\wedge)\cong\tau_{[a,a+1]}(K^\bullet_\mathrm{Hig},-df\wedge)~\mathrm{in}~D(X').
$$
\end{theorem}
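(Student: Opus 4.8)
\emph{Reduction.}\quad The plan is to deduce both parts from Propositions~\ref{intro thm quasi-iso} and~\ref{intro thm homotopy}, via the following observation: if $A^\bullet,B^\bullet\in D(X')$ have cohomology concentrated in the two degrees $a$ and $a+1$, then a morphism $A^\bullet\to B^\bullet$ is an isomorphism as soon as it induces isomorphisms on $\mathcal H^a$ and on $\mathcal H^{a+1}$, since its cone then has vanishing cohomology in every degree. As the nonabelian Hodge theory of \cite{OV,Schepler} already furnishes a Cartier-type identification $\mathcal H^j(F_*K^*_\mathrm{dR})\cong\mathcal H^j(K^*_\mathrm{Hig})$ for all $j$, it suffices to produce one morphism in $\mathrm{Hom}_{D(X')}(\tau_{[a,a+1]}F_*K^*_\mathrm{dR},\tau_{[a,a+1]}K^*_\mathrm{Hig})$ which is an isomorphism on cohomology sheaves. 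Note that Proposition~\ref{intro thm quasi-iso} already yields such a morphism when $a+1<p-\ell$ (restrict the isomorphism $\tau_{<p-\ell}F_*K^*_\mathrm{dR}\cong\tau_{<p-\ell}K^*_\mathrm{Hig}$ to the window $[a,a+1]$); the real content is to remove the restriction on $a$, at the cost of shrinking the admissible window from $p-\ell$ consecutive degrees anchored at $0$ to $p-\ell-1$ consecutive degrees anchored anywhere.

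\smallskip
\emph{Part (1).}\quad I would pass to an affine open covering $\mathcal U=\{U_i\}$ of $X$ on which $\Omega^1_{X/k}(\log D)$ is free, and choose on each $U_i$ a splitting into line bundles. Since $\ell<p-2$, every summand has rank $<p-\ell$, so Proposition~\ref{intro thm quasi-iso} applies on each $U_i$ with $q=\infty$ and gives a local isomorphism $\tau_{[a,a+1]}F_*K^*_\mathrm{dR}|_{U_i}\cong\tau_{[a,a+1]}K^*_\mathrm{Hig}|_{U_i}$. These must be glued into a single class in $D(X')$; the obstruction to doing so lies, by the local-to-global spectral sequence for $\mathrm{Ext}$, in the $H^1$ of the sheaf $\mathrm{Ext}^1$ between $\mathcal H^{a+1}$ and $\mathcal H^a$. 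The key point, which I would extract from the proof of Proposition~\ref{intro thm quasi-iso} by refining the analysis of the \v{C}ech double complex $\check{\mathcal C}(\mathcal U',F_*K^*_\mathrm{dR})$ of \cite{AS}, is that the \v{C}ech assembly closes up on any window of $r$ consecutive cohomological degrees placed at an \emph{arbitrary} position, provided $r\le p-\ell-1$: the combinatorial identities constraining it involve at most $p-\ell-1$ factors of the transition homotopies, so only $r$, and not its location, matters. For $[a,a+1]$ one has $r=2$, and $2\le p-\ell-1$ is precisely the hypothesis $\ell<p-2$. Finally, the a priori dependence of the construction on the chosen local splittings is eliminated by Proposition~\ref{intro thm homotopy}: any two ordered splittings of a free $\Omega^1_{X/k}(\log D)|_{U_i}$ into line bundles are linked by transpositions and merge/refinement moves of adjacent summands (admissible here since such blocks have rank $\le2<p-\ell$), each of which leaves the \v{C}ech map unchanged up to homotopy, so the glued morphism in $D(X')$ is canonical. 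Composing with the quasi-isomorphism $\check{\mathcal C}(\mathcal U',-)\simeq(-)$ and identifying the effect on $\mathcal H^a,\mathcal H^{a+1}$ with the Cartier isomorphism (a local computation already contained in Proposition~\ref{intro thm quasi-iso}) completes (1).

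\smallskip
\emph{Part (2).}\quad The same argument applies to the twisted complexes $(K^\bullet_\mathrm{dR},d+df\wedge)$ and $(K^\bullet_\mathrm{Hig},-df'\wedge)$ in the semistable setting of Corollary~\ref{intro tau<p structure sheaf}, with two modifications. First, the identification of cohomology sheaves is now supplied by Theorem~\ref{de Rham-Higgs local}, which gives $\mathcal H^j(F_*(K^\bullet_\mathrm{dR},d+df\wedge))\cong\mathcal H^j(K^\bullet_\mathrm{Hig},-df'\wedge)$ in a formal neighbourhood of each closed point, hence globally. Second, the \v{C}ech construction must be carried out in its twisted form; this is legitimate because, by the first finding, the twisted Higgs complex is formally locally isomorphic, through the variant $\alpha$-transform $\Theta$ introduced before Theorem~\ref{de Rham-Higgs local}, to the Higgs complex of an untwisted nilpotent Higgs bundle of level $0$, so the window analysis applies with $\ell=0$; the resulting inequality $2\le p-1$ holds under the hypotheses of Corollary~\ref{intro tau<p structure sheaf}. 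Assembling exactly as in (1) yields the asserted isomorphism.

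\smallskip
\emph{The main difficulty.}\quad The crux of both parts is the claim that the \v{C}ech assembly depends only on the \emph{size}, not the \emph{location}, of the truncation window. Establishing it requires, for arbitrary $b$, exhibiting the obstruction to splitting $\tau_{[b,b+r-1]}F_*K^*_\mathrm{dR}$ as a $(p-\ell-1)$-bounded multilinear expression in the local Frobenius-lifting data that is manifestly invariant under translation in $b$, together with checking that the homotopies produced by Proposition~\ref{intro thm homotopy} remain homotopies after applying $\tau_{[b,b+r-1]}$. In part (2) one must additionally verify that passing to the variant $\alpha$-transform $\Theta$ is compatible both with the truncation and with the \v{C}ech assembly; this is plausible since $\Theta$ differs from $\theta-df'\wedge$ by a term of controlled nilpotence, but it requires a careful check.
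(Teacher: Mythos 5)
Your skeleton shares ingredients with the paper (local splittings into line bundles since $\ell<p-2$, Propositions~\ref{intro thm quasi-iso} and~\ref{intro thm homotopy}, a twisted variant for part (2)), but the two steps that carry the whole weight are missing or replaced by unproven claims. First, the assertion you yourself flag as the crux --- that the \v{C}ech assembly ``closes up on any window of $r$ consecutive degrees at an arbitrary position provided $r\le p-\ell-1$'' --- is not established, and it is not the mechanism that makes the theorem true: the heuristic ``at most $p-\ell-1$ factors of the transition homotopies'' does not correspond to anything in the construction, and if such a translation-invariance principle held one would obtain decompositions of windows of length $p-\ell-1$ anchored anywhere, which is strictly beyond what the method (and the paper) can deliver; only two-term windows work. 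Second, your gluing step is insufficient as stated: knowing that different local choices give maps that agree ``up to homotopy'' only yields equality in $D(U)$ on each open set, which does not by itself allow descent of morphisms in the derived category; moreover, for complexes with cohomology concentrated in degrees $a,a+1$ the obstruction to globalizing a compatible family of local classes sits (via the local-to-global Ext spectral sequence) in $H^2$ of $\mathcal Hom(\mathcal H^{a+1},\mathcal H^{a})$, not in $H^1$ of $\mathcal Ext^1$, and you never show it vanishes. You also invoke invariance under \emph{transpositions} of the ordered splitting, which Proposition~\ref{intro thm homotopy} does not provide (the paper stresses $\varphi_{(\Omega_1,\Omega_2)}\neq\varphi_{(\Omega_2,\Omega_1)}$ in general).

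The paper's actual argument (Theorem~\ref{two sided truncation}) avoids derived gluing altogether. Locally it chooses coordinate splittings into line bundles compatible with $D$ (resp.\ with $g$), obtaining via Theorem~\ref{compatibility MHM} an untruncated $\mathcal L$-indexed $\infty$-homotopy, and then proves that after applying $\tau_{[a,a+1]}$ this $\infty$-homotopy is \emph{literally equal} for any two such choices: the change-of-basis matrix between two coordinate splittings is factored into elementary matrices, producing a chain of $f$-compatible ordered splittings linked only by merge moves \eqref{case i} through summands of rank $\le 2$; each move alters the $\infty$-homotopy by an explicit homotopy $\psi$ with $\psi_0=0$ (Proposition~\ref{Ho^2}), and the contribution of such a homotopy vanishes identically after two-term truncation, because the term $\nabla\circ\psi$ lands in the image killed in the degree-$a$ quotient while the term $\psi\circ\theta$ vanishes on the kernel forming the degree-$(a+1)$ term. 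It is this on-the-nose equality, not homotopy, that lets the local truncated $\infty$-homotopies glue into a global one and hence give the isomorphism in $D(X')$; and the hypothesis $\ell<p-2$ enters precisely because the merged rank-$2$ blocks require $2<p-\ell$ for Proposition~\ref{Ho^2}, not through any window-length principle. Your part (2) inherits the same gap; the paper handles it through the formal $g$-transform pair $(K^*_{\mathrm{Hig},f},K^*_{\mathrm{dR},f})$ and the Artin--Hasse isomorphisms of Theorem~\ref{compatibility MHM}, rather than through a case-by-case appeal to Theorem~\ref{de Rham-Higgs local}.
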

As an immediate consequence of the above theorem, we obtain isomorphisms between cohomology sheaves.

\begin{corollary}
Under the notation and assumptions in \textup{(1)} of the above theorem, we have
\[
F_*\mathcal H^a(K^*_{\mathrm{dR}}) \cong \mathcal H^a(K^*_{\mathrm{Hig}}), \quad \forall a.
\]
Under the notation and assumptions above Corollary~\ref{intro tau<p structure sheaf}, we have
\[
F_*\mathcal H^a(K^\bullet_{\mathrm{dR}}, d + df \wedge) \cong \mathcal H^a(K^\bullet_{\mathrm{Hig}}, -df \wedge), \quad \forall a.
\]
\end{corollary}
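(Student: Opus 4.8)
The plan is to deduce both isomorphisms directly from the preceding theorem by passing to degree-$a$ cohomology sheaves. First I would record the elementary observation that, for any object $C$ of the derived category $D(X')$ and any integer $a$, the two-sided truncation satisfies $\mathcal H^a(\tau_{[a,a+1]}C)\cong\mathcal H^a(C)$ canonically: writing $\tau_{[a,a+1]}C=\tau_{\leq a+1}\tau_{\geq a}C$, the complex $\tau_{[a,a+1]}C$ has cohomology concentrated in degrees $a$ and $a+1$, and neither of the two truncations alters $\mathcal H^a$.

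Next, applying the functor $\mathcal H^a(-)$ to the isomorphism
\[
\tau_{[a,a+1]}F_*K^*_{\mathrm{dR}}\cong\tau_{[a,a+1]}K^*_{\mathrm{Hig}}\quad\text{in }D(X')
\]
supplied by part $(1)$ of the theorem yields an isomorphism $\mathcal H^a(F_*K^*_{\mathrm{dR}})\cong\mathcal H^a(K^*_{\mathrm{Hig}})$ for every $a$. To rewrite the left-hand side I would use that the relative Frobenius $F:X\to X'$ is finite, hence affine, so that $F_*$ is exact on quasi-coherent sheaves and therefore commutes with the formation of cohomology sheaves of a complex: $\mathcal H^a(F_*K^*_{\mathrm{dR}})=F_*\mathcal H^a(K^*_{\mathrm{dR}})$. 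Combining the two gives the first displayed formula.

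The second formula is obtained in precisely the same manner, now starting from the isomorphism
\[
\tau_{[a,a+1]}F_*(K^\bullet_{\mathrm{dR}},d+df\wedge)\cong\tau_{[a,a+1]}(K^\bullet_{\mathrm{Hig}},-df\wedge)\quad\text{in }D(X')
\]
of part $(2)$ of the theorem: one applies $\mathcal H^a(-)$, observes that twisting the differential by $df\wedge$ does not affect the exactness of $F_*$ on the underlying quasi-coherent sheaves, and again invokes $\mathcal H^a(F_*(K^\bullet_{\mathrm{dR}},d+df\wedge))=F_*\mathcal H^a(K^\bullet_{\mathrm{dR}},d+df\wedge)$. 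There is no genuine difficulty here; the statement is formal once the theorem is in hand, and the only point meriting a word of justification is the interchange of $F_*$ with cohomology sheaves, which is immediate from the affineness of Frobenius.
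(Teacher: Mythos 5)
Your argument is correct and is exactly the route the paper intends: the corollary is stated as an immediate consequence of the two-term truncation theorem, obtained by applying $\mathcal H^a(-)$ to the isomorphisms $\tau_{[a,a+1]}F_*K^*_{\mathrm{dR}}\cong\tau_{[a,a+1]}K^*_{\mathrm{Hig}}$ (and its twisted analogue) and using that $F$ is affine (indeed a universal homeomorphism), so $F_*$ is exact and commutes with cohomology sheaves. Your spelled-out justifications of $\mathcal H^a(\tau_{[a,a+1]}C)\cong\mathcal H^a(C)$ and of the interchange of $F_*$ with $\mathcal H^a$ are just the standard details left implicit in the paper.
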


When \( K^*_{\mathrm{dR}} = \Omega^*(H,\nabla) \) and \( K^*_{\mathrm{Hig}} = \Omega^*(E,\theta) \), the first isomorphism of this corollary has been obtained by \cite{Ogus04}. 

\subsection{Further consequences}
Several interesting consequences arise from the partial answers to Question \ref{Question 1}. In what follows, we examine a few of them. 

\subsubsection{K\"unneth formula}
 Proposition~\ref{intro thm quasi-iso} is somewhat unexpected, yet perfectly reasonable. Suppose that there are splittings
\begin{eqnarray}\label{intro assumption splitting}
(\tilde{X}, \tilde{D}) = \prod_{i=1}^\beta (\tilde{X}_i, \tilde{D}_i), \quad (E, \theta) = (E_1, \theta_1)\boxtimes\cdots\boxtimes(E_\beta,\theta_\beta),\quad
\mathcal{U} = \mathcal{U}_1 \times \cdots \times \mathcal{U}_\beta,
\end{eqnarray}
where each \((E_i, \theta_i)\) is a nilpotent Higgs bundle on \((X_i, D_i)/k\), which is the mod \(p\) reduction of \((\tilde{X}_i, \tilde{D}_i)/W_2(k)\), of level \(\leq\ell\) $(<p-\dim X)$, and each \(\mathcal{U}_i\) is an open affine covering of \(X_i\). Set $$(H_i,\nabla_i):=C^{-1}_{(\tilde X_i,\tilde D_i)/W_2(k)}(E_i,\theta_i),\quad \underline\Omega:=(\pi_1^*\Omega^1_{X_1/k}(\log D_1),\cdots,\pi_\beta^*\Omega^1_{X_\beta/k}(\log D_\beta)),$$
where $1\leq i\leq\beta$ and $\pi_i:X\to X_i$ is the natural projection. We have natural identifications 
$$
\Omega^*(E,\theta)\cong\Omega^*(E_1,\theta_1)\boxtimes\cdots\boxtimes\Omega^*(E_\beta,\theta_\beta),\quad \Omega^*(H,\nabla)\cong\Omega^*(H_1,\nabla_1)\boxtimes\cdots\boxtimes\Omega^*(H_\beta,\nabla_\beta).$$
Let $\Phi_{(\tilde X_i,\tilde D_i)}$ denote the isomorphism from $F_*\Omega^*(H_i,\nabla_i)$ to $\Omega^*(E_i,\theta_i)$ induced by $\varphi_{(\tilde X_i,\tilde D_i)}$.
Define 
$$
\cup:\check{\mathcal C}(\mathcal U_\beta',F_*\Omega^*(H_\beta,\nabla_\beta))\boxtimes\cdots\boxtimes\check{\mathcal C}(\mathcal U_1',F_*\Omega^*(H_1,\nabla_1))\to\check{\mathcal C}(\mathcal U',F_*\Omega^*(H,\nabla))$$
to be the cup product of \v{C}ech complexes. 
\begin{theorem}[K\"unneth formula]
Notation and assumptions as above. Then there are splittings
\begin{eqnarray}\label{intro splitting}
(H,\nabla)\cong(H_1,\nabla_1)\boxtimes\cdots\boxtimes(H_\beta,\nabla_\beta),\quad \varphi_{(\tilde X,\tilde D),\underline\Omega}=\cup\circ(\varphi_{(\tilde X_\beta,\tilde D_\beta)}\boxtimes\cdots\boxtimes\varphi_{(\tilde X_1,\tilde D_1)}).
\end{eqnarray}
In particular, we have
$$
\Phi_{(\tilde X,\tilde D)}=\Phi_{(\tilde X_1,\tilde D_1)}\boxtimes\cdots\boxtimes\Phi_{(\tilde X_\beta,\tilde D_\beta)}\in\mathrm{Hom}_{D(X')}(\Omega^*(E,\theta),F_*\Omega^*(H,\nabla)).
$$
\end{theorem}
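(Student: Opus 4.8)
The plan is to split the statement into two parts that can be attacked independently: (a) the identity $(H,\nabla)\cong(H_1,\nabla_1)\boxtimes\cdots\boxtimes(H_\beta,\nabla_\beta)$, which I would deduce from the compatibility of the inverse Cartier transform with external products; and (b) the cochain-level identity $\varphi_{(\tilde X,\tilde D),\underline\Omega}=\cup\circ(\varphi_{(\tilde X_\beta,\tilde D_\beta)}\boxtimes\cdots\boxtimes\varphi_{(\tilde X_1,\tilde D_1)})$, from which the assertion about $\Phi_{(\tilde X,\tilde D)}$ will follow formally. Throughout I would use the explicit recipe for $\varphi_{(\tilde X,\tilde D),\underline\Omega}$ constructed in the proof of Proposition~\ref{intro thm quasi-iso}: over intersections of members of $\mathcal U$ one chooses Frobenius lifts compatible with $\tilde X$, and the \v{C}ech cochain is then assembled degree by degree out of the ordered splitting data $\underline\Omega$, in the style of Deligne--Illusie and Achinger--Suh. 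The standing hypothesis $\ell<p-\dim X$ forces $\mathrm{rank}(\pi_i^*\Omega^1_{X_i/k}(\log D_i))=\dim X_i<p-\ell$, so $q=\infty$ and no truncation enters; this is what lets the identities hold on the nose.

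For part (a), I would observe that $\prod_i(\tilde X_i,\tilde D_i)$ is a $W_2(k)$-lifting of $(X,D)=\prod_i(X_i,D_i)$, that the product covering $\mathcal U=\mathcal U_1\times\cdots\times\mathcal U_\beta$ admits product Frobenius lifts $\tilde F_{U_1}\times\cdots\times\tilde F_{U_\beta}$, and that the local splitting data for the product is the product of the local data on the factors. Feeding these product choices into the local description of $C^{-1}_{(\tilde X,\tilde D)/W_2(k)}$ (the torsor-twist construction of Ogus--Vologodsky and Schepler), one checks that the glueing cocycles factor through the factors, so that $C^{-1}$ of a box product of nilpotent Higgs bundles is canonically the box product of the $C^{-1}$'s. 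Hence $(H,\nabla)=C^{-1}(E,\theta)\cong\boxtimes_i C^{-1}(E_i,\theta_i)=\boxtimes_i(H_i,\nabla_i)$, compatibly with the box-product identifications of de Rham complexes recorded above the theorem. This also identifies the comparison data on $X$ with the product of the comparison data on the factors.

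The heart of the proof is part (b), which I would prove by induction on $\beta$, the inductive step reducing to $\beta=2$. With product Frobenius lifts and the product splitting $\Omega^1_{X/k}(\log D)=\pi_1^*\Omega^1_{X_1/k}(\log D_1)\oplus\pi_2^*\Omega^1_{X_2/k}(\log D_2)$, every ingredient of the ordered \v{C}ech construction on $X$ — the linearized Frobenius maps, the Koszul-type homotopies attached to each summand, and the \v{C}ech differentials — decomposes as a tensor product of the corresponding ingredient on $X_1$ with that on $X_2$, and reassembling these tensor factors over the product covering is precisely the \v{C}ech cup product $\cup$. The hard part will be the bookkeeping that makes this precise: one must reconcile the ordering of $\underline\Omega=(\Omega_1,\ldots,\Omega_\beta)$ with the reversed order in $\varphi_{(\tilde X_\beta,\tilde D_\beta)}\boxtimes\cdots\boxtimes\varphi_{(\tilde X_1,\tilde D_1)}$ — the reversal being a Koszul-sign artifact of permuting the \v{C}ech and de Rham gradings in the cup product — and one must verify the degree constraints so that the relevant truncations are vacuous. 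I would first isolate the sign-and-ordering lemma for \v{C}ech cup products of product coverings, and then carry out the $\beta=2$ comparison by matching the two explicit cochains term by term.

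Finally, the statement about $\Phi$ is formal. By construction $\Phi_{(\tilde X,\tilde D)}$ is obtained from $\varphi_{(\tilde X,\tilde D),\underline\Omega}$ by inverting it in $D(X')$ and composing with the canonical quasi-isomorphism from a complex to its \v{C}ech resolution, and similarly for each factor. Since the \v{C}ech cup product for $\mathcal U'=\mathcal U_1'\times\cdots\times\mathcal U_\beta'$ is a quasi-isomorphism (Künneth for \v{C}ech complexes of quasi-coherent sheaves on separated schemes), inverting the cochain identity of part (b) in $D(X')$ yields $\Phi_{(\tilde X,\tilde D)}=\Phi_{(\tilde X_1,\tilde D_1)}\boxtimes\cdots\boxtimes\Phi_{(\tilde X_\beta,\tilde D_\beta)}$, compatibly with the box-product identifications of the de Rham and Higgs complexes, as desired.
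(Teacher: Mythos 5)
Your parts (a) and (b) are essentially the paper's own route: the paper also takes the product covering with product Frobenius lifts, observes that the inverse Cartier glueing data factor through the two projections (which gives the splitting of $(H,\nabla)$), and then verifies by a direct term-by-term computation — using the relations $\pi_1^*h_{\tilde F_{j_1^i}\tilde F_{j_1^{i+1}}}(\omega)=h_{\tilde F_{(j_1^i,j_2^i)}\tilde F_{(j_1^{i+1},j_2^{i+1})}}(\pi_1^*\omega)$ and the analogous expansion of $\pi_2^*\zeta_{\tilde F_{j_2^i}}$ — that the cup-product composite coincides with $\varphi_{\underline\Omega}$ for $\underline\Omega=(\pi_2^*\Omega^1_{X_2/k}(\log D_2),\pi_1^*\Omega^1_{X_1/k}(\log D_1))$ on the nose. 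That is your "term-by-term matching", and your reduction by associativity of the cup product to two factors is harmless.

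The genuine gap is in your last paragraph. In the paper's conventions $\Phi_{(\tilde X,\tilde D)}$ is the class in $D(X')$ induced by $\varphi_{(\tilde X,\tilde D)}$, i.e. by the $\beta=1$ construction of \cite{SZ1} that makes no reference to any splitting; it is \emph{not} defined as the class induced by $\varphi_{(\tilde X,\tilde D),\underline\Omega}$, contrary to what you assert ("by construction $\Phi_{(\tilde X,\tilde D)}$ is obtained from $\varphi_{(\tilde X,\tilde D),\underline\Omega}$\,"). Inverting your cochain identity in $D(X')$ therefore only yields $\Phi_{(\tilde X,\tilde D),\underline\Omega}=\Phi_{(\tilde X_1,\tilde D_1)}\boxtimes\cdots\boxtimes\Phi_{(\tilde X_\beta,\tilde D_\beta)}$. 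To reach the stated conclusion you must still show that $\varphi_{(\tilde X,\tilde D),\underline\Omega}$ and $\varphi_{(\tilde X,\tilde D)}$ induce the same class, and this is precisely the nontrivial input the paper supplies by invoking Proposition~\ref{Ho^2} (equivalently Proposition~\ref{intro thm homotopy}), merging the summands of $\underline\Omega$ step by step — admissible here because the rank of each merged piece stays below $p-\ell$ thanks to $\ell<p-\dim X$. Your proposal neither invokes nor reproves this homotopy-independence, so the final identity for $\Phi_{(\tilde X,\tilde D)}$ is not established as written. (A minor point: the "Künneth quasi-isomorphism for Čech complexes" you appeal to is not what is needed; the paper instead uses the commutative diagram comparing the cup-product roof with the augmentation maps $\rho_i$, which is enough to identify the induced classes.)
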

The second equality of \eqref{intro splitting} shows that although its right-hand side appears to depend on the splittings in \eqref{intro assumption splitting}, it is in fact independent of them, and depends only on $(\tilde X,\tilde D)/W_2(k)$, $(E,\theta)$, the covering $\mathcal U$, and the ordered splitting $\underline\Omega$.

\subsubsection{$E_1$-degeneration and vanishing theorem modulo $p^n$}
Suppose that there exists a smooth formal $W(k)$-scheme $\mathfrak{X}$ endowed with a simple normal crossing divisor $\mathfrak{D}$ relative to $W(k)$, such that their reductions modulo $p$ recover $(X, D)/k$. Denote by $\mathrm{MF}^\nabla_{[0,\ell]}((\mathfrak X,\mathfrak D)/W(k))$ ($0\leq\ell\leq p-2$) the category of Fontaine-Faltings modules of Hodge-Tate weight $\leq\ell$ on $(\mathfrak X,\mathfrak D)/W(k)$.

\begin{theorem}
Let $(M, \nabla, F^\bullet M, \varphi) \in \mathrm{MF}^\nabla_{[0,\ell]}((\mathfrak{X}, \mathfrak{D})/W(k))$, and let $(\tilde{M}, \tilde{\nabla})$ be the $p$-connection corresponding to $(M, \nabla)$. Assume that $\Omega^1_{X/k}(\log D)$ splits into a direct sum of subbundles, each of rank $< p - \ell$. Then the spectral sequence
\[
E_1^{i,j} = \mathbb{H}^{i+j}(\mathfrak{X}, \mathrm{Gr}_F^i \Omega^*(M, \nabla)) \Rightarrow \mathbb{H}^{i+j}(\mathfrak{X}, \Omega^*(M, \nabla))
\]
degenerates at the $E_1$-page. Moreover, if $X$ is projective over $k$ and $L$ is an ample line bundle on $X$ (regarded as a coherent sheaf on $\mathfrak{X}$), then
\[
\mathbb{H}^i(\mathfrak{X}, \Omega^*(\tilde{M}, \tilde{\nabla}) \otimes L) = 0, \quad i > \dim X.
\]
\end{theorem}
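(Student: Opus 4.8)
The plan is to deduce the $E_1$-degeneration from a mod-$p$ decomposition statement together with the standard dévissage argument of Deligne--Illusie, and then to derive the vanishing theorem from the degeneration by the usual Kodaira--Serre argument. First I would reduce to the mod-$p$ situation: by the general comparison between Fontaine--Faltings modules and Higgs--de~Rham flows (Faltings, Ogus--Vologodsky), the data $(M,\nabla,F^\bullet M,\varphi)\in\mathrm{MF}^\nabla_{[0,\ell]}((\mathfrak X,\mathfrak D)/W(k))$ reduces modulo $p$ to a nilpotent Higgs bundle $(E,\theta):=\mathrm{Gr}_F(M,\nabla)\bmod p$ on $(X,D)/k$ of level $\le\ell$, whose inverse Cartier transform is $(H,\nabla):=(M,\nabla)\bmod p$; moreover the Hodge filtration $F^\bullet M$ reduces to the filtration inducing $(E,\theta)$, and $\varphi$ endows the reduction with the structure needed to lift the Higgs--de~Rham comparison to $W(k)$-coefficients. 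Under the splitting hypothesis $\Omega^1_{X/k}(\log D)=\bigoplus_{i=1}^\beta\Omega_i$ with $\mathrm{rank}(\Omega_i)<p-\ell$, the corollary following Proposition~\ref{intro thm quasi-iso} (with $K^*_{\mathrm{dR}}=\Omega^*(H,\nabla)$, $K^*_{\mathrm{Hig}}=\Omega^*(E,\theta)$, $q=\infty$) gives a decomposition
\[
F_*\Omega^*(H,\nabla)\cong\bigoplus_i\mathcal H^i(F_*\Omega^*(H,\nabla))[-i]\quad\text{in }D(X'),
\]
and via the Cartier-type identification the right-hand side is $\bigoplus_i\Omega^i(E,\theta)[-i]$-type graded pieces, i.e. $\mathrm{Gr}_F\Omega^*(M,\nabla)\bmod p$.

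The second step is to upgrade this mod-$p$ splitting to the degeneration over $W(k)$ (equivalently over $\mathfrak X$). Here I would run the Deligne--Illusie dévissage: the mod-$p^n$ truncations $\Omega^*(M,\nabla)\otimes_{W(k)}W_n(k)$ fit into exact triangles relating consecutive $n$, and one shows by induction on $n$ that $\mathbb H^*(\mathfrak X,\Omega^*(M,\nabla)\otimes W_n(k))$ has the expected rank, using the mod-$p$ decomposition at the bottom of the induction and flatness/base-change to propagate upward; passing to the limit and using that $\mathfrak X$ is a formal $W(k)$-scheme (so cohomology is $p$-adically complete and finitely generated) yields that the dimensions of $\mathbb H^{i+j}(\mathfrak X,\mathrm{Gr}_F^i\Omega^*(M,\nabla))$ sum to $\dim\mathbb H^{i+j}(\mathfrak X,\Omega^*(M,\nabla))$, which is exactly $E_1$-degeneration. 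The key input is that the Hodge-to-de~Rham spectral sequence here is a spectral sequence of finitely generated $W(k)$-modules, so a dimension count over $k$ after $\otimes^{\mathbb L}k$, combined with the torsion-freeness forced by the mod-$p$ count, forces degeneration; this is the same mechanism as in \cite{DI} but applied to the twisted complex $\Omega^*(M,\nabla)$ rather than $\Omega^*_{X/k}$.

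For the vanishing statement, I would pass to the $p$-connection $(\tilde M,\tilde\nabla)$, which is essentially the Rees/Higgs-type deformation of $(M,\nabla)$ interpolating between $(M,\nabla)$ and $(E,\theta)$; its de~Rham complex $\Omega^*(\tilde M,\tilde\nabla)$ has associated graded (in the appropriate sense) the Higgs complex $\Omega^*(E,\theta)$, so by the $E_1$-degeneration just proved the hypercohomology $\mathbb H^i(\mathfrak X,\Omega^*(\tilde M,\tilde\nabla)\otimes L)$ is computed by $\bigoplus_{p+q=i}\mathbb H^q(\mathfrak X,E\otimes\Omega^p_{X/k}(\log D)\otimes L)$-type terms. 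Then I would invoke the usual Serre vanishing/Kodaira-type argument: twisting by the ample $L$ (viewed as a coherent sheaf on $\mathfrak X$, i.e. a $p$-adically complete family) makes the higher coherent cohomology of each Hodge piece vanish in the relevant range, and a degree count shows $\Omega^p_{X/k}(\log D)=0$ for $p>\dim X$, so every contributing term vanishes when $i>\dim X$. Concretely one reduces mod $p$, applies the positive-characteristic Kodaira vanishing that follows from the decomposition theorem (Deligne--Illusie's argument, as extended to logarithmic/twisted coefficients), and then lifts back to $\mathfrak X$ by the $p$-adic completeness of coherent cohomology.

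The main obstacle will be the second step: matching the mod-$p$ decomposition provided by Proposition~\ref{intro thm quasi-iso} with the honest filtered complex $\Omega^*(M,\nabla)$ over $W(k)$, since Proposition~\ref{intro thm quasi-iso} is stated for $(H,\nabla)=C^{-1}(E,\theta)$ and one must check that the filtration on $M$ coming from $F^\bullet M$ reduces to the one used there and that the decomposition is compatible with the $p^n$-dévissage (this is where the Fontaine--Faltings structure $\varphi$, i.e. the Higgs--de~Rham flow being fixed, is genuinely used — without it one only gets a mod-$p$ statement). Once that compatibility is in hand, the degeneration and the vanishing are formal consequences of the Deligne--Illusie machinery applied to twisted coefficients; I expect no new difficulty there beyond bookkeeping with the logarithmic poles and the level-$\le\ell$ condition ensuring $p-\ell-1$ is large enough for the relevant truncations to be the full complex.
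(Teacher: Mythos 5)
Your mod-$p$ setup is the same as the paper's: the Fontaine--Faltings structure gives $(E,\theta)=\mathrm{Gr}_F(M,\nabla)\bmod p$ with $C^{-1}(E,\theta)\cong(M,\nabla)\bmod p$, and the splitting hypothesis is exactly what feeds Theorem~\ref{de-Hig without truncation 2} and Theorem~\ref{compatibility MHM} to give the untruncated comparison $F_*\Omega^*(H,\nabla)\cong\Omega^*(E,\theta)$ in $D(X')$ (note this is an isomorphism with the Higgs complex, i.e.\ with $\mathrm{Gr}_F$ of the filtered de Rham complex, \emph{not} a formality statement $\bigoplus_i\mathcal H^i[-i]$ as you write; for the dimension count this distinction is harmless, but the decomposition you quote is false when $\theta\neq0$). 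The genuine gap is your second step, the upgrade from the mod-$p$ statement to $E_1$-degeneration over $W(k)$. Degeneration mod $p$ does not force torsion-freeness of the Hodge or de Rham hypercohomology over $W(k)$, and your induction on $n$ through the triangles $K\otimes^{\mathbb L}W_{n-1}\to K\otimes^{\mathbb L}W_n\to K\otimes^{\mathbb L}k$ only yields inequalities of lengths once torsion can occur; so the asserted implication ``mod-$p$ count $+$ flatness/base change $\Rightarrow$ degeneration over $W(k)$'' is not a valid deduction. Where the paper actually proves a statement of this shape (the corollary in \S5.2), the mechanism is different and genuinely uses $\varphi$ beyond the special fibre: one identifies the $p$-connection complex with the Rees-type colimit $\mathrm{colim}(\cdots\to F^i\Omega^*(M,\nabla)\xleftarrow{p}F^i\Omega^*(M,\nabla)\to F^{i-1}\Omega^*(M,\nabla)\leftarrow\cdots)$ (``adaptedness''), uses the comparison quasi-isomorphism between $\Omega^*(\tilde M,\tilde\nabla)$ and $\Omega^*(M,\nabla)$ to endow $(R\pi_*\Omega^*(M,\nabla),R\pi_*F^\bullet,\varphi)$ with the structure of a Fontaine complex in the sense of \cite[Def.~5.3.6]{Ogus}, and then quotes the strictness criterion \cite[Cor.~5.3.7]{Ogus} to get injectivity of $\mathbb H^m(F^i)\to\mathbb H^m(\Omega^*(M,\nabla))$ directly over $W(k)$; this strong-divisibility/strictness input (the same engine as in \cite{Fa}) is what replaces your length count, and it is the piece missing from your proposal.

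On the vanishing statement there is a second, smaller misstep: with $L$ merely ample the individual pieces $H^q\bigl(X,\mathrm{Gr}^i M\otimes\Omega^p_{X/k}(\log D)\otimes L\bigr)$ need not vanish for $p+q>\dim X$, so the ``Serre vanishing of each Hodge piece plus degree count'' sentence would fail. The correct route, which you name only at the very end, is the Frobenius-amplification iteration of \cite{DI} as implemented for Higgs/Fontaine coefficients by \cite{Arapura}: use the untruncated comparison to trade $\mathbb H^i(\text{Higgs}\otimes L)$ against $\mathbb H^i(\text{de Rham}\otimes L^{p})$, iterate to $L^{p^n}$ and invoke Serre vanishing for $n\gg0$, and only then climb the $p$-adic tower for $\Omega^*(\tilde M,\tilde\nabla)\otimes L$ by dévissage and completeness. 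So the overall skeleton is right, but as written both the $W(k)$-degeneration step and the vanishing step rely on arguments that do not go through; the degeneration needs the Fontaine-complex strictness argument (or an equivalent Fontaine--Laffaille strong divisibility argument), not a dimension-count dévissage.
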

The first part of the above theorem slightly strengthens the $E_1$-degeneration of Faltings \cite[Theorem 4.1$^*$]{Fa} (see also the alternative proof by Xu \cite{X}) in our specific setting. In particular, it recovers the fact that the Hodge-to-de Rham spectral sequence of an abelian variety $A/k$ degenerates at $E_1$, even when $A$ is supersingular and $\dim A > p$, a result originally established by Oda \cite{Oda}. The second part of the theorem slightly strengthens the vanishing theorem of Arapura \cite{Arapura} in our specified situation. In particular, it recovers the vanishing of higher cohomology vector spaces of ample line bundles on $A/k$, a fact that was already guaranteed by a classical result of Mumford \cite{Mumford}.

Following the spirit of Deligne–Illusie \cite{DI}, we expect that there exists a comparison between $\Omega^*(M, \nabla)$ and $\Omega^*(\tilde{M}, \tilde{\nabla})$, which furthermore restricts to subcomplexes inspired by the theory of mixed Hodge modules. The following theorem confirms this expectation in a special case.

\begin{theorem}
 Suppose that $(\mathfrak X,\mathfrak D)/W(k)$ is the fiber product of smooth logarithmic formal curves over $W(k)$. Let $(M, \nabla, F^\bullet M, \varphi) \in \mathrm{MF}^\nabla_{[0,p-2]}((\mathfrak{X}, \mathfrak{D})/W(k))$, and let $(\tilde{M}, \tilde{\nabla})$ be the $p$-connection corresponding to $(M, \nabla)$. Then there is an isomorphism of sheaves of abelian groups
 \begin{eqnarray}\label{De-Hig mod p^n}
\Omega^*(M,\nabla)\cong\Omega^*(\tilde M,\tilde\nabla)~\mathrm{in}~D(\mathrm{Ab}(X)),
 \end{eqnarray}
 which restricts to an isomorphism of intersection subcomplexes
 \begin{eqnarray}\label{De-Hig mod p^n int}
\Omega_\mathrm{int}^*(M,\nabla)\cong\Omega_\mathrm{int}^*(\tilde M,\tilde\nabla)~\mathrm{in}~D(\mathrm{Ab}(X)).
 \end{eqnarray}
 In particular, the spectral sequence
\[
E_1^{i,j} = \mathbb{H}^{i+j}(\mathfrak{X}, \mathrm{Gr}_F^i \Omega_\mathrm{int}^*(M, \nabla)) \Rightarrow \mathbb{H}^{i+j}(\mathfrak{X}, \Omega_\mathrm{int}^*(M, \nabla))
\]
degenerates at the $E_1$-page.
\end{theorem}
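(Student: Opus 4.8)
The statement is a lift of the Ogus--Vologodsky--Schepler de Rham--Higgs comparison from characteristic $p$ to $W(k)$ (equivalently, compatibly modulo every $p^n$), together with the assertion that the lift respects intersection subcomplexes. The plan is to run the \v{C}ech construction of Proposition~\ref{intro thm quasi-iso} with $W_{n+1}(k)$-coefficients, using the Frobenius structure $\varphi$ of the Fontaine--Faltings module in place of a lift of the inverse Cartier transform, and then to pass to the limit over $n$. The guiding observation is that, since $F\colon X\to X'$ is a homeomorphism, $F_*$ is invisible in $D(\mathrm{Ab}(X))$; hence the reduction of \eqref{De-Hig mod p^n} modulo $p$ is exactly the comparison $F_*\Omega^*(H,\nabla)\cong\Omega^*(E,\theta)$ for $(E,\theta):=\mathrm{Gr}_F(M,\nabla)\otimes k$ and $(H,\nabla)=C^{-1}(E,\theta)$ (the latter identification being supplied by $\varphi\bmod p$), which is a special case of Proposition~\ref{intro thm quasi-iso}. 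Indeed, because $(\mathfrak X,\mathfrak D)$ is a fibre product of logarithmic formal curves we have $\Omega^1_{X/k}(\log D)=\bigoplus_i L_i$ with each $L_i$ a line bundle, and the Hodge--Tate weight bound gives $\ell\leq p-2$, so $\mathrm{rank}\,L_i=1<p-\ell$ and we are in the non-truncated regime $q=\infty$. The $p$-connection $(\tilde M,\tilde\nabla)$ is the Rees-type modification of $(M,\nabla,F^\bullet)$ with $\tilde\nabla=p\nabla$, so that $\Omega^*(\tilde M,\tilde\nabla)\otimes k\cong\Omega^*(E,\theta)$ and $\Omega^*(M,\nabla)\otimes k\cong\Omega^*(H,\nabla)$.

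\textbf{Local comparison over $W_{n+1}(k)$.} Cover $X$ by affines $U$ that are products of affine curve-pieces, each equipped with a lift $\tilde U/W(k)$ and a lift of Frobenius $\tilde F_U$. On each such $U$ the Fontaine--Faltings structure $\varphi$, together with $\tilde F_U$ and the splitting $\bigoplus_i L_i|_U$, produces an explicit morphism of complexes $\zeta_U\colon\Omega^*(\tilde M,\tilde\nabla)|_U\to\Omega^*(M,\nabla)|_U$ reducing modulo $p$ to the standard local inverse-Cartier comparison, hence a quasi-isomorphism modulo $p^{n+1}$; this is the $W_{n+1}$-analogue of the map $\zeta_{\tilde F}$ underlying Proposition~\ref{intro thm quasi-iso}, and it is assembled one logarithmic curve direction at a time (in the spirit of Proposition~\ref{intro thm homotopy} and the K\"unneth decomposition), so that the rank-one analysis suffices at each step. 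One checks along the way, as in the proof of Theorem~\ref{de Rham-Higgs local}, that $\zeta_U$ preserves the order-of-vanishing filtration along $D$ defining $\Omega^*_\mathrm{int}$ and is strictly compatible with the Hodge filtration $F^\bullet$.

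\textbf{Gluing, limit, and degeneration.} On a double overlap the maps $\zeta_U$ and $\zeta_V$ differ by the discrepancy between $\tilde F_U$ and $\tilde F_V$; because $\ell\leq p-2$ and the cotangent splitting consists of rank-one pieces, the relevant $p$-adic expansions of this discrepancy truncate below level $p-\ell$, the Koszul-type identities of Proposition~\ref{intro thm quasi-iso} apply, and the discrepancy becomes null-homotopic. Thus the $\zeta_U$ patch into a quasi-isomorphism $\varphi_{(\mathfrak X,\mathfrak D),\underline\Omega}\colon\Omega^*(\tilde M,\tilde\nabla)\to\check{\mathcal C}(\mathcal U',F_*\Omega^*(M,\nabla))$ over $W_{n+1}(k)$, hence an isomorphism in the derived category at level $n$; functoriality of $\varphi$ makes these compatible as $n$ varies, and passing to the limit yields \eqref{De-Hig mod p^n}. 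Since the whole construction respects $\Omega^*_\mathrm{int}$ by the previous paragraph, it restricts to \eqref{De-Hig mod p^n int}. Finally, strict compatibility of the comparison with $F^\bullet$ forces $\sum_i\mathrm{rk}\,\mathbb H^m(\mathfrak X,\mathrm{Gr}_F^i\Omega^*_\mathrm{int}(M,\nabla))=\mathrm{rk}\,\mathbb H^m(\mathfrak X,\Omega^*_\mathrm{int}(M,\nabla))$ for all $m$, which is precisely the asserted $E_1$-degeneration.

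\textbf{Main obstacle.} The crux is the gluing step over $W_{n+1}(k)$: showing that the \v{C}ech homotopy obstruction measuring the failure of $\{\zeta_U\}$ to patch vanishes modulo $p^{n+1}$. This is where the two numerical hypotheses are indispensable --- the bound $\ell\leq p-2$ keeps the $p$-adic expansions of the Frobenius-lift discrepancies within the range controlled by the abstract Koszul computations of Proposition~\ref{intro thm quasi-iso}, and the rank-one splitting of $\Omega^1_{X/k}(\log D)$ is exactly what promotes the truncated statement to $q=\infty$ --- and it is also where the Fontaine--Faltings datum $\varphi$, rather than a mere $W_2(k)$-lift, is needed, namely to furnish a coherent system of local comparisons across all levels $n$ so that the limit in $D(\mathrm{Ab}(X))$ exists. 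Verifying the intersection-complex compatibility in each curve direction (a concrete check on the two-term complex $[\,M\to(\mathrm{Res}\,M+tM)\,d\log t\,]$) and the strictness of $F^\bullet$ under $\zeta_U$ are the remaining, more routine, points.
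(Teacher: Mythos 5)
The comparison half of your plan is essentially the route the paper takes: one regards $(\tilde M,\tilde\nabla)$ as the ``Higgs side'' and $(M,\nabla)$ as its inverse Cartier transform in the $p$-connection sense (glued by Taylor's formula from $F_\alpha^*\tilde M$ along Frobenius lifts of the $p$-adic completions of product affines), and then one builds the \v{C}ech-level quasi-isomorphism $\varphi_{\underline\Omega}$, with augmentation $\rho$, for the ordered splitting $\underline\Omega=(\pi_1^*\Omega^1_{Y_1/W(k)},\ldots,\pi_n^*\Omega^1_{Y_n/W(k)})$ into line bundles, i.e.\ the non-truncated regime $q=\infty$; the quasi-isomorphism property is checked by the same Koszul decomposition of $F_*\Omega^*(M,\nabla)$ into eigen-summands, and coordinate-independence by a Taylor-formula identity. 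Two points in your write-up need repair but are not fatal: objects of $\mathrm{MF}^\nabla_{[0,p-2]}$ are $p$-power torsion, so no limit over $n$ is needed (and your ``limit in $D(\mathrm{Ab}(X))$'' of level-$n$ isomorphisms would require a homotopy-coherence argument you do not supply); and $(\tilde M,\tilde\nabla)$ is the Rees-type module built out of $F^\bullet M$, not $(M,p\nabla)$, so the identifications of reductions you use must be routed through that description.

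The genuine gap is the degeneration step. You claim that ``strict compatibility of the comparison with $F^\bullet$'' forces $\sum_i\mathrm{rk}\,\mathbb H^m(\mathfrak X,\mathrm{Gr}^i_F\Omega^*_{\mathrm{int}}(M,\nabla))=\mathrm{rk}\,\mathbb H^m(\mathfrak X,\Omega^*_{\mathrm{int}}(M,\nabla))$. But \eqref{De-Hig mod p^n int} is a Frobenius-semilinear isomorphism between two \emph{different} complexes --- the filtered de Rham complex of $M$ and the $p$-connection complex of $\tilde M$ --- so it is not a filtered morphism for which strictness is even defined, and since the hypercohomologies are torsion $W(k)$-modules a rank (or dimension) count cannot detect $E_1$-degeneration in the first place. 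What is actually needed, and what the paper proves, is the intersection adaptedness identity
\[
\Omega^*_{\mathrm{int}}(\tilde M,\tilde\nabla)\;\cong\;\mathrm{colim}\bigl(\cdots\rightarrow F^i\Omega^*_{\mathrm{int}}(M,\nabla)\xleftarrow{\,p\,}F^i\Omega^*_{\mathrm{int}}(M,\nabla)\rightarrow F^{i-1}\Omega^*_{\mathrm{int}}(M,\nabla)\xleftarrow{\,p\,}\cdots\bigr),
\]
which, combined with \eqref{De-Hig mod p^n int}, exhibits $\bigl(R\pi_*\Omega^*_{\mathrm{int}}(M,\nabla),\,R\pi_*F^i\Omega^*_{\mathrm{int}}(M,\nabla),\,\varphi_{\underline\Omega}\bigr)$ as a Fontaine complex; then \cite[Corollary 5.3.7]{Ogus} gives injectivity of $\mathbb H^m(\mathfrak X,F^i\Omega^*_{\mathrm{int}}(M,\nabla))\to\mathbb H^m(\mathfrak X,\Omega^*_{\mathrm{int}}(M,\nabla))$ for all $i,m$, which is exactly the asserted degeneration. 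Without this strong-divisibility input (Fontaine--Laffaille/Faltings style), degeneration does not follow from the mere existence of the comparison isomorphism, so you should replace your last sentence by this argument.
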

This theorem sheds light on the issue raised in \cite[Remark 14.19]{X}, and shows that \cite[Theorem 5.6]{Fa} may be further extended from the perspective of the theory of mixed Hodge modules. Note that when $\mathfrak{X}/W(k)$ admits a Frobenius lifting (or more generally, when $X$ can be embedded into a smooth formal scheme $\mathfrak{Y}/W(k)$ endowed with a Frobenius lifting), the corresponding comparison has been studied by Shiho \cite{Shi}, Ogus \cite{O24}, and Wang \cite{W}.

\subsubsection{$E_1$-degeneration and vanishing theorem in characteristic $0$}

Let $X_\mathbb{C}$ be a smooth projective variety over $\mathbb{C}$, and let $f: X_\mathbb{C} \to \mathbb{P}^1_\mathbb{C}$ be a semistable family. Let $D_\mathbb{C}$ be a simple normal crossing divisor on $X_\mathbb{C}$ which is the pullback of an effective divisor on $\mathbb{A}^1_\mathbb{C}$ containing $\infty$, such that the restriction $f|_{X_\mathbb{C} - D_\mathbb{C}}$ is smooth. Let $L$ be an ample line bundle on $X_\mathbb{C}$. Let $K^*_\mathrm{dR}\subset\Omega^*_{X_\mathbb C/\mathbb C}(\log D_\mathbb C)$ be the subcomplex constructed as above Corollary \ref{intro tau<p structure sheaf}.
\begin{theorem}
Notation and assumptions as above. 

$(1)$ For any $i\geq0$, we have
$$
\dim_\mathbb C\mathbb H^i(X_\mathbb C-D_\mathbb C,(K^\bullet_\mathrm{dR},d+df\wedge))\cong \dim_\mathbb C\mathbb H^i(X_\mathbb C-D_\mathbb C,(K^\bullet_\mathrm{dR},-df\wedge)).$$

$(2)$ For any $i>\dim X_\mathbb C$, we have
$$
H^j(X_\mathbb C,K^i_\mathrm{dR})=0,\quad i+j>\dim X_\mathbb C.
$$
\end{theorem}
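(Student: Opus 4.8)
The plan is to derive both statements by reduction to positive characteristic, with Corollary~\ref{intro tau<p structure sheaf} as the key input. First I would spread out: choose a finitely generated $\mathbb{Z}$-subalgebra $R\subset\mathbb{C}$, smooth over $\mathbb{Z}$ after inverting finitely many primes, together with a model $(\mathcal{X}_R,\mathcal{D}_R,f_R,L_R)$ of $(X_{\mathbb{C}},D_{\mathbb{C}},f,L)$ such that $\mathcal{X}_R/R$ is smooth projective, $\mathcal{D}_R$ is a relative simple normal crossing divisor, $f_R\colon(\mathcal{X}_R,\mathcal{D}_R)\to(\mathbb{P}^1_R,\Delta_R)$ is semistable with $\mathcal{D}_R$ the reduced preimage of $\Delta_R$, and $L_R$ is relatively ample; the subcomplex $K^*_{\mathrm{dR}}$, being manufactured out of $f_R$ and $\mathcal{D}_R$, spreads out to $K^*_{\mathrm{dR},R}\subset\Omega^*_{\mathcal{X}_R/R}(\log\mathcal{D}_R)$. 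For a closed point $s\in\mathrm{Spec}\,R$ with perfect residue field $k$ of characteristic $p$, the fibre $(X_k,D_k,f_k,L_k)$ inherits all these properties, admits a $W_2(k)$-lifting (lift $s$ to a $W_2(k)$-point of $\mathrm{Spec}\,R$), and for all but finitely many $p$ has $\dim X_k=\dim X_{\mathbb{C}}<p$. In this range the hypotheses above Corollary~\ref{intro tau<p structure sheaf} hold, and $\tau_{<p}$ acts as the identity on the complexes in play, which live in degrees $[0,\dim X_k]$, so Corollary~\ref{intro tau<p structure sheaf} yields an honest isomorphism
\[
F_*(K^\bullet_{\mathrm{dR},k},d+df_k\wedge)\;\cong\;(K^\bullet_{\mathrm{Hig},k},-df_k'\wedge)\qquad\text{in }D(X_k').
\]

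For part~(1), I would take hypercohomology of this isomorphism over $X_k'$. Since $F$ is affine the left side computes $\mathbb{H}^i(X_k,(K^\bullet_{\mathrm{dR},k},d+df_k\wedge))$, and since $k$ is perfect the abstract isomorphism $X_k'\cong X_k$ carries $f_k'$ and $K^\bullet_{\mathrm{Hig}}$ to $f_k$ and the analogous subcomplex, so the right side has the same $k$-dimension as $\mathbb{H}^i(X_k,(K^\bullet_{\mathrm{dR},k},-df_k\wedge))$. Both of the complexes of coherent sheaves $(K^\bullet_{\mathrm{dR}},d+df\wedge)$ and $(K^\bullet_{\mathrm{dR}},-df\wedge)$ spread out over $\mathrm{Spec}\,R$, so their relative hypercohomology sheaves are coherent and of generically constant rank; the equality of fibre dimensions at the dense set of good closed points then forces equality of the two generic ranks, i.e.\ $\dim_{\mathbb{C}}\mathbb{H}^i(X_{\mathbb{C}},(K^\bullet_{\mathrm{dR}},d+df\wedge))=\dim_{\mathbb{C}}\mathbb{H}^i(X_{\mathbb{C}},(K^\bullet_{\mathrm{dR}},-df\wedge))$ for every $i$. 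Finally, over $\mathbb{C}$ the restriction of $K^\bullet_{\mathrm{dR}}$ to the open part is $\Omega^\bullet_{X_{\mathbb{C}}-D_{\mathbb{C}}}$, and a standard logarithmic (Kontsevich-type) comparison identifies the hypercohomology of the good subcomplex on the compactification with the twisted de Rham, resp.\ twisted Higgs, hypercohomology of $X_{\mathbb{C}}-D_{\mathbb{C}}$ for both differentials; this gives statement~(1).

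For part~(2), I would instead use the untwisted de Rham--Higgs decomposition $F_*K^\bullet_{\mathrm{dR},k}\cong\bigoplus_i K^i_{\mathrm{Hig},k}[-i]$ in $D(X_k')$, which is the $\theta=0$, constant-$f$ specialization of the comparison underlying Corollary~\ref{intro tau<p structure sheaf} (equivalently, the known decomposability of the weight-filtration and Kontsevich subcomplexes; cf.\ \cite{SZ2}). Tensoring with the ample bundle $L_k'$ on $X_k'$ and applying the projection formula $F_*(K^\bullet_{\mathrm{dR},k}\otimes F^*L_k')\cong F_*K^\bullet_{\mathrm{dR},k}\otimes L_k'$ --- where $K^\bullet_{\mathrm{dR},k}\otimes F^*L_k'$ is formed with the canonical $p$-curvature-zero connection on $F^*L_k'$ --- gives $F_*(K^\bullet_{\mathrm{dR},k}\otimes F^*L_k')\cong\bigoplus_i(K^i_{\mathrm{Hig},k}\otimes L_k')[-i]$. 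Since $F^*L_k'\cong L_k^{\otimes p}$, relative Serre vanishing over $R$ (applied to the finitely many coherent sheaves $K^i_{\mathrm{dR},R}$) provides a bound $p_0$, depending only on $(X_{\mathbb{C}},L)$, so that $H^q(X_k,K^i_{\mathrm{dR},k}\otimes F^*L_k')=0$ for all $q>0$ and all $i$ whenever $p\geq p_0$; the stupid-filtration spectral sequence of $K^\bullet_{\mathrm{dR},k}\otimes F^*L_k'$ then collapses to give $\mathbb{H}^m(X_k,K^\bullet_{\mathrm{dR},k}\otimes F^*L_k')=0$ for $m>\dim X_k$. Taking $R\Gamma$ of the displayed decomposition, $\bigoplus_i H^{m-i}(X_k',K^i_{\mathrm{Hig},k}\otimes L_k')=\mathbb{H}^m(X_k,K^\bullet_{\mathrm{dR},k}\otimes F^*L_k')=0$ for $m>\dim X_k$, hence $H^j(X_k',K^i_{\mathrm{Hig},k}\otimes L_k')=0$ whenever $i+j>\dim X_k$. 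As $K^i_{\mathrm{Hig}}$ and $K^i_{\mathrm{dR}}$ coincide as coherent sheaves and $X_k'\cong X_k$, upper semicontinuity of $h^j(\,\cdot\,,K^i_{\mathrm{dR}}\otimes L)$ over $\mathrm{Spec}\,R$ --- whose vanishing locus is open and nonempty, containing a good closed point, hence the generic point --- propagates the vanishing to the $\mathbb{C}$-fibre, giving $H^j(X_{\mathbb{C}},K^i_{\mathrm{dR}}\otimes L)=0$ for $i+j>\dim X_{\mathbb{C}}$, which is statement~(2).

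The main obstacle is the spreading-out bookkeeping of the first paragraph: one must verify that semistability of $f_R$, the identity $\mathcal{D}_R=f_R^{-1}(\Delta_R)_{\mathrm{red}}$, and the normal-crossings condition all specialize to a dense set of closed fibres, and --- most importantly --- that the resulting $(X_k,D_k)/k$ with its divisor data is $W_2(k)$-liftable precisely in the sense demanded by Corollary~\ref{intro tau<p structure sheaf}, which one arranges, after shrinking $\mathrm{Spec}\,R$, by letting the total space itself furnish the lift. The only other delicate point is the characteristic-$0$ identification at the end of part~(1) between the hypercohomology of $K^\bullet_{\mathrm{dR}}$ on the compactification and the twisted cohomology of $X_{\mathbb{C}}-D_{\mathbb{C}}$, which I would quote from the logarithmic comparison theorems underlying the definition of these subcomplexes rather than reprove.
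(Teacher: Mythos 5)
Your route is exactly the one the paper gestures at (the paper gives no detailed proof of this theorem: it only says that the mod-$p$ proof of (1) reduces to Corollary~\ref{intro tau<p structure sheaf} and that of (2) to the decomposition theorem), and the bulk of your argument is sound: the spreading-out, the $W_2(k)$-lift furnished by the integral model, the remark that $\tau_{<p}$ is vacuous once $p>\dim X$, the transfer of dimension equalities and of vanishing through coherence and semicontinuity over $\mathrm{Spec}\,R$, and the Deligne--Illusie-style argument for (2) using $F^*L'\cong L^{\otimes p}$, uniform Serre vanishing and the untwisted comparison of Theorem~\ref{compatibility MHM} (you also correctly reinstated the $\otimes L$, which is plainly intended in view of the paper's appeal to Kodaira--Saito even though it is missing from the displayed formula).

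The genuine gap is the last step of your part (1). You invoke a ``standard logarithmic (Kontsevich-type) comparison'' identifying $\mathbb{H}^i(X_\mathbb{C},(K^\bullet_{\mathrm{dR}},d+df\wedge))$ and $\mathbb{H}^i(X_\mathbb{C},(K^\bullet_{\mathrm{Hig}},-df\wedge))$ with the twisted de Rham, resp.\ twisted Higgs, hypercohomology of $X_\mathbb{C}-D_\mathbb{C}$. On the Higgs side no such identification can exist: $f$ is smooth on $X_\mathbb{C}-D_\mathbb{C}$, so $df$ is a nowhere-vanishing $1$-form there, the Koszul complex $(\Omega^\bullet_{X_\mathbb{C}-D_\mathbb{C}},-df\wedge)$ is exact, and its hypercohomology vanishes identically; by contrast $\mathbb{H}^i(X_\mathbb{C},(K^\bullet,-df\wedge))$ is in general nonzero, since its cohomology sheaves are supported on $D_\mathbb{C}$ and record nearby/vanishing-cycle data (for the Kontsevich piece, \cite{ESY} identify these groups with the twisted de Rham cohomology of the open part, which is nonzero already for a nonisotrivial semistable elliptic surface). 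The same failure occurs for the weight-filtration pieces even on the de Rham side, since $W_j\Omega^\bullet(\log D)$ does not compute the cohomology of the complement. So your final conversion step would fail; what your mod-$p$ argument honestly proves is the equality of $\dim_\mathbb{C}\mathbb{H}^i(X_\mathbb{C},(K^\bullet_{\mathrm{dR}},d+df\wedge))$ and $\dim_\mathbb{C}\mathbb{H}^i(X_\mathbb{C},(K^\bullet_{\mathrm{dR}},-df\wedge))$ on the compactification, which is the ``special case of Sabbah'' the paper intends (Sabbah's theorem concerns complexes on the proper variety; the ``$X_\mathbb{C}-D_\mathbb{C}$'' in the paper's display cannot be taken literally, as the computation above shows the open-variety statement is false). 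The correct fix is simply to stop there and not attempt any comparison with the open part.
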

The first statement is a special case of the main theorem of Sabbah \cite{Sah1}, whose mod~$p$ proof reduces to Corollary~\ref{intro tau<p structure sheaf}. In the case where $D_\mathbb{C} = \emptyset$ and $K^*_{\mathrm{dR}} = \Omega^*_{X_\mathbb{C}/\mathbb{C}}$, the result was proved in \cite{OV}. The second statement is a special case of the Kodaira-Saito vanishing theorem \cite{Saito}, whose mod $p$ proof essentially reduces to the decomposition theorem in \cite[Appendix D]{ESY}.

\subsection{Organization}
In \S2, we fix notation, recall some basic notions, and introduce several complexes that will be considered throughout the paper, motivated by the theory of mixed Hodge modules. In \S3, we prove that Question~\ref{Question 1} holds in the formal neighborhood of any closed point of~$X'$. In \S4, we establish a relation between splittings of the logarithmic cotangent bundle $\Omega^1_{X/k}(\log D)$ and isomorphisms in $D(X')$ between suitable truncations of $\Omega^*(E,\theta)$ and $F_*\Omega^*(H,\nabla)$. We then show that these isomorphisms are independent of the choice of splittings. Furthermore, we verify that all constructions are compatible with the complexes introduced in \S2.6, under suitable technical conditions. Finally, we establish a de Rha–Higgs comparison under two-term truncation. In \S5, we present several applications, including the K\"unneth formula, the lifting of the constructions in \S4 mod $p^n$, and the $E_1$-degeneration and vanishing theorem.

\section{Preliminaries}

\subsection{Convention}
 Throughout this paper, let $p$ denote a positive prime number and let $k$ denote a perfect field of characteristic $p$. Denote by $X$ a smooth variety over $k$, $D$ a simple normal crossing divisor on $X$, and $(\tilde X,\tilde D)/W_2(k)$ a flat lifting of $(X,D)/k$. Let $\sigma:\mathrm{Spec}(k)\to\mathrm{Spec}(k)$ be the morphism induced by the Frobenius endomorphism of $k$. For any open subset \( U \subset X \), denote by \( \widetilde{U} \) the corresponding open subset of \( \widetilde{X} \) whose reduction modulo \( p \) is \( U \). Set
\[
X' := X \times_{\mathrm{Spec}(k), \sigma} \mathrm{Spec}(k), \quad 
\widetilde{X}' := \widetilde{X} \times_{\mathrm{Spec}\, W_2(k), \sigma} \mathrm{Spec}\, W_2(k).
\]
More generally, if \( \square \) is an object over \( k \), we write
\[
\square' := \square \times_{\mathrm{Spec}(k), \sigma} \mathrm{Spec}(k),
\]
and if \( \square \) is an object over \( W_2(k) \), we set
\[
\square' := \square \times_{\mathrm{Spec}\, W_2(k), \sigma} \mathrm{Spec}\, W_2(k).
\]

 We use the subscript $*$ to denote a complex and the superscript $\bullet$ to denote a graded module. That is, $\mathcal{F}^*$ refers to a complex, while $\mathcal{F}^\bullet$ denotes a graded module of the form $\bigoplus_{i = -\infty}^{\infty} \mathcal{F}^i$.

\subsection{Truncation}

Let \( \mathcal{F}^* \) be a complex of \( \mathcal{O}_X \)-modules on \( X \), and let \( a < b \) be integers.  
We denote the truncation \( \tau_{<b} \mathcal{F}^* \) by
\[
\cdots \longrightarrow \mathcal{F}^{b-3} \longrightarrow \mathcal{F}^{b-2} \longrightarrow \ker\left( \mathcal{F}^{b-1} \to \mathcal{F}^b \right) \longrightarrow 0,
\]
and the two-sided truncation \( \tau_{[a,b]} \mathcal{F}^* \) by
\[
0 \longrightarrow \frac{\mathcal{F}^a}{\mathrm{im}(\mathcal{F}^{a-1} \to \mathcal{F}^a)} 
\longrightarrow \mathcal{F}^{a+1} \longrightarrow \cdots \longrightarrow \mathcal{F}^{b-1} 
\longrightarrow \ker(\mathcal{F}^b \to \mathcal{F}^{b+1}) \longrightarrow 0.
\]

\subsection{$\lambda$-connections}
Let $S$ be a Noetherian scheme, and let $Y$ be a smooth $S$-scheme of pure relative dimension $d$ equipped with a simple normal crossing divisor $D_Y$ relative to $S$. Let $\lambda \in \Gamma(S, \mathcal{O}_S)$, and let $M$ be an $\mathcal{O}_Y$-module.
A \emph{$\lambda$-connection} on $M$ with pole along $D_Y$ is an $\mathcal{O}_S$-linear morphism
\[
\nabla \colon M \to M \otimes_{\mathcal{O}_Y} \Omega^1_{Y/S}(\log D_Y)
\]
satisfying the $\lambda$-Leibniz rule:
\[
\nabla(fm) = f \nabla(m) + m \otimes \lambda \, df, \quad \text{for all } f \in \mathcal{O}_Y,\, m \in M.
\]
This $\lambda$-connection associates to a $\lambda$-complex
$$
\Omega^*(M,\nabla):=[M\xrightarrow{\nabla}M\otimes\Omega^1_{Y/S}(\log D_Y)\xrightarrow{\nabla}\cdots\xrightarrow{\nabla}M\otimes\Omega^d_{Y/S}(\log D_Y)].
$$
It is customary to refer to $1$-complexes as de Rham complexes and to $0$-complexes as Higgs complexes.

We simply refer to the pair $(M, \nabla)$ as a $\lambda$-connection on $(Y, D_Y)/S$, and denote the corresponding category by $\lambda\mathrm{MIC}((Y, D_Y)/S)$. In accordance with standard convention, we denote the category corresponding to $\lambda = 0$ by $\mathrm{HIG}((Y, D_Y)/S)$, whose objects are called Higgs sheaves.

In this paper, we are interested in three special cases of $\lambda$-connections: namely, $1$-connections (i.e., integrable connections), $0$-connections (i.e., Higgs fields), and $p$-connections in the case where $S = \mathrm{Spec}\, W(k)$.

\subsection{Nilpotence}

 We say a Higgs sheaf $(E,\theta)$ on $(X,D)/k$ is nilpotent of level $\leq\ell$ if
$$
\prod_{i=1}^{\ell+1}(\mathrm{id}\otimes\partial_i)\theta=0\in\mathcal Hom_{\mathcal O_X}(E),~\partial_1,\cdots,\partial_{\ell+1}\in T_{X/k}(-\log D).
$$
Denote by $\mathrm{HIG}_\ell((X,D)/k)$ the category of all such nilpotent Higgs sheaves. 

\subsection{Inverse Cartier transform}
Let $C^{-1}_{(\tilde X,\tilde D)/W_2(k)}$ be the inverse Cartier transform associated to $(\tilde X,\tilde D)/W_2(k)$, which is an equivalence of categories from $\mathrm{HIG}_\ell((X,D)/k)$ to $\mathrm{MIC}_\ell((X,D)/k)$. For more details, see \cite{OV, Schepler, LSYZ, LSZ}.

For simplicity, we abbreviate \( C^{-1}_{(\widetilde{X}, \widetilde{D})/W_2(k)} \) by \( C^{-1} \). There are two constructions of the inverse Cartier transform \( C^{-1} \): one due to \cite{OV, Schepler}, and another given in \cite{Fa, LSYZ, LSZ}. In this paper, we adopt the latter construction, which we briefly recall below.

Let $\{U_i\}$ be an open affine covering $X$, and let $\tilde F_i:\tilde U_i\to\tilde U'_i$ be a Frobenius lifting of the relative Frobenius $F: U_i\to U_i'$, such that $\tilde F_i^*(\tilde D'|_{\tilde U_i'})=p\tilde D|_{\tilde U_i}$. These data give rise to the following two families of morphisms:
$$
\zeta_i=\frac{d\tilde F^*_i}{p}:F^*\Omega^1_{U'_i/k}\to \Omega^1_{U_i/k},\quad h_{ij}=\frac{\tilde F^*_j-\tilde F^*_i}{p}:F^*\Omega^1_{U'_i\cap U'_j/k}\to\Omega^1_{U_i\cap U_j/k}.
$$

Given any $(E,\theta)\in\mathrm{HIG}_{p-1}((X',D')/k)$. Then $(H,\nabla)$ is obtained by gluing the local integrable connections
$$
(H_i:=(F^*E)|_{U_i},(\mathrm{id}\otimes\zeta_{\tilde F_i})F^*\theta)
$$
via the transition isomorphisms
$$
G_{ij}=\mathrm{exp}((\mathrm{id}\otimes h_{ij})F^*\theta):H_j|_{U_i\cap U_j}\to H_i|_{U_i\cap U_j}.$$

\subsection{Hodge pairs of complexes in positive characteristic}

Let $K^*_\mathrm{Hig}$ be a complex of $\mathcal O_{X'}$-modules on $X'$, and let
$K^*_\mathrm{dR}$ be a complex of sheaves of $k$-vector spaces on $X$. We say the pair  $(K^*_\mathrm{Hig},K^*_\mathrm{dR})$ is a \emph{Hodge pair of complexes on $(X,D)/k$}, if it belongs to one of the following situations.

\emph{Hodge pair of complexes of type $(\mathrm{\uppercase\expandafter{\romannumeral1}},\ell)$}.
Given any $0\leq\ell\leq p-1$. Let $(E,\theta)\in\mathrm{HIG}_\ell((X,D)/k)$ and let $(H,\nabla)$ be its inverse Cartier transform.  Set either 
$$
K^*_{\mathrm{Hig}}:=\Omega^*(E,\theta),\quad
K^*_{\mathrm{dR}}:=\Omega^*(H,\nabla) $$
or
$$
K^*_{\mathrm{Hig}}:=W_i\Omega^*(E,\theta),\quad
K^*_{\mathrm{dR}}:=W_i\Omega^*(H,\nabla).
$$
In the later case, we additionally suppose that $(E,\theta)$ and $(H,\nabla)$ have no pole along $D$, namely
    $$
    \nabla(H)\subset H\otimes\Omega^1_{X/k},~\theta(E)\subset E\otimes\Omega^1_{X/k}.
    $$
 The notation $W_i$ means the $i$-th weight filtration (see e.g. \cite[Definition 8.3.19]{CEGT}), which is given by
 $$
 \begin{array}{c}
 W_i\Omega^q(E,\theta):=\left\{\begin{matrix}E\otimes\mathrm{im}(\Omega_{X/k}^i(\log D)\otimes\Omega^{q-i}_{X/k}\to\Omega_{X/k}^q(\log D)),&i\leq q,\\
E\otimes\Omega_{X/k}^q(\log D),&i>q
 \end{matrix}\right.\\
 \end{array}
 $$
 and
 $$
 \begin{array}{c}
 W_i\Omega^q(H,\nabla):=\left\{\begin{matrix}H\otimes\mathrm{im}(\Omega_{X/k}^i(\log D)\otimes\Omega^{q-i}_{X/k}\to\Omega_{X/k}^q(\log D)),&i\leq q,\\
H\otimes\Omega_{X/k}^q(\log D),&i>q.
 \end{matrix}\right.\\
 \end{array} $$

\emph{Hodge pair of complexes of type $(\mathrm{\uppercase\expandafter{\romannumeral2}},\ell)$}. Given any $0\leq\ell\leq p-1$. Let $(E,\theta)\in\mathrm{HIG}_\ell((X,D)/k)$ and let $(H,\nabla)$ be its inverse Cartier transform. Set
$$
K^*_{\mathrm{Hig}}:=\Omega_\mathrm{int}^*(E,\theta),\quad
K^*_{\mathrm{dR}}:=\Omega_\mathrm{int}^*(H,\nabla),
$$ 
Here, the subscript “int” refers to the intersection subcomplexes (see \cite[Definition 8.3.31]{CEGT} and \cite[\S2]{SZ2}). For completeness, we briefly recall the definition below. Without loss of generality, we may assume that $X/k$ admits a system of coordinates 
Let $t_1,\cdots,t_n$ and that $D:=(t_1\cdots t_n=0)$. Then
 $$
 \Omega^\bullet_\mathrm{int}(H,\nabla):=\bigoplus_{I\subset\{1,\cdots,n\}}(\sum_{J\subset I}t_J\nabla_{I-J}H)\otimes d_I\log t.
 $$
 Here $t_J:=\prod_{j\in J}t_j$ for $J\neq\emptyset$ and $t_\emptyset:=1$,  $\nabla_I:=\prod_{i\in I}\nabla_i$ for $I\neq\emptyset$ and $\nabla_\emptyset:=id$ ($\nabla=\sum_{i=0}^n\nabla_i\otimes d\log t_i$), and $d_I\log t:=d\log t_{i_1}\cdots d\log t_{i_r}$ for $I=\{i_1,\cdots,i_r\},i_1<\cdots<i_r$ and $d_\emptyset\log t:=1$. Similarly, one can define the intersection de Rham subcomplex $\Omega^*_\mathrm{int}(E,\theta)\subset\Omega^*(E,\theta)$.

\emph{Hodge pair of complexes of type $(\mathrm{\uppercase\expandafter{\romannumeral3}},\ell)$}. Given any $0\leq\ell\leq p-1$. Let $(E,\theta)\in\mathrm{HIG}_\ell((X,D)/k)$ and let $(H,\nabla)$ be its inverse Cartier transform.  Let $\tilde g:(\tilde X,\tilde D)\to(\mathbb A_{W_2(k)}^1,\tilde \Delta)$ be a semistble family over $W_2(k)$, which means that $\tilde g^*\tilde\Delta=\tilde D$ and $\tilde g$ is smooth outside $\tilde D$. Suppose that $\tilde\Delta$ containing $0$. Let $g:(X,D)\to(\mathbb A^1_k,\Delta)$ be the mod $p$ reduction of $\tilde g$. Set
$$
K^*_{\mathrm{Hig}}:=\Omega_{g'^{-1}}^*(E,\theta),\quad
K^*_{\mathrm{dR}}:=\Omega_{g^{-1}}^*(H,\nabla),
$$
which are Kontsevich subcomplexes of $\Omega^*(E,\theta)$ and $\Omega^*(H,\nabla)$, respectively. See \cite[Definition 2.11]{KKP} and \cite[\S1.3]{ESY} for more details. To facilitate later arguments, we give a more explicit description of the Kontsevich subcomplexes introduced above.
 Suppose that $X/k$ admits a system of coordinates $t_1,\cdots,t_n$ such that $g=t_1\cdots t_n$ and $D=(g=0)$. Then
$$
\Omega^i_{g^{-1}}(H,\nabla):=H\otimes\Omega^i_{g^{-1}},
$$
where
$$\Omega^i_{g^{-1}}:=\bigoplus_{I\subset\{2,\cdots,n\},|I|=i-1}\mathcal O_Xd\log g\wedge d_I\log t\oplus\bigoplus_{I\subset\{1,\cdots,n\},|I|=i}g\mathcal O_Xd_I\log t\subset\Omega^1_{X/k}(\log D).
$$
Similarly, one can define $\Omega_{g'^{-1}}^i(E,\theta)$.

\emph{Hodge pair of complexes of type $(\mathrm{\uppercase\expandafter{\romannumeral4}},0)$}. The last situation is a special case combining the first and the preceding ones. Let $\tilde{g}: (\tilde{X},\tilde D) \to (\mathbb{A}^1_{W_2(k)},\tilde\Delta)$ be a semistable family over $W_2(k)$ such that $\tilde\Delta=\sum_{i=1}^m[\tilde \lambda_i],~\tilde\lambda_i\in W_2(k)$. Let $g,\Delta,\lambda_i$ be the mod reductions of $\tilde g,\tilde\Delta,\tilde\lambda_i$, respectively. 

Set $K^*_{\mathrm{dR}} \subset \Omega^*_{X/k}(\log D)$ to be the subcomplex defined as follows: on $X-D$, define $K^*_{\mathrm{dR}} := \Omega^*_{X/k}$; around a fiber $g^{-1}(\lambda_i)$, define $K^*_{\mathrm{dR}} := W_j\Omega^*_{X/k}(\log D)$ or $\Omega^*_{(g - \lambda_i)^{-1}}$. Define $K^*_{\mathrm{Hig}}$ similarly.

\subsection{Exponential twisting of the Hodge pairs}
Inspired by Sabbah's work on twisted de Rham complexes (see \cite{Sah1}), we consider the exponential twisting of the Hodge pairs of complexes introduced in the previous subsection. Let $f\in\Gamma(X,\mathcal O_X)$ and let $(K^*_\mathrm{Hig},K^*_\mathrm{dR})$ be a Hodge pair of complexes. Define the $f$-exponential twisting of $(K^*_\mathrm{Hig},K^*_\mathrm{dR})$ by
$$
(e^{f'}K^*_\mathrm{Hig}:=(K^\bullet_\mathrm{Hig},\theta-df'\wedge),e^fK^*_\mathrm{dR}:=(K^\bullet_\mathrm{dR},\nabla+df\wedge)).
$$

To facilitate later arguments, we need the following observations:
$$
\begin{array}{cc}
     e^{f'}W_i\Omega^*(E,\theta)=W_i\Omega^*(E,\theta-df'\wedge),
     & e^fW_i\Omega^*(H,\nabla)=W_i\Omega^*(H,\nabla+df\wedge),\\
e^{f'}\Omega^*_\mathrm{int}(E,\theta)=\Omega^*_\mathrm{int}(E,\theta-df'\wedge),&e^f\Omega^*_\mathrm{int}(H,\nabla)=\Omega^*_\mathrm{int}(H,\nabla+df\wedge), \\
e^{f'}\Omega^*_{g'^{-1}}(E,\theta)=\Omega^*_{g'^{-1}}(E,\theta-df'\wedge),&e^f\Omega^*_{g^{-1}}(H,\nabla)=\Omega^*_{g^{-1}}(H,\nabla+df\wedge). 

\end{array}
$$

\section{De Rham-Higgs comparison ($df\neq0$)}

Let $k$ be a perfect field of characteristic $p$, let $X$ be a smooth variety of dimension $n$ over $k$ which endowed with a simple normal crossing divisor $D$, and let $f,g\in\Gamma(X,\mathcal O_X)$ such that $(g=0)$ is a subdivisor of $D$. We introduce the following assumptions:
\begin{itemize}
    \item[(S1)\ ] $X$ admits a system of coordinates $t_1,\cdots,t_n$ such that $D=(t_1\cdots t_r=0)$ and $g=t_1\cdots t_s$;
    \item[(S2)\ ] there exist $W_2(k)$-liftings $\tilde X,\tilde t_i,\tilde g$ of $X,t_i,g$, respectively and a Frobenius lifting $\tilde F:\tilde X\to \tilde X$ such that $\tilde F^*(\tilde t_i)=\tilde t_i^p$ and $\tilde g=\tilde t_1\cdots\tilde t_s$. 
\end{itemize}
Under the first assumption, we have a natural identification
$$
F_*\mathcal O_X=\mathcal O_X\{t_1^{i_1}\cdots t_n^{i_n}:0\leq i_1,\cdots,i_n<p\}.
$$
It follows that 
\begin{eqnarray}\label{decom f}
f=\sum_{0\leq i_1,\cdots,i_n<p}f_{i_1,\cdots,i_n},~f_{i_1,\cdots,i_n}=h_{i_1,\cdots,i_n}^pt_1^{i_1}\cdots t_n^{i_n},~h_{i_1,\cdots,i_n}\in\Gamma(X,\mathcal O_X).
\end{eqnarray}
Without loss of generality, we may assume that $f_{0,\cdots,0}=0$. Let $Z$ be the closed subscheme of $X$ defined by all $f_{i_1,\cdots,i_n}$. Let $(E,\theta)$ be a Higgs bundle of nilpotent level $\leq p-1$ on $(X,D)/k$ and set $(H,\nabla):=C^{-1}_{\tilde F}(E,\theta)=(F^*E,\nabla_{can}+(id\otimes\zeta_{\tilde F})F^*\theta)$. Set 
\begin{eqnarray}\label{MHM complex}
K^*_{dR}:=\star(H,\nabla),~K^*_{Hig}:=\star(E,\theta),~\star=\Omega^*,~\Omega^*_{int},~\Omega^*_{g^{-1}},~W_i\Omega^*.
\end{eqnarray}

The main purpose of this section is to verify that the slogan in the introduction is true after restricting to the formal neighborhood of $X$ along $Z$. More precisely, we have
\begin{theorem}\label{main thm s2}
With the notation and assumptions above. Set $\mathfrak X:=X_{/Z}$. Then there is a quasi-isomorphism of complexes
\begin{eqnarray}\label{Cartier quasi}
\varphi_{\tilde F}:(K^\bullet_{Hig},\theta-df\wedge)\otimes\mathcal O_{\mathfrak X}\to F_*(K^\bullet_{dR},\nabla+df\wedge)\otimes\mathcal O_{\mathfrak X}.
\end{eqnarray}
\end{theorem}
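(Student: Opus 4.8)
The plan is to reduce the twisted statement to the \emph{untwisted} local de Rham--Higgs comparison by means of an analogue of Ogus's $\alpha$-transform (see \cite{O24}). Note first that, by \eqref{decom f}, $f=\sum_{\underline i\neq 0}h^p_{\underline i}t^{\underline i}$, so each $f_{\underline i}$ --- hence $f$ --- lies in the ideal of $Z$ and is therefore topologically nilpotent on $\mathfrak X$; securing this topological nilpotence is exactly what the passage to the formal completion along $Z$ is for. Working over $\mathcal O_{\mathfrak X}$ and writing $\widehat E:=E\otimes\mathcal O_{\mathfrak X}$, $\widehat H:=H\otimes\mathcal O_{\mathfrak X}$, $\widehat\theta$, $\widehat\nabla$ for the induced data, the first --- and central --- step is to produce a nilpotent Higgs field $\Theta$ of level $\leq p-1$ on $\widehat E$, a twisted variant of the $\alpha$-transform of $\widehat\theta-df\wedge$, with two features. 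First, for each type $\star\in\{\Omega^*,\Omega^*_{int},\Omega^*_{g^{-1}},W_i\Omega^*\}$ there is an isomorphism of complexes $\iota_\star\colon\star(\widehat E,\widehat\theta-df\wedge)\xrightarrow{\ \sim\ }\star(\widehat E,\Theta)$, given by the exponential of a nilpotent operator built out of $\widehat\theta$ and the $f_{\underline i}$'s; since the $f_{\underline i}$ are $p$-th powers times monomials in $t_1,\dots,t_n$ and $\widehat\theta$ is $\mathcal O$-linear, this exponential visibly preserves the intersection, Kontsevich and weight subcomplexes, by the coordinate descriptions of \S2.6, the twisting identities of \S2.7 and hypotheses (S1)--(S2). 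Second, $C^{-1}_{\tilde F}(\widehat E,\Theta)\cong(\widehat H,\widehat\nabla+df\wedge)$ as bundles with connection on $\mathfrak X$, which one checks by expanding $(\mathrm{id}\otimes\zeta_{\tilde F})F^*\Theta$ using the identity $\zeta_{\tilde F}(F^*d\log t_j)=d\log t_j$ and verifying that the result agrees with $(\mathrm{id}\otimes\zeta_{\tilde F})F^*\widehat\theta+df\wedge$ up to an automorphism of $\widehat H$.

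Granting $\Theta$, the second step is the untwisted comparison applied to $(\widehat E,\Theta)$. Since the Frobenius lifting $\tilde F$ is globally defined on $\widetilde{\mathfrak X}$ there is no \v{C}ech gluing to perform, and the generalized Deligne--Illusie construction --- in the forms of \cite[Remarques 2.2(ii)]{DI}, \cite[Theorem 2.26(2)]{OV}, \cite[Theorem 5.2(ii)]{Schepler} and \cite[Proposition 3.8]{SZ2} --- furnishes a quasi-isomorphism of complexes $\varphi^{\tilde F}_\Theta\colon\star(\widehat E,\Theta)\to F_*\,\star\!\big(C^{-1}_{\tilde F}(\widehat E,\Theta),\nabla\big)$ which, following \cite[\S3]{SZ2} essentially verbatim, is compatible with the four subcomplex types; here the hypothesis $\tilde F^*\tilde g=\tilde g^p$ from (S2) is precisely what makes it respect the Kontsevich complexes. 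Setting $\varphi_{\tilde F}:=\varphi^{\tilde F}_\Theta\circ\iota_\star$ and identifying the target with $F_*(K^\bullet_{dR},\nabla+df\wedge)\otimes\mathcal O_{\mathfrak X}$ via the second feature of $\Theta$ gives the morphism \eqref{Cartier quasi}; it is a quasi-isomorphism because $\iota_\star$ is an isomorphism of complexes and $\varphi^{\tilde F}_\Theta$ is a quasi-isomorphism.

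The main obstacle is the construction of $\Theta$ in the first step, together with its two features. The naive guess does not work: applying $C^{-1}_{\tilde F}$ directly to $(\widehat E,\widehat\theta-df\wedge)$ produces the de Rham complex of $(\widehat H,\widehat\nabla)$ perturbed not by $df$ but by the closed $1$-form $\zeta_{\tilde F}(F^*df)$, i.e.\ by the Cartier lift of $df$ determined by $\tilde F$, and the difference $\zeta_{\tilde F}(F^*df)-df$ is closed but generally not exact; so the correct twist $df\wedge$ must be manufactured by a correction, and the only device available for this is the nilpotent Higgs field $\widehat\theta$ itself --- this is the point of the $\alpha$-transform. It is here that the two nilpotence hypotheses are both indispensable: the topological nilpotence of the $f_{\underline i}$ on $\mathfrak X$ makes the exponential defining $\Theta$ and $\iota_\star$ converge, while the level bound on $\widehat\theta$ is what keeps $\Theta$ of nilpotent level $\leq p-1$, so that $C^{-1}_{\tilde F}$ applies. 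The remaining point --- compatibility of $\iota_\star$ and $\varphi^{\tilde F}_\Theta$ with the intersection, Kontsevich and weight subcomplexes --- is more routine and follows the pattern of \cite[\S2--3]{SZ2}, via the explicit coordinate formulas recalled in \S2.6.
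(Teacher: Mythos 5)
Your two-step strategy is exactly the one the paper uses: replace $\hat\theta-df\wedge$ over $\mathfrak X$ by the corrected field $\Theta=\hat\theta-\sum_{\underline i}\sum_{j\ge 0}f_{\underline i}^{p^j-1}df_{\underline i}\wedge$, identify $C^{-1}_{\tilde F}(\hat E,\Theta)$ with $(\hat H,\hat\nabla+df\wedge)$ by multiplication with the Artin--Hasse exponential $\prod_{\underline i}\mathrm{AH}(f_{\underline i})$, identify the $\star$-complexes of $\hat\theta-df\wedge$ and $\Theta$ by units of the form $1+(\text{topologically nilpotent})$, and then run the untwisted local Cartier quasi-isomorphism for $(\hat E,\Theta)$. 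Two of your claims, however, need repair. First, $\Theta$ is \emph{not} nilpotent of level $\le p-1$: its components are $\hat\theta_i$ shifted by the topologically nilpotent scalar $\sum_{q}\sum_{j}(t_q\partial_q f)^{p^j}$, and a $p$-fold product then produces $s^p\neq 0$; it is only topologically nilpotent. This is harmless for applying $C^{-1}_{\tilde F}$, since with a fixed Frobenius lifting the paper's local inverse Cartier transform is defined for an arbitrary Higgs field, but it does mean that your appeal to the cited local results of Deligne--Illusie, Ogus--Vologodsky, Schepler and Sheng--Zhang (all stated for nilpotent Higgs modules on schemes) is not literally available in the formal, topologically nilpotent setting; the paper instead reproves the untwisted comparison there, by decomposing $F_*\star\,C^{-1}_{\tilde F}(\hat E,\Theta)$ into weight pieces $K^*_v$ indexed by $v\in\mathbb F_p^n$, showing the $v\ne 0$ pieces are acyclic (this is where topological nilpotence of the $\Theta_i$ enters), and, for the weight-filtration type, running an extra residue/$5$-lemma induction that is not in \cite{SZ2}.

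Second, and this is the genuine gap: the compatibility of your isomorphism $\iota_\star$ with the Kontsevich subcomplex is not ``visible.'' The paper notes explicitly that the isomorphism built from the units $\vartheta_i$ in the coordinates $t_1,\dots,t_n$ does \emph{not} in general restrict to an isomorphism of $\Omega^*_{g^{-1}}$-complexes, and it must construct a second isomorphism $\psi'$ adapted to $g$, working in the basis $d\log g, d\log t_2,\dots,d\log t_r, dt_{r+1},\dots$ with modified units $\vartheta_i'$ built from $(t_i\partial_i-t_1\partial_1)f$ for $2\le i\le s$. So the assertion that the Step 1 correction preserves all four subcomplex types ``by the coordinate descriptions of \S2.6'' fails as stated for $K^*_{\mathrm{Hig}}=\Omega^*_{g^{-1}}(E,\theta)$; the fix is the same device the paper uses, but it is a construction your proposal is missing rather than a routine verification.
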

We call this kind of quasi-isomorphism \emph{the Cartier quasi-isomorphism}. For $(E,\theta)=(\mathcal O_X,0), f=0,\star=\Omega^*$, it recovers the classical Cartier quasi-isomorphism. For a general $(E,\theta)$ and $f=0,\star=\Omega^*$, Ogus-Vologodsky\cite{OV} and Schepler\cite{Schepler} established an isomorphism between $F_*\Omega^*(H,\nabla)$ and $\Omega^*(E,\theta)$ in the derived category $D(X)$. For a general $(E,\theta)$ and $f=0,\star=\Omega^*_{int}$, it was constructed by Sheng and the author\cite{SZ2}. For $(E,\theta)=(\mathcal O_X,0),D=\emptyset,\star=\Omega^*$ and some special $f$ ($df\neq0$), Ogus-Vologodsky\cite{OV} again and Arinkin-C\u{a}ld\u{a}raru-Hablicsek\cite{ACH} respectively constructed isomorphisms between $F_*\Omega^*(\mathcal O_X,d+df\wedge)$ and $\Omega^*(\mathcal O_X,-df\wedge)$ in the derived category $D(X)$.

 The theorem above has two immediate corollaries. If we take $f=0$, then $Z=X$. It follows that
\begin{corollary}
The Frobenius lifting $\tilde F$ above gives rise to a quasi-isomorphism of complexes
$$
\varphi_{\tilde F}:K_{Hig}^*\to F_*K^*_{dR}
$$
\end{corollary}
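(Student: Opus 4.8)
The plan is to obtain the corollary as the specialization of Theorem~\ref{main thm s2} to the case $f=0$, and then to check that in this case the statement of that theorem reduces verbatim to the assertion of the corollary. Concretely, when $f=0$ every coefficient $f_{i_1,\cdots,i_n}$ in the decomposition \eqref{decom f} vanishes, so the closed subscheme $Z\subset X$ cut out by these coefficients is all of $X$; hence $\mathfrak X:=X_{/Z}=X_{/X}=X$ (the ideal of definition being $0$), and the functor $-\otimes_{\mathcal O_X}\mathcal O_{\mathfrak X}$ is the identity on complexes of $\mathcal O_X$-modules. Moreover $df=0$, so the exponentially twisted differentials collapse: $\nabla+df\wedge=\nabla$ and $\theta-df'\wedge=\theta$, whence $(K^\bullet_{dR},\nabla+df\wedge)=K^*_{dR}$ and $(K^\bullet_{Hig},\theta-df'\wedge)=K^*_{Hig}$ for each choice of $\star$ in \eqref{MHM complex}. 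Substituting these identifications into the Cartier quasi-isomorphism \eqref{Cartier quasi} produces exactly a quasi-isomorphism of complexes $\varphi_{\tilde F}\colon K^*_{Hig}\to F_*K^*_{dR}$, which is the claim.

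Thus the only thing to verify beyond quoting Theorem~\ref{main thm s2} is the harmless bookkeeping above — that completion along the whole space changes nothing, and that setting $df=0$ recovers the untwisted complexes — so there is no separate obstacle at the level of the corollary. For orientation I record how I expect Theorem~\ref{main thm s2} itself to be established, since that is where the work lies. Following the strategy of Deligne--Illusie~\cite{DI} and its extension in~\cite{SZ2}, one uses the preferred Frobenius lifting $\tilde F$ with $\tilde F^*\tilde t_i=\tilde t_i^p$ to present $(H,\nabla)=(F^*E,\nabla_{\mathrm{can}}+(\mathrm{id}\otimes\zeta_{\tilde F})F^*\theta)$, and defines $\varphi_{\tilde F}$ degreewise on $E\otimes\Omega^\bullet$, sending a local generator $e\otimes d_I\log t$ to its evident $F_*$-image corrected by terms built from $\theta$ and, in the twisted case, from the pieces $f_{i_1,\cdots,i_n}$ of $f$; passing to the formal neighborhood of $Z$ is precisely what makes these correction terms manageable. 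One then checks that $\varphi_{\tilde F}$ commutes with the differentials, that it is a quasi-isomorphism by reduction to the classical Cartier isomorphism, and that it carries $\Omega^*_{int}$, $\Omega^*_{g^{-1}}$ and $W_i\Omega^*$ into their counterparts by an explicit computation in the coordinates $t_1,\cdots,t_n$.

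Within that argument the genuine obstacle is the interaction of the $df$-twist with the inverse Cartier transform: as noted in the introduction, $C^{-1}$ applied naively to $(\hat E,\hat\theta-df'\wedge)$ does not return $(\hat H,\hat\nabla+df\wedge)$, so one must produce a modified Higgs field $\Theta$ on $\hat E$ — a variant of the $\alpha$-transform of $\hat\theta-df'\wedge$ — whose inverse Cartier transform is $(\hat H,\hat\nabla+df\wedge)$ and whose Higgs complex is still isomorphic to that of $(\hat E,\hat\theta-df'\wedge)$, and then verify that this isomorphism is compatible with the subcomplexes in \eqref{MHM complex}. None of this intervenes in the corollary: once $f=0$ one has $\Theta=\theta$, and the construction degenerates to the (by now standard) Cartier quasi-isomorphism for $(H,\nabla)=C^{-1}_{\tilde F}(E,\theta)$ together with its intersection, Kontsevich and weight subcomplexes.
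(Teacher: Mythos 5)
Your proof is correct and is exactly the paper's argument: the paper derives the corollary by setting $f=0$ in Theorem~\ref{main thm s2}, noting that then $Z=X$, so completion along $Z$ is trivial and the twisted differentials reduce to $\theta$ and $\nabla$, which is precisely your bookkeeping. The additional sketch you give of how Theorem~\ref{main thm s2} itself is proved is not needed for the corollary but is consistent with the paper's two-step strategy (deforming to the modified Higgs field $\Theta$ and then decomposing the pushforward).
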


Keep the data $X/k,D,f,g,(E,\theta)$ above, but we do not require the assumptions (S1) and (S2). Instead, we only assume that there exist $W_2(k)$-liftings $\tilde X,\tilde D,\tilde g$ of $X,D,g$, respectively such that $(\tilde g=0)$ is a subdivisor of $\tilde D$. Set $(H,\nabla):=C^{-1}_{(\tilde X,\tilde D)/W_2(k)}(E,\theta)$ and construct $K^*_{dR},K^*_{Hig}$ as \eqref{MHM complex}.
 \begin{corollary}\label{corollary 2}
The supports of $\mathcal H^i(K^\bullet_{dR},\nabla+df\wedge)$ and $\mathcal H^i(K^\bullet_{Hig},\theta-df\wedge)$ coincide. In particular, if the support of $(K^\bullet_{Hig},\theta-df\wedge)$ is a finite set, then 
$$
F_*(K^\bullet_{dR},\nabla+df\wedge)\cong(K^\bullet_{Hig},\theta-df\wedge)~\mathrm{in}~D(X).
$$     
\end{corollary}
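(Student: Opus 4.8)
The plan is to reduce everything to the formal neighbourhood of a closed point, where the comparison is already available, and then to feed that into two routine descent arguments. First I would observe that both complexes $(K^\bullet_{\mathrm{dR}},\nabla+df\wedge)$ and $(K^\bullet_{\mathrm{Hig}},\theta-df'\wedge)$, after applying $F_*$, have $\mathcal O_{X'}$-linear differentials: $\nabla$ and $df\wedge$ commute with multiplication by local sections in the image of $F^\#\colon\mathcal O_{X'}\to\mathcal O_X$ because the exterior derivatives of such sections vanish, and $F$ is finite, so all cohomology sheaves in question are coherent $\mathcal O_{X'}$-modules. Since $X'$ is of finite type over a field, a closed subset is determined by its closed points, so it suffices to show that for each closed point $x'\in X'$ the stalks of $\mathcal H^j(F_*(K^\bullet_{\mathrm{dR}},\nabla+df\wedge))$ and of $\mathcal H^j(K^\bullet_{\mathrm{Hig}},\theta-df'\wedge)$ at $x'$ vanish simultaneously; because $\hat{\mathcal O}_{X',x'}$ is faithfully flat over $\mathcal O_{X',x'}$ and $-\otimes\hat{\mathcal O}_{X',x'}$ commutes with cohomology, this reduces to comparing the $\mathcal H^j$ of the two complexes after $\otimes\,\hat{\mathcal O}_{X',x'}$.

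That comparison is exactly the content of Theorem~\ref{de Rham-Higgs local}, which supplies, for every closed point $x\in X$, an isomorphism of complexes $F_*(K^\bullet_{\mathrm{dR}},\nabla+df\wedge)\otimes\hat{\mathcal O}_{X',x'}\cong(K^\bullet_{\mathrm{Hig}},\theta-df'\wedge)\otimes\hat{\mathcal O}_{X',x'}$. One can also derive this directly from Theorem~\ref{main thm s2}: around $x$ choose a system of coordinates adapted to $D$ and $g$ and \emph{centred at $x$}, lift them compatibly with $(\tilde X,\tilde D,\tilde g)$ after adjusting by units $\equiv 1\bmod p$, and set $\tilde F^\#(\tilde t_i)=\tilde t_i^p$; then (S1) and (S2) hold on a neighbourhood of $x$, and $x\in|Z|$ automatically, since every $f_{(i)}$ with $(i)\neq 0$ is divisible by some coordinate and hence vanishes at $x$. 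Canonicity of the inverse Cartier transform identifies the locally constructed $(H,\nabla)$, $K^*_{\mathrm{dR}}$, $K^*_{\mathrm{Hig}}$ with the given ones, and tensoring the quasi-isomorphism $\varphi_{\tilde F}$ of Theorem~\ref{main thm s2} along the flat map $\mathcal O_{X_{/Z},x}\to\hat{\mathcal O}_{X,x}$ yields the claimed isomorphism. Combined with the previous paragraph, this proves that the supports of $\mathcal H^j(K^\bullet_{\mathrm{dR}},\nabla+df\wedge)$ and $\mathcal H^j(K^\bullet_{\mathrm{Hig}},\theta-df'\wedge)$ coincide, once $X$ and $X'$ are identified via the universal homeomorphism $F$.

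For the final assertion, assume $S:=\bigcup_j\mathrm{Supp}\,\mathcal H^j(K^\bullet_{\mathrm{Hig}},\theta-df'\wedge)$ is finite; by the support equality it is also $\bigcup_j\mathrm{Supp}\,\mathcal H^j(F_*(K^\bullet_{\mathrm{dR}},\nabla+df\wedge))$, and $S$ is a finite set of closed points. Near each $x'\in S$ every cohomology sheaf of either complex is coherent and supported only at $x'$, hence of finite length over $\mathcal O_{X',x'}$, hence already $\mathfrak m_{x'}$-adically complete; therefore for each of the two complexes $C^*$ the canonical map $C^*\to C^*\otimes_{\mathcal O_{X'}}\hat{\mathcal O}_{X',x'}$ is a quasi-isomorphism near $x'$, and both complexes are acyclic away from $S$. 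Composing with the isomorphism of Theorem~\ref{de Rham-Higgs local} gives an isomorphism $F_*(K^\bullet_{\mathrm{dR}},\nabla+df\wedge)\cong(K^\bullet_{\mathrm{Hig}},\theta-df'\wedge)$ in $D(\hat{\mathcal O}_{X',x'})$ for every $x'\in S$. Since $S$ is discrete and both objects are supported on $S$, one has
\[
\mathrm{Hom}_{D(X')}(F_*K^\bullet_{\mathrm{dR}},K^\bullet_{\mathrm{Hig}})=\prod_{x'\in S}\mathrm{Hom}_{D(\hat{\mathcal O}_{X',x'})}\!\bigl(F_*K^\bullet_{\mathrm{dR}}\otimes\hat{\mathcal O}_{X',x'},\,K^\bullet_{\mathrm{Hig}}\otimes\hat{\mathcal O}_{X',x'}\bigr);
\]
the tuple formed by the local isomorphisms above is then a morphism in $D(X')$ which is an isomorphism on each completed stalk, hence an isomorphism, giving the stated comparison in $D(X')$ (which is $D(X)$ under the identification above).

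The mathematical substance is entirely contained in Theorem~\ref{de Rham-Higgs local}/Theorem~\ref{main thm s2}; what is left is bookkeeping. The one point that must be handled with some care is the $\mathcal O_{X'}$-coherence of the relevant cohomology sheaves — this is what lets ``finite support'' upgrade to ``finite length'' and hence to $\mathfrak m$-adic completeness — and it depends on the differentials becoming $\mathcal O_{X'}$-linear after $F_*$, i.e. on $F$ being finite and killing the differentials of $F^\#$-sections. A secondary subtlety is purely notational: the Higgs complex lives on the Frobenius twist, so the equality of supports is an equality of closed subsets of $X\cong X'$, and the displayed comparison is most naturally stated in $D(X')$.
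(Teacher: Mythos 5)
Your proposal is correct and follows essentially the same route as the paper: reduce the support statement to completed stalks at closed points via faithful flatness of $\hat{\mathcal O}_{X,x}$ and the formal comparison of Theorem~\ref{main thm s2} (choosing coordinates centred at $x$ so that $x$ lies in $Z$), and then, when the support is finite, use that completion is a quasi-isomorphism on complexes with punctually supported cohomology to promote the formal isomorphism to one in the derived category. The only cosmetic difference is that the paper reduces to a single supporting point and produces the global arrow from a commutative square, whereas you patch the points together through a Hom-decomposition; both amount to the same bookkeeping.
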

\begin{proof}
For any closed point $x\in X$, we can choose a system of coordinates $t_1,\cdots,t_n$ around $x$ such that $t_(x)=\cdots=t_n(x)=0$ and $g=t_1\cdots t_s$. Let $\tilde t_i$ be a lifting of $t_i$ and let $\tilde F$ be a Frobenius lifting around $x$ such that $\tilde F(\tilde t_i)=\tilde t_i^p$.
Write $f$ as \eqref{decom f} around $x$ and assume $f_{0,\cdots,0}=0$.  Let $Z$ be the locally closed subscheme of $X$ defined by all $f_{i_1,\cdots,i_n}$ and let $\mathfrak X=X_{/Z}$. According to the theorem above, coupled with the flatness of $\mathcal O_{\mathfrak X}$ over $\mathcal O_X$, we have
$$
\mathcal H^i(K^\bullet_{Hig},\theta-df\wedge)\otimes\mathcal O_{\mathfrak X}\cong\mathcal H^i(F_*(K^\bullet_{dR},\nabla+df\wedge))\otimes\mathcal O_{\mathfrak X}.
$$
Tensoring it with $\otimes_{\mathcal O_{\mathfrak X}}\hat{\mathcal O}_{X,x}$, one gets
$$
\mathcal H^i(K^\bullet_{Hig},\theta-df\wedge)_x\otimes\hat{\mathcal O}_{X,x}\cong\mathcal H^i(F_*(K^\bullet_{dR},\nabla+df\wedge))_x\otimes\hat{\mathcal O}_{X,x}.
$$
Since $\hat{\mathcal O}_{X,x}$ is fully faithful flat over $\mathcal O_{X,x}$, the isomorphism above implies that
$$
\mathcal H^i(K^\bullet_{Hig},\theta-df\wedge)_x\neq0\Longleftrightarrow\mathcal H^i(K^\bullet_{dR},\nabla+df\wedge)_x\neq0.
$$
From which, the first statement of this corollary follows. 

For the second statement, it suffices to assume that the support of $(K^\bullet_{Hig},\theta-df\wedge)$ is $\{x\}$. Keep the notation and assumption in the above paragraph. Consider the  diagram
$$
\xymatrix{
(K^\bullet_{Hig},\theta-df\wedge)\ar@{.>}[r]\ar[d]&F_*(K^\bullet_{dR},\nabla+df\wedge)\ar[d]\\
(\mathcal K^\bullet_{Hig},\theta-df\wedge)\otimes\hat{\mathcal O}_{X,x}\ar[r]^-{\varphi_{\tilde F}}& F_*(K^\bullet_{dR},\nabla+df\wedge)\otimes\hat{\mathcal O}_{X,x},
}
$$
where the vertical arrows are the natural morphisms of complexes. Combing the first statement with the assumption on the support of $(K^\bullet_{Hig},\theta-df\wedge)$, we conclude that they are quasi-isomorphisms. Consequently, there is a unique dotted arrow in $D(X)$ which makes the diagram above commutative. Obviously, it is an isomorphism. This completes the proof of the second statement.
\end{proof}

In the remaining part of this section, we dedicate to proving Theorem \ref{main thm s2}. Roughly speaking, the approach to prove Theorem \ref{main thm s2} contains two steps: the first step is to deform $\theta-df\wedge$ to an appropriate Higgs field $\Theta$ such that their Higgs complex are isomorphic and the inverse Cartier transform of $\Theta$ is isomorphic to $\nabla-df\wedge$; the second step is to show that $F_*\star(C_{\tilde F}^{-1}(\hat E,\Theta))$ is quasi-isomorphic to $\star(\hat E,\Theta)$, where $\star$ is defined as \eqref{MHM complex}. 

\subsection{Proof of Theorem \ref{main thm s2} (step 1)}
We need introduce some notions on formal schemes. Set 
$$(\mathfrak X,\mathfrak D):=(X,D)_{/Z},~\Omega^1_{\mathfrak X/k}(\log\mathfrak D):=\Omega^1_{X/k}(\log D)\otimes\mathcal O_{\mathfrak X},~\hat E:=E_{/Z},~\hat .$$
It is obvious that the notions of Higgs field, integrable connection and its $p$-curvature on $(X,D)/k$ still work on $(\mathfrak X,\mathfrak D)/k$.
Clearly, the Higgs field $\theta$ on $E$ induces a Higgs field on $\hat E$, namely
$$
\hat\theta:\hat E\to\hat E\otimes\Omega^1_{\mathfrak X/k}(\log\mathfrak D).
$$
Let $\mathcal I$ be the ideal subsheaf of $\mathcal O_X$ corresponding to $Z$. One may check that the logarithmic integrable connection $\nabla$ on $H=F^*E$ induces a morphism
$$
\nabla_i:H/(F^*\mathcal I)^iH\to H/(F^*\mathcal I)^iH\otimes\Omega^1_{X/k}(\log D).
$$
Note that the inverse limit of $H/(F^*\mathcal I)^iH$ is $\hat H$ and the inverse limit of $\nabla_i$ gives rise to a logarithmic connection on $\hat H$, namely
$$
\hat\nabla:\hat H\to\hat H\otimes\Omega^1_{\mathfrak X/k}(\log\mathfrak D).
$$
For convenient, the Frobenius of $\mathfrak X$ is denoted by $F$ again. Let
$$
\hat{\zeta}_{\tilde F}:F^*\Omega^1_{\mathfrak X/k}(\log\mathfrak D)\to\Omega^1_{\mathfrak X/k}(\log\mathfrak D)
$$
be the $\mathcal O_{\mathfrak X}$-linear morphism induced by 
$$\zeta_{\tilde F}:F^*\Omega^1_{X/k}(\log D)\to\Omega^1_{X/k}(\log D).$$

\begin{definition}
Let $\Theta$ be any Higgs field on $\hat E$ (not necessarily $\hat\theta$). Define the inverse Cartier transform of $(\hat E,\Theta)$ with respect to $\tilde F$ by 
$$
C^{-1}_{\tilde F}(\hat E,\Theta):=(F^*\hat E,\nabla_{can}+(id\otimes\hat{\zeta}_{\tilde F})F^*\Theta).
$$
\end{definition}
It is obvious that $C^{-1}_{\tilde F}(\hat E,\hat\theta)=C^{-1}_{\tilde F}(E,\theta)_{/Z}=(\hat H,\hat\nabla)$. By \cite[Theorem 2.8, 3]{OV}, we know that the $p$-curvature of $\nabla$ is $-F^*\theta$. By careful computation, one may check that the $p$-curvature of $\nabla+df\wedge$ is $F^*(-\theta+df\wedge)$. Unfortunately, this does not imply that $C_{\tilde F}^{-1}(E,\theta-df\wedge)$ is isomorphic to $(H,\nabla+df\wedge)$.
\begin{example}
Take $(E,\theta)=(\mathcal O_X,0)$ and $f=t_1$. One may check that the $p$-curvature of 
$$C^{-1}_{\tilde F}(\mathcal O_X,-dt_1\wedge)=(\mathcal O_X,d-t_1^{p-1}dt_1)$$
is $(\mathcal O_X,F^*dt_1-F^*(t_1^{p-1}dt_1))$. Consequently, the extra term $-F^*(t_1^{p-1}dt_1)$ prevents the existence of an isomorphism between $C^{-1}_{\tilde F}(\mathcal O_X,-dt_1\wedge)$ and $(\mathcal O_X,d+df\wedge)$.
\end{example}

Nevertheless, after restricting $\theta-df\wedge$ to the formal neighborhood of $Z$ in $X$, the resulting Higgs field $\hat\theta-df\wedge$ can be deformed to an appropriate Higgs field $\Theta$ such that its inverse cartier transform is $\hat\nabla+df\wedge$.
\begin{lemma}
Set
$$
\Theta:=\hat\theta-\sum_{0\leq i_1,\cdots,i_n<p}\sum_{j=0}^{\infty}f_{i_1\cdots,i_n}^{p^j-1}df_{i_1,\cdots,i_n}\wedge.
$$
Then its inverse Cartier transform is isomorphic to $\hat\nabla+df\wedge$.
\end{lemma}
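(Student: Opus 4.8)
\emph{Proof proposal.} The plan is to compute the inverse Cartier transform
$C^{-1}_{\tilde F}(\hat E,\Theta)=\bigl(F^{*}\hat E,\ \nabla_{\mathrm{can}}+(\mathrm{id}\otimes\hat\zeta_{\tilde F})F^{*}\Theta\bigr)$ directly from its definition and to compare it with $(\hat H,\hat\nabla+df\wedge)$, where $(\hat H,\hat\nabla)=C^{-1}_{\tilde F}(\hat E,\hat\theta)$.

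First I would check that $\Theta$ is well defined. Since $f_{0,\dots,0}=0$, every $f_{i_{1},\dots,i_{n}}=h_{i_{1},\dots,i_{n}}^{p}t_{1}^{i_{1}}\cdots t_{n}^{i_{n}}$ lies in the ideal $\mathcal I$ cutting out $Z$, so $f_{i_{1},\dots,i_{n}}^{\,p^{j}-1}\in\mathcal I^{\,p^{j}-1}$ and the double series defining $\Theta$ converges $\mathcal I$-adically on $\mathfrak X=X_{/Z}$. Each partial sum is $\hat\theta$ plus wedging with a closed global $1$-form, so $\Theta$ is again a Higgs field on $\hat E$, still nilpotent of level $\leq p-1$. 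Writing $\Theta=\hat\theta+\omega\wedge$ with
\[
\omega:=-\sum_{i_{1},\dots,i_{n}}\sum_{j\geq0}f_{i_{1},\dots,i_{n}}^{\,p^{j}-1}\,df_{i_{1},\dots,i_{n}},
\]
additivity of $F^{*}$ and of $\hat\zeta_{\tilde F}$ reduces the statement to the equality $C^{-1}_{\tilde F}(\hat E,\Theta)=\bigl(\hat H,\hat\nabla+\hat\zeta_{\tilde F}(F^{*}\omega)\wedge\bigr)$, so it remains to identify the $1$-form $\hat\zeta_{\tilde F}(F^{*}\omega)$ on $\mathfrak X$ and match it with $df$.

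The heart of the argument is a telescoping identity. Because $d(h^{p})=0$ in characteristic $p$ one has $df_{\underline i}=h_{\underline i}^{p}\,d(t^{\underline i})=f_{\underline i}\,d\!\log(t^{\underline i})$, hence $f_{\underline i}^{\,p^{j}-1}df_{\underline i}=f_{\underline i}^{\,p^{j}}\,d\!\log(t^{\underline i})$; and the hypothesis $\tilde F^{*}(\tilde t_{l})=\tilde t_{l}^{p}$ gives $\hat\zeta_{\tilde F}(F^{*}d\!\log t_{l})=d\!\log t_{l}$ together with $F^{*}u=u^{p}$ for functions $u$. A direct monomial computation then yields
\[
\hat\zeta_{\tilde F}\bigl(F^{*}(f_{\underline i}^{\,p^{j}-1}df_{\underline i})\bigr)=f_{\underline i}^{\,p^{j+1}-1}df_{\underline i}\qquad(j\geq0),
\]
so that applying $\hat\zeta_{\tilde F}\circ F^{*}$ to $\omega$ merely deletes the $j=0$ layer of the double series. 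Summing over $\underline i,j$ and separating out $df=\sum_{\underline i}df_{\underline i}$, one obtains that $C^{-1}_{\tilde F}(\hat E,\Theta)$ coincides with $(\hat H,\hat\nabla+df\wedge)$ up to the residual closed $1$-form $\omega$, which one then absorbs into an automorphism of $F^{*}\hat E$; this is precisely where working on the formal completion $\mathfrak X$ along $Z$ is essential, the series having no reason to converge on $X$ itself. Equivalently, setting $S_{\underline i}:=\sum_{j\geq0}f_{\underline i}^{\,p^{j}}$ one checks $S_{\underline i}^{p}-S_{\underline i}+f_{\underline i}=0$ and $dS_{\underline i}=df_{\underline i}$, so that $\omega=-\sum_{\underline i}S_{\underline i}\,d\!\log(t^{\underline i})$; this Artin--Schreier-type description of the correction term is, I expect, the form in which the required automorphism — the variant of Ogus's $\alpha$-transform alluded to in the introduction — becomes explicit.

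I anticipate this last step to be the main obstacle: the rigorous bookkeeping of the infinite telescoping sum over $\mathfrak X$, uniformly in $\underline i$ and $j$ and including the logarithmic directions along $D$, together with the construction of the automorphism realizing the residual gauge. By contrast, the nilpotence of $\Theta$, the convergence of its defining series, and the identification of its Higgs complex with the $1$-form twist of that of $\hat\theta$ should all be routine once the setup is in place.
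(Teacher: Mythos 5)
Your setup and reduction are correct and run parallel to the paper's own argument: the convergence of $\Theta$ on $\mathfrak X$, the identity $f_{\underline{i}}^{\,p^j-1}df_{\underline{i}}=f_{\underline{i}}^{\,p^j}d\log t^{\underline{i}}$, and the telescoping computation $\hat\zeta_{\tilde F}\bigl(F^*(f_{\underline{i}}'^{\,p^j-1}df'_{\underline{i}})\bigr)=f_{\underline{i}}^{\,p^{j+1}-1}df_{\underline{i}}$ are exactly what makes the connection side of the paper's verification carry the sum over $j\geq 1$ rather than $j\geq 0$, so your conclusion $C^{-1}_{\tilde F}(\hat E,\Theta)=(\hat H,\hat\nabla+df\wedge+\omega\wedge)$ is right. (A small inaccuracy: $\Theta$ is not nilpotent of level $\leq p-1$, only topologically nilpotent; this is harmless because the formal inverse Cartier transform here is defined by the explicit formula for a single fixed Frobenius lift and needs no nilpotence.)

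The genuine gap is the step you yourself flag as the "main obstacle": absorbing the residual closed $1$-form $\omega$ into an automorphism of $F^*\hat E$. In characteristic $p$ a closed $1$-form is in general not of the form $d\log u$ for a unit $u$, so the existence of this gauge is precisely the content of the lemma and cannot be deferred or treated as bookkeeping; and the Artin--Schreier element $S_{\underline{i}}=\sum_{j\geq 0}f_{\underline{i}}^{\,p^j}$ does not by itself produce the unit, since $\exp(S_{\underline{i}})$ is meaningless mod $p$. The paper supplies the missing construction explicitly: the gauge is multiplication by $G=\prod_{\underline{i}}\mathrm{AH}(f_{\underline{i}})$, the mod $p$ reduction of the Artin--Hasse exponential $\mathrm{AH}(x)=\exp\bigl(\sum_{j\geq 0}x^{p^j}/p^j\bigr)$, which has $p$-integral coefficients and converges to a unit of $\mathcal O_{\mathfrak X}$ because each $f_{\underline{i}}$ is topologically nilpotent along $Z$; the whole lemma then reduces to the identity $dG=G\sum_{\underline{i}}\sum_{j\geq 0}f_{\underline{i}}^{\,p^j-1}df_{\underline{i}}$, i.e.\ $d\log\mathrm{AH}(x)=\sum_{j\geq 0}x^{p^j-1}dx$ computed over $\mathbb Z_{(p)}$ and reduced mod $p$ (following Ogus--Vologodsky, Theorem 4.28). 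Without producing this unit (or an equivalent one) and verifying its logarithmic derivative, your argument establishes only that the two connections differ by the twist $\omega\wedge$, not that they are isomorphic.
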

\begin{proof}
It suffices to construct an isomorphism between the de Rham bundles $(\hat H, \hat\nabla+df\wedge)$ and $C^{-1}_{\tilde F}(\hat E,\Theta)$. Inspired by the proof of \cite[Theorem 4.28]{OV}, we consider the Artin-Hasse exponential of $g_{i_1,\cdots,i_n}$
$$
\mathrm{AH}(f_{i_1,\cdots,i_n})=\mathrm{exp}(\sum_{i=0}^\infty\frac{ f^{p^i}_{i_1,\cdots,i_n}}{p^i}).
$$
It is a power series of $g_{i_1,\cdots,i_n}$ with $p$-adically integral coefficients. Set 
$$
G:=\prod_{0\leq i_1,\cdots,i_n<p}\mathrm{AH}(g_{i_1,\cdots,i_n}).
$$
Then we claim that multiplication by $G$ gives rise to the desired isomorphism, i.e.,
$$
\xymatrix{
\hat H\ar[rrrr]^-{\nabla_{can}+(id\otimes\hat{\zeta}_{\tilde F})F^*\hat\theta+df\wedge}\ar[d]^-{G}&&&&\hat H\otimes\Omega^1_{\mathfrak X/k}(\log\mathfrak D)\ar[d]^-{G}\\
\hat H\ar[rrrr]^-{\nabla_{can}+(id\otimes\hat{\zeta}_{\tilde F})F^*\Theta}&&&&\hat H\otimes\Omega^1_{\mathfrak X/k}(\log\mathfrak D).
}
$$
The truth of the claim is equivalent to the equality between
\begin{eqnarray}\label{R}
(G\otimes id)(\nabla_{can}+(id\otimes\hat{\zeta}_{\tilde F})F^*\hat\theta+df\wedge)(F^*\hat e)
\end{eqnarray}
and 
\begin{eqnarray}\label{L}
(\nabla_{can}+(id\otimes\hat{\zeta}_{\tilde F})F^*\Theta)(GF^*\hat e)
\end{eqnarray}
for any $\hat e\in\hat E$. By an easy computation, \eqref{R} becomes
$$
(id\otimes\hat{\zeta}_{\tilde F})F^*\hat\theta(GF^*\hat e)+F^*\hat e\otimes Gdf
$$
and \eqref{L} becomes
$$
F^*\hat e\otimes dG+(id\otimes\hat{\zeta}_{\tilde F})F^*\hat\theta(GF^*\hat e)-F^*\hat e\otimes G\sum_{0\leq i_1,\cdots,i_n<p}\sum_{j=1}^{\infty}g_{i_1\cdots,i_n}^{p^j-1}dg_{i_1,\cdots,i_n}.
$$
Consequently, \eqref{R}=\eqref{L} is simplified to 
$$
F^*\hat e\otimes dG=F^*\hat e\otimes G\sum_{0\leq i_1,\cdots,i_n<p}\sum_{j=0}^{\infty}g_{i_1\cdots,i_n}^{p^j-1}dg_{i_1,\cdots,i_n}.
$$
Clearly, the last equality follows from 
$$
 dG=G\sum_{0\leq i_1,\cdots,i_n<p}\sum_{j=0}^{\infty}g_{i_1\cdots,i_n}^{p^j-1}dg_{i_1,\cdots,i_n}.
$$
This completes the proof.
\end{proof}

Next, we show that
\begin{lemma}
The complexes $\star(\hat E,\hat\theta-df\wedge)$ and $\star(\hat E,\Theta)$ are isomorphic.   
\end{lemma}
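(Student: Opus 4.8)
The plan is to produce an explicit isomorphism of complexes. Write $\mathbf i=(i_1,\cdots,i_n)$, $t^{\mathbf i}=t_1^{i_1}\cdots t_n^{i_n}$, and $\omega:=\sum_{\mathbf i}\sum_{j\geq1}f_{\mathbf i}^{\,p^j-1}df_{\mathbf i}$, so that $\Theta=\hat\theta-df\wedge-\omega\wedge$ and the two differentials on $\hat E\otimes\Omega^\bullet_{\mathfrak X/k}(\log\mathfrak D)$ differ by the $\Omega^\bullet$-linear operator $-\omega\wedge$. By the preceding lemma, $df+\omega=d\log G$ for the unit $G=\prod_{\mathbf i}\mathrm{AH}(f_{\mathbf i})$; in particular $\omega$ has no pole along $\mathfrak D$, and its coefficients lie in the ideal $\mathcal I_Z$ of $Z$, hence are topologically nilpotent on $\mathfrak X=X_{/Z}$. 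Moreover $df_{\mathbf i}=f_{\mathbf i}\,d\log t^{\mathbf i}$ (since $d(h_{\mathbf i}^{\,p})=0$), so both $df=\sum_{\mathbf i}f_{\mathbf i}\,d\log t^{\mathbf i}$ and $df+\omega=\sum_{\mathbf i}\bigl(\sum_{j\geq0}f_{\mathbf i}^{\,p^j}\bigr)d\log t^{\mathbf i}$ are expressed through the fixed frame $\{d\log t_l\}$, and the $d\log t_l$-coefficient of each is divisible by $t_l$.

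First I would reduce the four cases $\star\in\{\Omega^*,\Omega^*_{g^{-1}},W_i\Omega^*,\Omega^*_{\mathrm{int}}\}$ to a single underlying module. For the first three the underlying graded module of $\star(\hat E,-)$ is manifestly independent of the Higgs field, so $\star(\hat E,\hat\theta-df\wedge)$ and $\star(\hat E,\Theta)$ are the same module $\mathcal K^\bullet\subset\hat E\otimes\Omega^\bullet_{\mathfrak X/k}(\log\mathfrak D)$ equipped with two different differentials. For $\star=\Omega^*_{\mathrm{int}}$ the module does depend on the field through the operators $\nabla_i$, but twisting a Higgs field by a regular $1$-form $\sum_l a_l\,d\log t_l$ with $t_l\mid a_l$ leaves the intersection submodule $\bigoplus_I\bigl(\sum_{J\subset I}t_J\hat\theta_{I-J}\hat E\bigr)\otimes d_I\log t$ unchanged: since the $\hat\theta_i$ commute (being the components of a Higgs field), expanding $\prod_{i\in I-J}(\hat\theta_i-a_i)=\sum_{S\subset I-J}\pm\bigl(\prod_{i\in(I-J)\setminus S}a_i\bigr)\hat\theta_S$ and using $t_J\prod_{i\in(I-J)\setminus S}t_i=t_{I-S}$ shows each term lies in $t_{I-S}\hat\theta_S\hat E$. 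As both $df$ and $df+\omega$ are $1$-forms of this type, $\Omega^*_{\mathrm{int}}(\hat E,\hat\theta-df\wedge)$ and $\Omega^*_{\mathrm{int}}(\hat E,\Theta)$ again carry the same underlying module $\mathcal K^\bullet$. In all cases the lemma is thereby reduced to constructing an isomorphism of complexes $(\mathcal K^\bullet,\hat\theta-df\wedge)\xrightarrow{\ \sim\ }(\mathcal K^\bullet,\Theta)$.

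Finally I would look for a degree-$0$, $\mathcal O_{\mathfrak X}$-linear automorphism $u$ of $\mathcal K^\bullet$ with $u\circ(\hat\theta-df\wedge)=\Theta\circ u$; equivalently, $u$ should conjugate $-df\wedge$ to $-(df+\omega)\wedge$ while commuting with $\hat\theta\wedge$. The key point, and the reason this lemma is not formal, is that $u$ \emph{cannot} be taken to be multiplication by a unit such as $G$: on a Higgs complex such multiplication is $\Omega^\bullet$-linear and commutes with every differential, so — in contrast to the preceding lemma, where the connection $\nabla_{\mathrm{can}}$ made multiplication by $G$ effective — one genuinely needs a degree-mixing automorphism. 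The natural candidate is $u=\mathrm{id}_{\hat E}\otimes\Lambda^\bullet(\phi)$ for an $\mathcal O_{\mathfrak X}$-linear automorphism $\phi$ of $\Omega^1_{\mathfrak X/k}(\log\mathfrak D)$ with $\phi(df)=df+\omega$, constructed so that $\phi\equiv\mathrm{id}\bmod\mathcal I_Z$ and $\phi$ respects the weight, Kontsevich and intersection structures; since $df$ and $df+\omega$ are written through the same frame $\{d\log t_l\}$ with $t_l$-divisible coefficients, one can build $\phi$ (in general off-diagonal in this frame) by $\mathcal I_Z$-adic successive approximation, exploiting that $\omega$ has topologically nilpotent coefficients and passing to the $\mathcal I_Z$-adically convergent limit. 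I expect the production of such a $\phi$ — or of a more general degree-$0$ intertwiner when no bundle automorphism of $\Omega^1_{\mathfrak X/k}(\log\mathfrak D)$ suffices — together with the verification that it, and each successive correction, preserves the intersection subcomplex, to be the main obstacle.
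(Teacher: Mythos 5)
There is a genuine gap at the decisive step. Your reduction of all four cases to finding a single degree-zero intertwiner on a fixed underlying module is fine (and your observation that twisting by a $1$-form with $t_l$-divisible coefficients does not change the intersection submodule is a correct, slightly different packaging of what the paper checks), but the intertwiner you propose cannot exist in general and is in any case never constructed. If $u=\mathrm{id}_{\hat E}\otimes\Lambda^\bullet(\phi)$ with $\phi(df)=df+\omega$, then the chain-map condition forces $\sum_i\hat\theta_i\otimes(\phi-\mathrm{id})(\omega_i)\wedge=0$ on the complex; already for a rank-two $\hat E$ with a single nonzero nilpotent component $\hat\theta_1$ this forces $\phi(\omega_1)=\omega_1$, which is incompatible with $\phi(df)=df+\omega$ whenever $\omega$ has a nonzero $\omega_1$-component (as it typically does, since $\omega=\sum_{\mathbf i}\sum_{j\ge1}f_{\mathbf i}^{p^j-1}df_{\mathbf i}$ involves all the directions occurring in $f$). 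So acting only on the form factor is the wrong shape of intertwiner, and your fallback ("a more general degree-$0$ intertwiner by $\mathcal I_Z$-adic successive approximation") is exactly the content of the lemma and is left open; also, no degree-mixing is needed.

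The paper's proof supplies the missing idea by acting on the $\hat E$-coefficients instead of on the forms. Writing both complexes as Koszul complexes in the frame $\omega_1=d\log t_1,\dots,\omega_r=d\log t_r,\ \omega_{r+1}=dt_{r+1},\dots$, one computes $df+\omega=\sum_l\bigl(\sum_{j\ge0}(t_l\partial_l f)^{p^j}\bigr)d\log t_l$, so the two Koszul operators in the $l$-th direction differ by the explicit endomorphism
\[
\vartheta_l \;=\; 1+\sum_{j\ge1}\sum_{q=0}^{p-1}(t_l\partial_l f)^{p^j-q-1}\,\hat\theta_l^{\,q}
\]
(with the evident variant for $l>r$), which is an automorphism of $\hat E$ because $\hat\theta_l$ is nilpotent and $t_l\partial_l f$ is topologically nilpotent on $\mathfrak X$, and which commutes with all the $\hat\theta_i$. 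The isomorphism is then $\psi(\hat e\otimes\omega_{i_1}\wedge\cdots\wedge\omega_{i_q})=\vartheta_{i_1}\cdots\vartheta_{i_q}(\hat e)\otimes\omega_{i_1}\wedge\cdots\wedge\omega_{i_q}$: degree-preserving, diagonal in the frame, trivial on forms but nontrivial on coefficients, i.e.\ exactly the opposite distribution of the twist from your candidate. One then checks directly that $\psi$ preserves the weight and intersection subcomplexes, while for the Kontsevich subcomplex $\Omega^*_{g^{-1}}$ the same construction must be redone in the frame $d\log g,\,d\log t_2,\dots$ (with correspondingly modified $\vartheta'_i$), since $\psi$ built from the original frame need not preserve it — a point your scheme would also have to confront. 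Without an analogue of the identity $\Theta_l=\vartheta_l\circ(\hat\theta_l-t_l\partial_l f)$, your proposal does not close.
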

\begin{proof}
The proof contains three steps: the first step is to prove this lemma for $\star=\Omega^*$; the second step is for $\Omega^*_{int},W_i\Omega^*$; the last step is for $\star=\Omega^*_{g^{-1}}$.

\emph{Step 1}. Recall that the assumption (S1) gives rise to a system of coordinates $t_1,\cdots,t_n$ on $X/k$ such that $D=(t_1\cdots t_r=0)$ and $g=t_1\cdots t_s$. Write
\begin{eqnarray}\label{Theta cor}
\hat\theta=\sum_{i=1}^r\hat\theta_i\otimes d\log t_i+\sum_{i=r+1}^n\hat\theta_i\otimes dt_i,~\Theta=\sum_{i=1}^r\Theta_i\otimes d\log t_i+\sum_{i=r+1}^n\Theta_i\otimes dt_i
\end{eqnarray}
and the K\"ahler differential 
$$d=\sum_{i=1}^rt_i\partial_i\otimes d\log t_i+\sum_{i=r+1}^n\partial_i\otimes dt_i.$$ 
Clearly, we have natural isomorphisms
$$
\Omega^*(\hat E,\hat\theta-df\wedge)\cong\mathrm{Kos}(\hat E;\hat\theta_1-t_1\partial_1f,\cdots,\hat\theta_r-t_r\partial_rf,\hat\theta_{r+1}-\partial_{r+1}f,\cdots,\hat\theta_n-\partial_nf)
$$
and
$$
\Omega^*(\hat E,\Theta)\cong\mathrm{Kos}(\hat E;\Theta_1,\cdots,\Theta_n).
$$
To show $\Omega^*(\hat E,\hat\theta-df\wedge)$ is isomorphic to $\Omega^*(\hat E,\Theta)$, it suffices to check that 
$$
\Theta_i=\left\{
\begin{matrix}
  \vartheta_i(\hat\theta_i-t_i\partial_if),&1\leq i\leq r;\\
\vartheta_i(\hat\theta_i-\partial_if),&r+1\leq r\leq n,
\end{matrix}
\right.
$$
where each $\vartheta_i$ is an $\mathcal O_{\mathfrak X}$-linear automorphism of $\hat E$. In fact, we may construct an isomorphism of complexes 
$$
\psi:\Omega^*(\hat E,\hat\theta-df\wedge)\to\Omega^*(\hat E,\Theta) 
$$
as follows: set 
\begin{eqnarray}\label{omega}
\omega_1:=d\log t_1,\cdots,\omega_r:=d\log t_r,~\omega_{r+1}:=dt_{r+1},\cdots,\omega_n:=dt_n
\end{eqnarray}
and put
$$
\psi(\hat e):=\hat e,~\psi(\hat e\otimes\omega_{i_1}\wedge\cdots\wedge\omega_{i_q}):=\vartheta_{i_1}\cdots\vartheta_{i_q}(\hat e)\otimes\omega_{i_1}\wedge\cdots\wedge\omega_{i_q},~\hat e\in\hat E.
$$

Next, let us construct $\vartheta_i$. Using the facts
$$
t_q\partial_qf_{i_1,\cdots,i_n}=i_qf_{i_1,\cdots,i_n};~i_q=i_q^p\mod p;~x^p+y^p=(x+y)^p,~x,y\in\mathcal O_{\mathfrak X},
$$
we get
\begin{eqnarray*}
\sum_{0\leq i_1,\cdots,i_n<p}\sum_{j=0}^{\infty}f_{i_1\cdots,i_n}^{p^j-1}df_{i_1,\cdots,i_n}=\sum_{0\leq i_1,\cdots,i_n<p}\sum_{j=0}^{\infty}\sum_{q=1}^nf_{i_1\cdots,i_n}^{p^j-1}f_{i_1,\cdots,i_n}t_q\partial_qf_{i_1,\cdots,i_n}d\log t_q\\
=\sum_{0\leq i_1,\cdots,i_n<p}\sum_{j=0}^{\infty}\sum_{q=1}^n(i_qf_{i_1\cdots,i_n})^{p^j}d\log t_q=\sum_{q=1}^n\sum_{j=0}^{\infty}(t_q\partial_qf)^{p^j}d\log t_q.~~~~~~~~~~
\end{eqnarray*}
It follows that
$$
\Theta_i=\hat\theta_i-\sum_{q=1}^n\sum_{j=0}^{\infty}(t_q\partial_qf)^{p^j},~1\leq i\leq r;~t_i\Theta_i=t_i\hat\theta_i-\sum_{q=1}^n\sum_{j=0}^{\infty}.(t_q\partial_qf)^{p^j},~r+1\leq i\leq n.
$$
Consequently, for $1\leq i\leq r$, we may set
\begin{eqnarray*}
\vartheta_i:=\frac{\hat\theta_i-\sum_{j=0}^\infty(t_i\partial_i f)^{p^j}}{\hat\theta_i-t_i\partial_i f}=1-\frac{\sum_{j=1}^\infty(t_i\partial_i f)^{p^j}}{\hat\theta_i-t_i\partial_i f}~~~~~~\\
=1+\sum_{j=1}^\infty\sum_{q=0}^{p-1}(t_i\partial_i f)^{p^j-1}(\frac{\hat\theta_i}{t_i\partial_if})^q=1+\sum_{j=1}^\infty\sum_{q=0}^{p-1}(t_i\partial_i f)^{p^j-q-1}\hat\theta_i^q.
\end{eqnarray*}
Here the third equality follows from the Taylor expansion
$\frac{1}{1-x}=\sum_{i=0}^\infty x^i$
and the nilpotent condition $\hat\theta_i^p=0$. Since each term $(t_i\partial_i f)^{p^j-s-1}\hat\theta_i^s$ is a topological nilpotent endomorphism of $\hat E$, we conclude that $\vartheta_i$ is an automorphism of $\hat E$. Similarly, for $r+1\leq i\leq n$, we may set
$$
\vartheta_i:=1+\sum_{j=1}^\infty\sum_{q=0}^{p-1}(t_i\partial_i f)^{p^j-q-1}(t_i\hat\theta_i)^q.
$$
Clearly, it is an automorphism of $\hat E$ again.This completes the construction of $\vartheta_i$.

\emph{Step 2}. Using the following easily checked facts
$$
\psi(\Omega_{int}^j(\hat E,\hat\theta-df\wedge))=\Omega_{int}^j(\hat E,\Theta),~\psi(W_i\Omega^j(\hat E,\hat\theta-df\wedge))=W_i\Omega^j(\hat E,\Theta),
$$
we deduce that the isomorphism $\psi$ restricts to isomorphisms of subcomplexes
$$
\psi_{int}:\Omega_{int}^*(\hat E,\hat\theta-df\wedge)\to\Omega_{int}^*(\hat E,\Theta),~W_i\psi:W_i\Omega^*(\hat E,\hat\theta-df\wedge)\to W_i\Omega^*(\hat E,\Theta).
$$ 

\emph{Step 3}. Unfortunately, the isomorphism $\psi$ constructed in \emph{step 1} may not restricts to an isomorphism of complexes 
$$\psi_{g^{-1}}:\Omega^*_{g^{-1}}(\hat E,\hat\theta-df\wedge)\to\Omega^*_{g^{-1}}(\hat E,\Theta).$$
Nevertheless, we may construct the desired isomorphism $\psi_{g^{-1}}$ by using a similar idea as $\psi_{int}$ or $W_i\psi$. Write
$$
\hat\theta=\hat\theta'_1\otimes d\log g+\sum_{i=2}^r\hat\theta'_2\otimes d\log t_i+\sum_{i=r+1}^n\hat \theta'_i\otimes dt_i.
$$
and
$$
d=t_1\partial_1\otimes d\log g+\sum_{i=2}^s(t_i\partial_i-t_1\partial_1)\otimes d\log t_i+\sum_{i=s+1}^rt_i\partial_i\otimes d\log t_i+\sum_{i=r+1}^n\partial_i\otimes dt_i.
$$
it is easy to see that
$$
\hat\theta_1'=\hat\theta_1,~\hat\theta_2'=\hat\theta_2-\hat\theta_1,\cdots,\hat\theta_s'=\hat\theta_s-\hat\theta_1,~\hat\theta_{s+1}'=\hat\theta_{s+1},\cdots,\hat\theta_n'=\hat\theta_n
$$
and 
$$
df=t_1\partial_1f\otimes d\log g+\sum_{i=2}^s(t_i\partial_i-t_1\partial_1)f\otimes d\log t_i+\sum_{i=s+1}^rt_i\partial_if\otimes d\log t_i+\sum_{i=r+1}^n\partial_if\otimes dt_i.
$$
For our purpose, we construct another isomorphism of complexes 
$$
\psi':\Omega^*(\hat E,\hat\theta-df\wedge)\to\Omega^*(\hat E,\Theta) 
$$
as follows: set
$$
\omega'_1:=d\log g,~\omega'_2:=d\log t_2,\cdots,\omega'_r:=d\log t_r,~\omega'_{r+1}:=dt_{r+1},\cdots,\omega'_n:=dt_n
$$
and put
$$
\psi'(\hat e)=\hat e,~\psi'(\hat e\otimes\omega_{i_1}\wedge\cdots\omega_{i_q})=\vartheta'_{i_1}\cdots\vartheta'_{i_q}(\hat e)\otimes\omega'_{i_1}\wedge\cdots\wedge\omega'_{i_q}.
$$
Here 
$$
\vartheta_i':=\left\{
\begin{matrix}
1+\sum_{j=1}^\infty\sum_{q=0}^{p-1}((t_i\partial_i-t_1\partial_1) f)^{p^j-q-1}\hat\theta_i^q,&2\leq i\leq s,\\
\vartheta_i,&\mathrm{otherwise}.
\end{matrix}
\right.
$$
By the very construction of $\Omega^*_{g^{-1}}$, it is easy to see that $\psi'$ restricts to an isomorphism of subcomplexes 
$$
\psi'_{g^{-1}}:\Omega^*_{g^{-1}}(\hat E,\hat\theta-df\wedge)\to\Omega^*_{g^{-1}}(\hat E,\Theta).
$$
This completes the proof. 
\end{proof}

\subsection{Proof of Theorem \ref{main thm s2} (step 2)}

The key to our construction of a quasi-isomorphism between $F_*\star(C_{\tilde F}^{-1}(\hat E,\Theta))$ and $\star(\hat E,\Theta)$ is that $F_*\star(C_{\tilde F}^{-1}(\hat E,\Theta))$ can be decomposed into a direct sum of several acyclic complexes and $\star(\hat E,\Theta)$ for $\star=\Omega^*,\Omega^*_{int},\Omega^*_{g^{-1}}$. This idea is brought from log geometry, see e.g. the proof of \cite[Theorem (4.12)]{Kato}. Next, we construct the desired quasi-isomorphism case by case. 

\emph{Case of $\star=\Omega^*$}.  By definition, we a natural identification
\begin{eqnarray}\label{Frobenius push}
 F_*\Omega^q C^{-1}_{\tilde F}(\hat E,\Theta)\cong \hat E\otimes F_*\Omega^q_{\mathfrak X/k}(\log\mathfrak D). 
\end{eqnarray}
Meanwhile, one may check that
\begin{eqnarray}\label{decom F_*omega}
F_*\Omega^q_{\mathfrak X/k}(\log\mathfrak D)\cong\mathcal O_{\mathfrak X}\{t^{\boldsymbol{\alpha}}\omega_I:\boldsymbol{\alpha}\in\{0,\cdots,p-1\}^n,~I\subset\{1,\cdots,n\},~|I|=q\}.
\end{eqnarray}
Here $\omega_i$ is defined as \eqref{omega}, 
$$\omega_\emptyset:=1,~\omega_I:=\omega_{i_1}\wedge\cdots\wedge\omega_{i_q},~I=\{i_1,\cdots,i_q\},~i_1<\cdots<i_q$$
and $t^{\boldsymbol{\alpha}}=t_1^{\alpha_1}\cdots t_n^{\alpha_n}$ for $\boldsymbol{\alpha}=(\alpha_1,\cdots,\alpha_n)$.
\begin{definition}\label{Def w}
Let $B$ be the set consisting of all $t^{\boldsymbol{\alpha}}\omega_I$ with $\boldsymbol{\alpha}\in\{0,\cdots,p-1\}^n$ and $I\subset\{1,\cdots,n\}$. Define a function
$$
V:B\to \mathbb F_{p}^n,~t^{\boldsymbol{\alpha}}\omega_I\mapsto \boldsymbol{\alpha}+\boldsymbol{\alpha}_I \mod p.$$
Here $\boldsymbol{\alpha}_I=(\alpha_{1,I},\cdots,\alpha_{n,I})$ is defined as 
$$
\alpha_{i,I}:=\left\{
\begin{matrix}
1,&i\in I\cap \{r+1,\cdots,n\},\\
0,&\mathrm{otherwise}
\end{matrix}
\right.
$$
and $\boldsymbol{\beta}\mod p$ for $\boldsymbol{\beta}=(\beta_1,\cdots,\beta_n)\in\mathbb N^n$ is defined as 
$(\beta_1\mod p,\cdots,\beta_n\mod p)$.
\end{definition}
Write $\Theta$ as \eqref{Theta cor} and make the identification 
$$
F_*\Omega^q C^{-1}_{\tilde F}(\hat E,\Theta)\cong\hat E\{t^{\boldsymbol{\alpha}}\omega_I:\boldsymbol{\alpha}\in\{0,\cdots,p-1\}^n,~I\subset\{1,\cdots,n\},~|I|=q\}.
$$
Since
$$
\hat\nabla_\Theta:=\nabla_{can}+(id\otimes\zeta_{\tilde F})F^*\Theta=\sum_{i=1}^r(t_i\partial_i+F^*\Theta_i)\otimes d\log t_i+\sum_{i=r+1}^n(\partial_i+t_i^{p-1}F^*\Theta_i)\otimes dt_i,
$$
coupled with the identification \eqref{Frobenius push}, we conclude that $\hat\nabla_\Theta(\hat e\otimes t^{\boldsymbol{\alpha}}\omega_I)$ equals to
\begin{eqnarray*}
\sum_{i\leq r}(\alpha_iid+\Theta_i)\hat e\otimes d\log t_i\wedge t^{\boldsymbol{\alpha}}\omega_I+\sum_{i>r,\alpha_i>0}(\alpha_iid+t_i\Theta_i)\hat e\otimes d\log t_i\wedge t^{\boldsymbol{\alpha}}\omega_I\\
+\sum_{i>r,\alpha_i=0}\Theta_i\hat e\otimes t_i^{p-1}dt_i\wedge t^{\boldsymbol{\alpha}}\omega_I.
\end{eqnarray*}
Set 
$$
\gamma_i:=\left\{
\begin{matrix}
d\log t_i\wedge t^{\boldsymbol{\alpha}}\omega_I,&i\leq r~\mathrm{or}~i>r~\mathrm{and}~\alpha_i>0;\\
 t_i^{p-1}dt_i\wedge t^{\boldsymbol{\alpha}}\omega_I,&i>r~\mathrm{and}~\alpha_i=0.

\end{matrix}
\right.
$$ 
It is easy to see that if $\gamma_i\neq0$ (i.e., $i\notin I$), then either $\gamma_i$ or $-\gamma_i$ belongs to $B$. By the definition of $w$ above, we get the key observation
$$
V(t^{\boldsymbol{\alpha}}\omega_I)=V(\gamma_i)~\mathrm{or}~V(-\gamma_i),~i\leq i\leq n.
$$
It easily follows that
\begin{lemma}
For any $v\in\mathbb F_p^n$, the graded $\mathcal O_{\mathfrak X}$-submodule $\hat EV^{-1}(v)\subset\hat E\otimes F_*\Omega^\bullet_{\mathfrak X/k}(\log\mathfrak D)$ is closed under the differential $\hat\nabla_\Theta$. In other words, there is a subcomplex $K_v^*\subset F_*\Omega^*C_{\tilde F}^{-1}(\hat E,\Theta)$ such that $K_v^\bullet=\hat EV^{-1}(v)$.  Moreover, we have a decomposition of complexes
$$
F_*\Omega^*C_{\tilde F}^{-1}(\hat E,\Theta)\cong\bigoplus_{v\in\mathbb F_p^n}K_v^*.
$$
\end{lemma}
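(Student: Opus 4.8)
The plan is to read the decomposition off from the function $V$. Since $B$ is an $\hat E$-module basis of $F_*\Omega^\bullet C^{-1}_{\tilde F}(\hat E,\Theta)$, the partition of $B$ into the fibres $V^{-1}(v)$, $v\in\mathbb F_p^n$, trivially yields a direct sum decomposition $F_*\Omega^\bullet C^{-1}_{\tilde F}(\hat E,\Theta)=\bigoplus_{v\in\mathbb F_p^n}\hat EV^{-1}(v)$ of graded $\hat E$-modules, where $\hat ES$ denotes the free $\hat E$-submodule spanned by a subset $S\subseteq B$. Everything then reduces to checking that the differential $\hat\nabla_\Theta$ is homogeneous of degree $0$ for this grading, i.e.\ that $\hat\nabla_\Theta(\hat EV^{-1}(v))\subseteq\hat EV^{-1}(v)$ for every $v$; granting this, $K_v^*:=(\hat EV^{-1}(v),\hat\nabla_\Theta)$ is a subcomplex and the asserted decomposition holds in the category of complexes.

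To prove that $\hat\nabla_\Theta$ respects the grading, I would use the explicit formula for $\hat\nabla_\Theta(\hat e\otimes t^{\boldsymbol\alpha}\omega_I)$ recorded above: it is a sum, over $i\notin I$, of terms $\phi_i(\hat e)\otimes\gamma_i$, where $\phi_i$ is one of the $\mathcal O_{\mathfrak X}$-linear endomorphisms $\alpha_i\,\mathrm{id}+\Theta_i$, $\alpha_i\,\mathrm{id}+t_i\Theta_i$, $\Theta_i$ of $\hat E$, and $\gamma_i$ is as defined just before the lemma (note $\gamma_i=0$ exactly when $i\in I$, so the sum is genuinely over $i\notin I$). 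It then suffices to rewrite each $\gamma_i$ in the standard form $\pm t^{\boldsymbol\beta}\omega_{I\cup\{i\}}$ with $\boldsymbol\beta\in\{0,\dots,p-1\}^n$ and to verify $V(t^{\boldsymbol\beta}\omega_{I\cup\{i\}})=V(t^{\boldsymbol\alpha}\omega_I)$. Writing $e_i$ for the $i$-th standard basis vector, the three cases are: for $i\le r$, $\gamma_i=\pm t^{\boldsymbol\alpha}\omega_{I\cup\{i\}}$ and $\boldsymbol\alpha_{I\cup\{i\}}=\boldsymbol\alpha_I$, so $V$ is unchanged; for $i>r$ with $\alpha_i>0$, $\gamma_i=\pm t^{\boldsymbol\alpha-e_i}\omega_{I\cup\{i\}}$ with $\boldsymbol\alpha-e_i\in\{0,\dots,p-1\}^n$ while $\boldsymbol\alpha_{I\cup\{i\}}=\boldsymbol\alpha_I+e_i$, so the exponent drops by $e_i$ and the correction term rises by $e_i$; for $i>r$ with $\alpha_i=0$, $\gamma_i=\pm t^{\boldsymbol\alpha+(p-1)e_i}\omega_{I\cup\{i\}}$ with $(p-1)\in\{0,\dots,p-1\}$ in the $i$-th slot and $\boldsymbol\alpha_{I\cup\{i\}}=\boldsymbol\alpha_I+e_i$, so the net change of $V$ is $(p-1)e_i+e_i=pe_i\equiv 0\bmod p$. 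In all cases $\boldsymbol\beta\in\{0,\dots,p-1\}^n$ and $V(t^{\boldsymbol\beta}\omega_{I\cup\{i\}})=V(t^{\boldsymbol\alpha}\omega_I)$; the signs from reordering the wedge are irrelevant since $V$ ignores them.

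Since every summand of $\hat\nabla_\Theta(\hat e\otimes t^{\boldsymbol\alpha}\omega_I)$ thus lands in the graded piece indexed by $V(t^{\boldsymbol\alpha}\omega_I)$, the differential preserves each $\hat EV^{-1}(v)$, which proves the lemma. I do not anticipate any real obstacle: the entire content is the bookkeeping in the three-case computation above, where the two points to watch are the grading correction $\boldsymbol\alpha_I$ (which only ``sees'' the non-logarithmic coordinates $i>r$) and the mod-$p$ reduction forced by the $t_i^{p-1}$ factor that occurs precisely in the non-logarithmic directions with $\alpha_i=0$.
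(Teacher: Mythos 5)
Your proof is correct and follows essentially the same route as the paper: the paper computes $\hat\nabla_\Theta(\hat e\otimes t^{\boldsymbol\alpha}\omega_I)$, records the key observation $V(\gamma_i)=V(t^{\boldsymbol\alpha}\omega_I)$ (up to sign), and lets the lemma follow, which is exactly the bookkeeping you carry out. Your three-case verification (including the $t_i^{p-1}$ mod-$p$ cancellation in the non-logarithmic directions with $\alpha_i=0$) just makes explicit what the paper leaves as "it easily follows."
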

Note that $w(t^{\boldsymbol{\alpha}}\omega_I)=0$ if and only if 
$$t^{\boldsymbol{\alpha}}\omega_I\in\wedge^\bullet\{d\log t_1,\cdots,d\log t_r,t_{r+1}^{p-1}dt_{r+1},\cdots,t_n^{p-1}dt_n\}.$$
Hence for such $t^{\boldsymbol{\alpha}}\omega_I$, we have
$$
\hat\nabla_\Theta(\hat e\otimes t^{\boldsymbol{\alpha}}\omega_I)=\sum_{i=1}^r\Theta\hat e\otimes d\log t_i\wedge t^{\boldsymbol{\alpha}}\omega_I+\sum_{i=r+1}^n\Theta_i\hat e\otimes t_i^{p-1}dt_i\wedge t^{\boldsymbol{\alpha}}\omega_I.
$$
Consequently, we have an isomorphism of complexes
\begin{eqnarray}\label{varphi star=omega}
\begin{array}{c}
\varphi_{\tilde F}:\Omega^*(\hat E,\Theta)\to K_0^*,\\
\hat e\mapsto \hat e,\\
\hat e\otimes\omega_{i_1}\wedge\cdots\wedge\omega_{i_q}\mapsto \hat e\otimes\zeta_{\tilde F}(\omega_{i_1})\wedge\cdots\zeta_{\tilde F}(\omega_{i_q}). 
\end{array}
\end{eqnarray}

It remains to show that 
\begin{lemma}
$K_v^*$ is acyclic for $v\neq0$. 
\end{lemma}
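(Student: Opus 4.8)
The plan is to identify $K_v^*$ with the mapping cone of an isomorphism of complexes, which is automatically acyclic; this is the familiar device from logarithmic geometry used, for instance, in the proof of \cite[Theorem (4.12)]{Kato}. Since $v\neq 0$, the first step is to fix an index $i$ with $v_i\neq 0$ and to split the distinguished basis of $K_v^\bullet=\hat E V^{-1}(v)$ according to whether $i\in I$ or not: let $B^\bullet\subset K_v^\bullet$ be the $\mathcal O_{\mathfrak X}$-submodule spanned by the $\hat e\otimes t^{\boldsymbol\alpha}\omega_I$ with $i\in I$, and let $A^\bullet$ be the complementary submodule, spanned by those with $i\notin I$. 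From the explicit expression for $\hat\nabla_\Theta(\hat e\otimes t^{\boldsymbol\alpha}\omega_I)$ recorded above, every term of $\hat\nabla_\Theta$ applied to a generator with $i\in I$ still involves $\omega_i$, so $B^*$ is a subcomplex of $K_v^*$; consequently the induced differential on the quotient $K_v^*/B^*\cong A^*$ is the part of $\hat\nabla_\Theta$ in the directions $j\neq i$. Thus one obtains a short exact sequence of complexes $0\to B^*\to K_v^*\to A^*\to 0$, degreewise split, so that $K_v^*$ is the mapping cone of the connecting morphism $A^*[-1]\to B^*$; this connecting morphism is exactly the ``direction-$i$'' component $d_i$ of $\hat\nabla_\Theta$.

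The core step is then to check that $d_i\colon A^\bullet\to B^{\bullet+1}$ is an isomorphism of graded $\mathcal O_{\mathfrak X}$-modules. Reading off the formula for $\hat\nabla_\Theta$, $d_i$ sends a generator $\hat e\otimes t^{\boldsymbol\alpha}\omega_I$ with $i\notin I$ to $\pm\, c_i(\hat e)\otimes t^{\boldsymbol\alpha'}\omega_{I\cup\{i\}}$, where $\boldsymbol\alpha'=\boldsymbol\alpha$ if $i\le r$ and $\boldsymbol\alpha'=\boldsymbol\alpha-e_i$ (the $i$-th exponent lowered by one) if $i>r$, and where $c_i$ is the operator $\alpha_i\,\mathrm{id}+\Theta_i$ if $i\le r$, or $\alpha_i\,\mathrm{id}+t_i\Theta_i$ if $i>r$ with $\alpha_i>0$; the remaining possibility in the formula, namely $i>r$ with $\alpha_i=0$ and coefficient $\Theta_i$, cannot occur here, since $i\notin I$ and $\alpha_i=0$ would force $V(t^{\boldsymbol\alpha}\omega_I)_i=0\neq v_i$. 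In particular, $i\notin I$ together with $v_i\neq 0$ forces $\alpha_i\bmod p$ to be a unit. Since $t^{\boldsymbol\alpha}\omega_I\mapsto t^{\boldsymbol\alpha'}\omega_{I\cup\{i\}}$ is a bijection from the type-$0$ basis in degree $q$ onto the type-$1$ basis in degree $q+1$, the map $d_i$ is block-diagonal in this basis with blocks $\pm c_i$, hence a graded isomorphism as soon as each $c_i$ is invertible; and $d_i$, being the connecting morphism of a short exact sequence of complexes, is automatically a morphism of complexes. Granting the invertibility of the $c_i$, it follows that $K_v^*$ is the cone of an isomorphism of complexes, hence acyclic, which is the assertion.

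The only substantive point, and the step I expect to require the most care, is the invertibility of each $c_i$ as an $\mathcal O_{\mathfrak X}$-linear endomorphism of the formally complete bundle $\hat E$; this is handled exactly as in the construction of the automorphisms $\vartheta_i$ above. Using the identity $\sum_{0\le i_1,\dots,i_n<p}\sum_{j\ge 0}f^{p^j-1}_{i_1,\dots,i_n}\,df_{i_1,\dots,i_n}=\sum_{q}\sum_{j\ge 0}(t_q\partial_q f)^{p^j}\,d\log t_q$ already derived, one writes $\Theta_i=\hat\theta_i+s_i\,\mathrm{id}$ and $t_i\Theta_i=t_i\hat\theta_i+s_i\,\mathrm{id}$ with $s_i=-\sum_{q}\sum_{j\ge 0}(t_q\partial_q f)^{p^j}$, so that $c_i=(\alpha_i+s_i)\,\mathrm{id}+\hat\theta_i$ or $(\alpha_i+s_i)\,\mathrm{id}+t_i\hat\theta_i$. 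Because $Z$ is cut out by the functions $f_{i_1,\dots,i_n}$ and each $t_q\partial_q f$ lies in the ideal they generate, $s_i$ is topologically nilpotent on $\mathcal O_{\mathfrak X}$; and $\hat\theta_i$, hence $t_i\hat\theta_i$, is nilpotent since $(E,\theta)$ has nilpotent level $\le p-1$. As $\alpha_i$ reduces to a unit, $c_i$ is a unit plus a topologically nilpotent operator, so it is invertible via the $\mathcal I$-adically convergent Neumann series, just as in the construction of the $\vartheta_i$. This completes the proof.
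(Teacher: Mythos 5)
Your proof is correct and is essentially the paper's argument: the paper identifies $K_v^*$ with the Koszul complex $\mathrm{Kos}(\hat E;\,v_1\mathrm{id}+t_1^{\epsilon_1}\Theta_1,\cdots,v_n\mathrm{id}+t_n^{\epsilon_n}\Theta_n)$ and concludes acyclicity because the entry in a direction with $v_i\neq 0$ is a unit plus a topologically nilpotent operator, hence invertible, while you carry out by hand exactly the contraction that proves this Koszul fact, writing $K_v^*$ as the cone of the direction-$i$ component and checking (via the same invertibility of $\alpha_i\mathrm{id}+\Theta_i$, resp. $\alpha_i\mathrm{id}+t_i\Theta_i$) that this component is an isomorphism of complexes. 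The key input is identical, so this is the same proof in a slightly more explicit packaging.
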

\begin{proof}
To prove this lemma, we need the fact that $K_v^*$ is a Koszul complex, i.e., 
\begin{eqnarray}\label{Kos v}
K_v^*\cong\mathrm{Kos}(\hat E;v_1id+t_1^{\epsilon_1}\Theta_1,\cdots,v_nid+t_n^{\epsilon_n}\Theta_n),
\end{eqnarray}
where
\begin{eqnarray}\label{epsilon_i}
\epsilon_i:=\left\{
\begin{matrix}
 0,&i\leq r~\mathrm{or}~i>r~\mathrm{and}~v_i=0,\\
 1,&~i>r~\mathrm{and}~v_i\neq 0
\end{matrix}
\right.
\end{eqnarray}
and $v=(v_1,\cdots,v_n)$. Since each $\Theta_i$ is topologically nilpotent and at least one of $v_i$ is non-zero for $v\neq0$, we obtain that at least one of $v_iid+t_i^{\epsilon_i}\Theta_i$ is an automorphism of $\hat E$ for $v\neq0$. It follows that for $v\neq0$,  the right hand side of \eqref{Kos v} is acyclic, and hence its left hand side is also acyclic.
\end{proof}
The discussion above shows that the composite of 
\begin{eqnarray}\label{varphi tilde F}
\Omega^*(\hat E,\Theta)\stackrel{\varphi_{\tilde F}}{\to}K_0^*\subset\bigoplus_{v\in\mathbb F_p^n}K_v^*\cong F_*\Omega^*C_{\tilde F}^{-1}(\hat E,\Theta)
\end{eqnarray}
is a quasi-isomorphism.

\emph{Case of $\star=\Omega^*_{g^{-1}}$}. Recall that $g=t_1\cdots t_s$, $D=(t_1\cdots t_r=0)$ and $s\leq r$. By definition, we have a natural identification
$$
F_*\Omega^q_{g^{-1}}C^{-1}_{\tilde F}(\hat E,\Theta)\cong\hat E\otimes F_*\Omega^q_{g^{-1},\mathfrak X/k}(\log\mathfrak D).
$$
Since $\Omega^\bullet_{g^{-1},\mathfrak X/k}(\log\mathfrak D)$ is generated by
$$
d\log g\wedge^\bullet\{\omega_2,\cdots,\omega_n\},~g\wedge^\bullet\{\omega_2,\cdots,\omega_n\}
$$
over $\mathcal O_{\mathfrak X}$, there is an identification
$$
F_*\Omega^\bullet_{g^{-1}}C_{\tilde F}^{-1}(\hat E,\Theta)\cong\hat E\{t^{\boldsymbol{\alpha}}d\log g\wedge\omega_I,gt^{\boldsymbol{\alpha}}\omega_J:\boldsymbol{\alpha}\in\{0,\cdots,p-1\}^n,~I,J\subset\{2,\cdots,n\}\}.
$$
\begin{definition}
Let $B_g$ be the set consisting of all $t^{\boldsymbol{\alpha}}d\log g\wedge\omega_I$ and $gt^{\boldsymbol{\alpha}}\omega_J$. Define a function 
$$
V_g:B_g\to\mathbb F_p^n,~
t^{\boldsymbol{\alpha}}d\log g\wedge\omega_I\mapsto \boldsymbol{\alpha}+\boldsymbol{\alpha}_I,~t^{\boldsymbol{\alpha}}g\omega_J\mapsto\boldsymbol{\alpha}+\boldsymbol{\alpha}_I+\boldsymbol{\alpha}_g,
$$
where $\boldsymbol{\alpha}_I$ is defined as Definition \ref{Def w} and $\boldsymbol{\alpha}_g=(1,\cdots,1,0,\cdots,0)$ with the first $s$ components are $1$.
\end{definition}
By a similar argument as the case of $\star=\Omega^*$, we have
\begin{lemma}
There is a decomposition of complexes
$$
F_*\Omega_{g^{-1}}^*C_{\tilde F}^{-1}(\hat E,\Theta)\cong\bigoplus_{v\in\mathbb F_p^n}K_{g,v}^*,
$$
where $K_{g,v}^\bullet=\hat EV^{-1}_g(v)$ and 
$$K_{g,v}^*\cong\mathrm{Kos}(\hat E;\vartheta_1,\cdots,\vartheta_n),~\vartheta_i:=\left\{
\begin{matrix}
 v_1id+t_1^{\tau_1}\cdots t_s^{\tau_s}\Theta_1,&i=1,\\
 (v_i-v_1)id+t_i^{\epsilon_i}\Theta_i,&1<i\leq s,\\
 v_iid+t_i^{\epsilon_i}\Theta_i,&i>s.
\end{matrix}
\right.
$$
Here $\epsilon_i$ is defined as \eqref{epsilon_i} and
$$
\tau_i:=\left\{
\begin{matrix}
1,&v_i=0,\\
0,&v_i\neq0.
\end{matrix}
\right.
$$
Moreover, \eqref{varphi star=omega} induces an isomorphism of complexes
$$
\varphi_{g,\tilde F}:\Omega^*_{g^{-1}}(\hat E,\Theta)\to K^*_{g,0}
$$
and $K^*_{g,v}$ is acyclic for $v\neq0$.
\end{lemma}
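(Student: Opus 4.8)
The plan is to run, \emph{mutatis mutandis}, the argument already used for $\star=\Omega^*$, with the frame $\{d\log t_1,\dots,d\log t_r,dt_{r+1},\dots,dt_n\}$ replaced by the $g$-adapted frame $\{d\log g, d\log t_2,\dots,d\log t_r, dt_{r+1},\dots,dt_n\}$ and the weight function $V$ replaced by $V_g$. First I would check that the set $B_g$ of monomials $t^{\boldsymbol\alpha}\,d\log g\wedge\omega_I$ and $g\,t^{\boldsymbol\alpha}\,\omega_J$ (with $\boldsymbol\alpha\in\{0,\dots,p-1\}^n$ and $I,J\subset\{2,\dots,n\}$) is an $\mathcal O_{\mathfrak X'}$-basis of $F_*\Omega^\bullet_{g^{-1},\mathfrak X/k}(\log\mathfrak D)$: the $d\log g$-part is handled exactly as the monomial basis in the case $\star=\Omega^*$, while for the $g$-part one notes that multiplication by $g$ carries the basis $\{t^{\boldsymbol\alpha}\omega_J\}$ of the relevant $\mathcal O_{\mathfrak X'}$-free submodule of $F_*\Omega^\bullet_{X/k}(\log D)$ bijectively onto $\{g\,t^{\boldsymbol\alpha}\omega_J\}$ (the wrapping $t_i^p=F^*t_i'$ for $i\le s$ only alters the $\mathcal O_{\mathfrak X'}$-coefficient, not the linear independence).

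Next I would write $\Theta=\Theta_1'\otimes d\log g+\sum_{i=2}^r\Theta_i'\otimes d\log t_i+\sum_{i=r+1}^n\Theta_i'\otimes dt_i$ and, using the expression of the K\"ahler differential $d$ in the $g$-adapted frame already recalled in Step~1 of \S3.1 (namely $d=t_1\partial_1\otimes d\log g+\sum_{2\le i\le s}(t_i\partial_i-t_1\partial_1)\otimes d\log t_i+\sum_{s<i\le r}t_i\partial_i\otimes d\log t_i+\sum_{i>r}\partial_i\otimes dt_i$), compute $\hat\nabla_\Theta=\nabla_{can}+(\mathrm{id}\otimes\zeta_{\tilde F})F^*\Theta$ on a basis element $\hat e\otimes b$, $b\in B_g$. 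The crucial verification is that each term produced is, up to sign, of the form $\hat e'\otimes b'$ with $b'\in B_g$ and $V_g(b')=V_g(b)$; the delicate slot is the $d\log g$-slot, where $t_1\partial_1$ applied to the leading $t_1$-power of $g\,t^{\boldsymbol\alpha}$ either stays in the admissible range $\{0,\dots,p-1\}$ or wraps via $t_i^p=F^*t_i'$ for the various $i\le s$ folded into $g$ — this is precisely what forces the compensating factor $t_1^{\tau_1}\cdots t_s^{\tau_s}$ in front of $\Theta_1'$. Granting this, $\hat E\,V_g^{-1}(v)$ is a subcomplex $K^*_{g,v}$ and $F_*\Omega^*_{g^{-1}}C^{-1}_{\tilde F}(\hat E,\Theta)\cong\bigoplus_{v\in\mathbb F_p^n}K^*_{g,v}$.

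Reading off the diagonal (``stay in the same $V_g$-class'') part of $\hat\nabla_\Theta$ on $K^*_{g,v}$ then identifies it with $\mathrm{Kos}(\hat E;\vartheta_1,\dots,\vartheta_n)$, where the exponents $\tau_i$ (in $\vartheta_1$) and $\epsilon_i$ (in $\vartheta_i$, $i>1$) record exactly when a factor of $t_i$ must be reinserted to land back in $B_g$, i.e.\ when $v_i=0$ (resp.\ when $i>r$ and $v_i=0$), and the shift $t_1\partial_1$ in the $d\log t_i$-slot for $1<i\le s$ produces the scalar $v_i-v_1$. For $v=0$ one checks that $V_g^{-1}(0)\cap B_g$ is exactly $\{\,\zeta_{\tilde F}(F^*\eta):\eta\ \text{a basis form of }\Omega^\bullet_{g'^{-1},\mathfrak X'/k}(\log\mathfrak D')\,\}$, using $\zeta_{\tilde F}(F^*d\log g')=d\log g$ and $\zeta_{\tilde F}(F^*(g'\omega_J'))=g^p\,\zeta_{\tilde F}(F^*\omega_J')$; hence the restriction of the map $\varphi_{\tilde F}$ of \eqref{varphi star=omega} to the Kontsevich subcomplex gives an isomorphism of complexes $\varphi_{g,\tilde F}\colon\Omega^*_{g^{-1}}(\hat E,\Theta)\to K^*_{g,0}$ (the target differential $\hat\nabla_\Theta|_{V_g=0}$ matching $\Theta$ under this identification).

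Finally, for $v\ne0$, acyclicity of $K^*_{g,v}\cong\mathrm{Kos}(\hat E;\vartheta_1,\dots,\vartheta_n)$ follows because at least one $\vartheta_i$ is an automorphism: each $\Theta_i$, hence each $t_i^{\epsilon_i}\Theta_i$ and $t_1^{\tau_1}\cdots t_s^{\tau_s}\Theta_1$, is topologically nilpotent, so if $v_1\ne0$ then $\vartheta_1$ is a unit, if $v_1=0$ and $v_i\ne0$ with $1<i\le s$ then $\vartheta_i=v_i\,\mathrm{id}+\Theta_i$ is a unit, and if $v_1=0$ and $v_i\ne0$ with $i>s$ then $\vartheta_i=v_i\,\mathrm{id}+t_i^{\epsilon_i}\Theta_i$ is a unit. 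The main obstacle is clearly the second step: carrying out the weight-preservation check for the differential in the $g$-adapted frame and extracting the correct diagonal operators $\vartheta_i$ with the exponents $\tau_i,\epsilon_i$, the source of the difficulty being that the $d\log g$-direction absorbs all of $t_1,\dots,t_s$ simultaneously, so the $i=1$ operator mixes the wrapping behaviour of several coordinates at once.
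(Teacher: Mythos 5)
Your proposal follows the route the paper itself intends (its ``proof'' is literally ``by a similar argument as the case $\star=\Omega^*$''), and the two conclusions that actually get used later -- that $\varphi_{g,\tilde F}$ identifies $\Omega^*_{g^{-1}}(\hat E,\Theta)$ with $K^*_{g,0}$, and that $K^*_{g,v}$ is acyclic for $v\neq0$ -- do come out of it; your treatment of the basis $B_g$, of $V_g$ (which is just the restriction of $V$), and of the $v=0$ piece is fine.

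One concrete caveat at exactly the step you flag as the main obstacle. When you apply $\hat\nabla_\Theta$ to a $g$-type element $\hat e\otimes g\,t^{\boldsymbol\alpha}\omega_J$, the derivation $t_1\partial_1$ sitting in the $d\log g$-slot acts on the whole monomial $g\,t^{\boldsymbol\alpha}$, whose exponents at the positions $2,\dots,s$ may also wrap; so in the natural basis the slot-$1$ operator is $t_2^{\tau_2}\cdots t_s^{\tau_s}\,\bigl(v_1\,\mathrm{id}+t_1^{\tau_1}\Theta_1\bigr)$ (the reinserted factor multiplies the scalar as well, not only $\Theta_1$), and for $2\le i\le s$ the slot operator is $(v_i-v_1)\,\mathrm{id}+(\Theta_i-\Theta_1)$, i.e.\ it involves the $g$-adapted component rather than $\Theta_i$. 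For $v=0$ this reproduces $g\Theta_1$ in the first slot and matches the differential of $\Omega^*_{g^{-1}}(\hat E,\Theta)$ under $\varphi_{g,\tilde F}$, so that part of your argument is unaffected; and for $v\neq0$ both presentations are contractible Koszul complexes with the same terms, so the lemma as stated survives. But your final case split ``if $v_1\neq0$ then $\vartheta_1$ is a unit'' fails for the operator the computation actually produces: when $v_1\neq0$ and $v_i=0$ for some $2\le i\le s$, the computed $\vartheta_1$ is divisible by $t_i$ and is not invertible. In that configuration you should instead invoke the slot-$i$ operator, which is a unit because $v_i-v_1=-v_1\neq0$ and $\Theta_i-\Theta_1$ is topologically nilpotent. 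With that small adjustment (or after a change of basis bringing the operators to the form stated in the lemma), your argument closes and is the same as the paper's intended one.
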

Clearly, the lemma above shows that the composite of
$$\Omega_{g^{-1}}^*(\hat E,\Theta)\stackrel{\varphi_{g,\tilde F}}{\to}K_{g,0}^*\subset\bigoplus_{v\in\mathbb F_p^n}K_{g,v}^*\cong F_*\Omega_{g^{-1}}^*C_{\tilde F}^{-1}(\hat E,\Theta)$$ 
is a quasi-isomorphism.

\emph{Case of $\star=\Omega^*_{int}$}. For any $\boldsymbol{w}\in\{0,1\}^r$, set
$$
E_w:=\sum_{\boldsymbol{w}^1+\boldsymbol{w}^2=\boldsymbol{w},w_1,w_2\in\{0,1\}^r}t^{\boldsymbol{w}_1}\Theta^{\boldsymbol{w}_2}\hat E.
$$
Here
$$
t^{\boldsymbol{w}}:=t_1^{w_1}\cdots t_r^{w_r},~\Theta^{\boldsymbol{w}}:=\Theta_1^{w_1}\cdots \Theta_r^{w_r},~\boldsymbol{w}=(w_1,\cdots,w_r)\in\{0,1\}^r.
$$
Define 
$$W:B\to \{0,1\}^r,~t^{\boldsymbol{\alpha}}\omega_I\mapsto(\delta_0^{\alpha_1},\cdots,\delta_0^{\alpha_r}),~\boldsymbol{\alpha}=(\alpha_1,\cdots,\alpha_n)\in\{0,\cdots,p-1\}^n,$$
where $\delta_i^j:=1$ for $i=j$ and $0$ otherwise. 
By \cite[Lemma 3.14]{SZ2}, there is a decomposition
$$
F_*\Omega^\bullet_{int}C_{\tilde F}^{-1}(\hat E,\Theta)\cong\bigoplus_{\beta\in B}\hat E_{W(\beta)}.
$$
Oh the other hand, we have $B=\cup_{v\in\mathbb F_p^n}V^{-1}(v)$. It follows that
\begin{lemma}
There is decomposition of complexes
$$
F_*\Omega^*_{int}C_{\tilde F}^{-1}(\hat E,\Theta)\cong\bigoplus_{v\in\mathbb F_p^n}K^*_{int,v}
$$
with $K^\bullet_{int,v}=\bigoplus_{\beta\in V^{-1}(v)}\hat E_{W(\beta)}$. Moreover, $\varphi_{\tilde F}$ induces an isomorphism of subcomplexes
$$
\varphi_{int,\tilde F}:\Omega^*_{int}(\hat E,\Theta)\to K^*_{int,0}
$$
and $K_{v,int}^*$ is acyclic for $v\neq0$.
\end{lemma}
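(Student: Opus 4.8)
The plan is to derive the three assertions by feeding the $B$-indexed splitting of \cite[Lemma 3.14]{SZ2} through the partition $B=\bigsqcup_{v\in\mathbb F_p^n}V^{-1}(v)$, exactly as was done for $\star=\Omega^*$ but with the $V$-fibres in place of the $W$-fibres. Recall that \cite[Lemma 3.14]{SZ2}, applied to the nilpotent Higgs field $\Theta$ in place of $\hat\theta$, produces a splitting of graded $\mathcal O_{\mathfrak X}$-modules $F_*\Omega^*_{int}C_{\tilde F}^{-1}(\hat E,\Theta)\cong\bigoplus_{\beta\in B}\hat E_{W(\beta)}$, compatible with the splitting $F_*\Omega^*C_{\tilde F}^{-1}(\hat E,\Theta)\cong\bigoplus_{v}K^*_v$ obtained in the case $\star=\Omega^*$ and with the identification $\varphi_{\tilde F}$ of \eqref{varphi star=omega}; in particular the summand $\hat E_{W(\beta)}$ attached to $\beta=t^{\boldsymbol\alpha}\omega_I$ lies in homological degree $|I|$ and inside $K^*_{V(\beta)}$. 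The only thing to check for the first assertion is that $\hat\nabla_\Theta$ never connects summands carrying distinct $V$-values; this is the intersection-complex incarnation of the key observation from the case $\star=\Omega^*$: the monomials occurring in $\hat\nabla_\Theta(\hat e\otimes\beta)$ are, up to sign, the $\gamma_i$ with $i\notin I$, and $V(\gamma_i)=V(\beta)$ for each such $i$. Grouping the summands of \cite[Lemma 3.14]{SZ2} over each fibre $V^{-1}(v)$ then produces subcomplexes $K^*_{int,v}$ with $K^\bullet_{int,v}=\bigoplus_{\beta\in V^{-1}(v)}\hat E_{W(\beta)}$ and the decomposition $F_*\Omega^*_{int}C_{\tilde F}^{-1}(\hat E,\Theta)\cong\bigoplus_{v\in\mathbb F_p^n}K^*_{int,v}$.

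For the second assertion, observe that $K^*_{int,0}$ is a subcomplex of $K^*_0\cong\Omega^*(\hat E,\Theta)$, so under $\varphi_{\tilde F}$ it corresponds to a subcomplex of $\Omega^*(\hat E,\Theta)$; by the part of \cite[Lemma 3.14]{SZ2} that identifies the distinguished summand, this subcomplex is exactly the intersection Higgs complex, the coefficient submodule $\sum_{J\subset I}t_J\Theta_{I-J}\hat E$ at $d_I\log t$ being matched with the $\hat E_{W(\beta)}$ over $\beta\in V^{-1}(0)$. Hence $\varphi_{\tilde F}$ restricts to the asserted isomorphism $\varphi_{int,\tilde F}\colon\Omega^*_{int}(\hat E,\Theta)\to K^*_{int,0}$.

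The main obstacle is the third assertion, the acyclicity of $K^*_{int,v}$ for $v\neq0$: one cannot simply quote \cite[Lemma 3.14]{SZ2}, which records only a splitting of graded modules, and a subcomplex of an acyclic complex need not be acyclic. The plan is to build a contracting homotopy for $K^*_{int,v}$ by refining the one used for $K^*_v$ in the case $\star=\Omega^*$. Choosing $i_0$ with $v_{i_0}\neq0$, the operator $v_{i_0}\,\mathrm{id}+t_{i_0}^{\epsilon_{i_0}}\Theta_{i_0}$, with $\epsilon_{i_0}$ as in \eqref{epsilon_i}, is an automorphism of $\hat E$ because $t_{i_0}^{\epsilon_{i_0}}\Theta_{i_0}$ is topologically nilpotent, and it restricts to an automorphism of each coefficient submodule $\hat E_{W(\beta)}$. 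Forming the Koszul contraction in the $\omega_{i_0}$-direction and post-composing with the inverse of this automorphism gives a candidate homotopy; the delicate point --- and the crux of the argument --- is that this operator preserves $K^*_{int,v}$, i.e.\ that deleting $\omega_{i_0}$ and applying the inverse automorphism is compatible with the intersection constraints encoded by the $\hat E_{W(\beta)}$. Concretely this reduces to the inclusions $t_{i_0}\hat E_{W(\beta)}\subset\hat E_{W(\beta')}$ and $\Theta_{i_0}\hat E_{W(\beta)}\subset\hat E_{W(\beta')}$ for the relevant neighbouring indices $\beta,\beta'$, which are the same elementary facts about intersection coefficients used in \cite[\S3]{SZ2} and which hold verbatim with $\hat\theta$ replaced by $\Theta$. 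Granting this, $K^*_{int,v}$ is contractible for $v\neq0$; combined with the first two assertions, this exhibits the composite $\Omega^*_{int}(\hat E,\Theta)\xrightarrow{\varphi_{int,\tilde F}}K^*_{int,0}\hookrightarrow F_*\Omega^*_{int}C_{\tilde F}^{-1}(\hat E,\Theta)$ as a quasi-isomorphism, completing the case $\star=\Omega^*_{int}$.
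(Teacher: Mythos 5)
Your proof is correct and follows the route the paper itself takes: the paper obtains the lemma in one stroke from the graded decomposition of \cite[Lemma 3.14]{SZ2} together with the partition $B=\bigcup_{v\in\mathbb F_p^n}V^{-1}(v)$ and the observation that $V(t^{\boldsymbol{\alpha}}\omega_I)=V(\pm\gamma_i)$, leaving the identification of the image of $\varphi_{\tilde F}$ and the acyclicity of $K^*_{int,v}$ for $v\neq0$ implicit. What you add is exactly the point worth adding: as you note, a graded-module splitting does not by itself give acyclicity of a subcomplex of an acyclic complex, and your contracting homotopy $\phi_{i_0}^{-1}\iota_{i_0}$ with $\phi_{i_0}=v_{i_0}\mathrm{id}+t_{i_0}^{\epsilon_{i_0}}\Theta_{i_0}$ supplies the missing argument. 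The compatibility you flag as the crux does hold, and in fact simplifies: since the $i$-th entry of $V(t^{\boldsymbol{\alpha}}\omega_I)$ equals $\alpha_i$ for $i\leq r$, every $\beta\in V^{-1}(v)$ has the same log-exponent pattern, so the coefficient submodules attached to $\beta$ and to the index obtained by deleting $\omega_{i_0}$ (legitimate because $v_{i_0}\neq0$, so $i_0$ imposes no intersection constraint) coincide; the only inputs are that $t_{i_0}$ and $\Theta_{i_0}$ preserve this single submodule, which is immediate from its definition as a sum of images of commuting operators, and that the geometric series defining $\phi_{i_0}^{-1}$ then preserves it as well, the submodule being closed in the complete module $\hat E$. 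With that, $dh+hd=\mathrm{id}$ restricts to $K^*_{int,v}$, giving the acyclicity, and the remaining steps of your argument (grouping the summands of \cite[Lemma 3.14]{SZ2} over the $V$-fibres, and matching the $v=0$ summand with $\Omega^*_{int}(\hat E,\Theta)$ through $\varphi_{\tilde F}$) coincide with the paper's reasoning.
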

Clearly, the lemma above shows that the composite of
$$\Omega_{int}^*(\hat E,\Theta)\stackrel{\varphi_{int,\tilde F}}{\to}K_{int,0}^*\subset\bigoplus_{v\in\mathbb F_p^n}K_{int,v}^*\cong F_*\Omega_{int}^*C_{\tilde F}^{-1}(\hat E,\Theta)$$ 
is a quasi-isomorphism.

\emph{Case of $\star=W_q\Omega^*$}. Recall that in this situation,  we have 
$$(E,\theta)\in\mathrm{HIG}(X/k)\subset\mathrm{HIG}((X,D)/k).$$
Write 
$$\theta=\sum_{i=1}^n\theta'_i\otimes dt_i=\sum_{i=1}^rt_i\theta'_i\otimes d\log t_i+\sum_{i=r+1}^n\theta'_i\otimes dt_i.$$
For any $\emptyset\neq I\subset\{1,\cdots,r\}$, set $D_I$ to be the scheme-theoretic intersection of $D_i,i\in I$. Let $\theta_{D_I}$ be the Higgs field on $E_{D_I}:=E|_{D_I}$ induced by $\theta$, namely
$$
\theta_{D_I}:=\sum_{i\in I^c}\theta'_{i,D_I}\otimes dt_{i,D_I},~\theta'_{i,D_I}:=\theta'_i|_{D_I},~t_{i,D_I}:=t_i|_{D_I},~I^c:=\{1,\cdots,n\}-I.
$$
Set $\mathfrak D_I:=D_{I/Z\cap D_I}$ and $f_{D_I}:=f|_{D_I}$. Let $\tilde F_{D_I}:\tilde D_I\to\tilde D_I$ be the Frobenius lifting of $F:D_I\to D_I$ induced by $\tilde F$. In Step 1, replacing the data 
$$
t_i,i\in\{1,\cdots,n\},~f,~(E,\theta),~Z,~\tilde F
$$
by
$$
t_{i,D_I},i\in I^c,~(E_{D_I},\theta_{D_I}),~Z\cap D_I,~\tilde F_{D_I},
$$
then we get a Higgs filed $\Theta_{D_I}$ on $\hat E_{D_I}$.

By abuse of notation, denote by $\varphi_{\tilde F}$ again the composite of morphisms in \eqref{varphi tilde F}. For any $q\geq0$, it is easy to see that $\varphi_{\tilde F}$ induces a morphism of subcomplexes
$$
W_q\varphi_{\tilde F}:W_q\Omega^\bullet_{\log}(\hat E,\Theta)\to W_q\Omega^\bullet_{\log}C^{-1}_{\tilde F}(\hat E,\Theta).
$$
Moreover, we have the following diagram
$$
\xymatrix{
W_{q-1}\Omega^*_{\log}(\hat E,\Theta)\ar[d]^-{W_{q-1}\varphi_{\tilde F}}\ar[r]&W_q\Omega^*_{\log}(\hat E,\Theta)\ar[d]^-{W_q\varphi_{\tilde F}}\ar[r]^-{\mathrm{Res}^H_q}&\bigoplus_{|I|=q}\Omega^*_{\mathfrak D_I/k}(\hat E_{D_I},\Theta_{D_I})\ar[d]^-{\bigoplus_{|I|=q}\varphi_{\tilde F_{D_I}}}[-q]\\
W_{q-1}\Omega^*_{\log}C^{-1}_{\tilde F}(\hat E,\Theta)\ar[r]&W_q\Omega^*_{\log}C^{-1}_{\tilde F}(\hat E,\Theta)\ar[r]^-{\mathrm{Res}^D_q}&\bigoplus_{|I|=q}\Omega^*_{\mathfrak D_I/k}C^{-1}_{\tilde F_{D_I}}(\hat E_{D_I},\Theta_{D_I})[-q].
}
$$
Here $\mathrm{Res}^D_q$ is defined similar to $\mathrm{Res}^H_q$ and the later is the $q$-th surjective residue map defined by 
$$
\hat e\otimes(\sum_{|I|=q}\omega_I\wedge\beta_I+\beta_{q-1})\mapsto((-1)^q\beta_{I,D_I})_{|I|=q},
$$
where $\beta_I\in\Omega^\bullet_{\mathfrak X/k}(\hat E,\Theta)$ and $\beta_{q-1}\in W_{q-1}\Omega^\bullet_{(\mathfrak X,\mathfrak D)/k}(\hat E,\Theta)$. Since we have proved that $\varphi_{\tilde F_{D_I}}$ is a quasi-isomorphism, coupled with the $5$-lemma, we conclude that $W_q\varphi_{\tilde F}$ is a quasi-isomorphism.

\section{De Rham-Higgs comparison ($df=0$)}
Let $k$ be a perfect field of characteristic $p$, let $X$ be a smooth variety of dimension $n$ over $k$, and let $D\subset X$ a simple normal crossing divisor on $X$. Assume that $(X,D)/k$ admits a $W_2(k)$-lifting, say $(\tilde X,\tilde D)/W_2(k)$. Let $(E,\theta)$ be a nilpotent Higgs sheaf on $(X,D)/k$ of level $\leq\ell$ ($\ell<p$) and set $(H,\nabla):=C^{-1}_{(\tilde X,\tilde D)/W_2(k)}(E,\theta)$. Then we have the following \emph{de Rham-Higgs comparison}, which vastly generalizes the decomposition theorem of Deligne-Illusie\cite[Théorème 2.1]{DI}.
\begin{theorem}[{{Ogus-Vologodsky\cite[Corollary 2.27]{OV}, Schepler\cite[Corollary 5.7]{Schepler}}}]\label{OV}
In the derived category $D(X)$, we have
$$
\tau_{<p-\ell}F_*\Omega^*(H,\nabla)\cong\tau_{<p-\ell}\Omega^*(E.\theta).
$$   
\end{theorem}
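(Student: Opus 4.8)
The plan is to follow the method of Deligne--Illusie \cite{DI}, replacing the classical Cartier isomorphism by the coefficient version built into the inverse Cartier transform $C^{-1}$, much as Schepler does \cite{Schepler}. First I would fix an open affine covering $\mathcal U=\{U_i\}$ of $X$ so fine that each $\widetilde U_i$ admits a Frobenius lifting $\tilde F_i\colon \widetilde U_i\to\widetilde U_i$ with $\tilde F_i^*(\widetilde D|_{\widetilde U_i})=p\,\widetilde D|_{\widetilde U_i}$, together with coordinates $\tilde t_1,\dots,\tilde t_n$ on $\widetilde U_i$ satisfying $\tilde F_i^*(\tilde t_j)=\tilde t_j^{\,p}$. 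Recall from \S2.5 that $C^{-1}$ is then obtained by gluing the local pieces $(H,\nabla)|_{U_i}\cong(F^*E,\nabla_{can}+(\mathrm{id}\otimes\zeta_{\tilde F_i})F^*\theta)$ along the cocycle $G_{ij}=\exp((\mathrm{id}\otimes h_{ij})F^*\theta)$, where $\zeta_{\tilde F_i}=d\tilde F_i^*/p$ and $h_{ij}=(\tilde F_j^*-\tilde F_i^*)/p$; note that $G_{ij}$ is a \emph{finite} sum of at most $\ell+1$ terms because $F^*\theta$ is nilpotent of level $\leq\ell<p$.

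The second step is the local de Rham--Higgs comparison on a single $U_i$. Using the coordinate basis $F_*\mathcal O_{U_i}=\mathcal O_{U_i}\{t^{\boldsymbol\alpha}:\boldsymbol\alpha\in\{0,\dots,p-1\}^n\}$, the complex $F_*\Omega^*(H,\nabla)|_{U_i}$ splits as a direct sum $\bigoplus_{v\in\mathbb F_p^n}K_v^*$ of Koszul-type complexes — this is exactly the decomposition carried out in \S3.2 specialized to $f=0$. One checks that $K_0^*$ is identified with $\Omega^*(E,\theta)|_{U_i}$ through an explicit map $\varphi_{\tilde F_i}$ assembled from $\zeta_{\tilde F_i}$, while for $v\ne0$ the complex $K_v^*$ is a Koszul complex on an $n$-tuple of endomorphisms one of which is invertible (the remaining ones involving only the nilpotent $F^*\theta$), hence acyclic. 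This produces local quasi-isomorphisms $\varphi_{\tilde F_i}\colon\Omega^*(E,\theta)|_{U_i}\to F_*\Omega^*(H,\nabla)|_{U_i}$, with no truncation needed so far.

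The core of the argument is to glue the $\varphi_{\tilde F_i}$ into a single morphism in $D(X)$. On an overlap $U_{ij}$ the maps $\varphi_{\tilde F_i}$ and $\varphi_{\tilde F_j}$ differ, but the difference is controlled by $h_{ij}$ together with the expansion of $G_{ij}$ into powers of $F^*\theta$, and it is chain homotopic to zero by an explicit homotopy built by contracting forms against $h_{ij}$. Iterating these homotopies over higher overlaps, I would assemble a morphism of complexes
$$
\Phi\colon \tau_{<p-\ell}\,\Omega^*(E,\theta)\longrightarrow \check{\mathcal C}\!\big(\mathcal U,\ \tau_{<p-\ell}F_*\Omega^*(H,\nabla)\big)
$$
into the \v{C}ech complex of $\mathcal U$, whose $0$-th \v{C}ech component is $\bigoplus_i\varphi_{\tilde F_i}$ and whose $s$-th \v{C}ech component is a correction term involving an $s$-fold product of the $h_{i_0i_1},\dots,h_{i_{s-1}i_s}$ together with up to $\ell$ further factors of $F^*\theta$ coming from the $G_{i_ki_{k+1}}$. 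The bookkeeping of form-degrees in this \v{C}ech--de Rham double complex — verifying that $\Phi$ commutes with both differentials and that all correction terms stay within the ambient range — is exactly what requires, and produces, the bound $p-\ell$; I expect this to be the main technical obstacle, since the nonlinear cocycle $G_{ij}$ makes the homotopies genuinely more involved than in \cite{DI}.

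Finally, the morphism $\Phi$ is a quasi-isomorphism: this may be checked after restricting to each $U_i$, where the \v{C}ech complex degenerates to $\tau_{<p-\ell}F_*\Omega^*(H,\nabla)|_{U_i}$ and $\Phi$ becomes the local quasi-isomorphism $\varphi_{\tilde F_i}$ of Step 2. Since $\check{\mathcal C}(\mathcal U,\tau_{<p-\ell}F_*\Omega^*(H,\nabla))$ represents $\tau_{<p-\ell}F_*\Omega^*(H,\nabla)$ in $D(X)$, this gives the desired isomorphism. Alternatively, the statement can be deduced from the Cartier transform equivalence of categories of Ogus--Vologodsky \cite{OV} together with a generalized Cartier isomorphism identifying the graded pieces of the conjugate filtration on $\Omega^*(H,\nabla)$ with those of the Higgs complex; but the \v{C}ech-theoretic proof above is the one compatible with the refinements pursued in \S4.
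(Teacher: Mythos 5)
Your proposal is correct and follows essentially the same route the paper takes for this statement: the local Cartier quasi-isomorphisms $\varphi_{\tilde F_i}$ obtained from Frobenius liftings and the Koszul-type decomposition (the $f=0$ case of \S3), glued into a quasi-isomorphism $\varphi\colon\tau_{<p-\ell}\Omega^*(E,\theta)\to\check{\mathcal C}(\mathcal U,\tau_{<p-\ell}F_*\Omega^*(H,\nabla))$ by higher homotopies built from the $h_{ij}$ and the nilpotent $F^*\theta$, compared with $F_*\Omega^*(H,\nabla)$ via the augmentation $\rho$ — exactly the \v{C}ech construction of \cite{DI} as extended in \cite{SZ2} and recalled in \S4.1. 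No essential difference to report.
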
 
As shown by Petrov\cite[Corollary 10.7]{P1}, the truncation $\tau_{<p-\ell}$ is necessary. However, to derive nice geometric consequences, such as the $E_1$-degeneration and vanishing theorem, this truncation is not expected to appear. Obviously, it can be automatically dropped when $\dim X<p-\ell$. Recently, a beautiful result of Petrov in \cite[Corollary 4.6]{P2} demonstrated that $\tau_{<p}$ can be dropped, if $X$ is quasi-$F$-split, $D=\emptyset$, and $(E,\theta)=(\mathcal O_X,0)$. In this section, we show that this truncation can be dropped by additionally requiring that the logarithmic cotangent bundle $\Omega^1_{X/k}(\log D)$ splits into a direct sum of subbundles of rank $< p-\ell$. 

\subsection{Higher homotopies}
Theorem \ref{OV} was re-proven by Sheng and the author\cite{SZ2} using a different approach, inspired by the $\mathrm{\check{C}}$ech construction of Deligne-Illusie\cite{DI}. More precisely, we constructed an isomorphism between $\tau_{<p-\ell}\Omega^*(E,\theta)$ and $\tau_{<p-\ell}F_*\Omega^*(H,\nabla)$ in the derived category $D(X)$ as follows:
$$
\tau_{<p-\ell}\Omega^*(E,\theta)\xrightarrow{\varphi_\mathrm{SZ}}\check{\mathcal C}(\mathcal U,\tau_{<p-\ell}F_*\Omega^*(H,\nabla))\xleftarrow{\rho}\tau_{<p-\ell}F_*\Omega^*(H,\nabla).
$$
Here $\mathcal U=\{U_i\}_{i\in I}$ is an open affine covering of $X$, $\rho$ is the natural augmentation morphism and $\varphi$ is a quasi-isomorphism of complexes. The construction of $\varphi$ is intricate, we outline it below. For any $i\in I$, we take a Frobenius lifting $\tilde F_{U_i}:\tilde U_i\to\tilde U_i$ of $F_{U_i}:U_i\to U_i$ such that $\tilde F^*_{\tilde U}(\tilde D|_{\tilde U})=p\tilde D|_{\tilde U}$. As considered in \eqref{varphi star=omega}, we construct a local Cartier quasi-isomorphism
$$
\varphi_{\tilde F_{U_i}}:\Omega^*(E,\theta)|_{U_i}\to F_*\Omega^*(H,\nabla)|_{U_i},~e\otimes\omega_1\wedge\cdots\wedge\omega_s\mapsto \iota_{\tilde F_{U_i}}(F^*e)\otimes\zeta_{\tilde F_{U_i}}(\omega_1)\wedge\cdots\wedge\zeta_{\tilde F_{U_i}}(\omega_s).
$$
 If we only regard $\varphi$ as a morphism of graded $\mathcal O_X$-modules, then
$$
\varphi_\mathrm{SZ}=\bigoplus_{r=0}^\infty\varphi_\mathrm{SZ}^r,~\varphi_\mathrm{SZ}^r:\tau_{<p-\ell}\Omega^\bullet(E,\theta)\to\check{\mathcal C}^r(\mathcal U,\tau_{<p-\ell}F_*\Omega^\bullet(H,\nabla)[-r]).
$$
In particular, $\varphi^0$ is induced by the family $\{\tau_{<p-\ell}\varphi_{\tilde F_{U_i}}\}_{i\in I}$. We say \emph{the family $\{\varphi_\mathrm{SZ}^r\}_{r>0}$ is a family of higher homotopies of $\varphi^0_{\mathrm{SZ}}$.}

Next, we further assume that the logarithmic cotangent bundle $\Omega^1_{X/k}(\log D)$ splits into a direct sum of subbundles of rank $<p-\ell$:
\begin{eqnarray}\label{splitting cotangent bundle}
\Omega=\bigoplus_{i=1}^\beta\Omega_i,~\mathrm{rank}(\Omega_i)<p-\ell,~\Omega:=\Omega^1_{X/k}(\log D).
\end{eqnarray}
Using this splitting, we construct a quasi-isomorphism of complexes
\begin{eqnarray}\label{Cech quasi-iso without truncation}
\varphi_{\underline \Omega}:\Omega^*(E,\theta)\to\check{\mathcal C}(\mathcal U,F_*\Omega^*(H,\nabla)),~\underline\Omega:=(\Omega_1,\cdots,\Omega_\beta).
\end{eqnarray}
In particular, it gives rise to a desired isomorphism between $\Omega^*(E,\theta)$ and $F_*\Omega^*(H,\nabla)$ (without truncation) in the derived category $D(X)$. Similar to $\varphi_\mathrm{SZ}$, $\varphi_Z^0$ is induced  by the family $\{\varphi_{\tilde F_{U_i}}\}_{i\in I}$, but its higher homotopies are different from $\varphi_\mathrm{SZ}$ in general.

We now give an informal but suggestive explanation for the existence of $\varphi_{\underline\Omega}$. To simplify the discussion, we may assume $(E, \theta) = (\mathcal{O}_X, 0)$ ($\ell=0$) and $D = \emptyset$. According to the splitting~\eqref{splitting cotangent bundle}, we may heuristically regard it as inducing a decomposition $X = \prod_{i=1}^\beta X_i$, such that each $\Omega_i$ is the pullback of the cotangent bundle of $X_i$. By the result of Deligne–Illusie~\cite{DI}, each de Rham complex $F_*\Omega^\bullet_{X_i/k}$ is decomposable, which in turn suggests that $F_*\Omega^\bullet_{X/k}$ is also decomposable.

To construct $\varphi_{\underline\Omega}$, we recall some notations from \cite[\S3.1]{SZ2}. Let $\mathcal L$ be the sheaf of local Frobenius liftings $\tilde F$ to $\tilde X$ such that $\tilde F^*(\tilde D)=p\tilde D$. 
Let $\Delta_\ast(\mathcal L)$ be the simplicial complex attached to $\mathcal L$, which is described as follows. For $r \geq 0$, $\Delta_r(\mathcal L)$ is the sheaf associated to the presheaf of abelian groups, which assigns to an open subset $U \subset X$ the free abelian group generated by elements of $\Gamma(U, \mathcal L^{r+1})$, where $\mathcal L^{r+1}$ is the direct product of $(r + 1)$ copies of $\mathcal L$. The boundary operator 
\[
\partial : \Delta_{r+1}(\mathcal L) \to \Delta_r(\mathcal L), \quad r \geq 0
\]
is given as usual:
\[
(\tilde F_0, \cdots, \tilde F_{r+1}) \mapsto \sum_{q=0}^{r+1} (-1)^q (\tilde F_0, \cdots, \widehat{\tilde F_q}, \cdots, \tilde F_{r+1}).
\]
Set $\Delta_r(\mathcal L) = 0$ for $r < 0$. For \( \mathcal F^\ast \), \(\mathcal  G^\ast \), two complexes of sheaves of \(\mathcal{O}_X\)-modules,  
we define \(\mathcal Hom^\ast_{\mathcal{O}_X}(\mathcal F^\ast, \mathcal G^\ast)\) to be the associated Hom complex.  
Explicitly, for \( r \in \mathbb{Z} \),  
\[
\mathcal Hom^r_{\mathcal{O}_X}(\mathcal F^\ast, \mathcal G^\ast) := \bigoplus_{i \in \mathbb{Z}} 
\mathcal Hom_{\mathcal{O}_X}(\mathcal F^i, \mathcal G^{i+r}),
\]
and
\[
d_{\mathcal Hom} : \mathcal Hom^r_{\mathcal{O}_X}(\mathcal F^\ast, \mathcal G^\ast) 
\to \mathcal Hom^{r+1}_{\mathcal{O}_X}(\mathcal F^\ast, \mathcal G^\ast)
\]
sends 
\[
\{f^i \in \mathcal Hom_{\mathcal{O}_Y}(\mathcal F^i, \mathcal G^{i+r})\}_{i \in \mathbb{Z}}
\]
to
\[
\{d_{\mathcal G^\ast} \circ (-1)^{i+r}f^i + (-1)^{i+r+1} f^{i+1} \circ d_{\mathcal F^\ast} \in 
\mathcal Hom_{\mathcal{O}_X}(\mathcal F^i,\mathcal G^{i+r+1})\}_{i \in \mathbb{Z}}.
\]

\begin{definition}
 An $\mathcal L$-indexed $\infty$-homotopy from $\mathcal F^*$ to $\mathcal G^*$ is a morphism of complexes of sheaves of abelian groups
\[
\mathrm{Ho} : \Delta_\ast(\mathcal L) \to \mathcal Hom^\ast_{\mathcal{O}_X}(\mathcal F^*, \mathcal G^*)
\]
such that $\mathrm{Ho}^0(\tilde F)$ is a quasi-isomorphism for any $\tilde F\in\mathcal L$.
\end{definition}

By \cite[Proposition 3.2]{SZ2}, the existence of $\varphi_{\underline\Omega}$ is equivalent to the existence of an $\mathcal L$-indexed homotopy 
\begin{eqnarray}\label{L-indexed homotopy theta to nabla}
\mathrm{Ho}_{\underline\Omega}: \Delta_\ast(\mathcal L) \to \mathcal Hom^\ast_{\mathcal{O}_X}(\Omega^*(E,\theta), F_*\Omega^*(H,\nabla)),~\tilde F\in\mathcal L\mapsto\varphi_{\tilde F}.
\end{eqnarray}

\begin{theorem}\label{de-Hig without truncation 2}
With the notation and assumptions introduced above Theorem \ref{OV}, suppose further that $\Omega^1_{X/k}(\log D)$ splits into a direct sum of subbundles of rank $<p-\ell$ as in \eqref{splitting cotangent bundle}. Then there is a desired $\mathcal L$-indexed $\infty$-homotopy \eqref{L-indexed homotopy theta to nabla}. In particular, in the derived category $D(X)$ we have
$$
 F_*\Omega^*(H,\nabla)\cong\Omega^*(E,\theta).
$$
 \end{theorem}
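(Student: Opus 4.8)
The plan is to construct the $\mathcal L$-indexed $\infty$-homotopy $\mathrm{Ho}_{\underline\Omega}$ directly, by mimicking the $\check{\mathrm C}$ech construction of \cite{SZ2} but replacing the canonical (divided-power) splitting of $\Omega^q(E,\theta)\hookrightarrow\Omega^1(E,\theta)^{\otimes q}$ with the non-canonical splitting coming from the decomposition $\Omega^1_{X/k}(\log D)=\bigoplus_i\Omega_i$. Concretely, for an ordered tuple $(\tilde F_0,\dots,\tilde F_r)$ of local Frobenius liftings one builds an element $\mathrm{Ho}_{\underline\Omega}^r(\tilde F_0,\dots,\tilde F_r)\in\mathcal Hom^{-r}_{\mathcal O_X}(\Omega^*(E,\theta),F_*\Omega^*(H,\nabla))$ out of the contraction operators $h_{ij}=(\tilde F_j^*-\tilde F_i^*)/p$ and the maps $\zeta_{\tilde F_i}$, precomposed with the chosen section $\Omega^q\to\bigoplus_{q_1+\cdots+q_\beta=q}\wedge^{q_1}\Omega_1\otimes\cdots\otimes\wedge^{q_\beta}\Omega_\beta$. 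The key point is that in this section the combinatorial denominator that appears is $\prod_i q_i!$ rather than $q!$; since each $q_i\le\operatorname{rank}(\Omega_i)<p-\ell$ and the Higgs field contributes at most $\ell$ further factors from $\exp((\mathrm{id}\otimes h_{ij})F^*\theta)$, every denominator occurring in the formulas for $\mathrm{Ho}_{\underline\Omega}^r$ is a unit in $k$. First I would write down $\mathrm{Ho}^0_{\underline\Omega}(\tilde F)=\varphi_{\tilde F}$ as in \eqref{varphi star=omega}, recall from \S3 (the local step 2 of the proof of Theorem~\ref{main thm s2}) that each $\varphi_{\tilde F}$ is a quasi-isomorphism, and then set up the higher terms $\mathrm{Ho}^r_{\underline\Omega}$, $r\ge1$.

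The second step is to verify the cocycle identity
\[
d_{\mathcal Hom}\circ\mathrm{Ho}_{\underline\Omega}=\mathrm{Ho}_{\underline\Omega}\circ\partial,
\]
i.e.\ that $\mathrm{Ho}_{\underline\Omega}$ is indeed a morphism of complexes $\Delta_*(\mathcal L)\to\mathcal Hom^*_{\mathcal O_X}(\Omega^*(E,\theta),F_*\Omega^*(H,\nabla))$. This is the heart of the argument and is essentially the same bookkeeping as in \cite[\S3.1]{SZ2}, except that one must check the chosen non-canonical section interacts correctly with the exterior differential $\nabla$ and the wedge structure. The cleanest way to organize it is factorwise: because $\underline\Omega$ is an \emph{ordered} splitting, the section is a tensor product of the standard sections on each $\wedge^{q_i}\Omega_i$, and the whole construction becomes, up to sign conventions, a cup product over the $\beta$ factors of the rank-one-block constructions $\mathrm{Ho}_{\tilde X_i}$ from \cite{SZ1}; compatibility of $d_{\mathcal Hom}$, $\partial$, $\zeta$, and $h_{ij}$ with this cup product then follows formally. (This is morally the content of the Künneth formula stated later in the excerpt, and one could alternatively quote it, but it is cleaner to prove the homotopy identity by hand here.)

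Once $\mathrm{Ho}_{\underline\Omega}$ is constructed, the final statement is immediate: by \cite[Proposition 3.2]{SZ2}, an $\mathcal L$-indexed $\infty$-homotopy yields a quasi-isomorphism $\varphi_{\underline\Omega}\colon\Omega^*(E,\theta)\to\check{\mathcal C}(\mathcal U,F_*\Omega^*(H,\nabla))$ for any open affine covering $\mathcal U$, and since the augmentation $\rho\colon F_*\Omega^*(H,\nabla)\to\check{\mathcal C}(\mathcal U,F_*\Omega^*(H,\nabla))$ is a quasi-isomorphism on an affine cover, composing gives the desired isomorphism $F_*\Omega^*(H,\nabla)\cong\Omega^*(E,\theta)$ in $D(X)$ — with no truncation, because no denominator $\ge p-\ell$ ever appeared. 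The main obstacle I anticipate is purely combinatorial: keeping track of signs and of which multi-index $q_i$ each contraction $h_{ij}$ or derivative $\nabla$ lands in, so that the identity $d_{\mathcal Hom}\circ\mathrm{Ho}_{\underline\Omega}=\mathrm{Ho}_{\underline\Omega}\circ\partial$ holds on the nose; the factorwise/cup-product reorganization is what makes this tractable, since it reduces the check to the already-established rank-block case together with the standard compatibility of $\check{\mathrm C}$ech cup products with differentials.
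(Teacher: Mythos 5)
Your overall strategy matches the paper's: build an explicit $\mathcal L$-indexed $\infty$-homotopy whose degree-zero part is the local Cartier quasi-isomorphism $\varphi_{\tilde F}$, use the section of $\Omega^{r+s}\hookrightarrow\tilde\Omega^{\otimes(r+s)}$ induced by the ordered splitting so that the only denominators are blockwise factorials $q_i!$ with $q_i<p-\ell$ (hence units), and then conclude via \cite[Proposition 3.2]{SZ2} together with the augmentation $\rho$. Your first and last steps are essentially the paper's.

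The genuine gap is in your second step, the verification of $d_{\mathcal Hom}\circ\mathrm{Ho}_{\underline\Omega}=\mathrm{Ho}_{\underline\Omega}\circ\partial$. You propose to reduce it ``factorwise'' to a cup product of the rank-one-block constructions of \cite{SZ1}, claiming the compatibility of $d_{\mathcal Hom}$, $\partial$, $\zeta$ and $h_{ij}$ with this cup product is formal. It is not: the splitting $\Omega^1_{X/k}(\log D)=\bigoplus_i\Omega_i$ is only a splitting of a vector bundle, and none of the remaining data respects it. The Frobenius liftings $\tilde F\in\mathcal L$ are arbitrary, so $\zeta_{\tilde F}(F^*\omega)$ and $dh_{\tilde F_{q-1}\tilde F_q}(F^*\omega)$ for $\omega\in\Omega_i$ do not land in (the pullback of) $\Omega_i$, and the Higgs field $\theta$ mixes the blocks as well. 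The cup-product identification in the paper's K\"unneth theorem is proved only under the product hypotheses \eqref{intro assumption splitting} (product pair, external-product Higgs bundle, product covering, and product Frobenius liftings), precisely because one needs identities like $\pi_1^*h_{\tilde F_{j_1^i}\tilde F_{j_1^{i+1}}}(\omega)=h_{\tilde F_{(j_1^i,j_2^i)}\tilde F_{(j_1^{i+1},j_2^{i+1})}}(\pi_1^*\omega)$, which have no analogue for a general splitting; moreover that theorem is itself deduced from the homotopy-comparison Proposition \ref{intro thm homotopy}, so quoting it here would be circular. In the paper, the homotopy identity \eqref{infty homotopy relation} is instead established by a direct combinatorial computation: the coefficients $C(\underline i,\underline S,\underline{\overline j})$ of Construction \ref{construction higher homotopy} are not products of single-block coefficients but depend on the ordering of the levels $\lambda_1\geq\cdots\geq\lambda_r$ through telescoping factors $\bigl[(j^q+s'_q)+\cdots+(j^{a_i}+s'_{a_i})\bigr]^{-1}$, and the identity holds only because of nontrivial cancellations (e.g.\ $C(\cdots,S_{q-1}\cup S_q,\cdots)-C(\cdots,S_q\cup S_{q+1},\cdots)=(j^q+s'_q)\,C(S_0,\cdots,S_r,\underline{\overline j})$ when $D(\lambda_q)=D(\lambda_{q+1})$) among the three families of terms coming from $\nabla\varphi(r,s-1)$, the \v{C}ech alternating sum, and $\varphi(r,s)\theta$. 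This design and verification of the coefficients is the heart of the proof, and your proposal does not supply it or a valid substitute.
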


The construction of $\mathrm{Ho}_{\underline\Omega}$ requires some preliminaries. Suppose that $\Omega$ is free over $\mathcal O_X$ with basis $\omega_1,\cdots,\omega_n$, and that there is a nondecreasing surjective function 
\begin{eqnarray}\label{D}
D:\{1,\cdots,n\}\to\{1,\cdots,\beta\}
\end{eqnarray}
such that $\{\omega_j:j\in D^{-1}(i)\}$ forms a basis for $\Omega_i$ over $U$.  Write $\theta=\sum_{i=1}^n\theta_i\otimes\omega_i$. For any $r>0,s\geq0$ and any Frobenius liftings $\tilde F_0,\cdots,\tilde F_r\in\Gamma(X,\mathcal L)$, we will construct an $\mathcal O_X$-linear morphism of complexes
\begin{eqnarray}\label{varphi r s}
\varphi(r,s)_{\tilde F_0,\cdots,\tilde F_r}:\Omega^{r+s}(E,\theta)\to F_*\Omega^s(H,\nabla).
\end{eqnarray}
Its construction requires the following notation table.
\begin{itemize}
\item Let $T_{\underline\Omega}(r,s)$ be the set of all triples $(\underline i,\underline S,\underline{\overline j})$ satisfying the following conditions:
\begin{itemize}
    \item $\underline S:=(S_0,\cdots,S_r)$, where $S_0,\cdots,S_r\subset\{1,\cdots,n\}$ and 
    $$\#S_0+\cdots+\#S_r=\#(S_0\cup\cdots\cup S_r)=s;$$
    \item $\underline{\overline j}:=(j^1_1,\cdots,j^1_n;\cdots;j^r_1,\cdots,j^r_n)$, where $0\leq j^q_l<p$;
    \item $\lambda_1\geq\cdots\geq\lambda_r$, where $\lambda_q := \max D( S_q \cup \{ l : j^q_l \neq 0 \} \big)$;
    \item $\underline i:=(i_1,\cdots,i_r)\in D^{-1}(\lambda_1)\times\cdots\times D^{-1}(\lambda_r)$ and
    $$
    \#(\{i_1,\cdots,i_r\}\cup S_0\cup\cdots\cup S_r)=r+s;
    $$
    \item $j^q_{i_q}>0$ for any $1\leq q\leq r$.
\end{itemize}
    \item Let  \((\underline i,\underline S, \underline{\overline{j}})\in T_{\underline\Omega}(r,s) \).  For any \( 1 \leq i \leq \beta \), set  
\[
a_i := \max  \{q:\lambda_q = i \}  ,~ 
b_i := \min \{ q : \lambda_q = i \}.
\]
For any $1\leq q\leq r$, set 
$$s'_q := \# ( S_q \cap D^{-1}(\lambda_q)),$$
where we stipulate that $\mathrm{max}(\emptyset):=0$.
Now, set  
\begin{equation}\label{C}
C(\underline i,\underline S, \underline{\overline{j}}) := \prod_{i=1}^\beta c_i,
\end{equation}  
where  
\[
c_i := 
\begin{cases}
\prod_{b_i \leq q \leq a_i} \Big[ (j^q + s'_q) + \cdots + (j^{a_i} + s'_{a_i}) \Big]^{-1}, & \text{if } b_i > 0~\mathrm{and}~\sum_{q=b_i}^{a_i}(j^q+s_q')<p, \\
1, & \text{otherwise}.
\end{cases}
\]  
Here,  \( j^q := \sum_{l \in D^{-1}(D(\lambda_q))} j^q_l \) (we stipulate that $\sum_\emptyset:=0$). 

\item $\theta^{\underline{\overline j}}:=\theta_1^{j_1^1+\cdots+j^r_1}\cdots\theta_n^{j^1_n+\cdots+j^r_n}$. 
    \item $\zeta_{i,j}:=\zeta_{\tilde F_i}(F^*\omega_j)$ and $h_{i,j}:=h_{\tilde F_{i-1}\tilde F_i}(F^*\omega_j)$.
    \item $h^{[\underline{\overline j}]}:=h^{[j^1_1]}_{1,1}\cdots h^{[j^1_n]}_{1,n}\cdots h^{[j^r_1]}_{r,1}\cdots h^{[j^r_n]}_{r,n}$, where $a^{[j]}:=a^j/j!$.
    \item For \( S = \{i_1, \dots, i_m\} \subset \{1, \dots, n\} \) with \( i_1 < \cdots < i_m \), we define  
\[
\zeta_{0,S} := \zeta_{0,i_1} \wedge \cdots \wedge \zeta_{0,i_m}, \quad dh_{q,S} := dh_{q,i_1} \wedge \cdots \wedge dh_{q,i_m}.
\]  
By convention, we set \( \zeta_{0,\emptyset} := 1 \) and \( dh_{q,\emptyset} := 1 \). For subsets \( S_0, \dots, S_r \subset \{1, \dots, n\} \), define  
\[
\boldsymbol{\omega}_{\underline{S}} := \zeta_{0,S_0} \wedge dh_{1,S_1} \wedge \cdots \wedge dh_{r,S_r}, \quad \underline{S} = (S_0, \dots, S_r).
\]
\end{itemize}
\begin{construction}\label{construction higher homotopy}
Note that any section $\boldsymbol{e}\in\Omega^{r+s}(E,\theta)$ can be uniquely written as 
\begin{eqnarray}\label{section e}
\sum_{1\leq i_1<\cdots<i_{r+s}\leq n}e_{i_1,\cdots,i_{r+s}}\otimes(\omega_{i_1}\wedge\cdots\wedge\omega_{i_{r+s}}),~e_{i_1,\cdots,i_{r+s}}\in E.
\end{eqnarray}
Define
\begin{eqnarray}\label{key construction}
\varphi(r,s)_{\tilde F_0,\cdots,\tilde F_r}(\boldsymbol{e}):=\sum_{(\underline i,\underline S,\underline{\overline j})\in T_{\underline\Omega}(r,s)}C(\underline i,\underline S,\underline{\overline j})j^1_{i_1}\cdots j^r_{i_r}\iota_{\tilde F_0}(F^*(\theta^{\underline{\overline j}}\theta_{i_1}^{-1}\cdots\theta_{i_r}^{-1}e_{\underline S,\underline i}))(\boldsymbol{\omega}_{\underline S}h^{[\underline{\overline j}]}),
\end{eqnarray}
where $e_{\underline S,\underline i}:=e_{i^0_1,\cdots,i^0_{s_0},\cdots,i^r_1,\cdots,i^r_{s_r},i_1,\cdots,i_r}\footnote{By convention, we set \( e_{\sigma(1), \cdots, \sigma(r+s)} := \mathrm{sgn}(\sigma) e_{i_1, \cdots, i_{r+s}} \), where \( \sigma \) is a permutation of \( \{1, \dots, n\} \). If \( i_q = i_l \) for some \( q \neq l \), we set \( e_{i_1, \dots, i_{r+s}} := 0 \).
},
$ $\underline S=(S_0,\cdots,S_r), S_q=\{i^q_1,\cdots,i^q_{s_q}\}$ with $i^q_1<\cdots<i^q_{s_q}$ (if $S_q\neq\emptyset$).

\end{construction}
\begin{lemma}\label{indep of basis}
The construction of $\varphi(r,s)_{\tilde F_0,\cdots,\tilde F_r}$ is independent of the choice of basis $\omega_1,\cdots,\omega_n$. 
\end{lemma}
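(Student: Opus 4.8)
The claim is that $\varphi(r,s)_{\tilde F_0,\cdots,\tilde F_r}$, as defined by Construction~\ref{construction higher homotopy}, does not depend on the chosen adapted basis $\omega_1,\cdots,\omega_n$ of $\Omega=\bigoplus_i\Omega_i$. Here ``adapted'' means that each $\{\omega_j:j\in D^{-1}(i)\}$ is a basis of $\Omega_i$, where $D$ is the nondecreasing surjection of \eqref{D}; in particular the allowable changes of basis are exactly the block-diagonal ones, i.e.\ those of the form $\omega_j\mapsto\sum_{l\in D^{-1}(D(j))}a_{jl}\omega_l$ with $(a_{jl})$ invertible over $\mathcal O_X$ blockwise. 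The plan is to show invariance under the two standard generators of this block-diagonal group: (i) rescaling a single basis vector, $\omega_{j_0}\mapsto u\,\omega_{j_0}$ with $u\in\mathcal O_X^\times$, all others fixed; and (ii) an elementary transvection within one block, $\omega_{j_0}\mapsto\omega_{j_0}+a\,\omega_{j_1}$ with $D(j_0)=D(j_1)$, $j_0\neq j_1$, $a\in\mathcal O_X$, all others fixed. Since these generate the block-diagonal subgroup of $GL_n(\mathcal O_X)$ and $\varphi(r,s)$ is $\mathcal O_X$-linear in $\boldsymbol e$, invariance under all admissible changes of basis follows.

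The first step is to record how each ingredient of \eqref{key construction} transforms. Under $\omega_{j_0}\mapsto u\,\omega_{j_0}$ one has, from the definitions in \S2.5, $\zeta_{i,j_0}\mapsto$ ($F^*u$ times a combination of the $\zeta_{i,\cdot}$ adjusted by an exact term, but since $\tilde F^*$ is an algebra map one checks $\zeta_{\tilde F_i}(F^*(u\omega_{j_0}))=F^*(u^p)\,\zeta_{i,j_0}\cdot(\text{unit})$ modulo the precise normalization), $h_{i,j_0}\mapsto F^*(u^p)h_{i,j_0}$ up to the same bookkeeping, and correspondingly $\theta_{j_0}\mapsto u^{-1}\theta_{j_0}$ because $\theta=\sum\theta_j\otimes\omega_j$ is basis-free. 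The key observation is that in \eqref{key construction} every occurrence of an index $j_0$ carries matched powers: a factor $\theta^{\underline{\overline j}}$ involving $\theta_{j_0}^{(\text{total power})}$ is paired with $h^{[\underline{\overline j}]}$ involving $h_{\cdot,j_0}^{(\text{same total power})}$, and each $\theta_{i_q}^{-1}$ is paired with a $\boldsymbol\omega_{\underline S}$ or $h$ factor with index $i_q$; the coefficients $C$ and $j^q_{i_q}$ depend only on the $D$-classes, which are unchanged. Hence all the $u$-powers cancel, establishing invariance under (i). For (ii), the transvection, one expands $\varphi(r,s)$ computed in the new basis by multilinearity: each term $\omega_{j_0}$ in the wedge $\boldsymbol\omega_{\underline S}$ or in the monomial $\theta^{\underline{\overline j}}$ splits as ``$\omega_{j_0}$-part'' $+$ ``$a\,\omega_{j_1}$-part''. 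The terms where no substitution is made reproduce $\varphi(r,s)$ in the old basis; one must show the remaining terms cancel among themselves or vanish. They vanish for the usual reasons: if a substitution creates a repeated index in a wedge it is zero, and the genuine cross terms organize into telescoping sums indexed by which of the equivalent positions ($j_0$ vs.\ $j_1$, which lie in the same $D$-block, so the combinatorial data $\lambda_q$, $a_i$, $b_i$, $s'_q$ are unaffected) the index sits in, with the coefficient functions $C$ and the factorials in $h^{[\underline{\overline j}]}$ arranged exactly so that these sums cancel.

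The main obstacle is step (ii): verifying that the cross terms cancel requires a careful re-indexing of $T_{\underline\Omega}(r,s)$ together with a matching identity for the coefficient $C(\underline i,\underline S,\underline{\overline j})$ in \eqref{C}. The point is that $C$ was \emph{designed} (in the spirit of the $1/q_i!$ normalization replacing $1/q!$, as explained after \eqref{splitting cotangent bundle}) precisely so that moving a power of $\theta_{j}$ between the ``$\theta^{\underline{\overline j}}$'' slot and the ``$\theta_{i_q}^{-1}$'' cancellation slot, or between wedge positions within one block, is compensated by the rational factors $[(j^q+s'_q)+\cdots+(j^{a_i}+s'_{a_i})]^{-1}$ and by the binomial coefficients hidden in $h^{[\underline{\overline j}]}=\prod h^{[j^q_l]}$ via $a^{[j]}=a^j/j!$. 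Concretely I would: (a) fix a target term of $\varphi(r,s)_{\text{old}}$ and enumerate all new-basis triples that can contribute to it after the substitution; (b) exhibit a sign-reversing or scalar-cancelling involution (or a telescoping) on the complementary set of triples; (c) check the numerical identity on $C$ this forces, which reduces to an elementary identity about partial sums of the $j^q+s'_q$ and is verified by induction on the block size $\mathrm{rank}(\Omega_{D(j_0)})<p-\ell$ (this is where the rank bound enters, guaranteeing the denominators are units in $\mathbb F_p$). Steps (a)--(c) are bookkeeping-heavy but routine once the involution in (b) is identified; I do not expect any conceptual difficulty beyond organizing the notation, and the whole argument is local, so working with a fixed adapted basis and the surjection $D$ as above is legitimate throughout.
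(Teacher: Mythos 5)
The crux of the lemma is not addressed. Your step (i) (rescaling a single $\omega_{j_0}$ by a unit) is fine in substance — the powers of $F^*(u')$ coming from $h^{[\underline{\overline j}]}$ and $\boldsymbol\omega_{\underline S}$ cancel against those coming from $F^*(\theta^{\underline{\overline j}}\theta_{i_1}^{-1}\cdots\theta_{i_r}^{-1}e_{\underline S,\underline i})$, using $d(F^*u')=0$ in characteristic $p$, although your stated transformation laws are garbled (there is no "unit adjustment": $\zeta_{\tilde F_i}$ and $h_{\tilde F_{i-1}\tilde F_i}$ are $\mathcal O_X$-linear, and you omit the contravariant transformation of the coefficients $e_{i_1,\cdots,i_{r+s}}$). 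But step (ii), invariance under a transvection $\omega_{j_0}\mapsto\omega_{j_0}+a\,\omega_{j_1}$ within a block, is exactly where the lemma lives, and you do not prove it: you assert that the cross terms "organize into telescoping sums" governed by an involution and a numerical identity on $C(\underline i,\underline S,\underline{\overline j})$ from \eqref{C}, but you identify neither the involution nor the identity. This is not a routine deferral. After expanding $(\theta_{j_1}-a\theta_{j_0})^{j^q_{j_1}}$, $(h_{q,j_0}+F^*(a)h_{q,j_1})^{[j^q_{j_0}]}$ and the new $e$-coefficients, the cross terms redistribute the exponents $j^q_{j_0},j^q_{j_1}$ and the membership of $j_0,j_1$ in $S_q$; this changes which triples of $T_{\underline\Omega}(r,s)$ are hit (the constraint $j^q_{i_q}>0$, the admissible choices of $i_q$, and the weights $j^q_{i_q}$ all move, even though the block-level data $\lambda_q$, $j^q$, $s'_q$ and hence $C$ are stable), and the required cancellation is a genuine multinomial/divided-power identity that already in the simplest case $r=1$, $s=0$, one block, needs the identity rewriting $\sum_{j_1+j_2=m}\bigl[j_1\theta_1^{j_1-1}\theta_2^{j_2}e_1+j_2\theta_1^{j_1}\theta_2^{j_2-1}e_2\bigr]h_1^{[j_1]}h_2^{[j_2]}$ as a contraction of $(\theta_1h_1+\theta_2h_2)^{[m-1]}$ against $(e_1h_1+e_2h_2)$. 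In general you would have to produce such an identity compatible with the ordering conditions $\lambda_1\geq\cdots\geq\lambda_r$ and the nested partial sums in \eqref{C}; until that is written down, the proposal is a plan rather than a proof.

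Note also that the paper proves the lemma by a structurally different route that avoids this cancellation altogether: it lifts $\varphi(r,s)_{\tilde F_0,\cdots,\tilde F_r}$ to a morphism $\tilde\varphi(r,s)_{\tilde F_0,\cdots,\tilde F_r}$ on $E\otimes\tilde\Omega^{\otimes(r+s)}$ (Construction~\ref{tilde varphi}) whose definition uses only the block operators $\Theta_i$, $\sigma^q_i$, $\wedge^{s}dh_q$, $\wedge^r h$ and is therefore manifestly basis-free, and then checks that $\varphi(r,s)=\tilde\varphi(r,s)\circ\mathrm{sec}_{\underline\Omega}$, where $\mathrm{sec}_{\underline\Omega}$ is the canonical (basis-free) section of $\pi:\tilde\Omega^{\otimes(r+s)}\to\Omega^{r+s}$ coming from \eqref{splitting omega r+s} and \eqref{inclusion omega}. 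That final check is a direct matching of two explicit formulas, whereas your approach requires discovering an unspecified sign-reversing involution and proving a new coefficient identity; if you want to pursue your route, you must actually exhibit these for general $(r,s)$, multiple blocks, and the full set $T_{\underline\Omega}(r,s)$. One further small caveat: your reduction to single rescalings and transvections uses that block-diagonal $\mathrm{GL}$ over $\mathcal O_{X,x}$ is generated by these; that is correct over a local ring, but it should be stated (it fails for general rings), and it is the only reason the local reduction suffices.
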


Clearly, the splitting \eqref{splitting cotangent bundle} gives rise to a natural isomorphism
\begin{eqnarray}\label{splitting omega r+s}
\Omega^{r+s}\cong\bigoplus_{(i_1,\cdots,i_\beta)\in I_{r+s}}\bigwedge^{i_1}\Omega_1\otimes\cdots\otimes\bigwedge^{i_\beta}\Omega_\beta,
\end{eqnarray}
where $I_{r+s}$ consiting of $(i_1,\cdots,i_\beta)$ satisfying $i_1+\cdots+i_\beta=r+s$ and $0\leq i_l\leq\mathrm{rank}(\Omega_l)$. Since the rank of any $\Omega_q$ is strictly smaller than $p$, we have natural inclusions
\begin{eqnarray}\label{inclusion omega}
\bigoplus_{(i_1,\cdots,i_\beta)\in I_{r+s}}\bigwedge^{i_1}\Omega_1\otimes\cdots\otimes\bigwedge^{i_\beta}\Omega_\beta\subset\bigoplus_{(i_1,\cdots,i_\beta)\in I_{r+s}}\Omega_1^{\otimes i_1}\otimes\cdots\otimes\Omega_\beta^{\otimes i_\beta}\subset\Omega^{\otimes(r+s)}.
\end{eqnarray}
Denote by $\tilde\Omega^{\otimes(r+s)}$ the middle term of the above inclusions. Using the right inclusion, we obtain a projection $\pi:\tilde\Omega^{\otimes(r+s)}\to\Omega^{r+s}$. Combing \eqref{splitting omega r+s} and the left inclusion, we obtain a section of $\pi$, denoted by $\mathrm{sec}_{\underline\Omega}$, which is independent of the choice of the basis $\omega_1,\cdots,\omega_n$.

To prove this lemma, it suffices to lift $\varphi(r,s)_{\tilde F_0,\cdots,\tilde F_r}$ to $E\otimes\tilde\Omega^{\otimes(r+s)}$, namely a morphism
$$
\tilde\varphi(r,s)_{\tilde F_0,\cdots,\tilde F_r}:E\otimes\tilde\Omega^{\otimes(r+s)}\to F_*\Omega^s(H,\nabla)
$$
whose composite with $\mathrm{sec}_{\underline\Omega}$ is $\varphi(r,s)_{\tilde F_0,\cdots,\tilde F_r}$, and
which is independent of the choice of the basis $\omega_1,\cdots,\omega_n$ again. The following notation table is necessary for the construction of the lifting.
\begin{itemize}

    \item Let $\Theta_i$ be the composite of
    $$
    E\stackrel{\theta}{\to}E\otimes(\Omega_1\oplus\cdots\oplus\Omega_\beta)\xrightarrow{\mathrm{proj}_i} E\otimes\Omega_i.
    $$
    Let $H_i^q$ be the composite of
    $$
    F^*\Omega_i\hookrightarrow F^*\Omega\xrightarrow{h_q=h_{\tilde F_{q-1}\tilde F_q}}\mathcal O_X.
    $$
    Set $\sigma^q_i:=(id\otimes H^q_i)F^*\Theta_l\in\mathcal End_{\mathcal O_X}(F^*E)$.
    
    \item Let $U(r,s)$ be the set consisting of triples $(\overline{\underline j},\overline{\underline s},\underline s)$, subject to the following conditions:
\begin{itemize}
    \item $\overline{\underline j}=(j^1_1,\cdots,j^1_\beta;\cdots;j^r_1,\cdots,j^r_\beta)$, where $0\leq j^q_i<p$;
    \item $\underline s=(s_1,\cdots,s_\beta)$, where $s_1+\cdots+s_\beta=r$ and $s_i\geq0$;
 \item $\overline{\underline s}=(s^0_1,\cdots,s^0_\beta;\cdots;s^r_1,\cdots,s^r_\beta)$, where $s^q_i\geq 0$;
 \item $s^q_i=0$ for $1\leq i\leq \beta$ and $q>s_i+\cdots+s_\beta$, $\sum_{q=1}^rj^q_i+\sum_{q=0}^rs_i^q<p$ for any $1\leq i\leq \beta$, and 
     $$(\sum_{q=0}^rs^q_1+s_1,\cdots,\sum_{q=0}^rs^q_\beta+s_\beta)\in I_{r+s},~\sum_{q,i}s^q_i+\sum_l s_i=r+s.$$ 
       \end{itemize}    
    
 \item
    For any $s>0$ and any $0\leq q\leq r$, define 
 $$
 \wedge^sdh_q:\Omega^{\otimes s}\to F_*\Omega^s,~\xi_1\otimes\cdots\otimes\xi_s\mapsto dh_{\tilde F_{q-1}\tilde F_q}(F^*\xi_1)\wedge\cdots\wedge dh_{\tilde F_{q-1}\tilde F_q}(F^*),
 $$
 where $h_{\tilde F_{-1}\tilde F_0}:=\zeta_0$. Moreover, we stipulate that $\wedge^0dh_q$ is the Frobenius morphism $F^*:\mathcal O_X\to F_*\mathcal O_X$.
   
\item Given any $\underline s$ as in the construction of $U(r,s)$. Define 
$$
\wedge^rh:\Omega^{\otimes r}\to F_*\mathcal O_X,~\xi_1\otimes\cdots\otimes\xi_r\mapsto h_{\tilde F_0\tilde F_1}(F^*\xi_1)\cdots h_{\tilde F_{r-1}\tilde F_r}(F^*\xi_r)
$$
Moreover, we stipulate that $\wedge^0h$ is the Frobenius morphism $F^*:\mathcal O_X\to F_*\mathcal O_X$.
    
\item Given any $\overline{\underline j}$ as in the construction of $U(r,s)$. Set 
$$
\sigma^{\underline{\overline j}}:=\prod_{1\leq q\leq r,1\leq i\leq\beta}(\sigma^q_i)^{[j^q_i]}\in\mathcal End_{\mathcal O_X}(F^*E).
$$
\item Given any $(\underline{\overline s},\underline s)$ as in the construction of $U(r,s)$. Rewrite the increasing sequence $1,\cdots,r+s$ by 
$$
\begin{array}{c}
i^0_{1,1},\cdots,i^0_{1,s^0_1},\cdots,i^r_{1,1},\cdots,i^r_{1,s^r_1},i_{1,1},\cdots,i_{1,s_1},\\
\cdots\\
i^0_{\beta,1},\cdots,i^0_{\beta,s^0_\beta},\cdots,i^r_{\beta,1},\cdots,i^r_{\beta,s^r_\beta},i_{\beta,1},\cdots,i_{\beta,s_\beta}.
\end{array}
$$
Here, if some $s^q_i=0$, then we stipulate that the sequence $i^q_{i,1},\cdots,i^q_{i,s^q_i}$ is empty.
Denote by $\epsilon(\underline{\overline s},\underline s)$ the sign of the rearrangement of $1,\cdots,r+s$ given by
$$
\begin{array}{c}
  i^0_{1,1},\cdots,i^0_{1,s^0_1},\cdots,i^0_{\beta,1},\cdots,i^0_{\beta,i^0_\beta},\\
  \cdots\\
  i^r_{1,1},\cdots,i^r_{1,s^0_1},\cdots,i^r_{\beta,1},\cdots,i^r_{\beta,i^0_\beta},\\
  i_{1,1},\cdots,i_{1,s_1},\cdots,i_{\beta,1},\cdots,i_{\beta,s_\beta}.
\end{array}
$$
\item 
 Given any $(\overline{\underline j},\underline{\overline s},\underline s)\in U(r,s)$.  For any \( 1 \leq i \leq \beta \), set  
\[
a_i :=s_i+\cdots+s_\beta,~ 
b_i := s_{i+1}+\cdots+s_\beta.
\]
Here, we stipulate that $b_\beta=0$.
Set  
$$C(\underline{\overline{j}},\underline{\overline s},\underline s) := \epsilon(\underline{\overline s},\underline s)\frac{\prod_{1\leq i\leq\beta}(\sum_{q=0}^rs^q_i+s_i)!}{\prod_{0\leq q\leq r,1\leq i\leq\beta}s^q_i!}\prod_{i=1}^\beta c_i,$$
where  
\[
c_i := 
\begin{cases}
\prod_{b_i<q \leq a_i} \Big[ (j^q_i+ s^q_i+1) + \cdots + (j^{a_i}_i+ s^{a_i}_i+1) \Big]^{-1}, & \text{if } s_i > 0, \\
1, & \text{otherwise}.
\end{cases}
\]
    \end{itemize}

\begin{construction}\label{tilde varphi}
  Given any $(\overline{\underline j},\underline{\overline s},\underline s)\in U(r,s)$ and set
$l_i:=\sum_{q=0}^rs^q_i+s_i,~1\leq i\leq\beta$. Define
$$
\tilde\varphi_{(\overline{\underline j},\overline{\underline s},\underline s)}:E\otimes\Omega_1^{\otimes l_1}\otimes\cdots\otimes\Omega_\beta^{\otimes l_\beta}
\to F_*\Omega^s(H,\nabla)
$$
by the setting the section
$$
e\otimes(\xi^0_1\otimes\cdots\otimes\xi^r_1\otimes\xi_1)\otimes\cdots\otimes(\xi^0_\beta\otimes\cdots\otimes\xi^r_\beta\otimes\xi_\beta),~e\in E,~\xi^q_i\in\Omega_i^{\otimes s^q_i},~\xi_i\in\Omega_i^{\otimes s_i}
$$
to
$$
 C(\overline{\underline j},\overline{\underline s},\underline s)(\wedge^sh)(\xi_\beta\otimes\cdots\otimes\xi_1)\iota_{\tilde F_0}(\sigma^{\underline{\overline j}}(F^*e))\otimes\Xi,
$$
where
$$
\Xi:=[(\wedge^{s^0_1}dh_0)(\xi^0_1)\wedge\cdots\wedge(\wedge^{s^0_\beta}dh_0)(\xi^0_\beta)]\wedge\cdots\wedge[(\wedge^{s^r_1}dh_r)(\xi^r_1)\wedge\cdots\wedge(\wedge^{s^r_\beta}dh_r)(\xi^r_\beta)].
$$
\end{construction}
\emph{The proof of Lemma \ref{indep of basis}}. Set
$$
(\tilde\varphi(r,s)_{\tilde F_0,\cdots,\tilde F_r}:=\bigoplus_{(\overline{\underline j},\underline{\overline s},\underline s)\in U(r,s)}\tilde\varphi_{(\overline{\underline j},\overline{\underline s},\underline s)}):E\otimes\tilde\Omega^{\otimes(r+s)}\to F_*\Omega^s(H,\nabla)
$$
By construction, we know that it is independent of the choice of the basis $\omega_1,\cdots,\omega_n$. On the other hand, a straightforward computation shows that 
$$
\varphi(r,s)_{\tilde F_0,\cdots,\tilde F_r}=\tilde\varphi(r,s)_{\tilde F_0,\cdots,\tilde F_r}\circ\mathrm{sec}_{\underline\Omega},
$$
from which the lemma follows. \qed

\begin{remark}
\upshape

When $(E,\theta)=(\mathcal O_X,0)$, the above construction had been essentially constructed in the proof of \cite[Theorem 3.9]{AS} for $s=1,2$ and $\mathrm{rank}(\Omega_i)=1$. 

 Let $ K^*$ be an abstract Koszul complex on a ringed topos $(X,\mathcal O)$ (see \cite[Definirion 2.1]{AS}) and set $T:=\mathcal H^1(K^*)^{\vee}$. Suppose that the gerbe $\tau_{\leq 1}K^*$ is trivial and $p\mathcal O=0$. Let $(E,\theta)$ be a nilpotent $T$-Higgs bundle of level $\leq\ell$ ($\ell<p$), namely $(\mathrm{Sym}^{\ell+1}T)E=0$. Using the nonabelian Hodge theory of Ogus-Vologodsky \cite{OV} (see also Lan-Sheng-Zuo \cite{LSZ}), we can twist $K^*$ by $(E,\theta)$, denote by $K^*_\theta$ the resulting complex. If there is a splitting $T=\bigoplus_{i=1}^\beta T_i$ with $\mathrm{rank}(T_i)<p-\ell$, we may use \eqref{key construction} to establish an isomorphism between the Higgs complex $\Omega^*(E,\theta)$ and $K^*_\theta$. In particular, it follows that Theorem \ref{de-Hig without truncation 2} can be generalized to logarithmic geometry.
\end{remark}

\emph{The proof of Theorem \ref{de-Hig without truncation 2}}. For any $r>0$, we construct 
$$
\mathrm{Ho}^{-r}:\Delta_r(\mathcal L)\to \mathcal Hom^{-r}_{\mathcal{O}_X}(\Omega^*(E,\theta), F_*\Omega^*(H,\nabla))
$$
as follows. For any $0\leq s\leq n-r$, any open subset $U\subset X$ and any local liftings $\tilde F_0,\cdots,\tilde F_r\in\Gamma(U,\mathcal L)$, set
$$
\mathrm{Ho}^{-r}(\tilde F_0,\cdots,\tilde F_r):=\{\varphi(r,s)_{\tilde F_0,\cdots,\tilde F_r}\}_{0\leq s\leq n-r}.
$$
To verify the family $\mathrm{Ho}:=\{\mathrm{Ho}^{-r}\}_{r\geq0}$ constructed above forms an $\mathcal L$-indexed $\infty$-homotopy, it suffices to check that
\begin{eqnarray}\label{infty homotopy relation}
\nabla\varphi(r,s-1)_{\tilde F_0,\cdots,\tilde F_r}+\sum_{q=0}^r(-1)^{q+s}\varphi(r-1,s)_{\tilde F_0,\cdots,\widehat{\tilde F_q},\cdots,\tilde F_r}=\varphi(r,s)_{\tilde F_0,\cdots,\tilde F_r}\theta.
\end{eqnarray}
Without loss of generality, we may assume that $\Omega^1_{X/k}(\log D)$ admits a basis $\omega_1,\cdots,\omega_n$ over $U$ as described above \eqref{varphi r s}, and Write $\theta|_U=\sum_{i=1}^n\theta_i\otimes\omega_i$ again. Take a section $\boldsymbol{e}$ of $\Omega^{r+s-1}(E,\theta)|_U$ and write it as in \eqref{section e}, namely
$$
\boldsymbol{e}=\sum_{1\leq i_1<\cdots<i_{r+s-1}\leq n}e_{i_1,\cdots,i_{r+s-1}}\otimes(\omega_{i_1}\wedge\cdots\wedge\omega_{i_{r+s-1}}),~e_{i_1,\cdots,i_{r+s-1}}\in E|_U.
$$

\emph{The computation of $\nabla\varphi(r,s-1)_{\tilde F_0,\cdots,\tilde F_r}(\boldsymbol{e})$}. According to the identification of integrable connections
$$
\iota_{\tilde F_0}:((F^*E)|_U,\nabla_\mathrm{can}+\sum_{i=1}^nF^*\theta_i\otimes\zeta_{0,i})\cong (H,\nabla)|_U,
$$
we see that $\nabla\varphi(r,s-1)_{\tilde F_0,\cdots,\tilde F_r}(\boldsymbol{e})$ equals
\begin{eqnarray}\label{L nabla}
 \sum_{\underline S,\underline{\overline j},q,i}
f_{\underline S}^{\underline{\overline j}}[(\textcolor{red}{j^q_i}\textcolor{red}{dh_{q,i}}\wedge\boldsymbol{\omega}_{\underline S})\frac{h^{[\underline{\overline j}]}}{\textcolor{red}{h_{q,i}}}]+\sum_{\underline S,\underline{\overline j},i} \textcolor{red}{(F^*\theta_i)}(f_{\underline S}^{\underline{\overline j}})[(\textcolor{red}{\zeta_{0,i}}\wedge\boldsymbol{\omega}_{\underline S})h^{[\underline{\overline j}]}].
\end{eqnarray}
Here, $h^{[\underline{\overline j}]}/h_{q,i}$ is well-defined  for $j^q_i>0$ and we stipulate that $h^{[\underline{\overline j}]}/h_{q,i}:=1$ for $j^q_i=0$. The summation $\sum_{\underline S,\underline{\overline j},q,i}$ is taken over 
$$
\underline S:=(S_0,\cdots,S_r),~\underline{\overline j}:=(j^1_1,\cdots,j^1_n;\cdots;j^r_1,\cdots,j^r_n),~q,i,
$$ 
subject to the following conditions:
\begin{itemize}
    \item $\#S_0+\cdots+\#S_r=s-1$;
    \item $0\leq j^q_l<p$;
    \item $0\leq q\leq r$ and $1\leq i\leq n$.
\end{itemize}
The another summation $\sum_{\underline S,\underline{\overline j},i}$ is defined similar to $\sum_{\underline S,\underline{\overline j},q,i}$.

By grouping and simplifying like terms in \eqref{L nabla}, we have
$$\nabla\varphi(r,s-1)_{\tilde F_0,\cdots,\tilde F_r}(\boldsymbol{e})=\sum_{(\underline S,\underline{\overline j})\in T(r,s)}g_{\underline S}^{\underline{\overline j}}(\boldsymbol{\omega}_{\underline S}h^{[\underline{\overline j}]}),$$
where
$$
g_{\underline S}^{\underline{\overline j}}:=\sum_{q,l}\textcolor{red}{\epsilon}f_{(S_0,\cdots,S_q-\{i^q_l\},\cdots,S_r)}^{\underline{\overline j}(q,l)}C\underline{\overline j}(q,l)^1_{i_1}\cdots \underline{\overline j}(q,l)^r_{i_r}\iota_{\tilde F_0}(F^*(\theta^{\underline{\overline j}}\theta_{i_1}^{-1}\cdots\theta_{i_r}^{-1}\textcolor{red}{\theta_{i^q_l}}e_{i^0_1,\cdots,\textcolor{red}{\widehat{i^q_l}},\cdots,i^r_{s_r},\underline i})).
$$
Here, $\underline{\overline j}(q,l)^{q'}_{l'}:=j^q_l$ for $(q',l')\neq(q,l)$ and $j^q_l+1$ for $(q',l')=(q,l)$.
If $S_q\neq\emptyset$, then we write $S_q=\{i_1^q,\cdots,i^q_{s_q}\}$ with $i^q_1<\cdots<i^q_{s_q}$. Set
$$C:=C(S_0,\cdots,S_q-\{i^q_l\},\cdots,S_r,\underline{\overline j}(q,l)),~\epsilon:=\frac{\mathrm{sgn}(i^0_1,\cdots,i^r_{s_r})}{\mathrm{sgn}(i_l^q,i^0_1,\cdots,\widehat{i^q_l},\cdots,i^r_{s_r})}.$$
Clearly, one can check that $C=C(S_0,\cdots,S_r,\underline{\overline j})$. The summation $\sum_{q,l}$ is taken over $q,l$, subject to the following conditions:
\begin{itemize}
    \item $1\leq q\leq r$ such that $S_q\neq\emptyset$;
    \item $1\leq l\leq s_q$.
\end{itemize}

By the very construction of $\underline{\overline j}(q,l)$, $g_{\underline{S}}^{\underline{\overline j}}$ is the sum of two parts. The first part equals
\begin{eqnarray}\label{f'}
f_{\underline S}'^{\underline{\overline j}}:=\sum_{\underline i,q,l}\epsilon Cj^1_{i_1}\cdots j^r_{i_r}\iota_{\tilde F_0}(F^*(\theta^{\underline{\overline j}}\theta_{i_1}^{-1}\cdots\theta_{i_r}^{-1}\textcolor{red}{\theta_{i^q_l}}e_{i^0_1,\cdots,\textcolor{red}{\widehat{i^q_l}},\cdots,i^r_{s_r},\underline i})).
\end{eqnarray}
Here, the summation $\sum_{\underline i,q,l}$ is taken over $\underline i=(i_1,\cdots,i_r),q,l$, subject to the following conditions:
\begin{itemize}
\item $1\leq i_1<\cdots<i_r\leq n$;
    \item $i_q\in D^{-1}(D(\lambda_q)),~\lambda_q=\mathrm{max}(\{l:j^q_l\neq0\}\cup S_q),~D(\lambda_1)\geq\cdots\geq D(\lambda_r)$.;
    \item $0\leq q\leq r$ such that $S_q\neq\emptyset$, and $1\leq l\leq s_q$.
\end{itemize}
The second part equals
\begin{eqnarray}\label{f''}
f_{\underline S}''^{\underline{\overline j}}:=\sum_{q=1}^r\sum_{i_1,\cdots,{\textcolor{red}{\widehat{i_q}}},\cdots,i_q}\textcolor{red}{(-1)^{s+q}} C \textcolor{red}{s'_q}j_{i_1}^1\cdots\textcolor{red}{\widehat{j^q_{i_q}}}\cdots j^r_{i_r}\iota_{\tilde F_0}(F^*(\theta^{\underline{\overline j}}\theta_{i_1}^{-1}\cdots\textcolor{red}{\widehat{\theta_{i_q}^{-1}}}\cdots\theta_{i_r}^{-1}e_{\underline S,i_1,\cdots,\textcolor{red}{\widehat{i_q}},\cdots,i_r})),
\end{eqnarray}
where 
$$
s_q'=\#D^{-1}(D(\lambda_q))\cap S_q,~\lambda_q=\mathrm{max}( S_q\cup\{l:j^q_l\neq0\}).
$$
Note that
the summation $\sum_{\underline S,\underline{\overline j}}$ has been described above. The summation $\sum_{i_1,\cdots,\widehat{i_q},\cdots,i_q}$ is taken over $i_1,\cdots,i_{q-1},i_{q+1},\cdots,i_r$, subject to the following conditions:
\begin{itemize}
\item $1\leq i_1<\cdots<i_{q-1}<i_{q+1}<\cdots<i_r\leq n$;
    \item For any $u\in\{1,\cdots,r\}-\{q\}$, we have $i_u\in D^{-1}(D(\lambda_q))$;
    \item $D(\lambda_1)\geq\cdots\geq D(\lambda_r)$.
\end{itemize}

\emph{The computation of $\sum_{q=0}^r(-1)^{q+s}\varphi(r-1,s)_{\tilde F_0,\cdots,\textcolor{red}{\widehat{\tilde F_q}},\cdots,\tilde F_r}(\boldsymbol{e})$}. If $r=1$, there is nothing to do. Thus, we may assume $r>1$. By \eqref{key construction}, we have
\begin{eqnarray}\label{r-1,s,q}
\varphi(r-1,s)_{\tilde F_0,\cdots,\textcolor{red}{\widehat{\tilde F_q}},\cdots,\tilde F_r}(\boldsymbol{e})=\sum_{(\underline S,\underline{\overline j})\in T(r-1,s)}f_{\underline S}^{\underline{\overline j},\textcolor{red}{\hat q}}(\boldsymbol{\omega}_{\underline S}^{\textcolor{red}{\hat{q}}}h^{[\underline{\overline j}],\textcolor{red}{\hat{q}}}).
\end{eqnarray}
 To explain the right hand side of the equality above, we introduce an additional notation table below.
\begin{itemize}
    \item For $q=0$, set $$\boldsymbol{\omega}_{\underline S}^{\hat 0}:=\zeta_{1,S_0}\wedge dh_{2,S_1}\wedge\cdots\wedge dh_{r,S_{r-1}},~\underline S=(S_0,\cdots,S_{r-1}).$$ 
    For $q=r$, set 
    $$\boldsymbol{\omega}_{\underline S}^{\hat r}:=\zeta_{0,S_0}\wedge dh_{1,S_1}\wedge\cdots\wedge dh_{r-1,S_{r-1}}.$$
     For $0<q<r$, set 
    $$\boldsymbol{\omega}_{\underline S}^{\hat q}:=\zeta_{0,S_0}\wedge dh_{1,S_1}\wedge\cdots\wedge dh_{q-1,S_{q-1}}\wedge dh_{\tilde F_{q-1}\tilde F_{q+1},S_q}\wedge dh_{q+2,S_{q+1}}\wedge\cdots dh_{r,S_{r-1}}.$$
    Here, for \( S = \{i_1, \dots, i_m\} \subset \{1, \dots, n\} \) with \( i_1 < \cdots < i_m \), we define  
\[
dh_{\tilde F_{q-1}\tilde F_{q+1},S} :=dh_{\tilde F_{q-1}\tilde F_{q+1},i_1}\wedge \cdots \wedge dh_{\tilde F_{q-1}\tilde F_{q+1},i_m},~h_{\tilde F_{q-1}\tilde F_{q+1},i}:=h_{\tilde F_{q-1}\tilde F_{q+1}}(F^*\omega_i).
\]  
By convention, we set \( dh_{q+1/2,\emptyset} := 1 \). 
\item $f_{\underline S}^{\underline{\overline j},\hat q}:=\left\{
\begin{matrix}
 \sum_{\underline i}C(S_0,\cdots,S_{r-1},\underline{\overline j})j^1_{i_1}\cdots j^{r-1}_{i_{r-1}}\iota_{\tilde F_1}(F^*(\theta^{\underline{\overline j}}\theta_{i_1}^{-1}\cdots\theta_{i_{r-1}}^{-1}e_{\underline S,\underline i})),&q=0,\\
\sum_{\underline i}C(S_0,\cdots,S_{r-1},\underline{\overline j})j^1_{i_1}\cdots j^{r-1}_{i_{r-1}}\iota_{\tilde F_0}(F^*(\theta^{\underline{\overline j}}\theta_{i_1}^{-1}\cdots\theta_{i_{r-1}}^{-1}e_{\underline S,\underline i})),&0<q\leq r.
\end{matrix}
\right.$
\item $h^{[\underline{\overline j}],\hat q}:=a_qb_qc_q$, where
$$
\begin{array}{c}
a_q:=\left\{
\begin{matrix}
1,&q=0,1,\\
(h_{1,1}^{[j^1_1]}\cdots h_{1,n}^{[j^1_n]})\cdots (h_{q-1,1}^{[j^{q-1}_1]}\cdots h_{q-1,n}^{[j^{q-1}_n]}),&1<q\leq r,
\end{matrix}
\right.\\
b_q:=\left\{
\begin{matrix}
1,&q=0,r,\\
h_{\tilde F_{q-1}\tilde F_{q+1},1}^{[j^q_1]}\cdots h_{\tilde F_{q-1}\tilde F_{q+1},n}^{[j^q_n]},&0<q<r,
\end{matrix}
\right.\\
c_q:=\left\{
\begin{matrix}
1,&q=r-1,r,\\
 (h_{q+2,1}^{[j^{q+1}_1]}\cdots h_{q+2,n}^{[j^{q+1}_n]})\cdots(h_{r,1}^{[j_1^{r-1}]}\cdots h_{r,n}^{[j_n^{r-1}]}),&0\leq q\leq r-2.
\end{matrix}
\right.
\end{array}
$$
\end{itemize}

Using the basic relations
$$
\begin{array}{c}
\zeta_{1,i}=\zeta_{0,i}+dh_{1,i},~\iota_{\tilde F_1}(F^*e)=\iota_{\tilde F_0}(\exp (\sum_{i=1}^nh_{1,i}F^*\theta_i)(F^*e))~(e\in E|_U),\\
h_{\tilde F_{q-1}\tilde F_{q+1},i}=h_{q,i}+h_{q+1,i},
\end{array}
$$
it is easy to see that for any $0\leq q<r$, we have
$$
\varphi(r-1,s)_{\tilde F_0,\cdots,\widehat{\tilde F_q},\cdots,\tilde F_r}(\boldsymbol{e})=\sum_{(\underline S,\underline{\overline j})\in T(r,s)}f_{\underline S\textcolor{red}{(\cup q)}}^{\underline{\overline j}\textcolor{red}{(\cup q)},\hat q}(\boldsymbol{\omega}_{\underline S}h^{[\underline{\overline j}]}),
$$
where
\begin{eqnarray}\label{cup j}
\underline{\overline j}(\cup q):=\left\{
\begin{matrix}
(j^2_1,\cdots,j^2_n;\cdots;j^r_1,\cdots,j^r_n),&q=0,\\
(\cdots;j^{q-1}_1,\cdots,j^{q-1}_n;j^q_1+j^{q+1}_n,\cdots,j^q_n+j^{q+1}_n;j^{q+2}_1,\cdots,j^{q+2}_n;\cdots),&0\leq q<r,
\end{matrix}
\right.
\end{eqnarray}
and
\begin{eqnarray}\label{cup s}
\underline S(\cup q):=(S_0,\cdots,S_{q-1},S_q\cup S_{q+1},S_{q+2},\cdots,S_r).
\end{eqnarray}
Consequently, we have
$$
\sum_{q=0}^r(-1)^{q+s}\varphi(r-1,s)_{\tilde F_0,\cdots,\widehat{\tilde F_q},\cdots,\tilde F_r}(\boldsymbol{e})=\sum_{(\underline S,\underline{\overline j})\in T(r,s)}f_{\underline S}'''^{\underline{\overline j}}(\boldsymbol{\omega}_{\underline S}h^{[\underline{\overline j}]}),
$$
where
$$
f_{\underline S}'''^{\underline{\overline j}}:=\left\{
\begin{matrix}
  \sum_{q=0}^{r-1}(-1)^{q+s}f_{\underline S(\cup q)}^{\underline{\overline j}(\cup q),\hat q},&\mathrm{if}~\lambda_r>0,\\
 \sum_{q=0}^{r-1}(-1)^{q+s}f_{\underline S(\cup q)}^{\underline{\overline j}(\cup q),\hat q}+(-1)^{r+s}f_{(S_0,\cdots,S_{r-1})}^{(j^1_1,\cdots,j^1_n;\cdots;j^{r-1}_1,\cdots,j^{r-1}_n),\hat r},&\mathrm{otherwise}.
\end{matrix}
\right.
$$

One can check that if $\lambda_r=0$, then
 $$
 f_{\underline S(\cup 0)}^{\underline{\overline j}(\cup 0),\hat 0}=\cdots=f_{\underline S(\cup r-2)}^{\underline{\overline j}(\cup r-2),\widehat{r-2}}=0,\cdots,f_{\underline S(\cup r-1)}^{\underline{\overline j}(\cup r-1),\widehat{r-1}}=f_{(S_0,\cdots,S_{r-1})}^{(j^1_1,\cdots,j^1_n;\cdots;j^{r-1}_1,\cdots,j^{r-1}_n),\hat r}.
 $$
 On the other hand, if $\lambda_r>0$ and $D(\lambda_q)<D(\lambda_{q+1})$ for some $0\leq q<r$, then either all
$$
f_{\underline S(\cup 0)}^{\underline{\overline j}(\cup 0),\hat 0},\cdots,f_{\underline S(\cup r-1)}^{\underline{\overline j}(\cup r-1),\widehat{r-1}}
$$
are $0$ or two adjacent terms coincide while the rest are zero. It follows that $f_{\underline S}'''^{\underline{\overline j}}=0$ if $\lambda_r=0$ or  $D(\lambda_q)<D(\lambda_{q+1})$ for some $0\leq q<r$. Next, we assume that
$$
D(\lambda_1)\geq\cdots\geq D(\lambda_r)>0.
$$
Using the following observations:
\begin{itemize}
    \item If $D(\lambda_q)=D(\lambda_{q+1})$, then
    $$
     C(\cdots,S_{q-1}\cup S_q,\cdots,\underline{\overline j})-C(\cdots,S_q\cup S_{q+1},\cdots,\underline{\overline j})=(j^q+s_q')C(S_0,\cdots,S_r,\underline{\overline j});
    $$
    \item If  $D(\lambda_q)>D(\lambda_{q+1})$, then
    $$
    C(\cdots,S_{q-1}\cup S_q,\cdots,\underline{\overline j})=(j^q+s_q')C(S_0,\cdots,S_r,\underline{\overline j}),
    $$
\end{itemize}
we deduce that $f_{\underline S}'''^{\underline{\overline j}}$ equals
\begin{eqnarray}\label{f'''}
\sum_{q=1}^r\sum_{i_1,\cdots,{\textcolor{red}{\widehat{i_q}}},\cdots,i_q}\textcolor{red}{(-1)^{s+q-1}} C \textcolor{red}{(j^q+s'_q)}j_{i_1}^1\cdots\textcolor{red}{\widehat{j^q_{i_q}}}\cdots j^r_{i_r}\iota_{\tilde F_0}(F^*(\theta^{\underline{\overline j}}\theta_{i_1}^{-1}\cdots\textcolor{red}{\widehat{\theta_{i_q}^{-1}}}\cdots\theta_{i_r}^{-1}e_{\underline S,i_1,\cdots,\textcolor{red}{\widehat{i_q}},\cdots,i_r})).
\end{eqnarray}
Since the summation above is similar to \eqref{f''}, we omit further explanation.

Combing \eqref{f'}, \eqref{f''}, and \eqref{f''}, we obtain that the image of the left hand side of \eqref{infty homotopy relation} under $\boldsymbol{e}$ is 
\begin{eqnarray}\label{LHS infty homotopy relation}
\sum_{(\underline S,\underline{\overline j})\in T(r,s)}(f_{\underline S}'^{\underline{\overline j}}+f_{\underline S}''^{\underline{\overline j}}+f_{\underline S}'''^{\underline{\overline j}})(\omega_{\boldsymbol{\underline S}}h^{[\underline{\overline j}]}).
\end{eqnarray}
By direct computation, we have
$$
f_{\underline S}''^{\underline{\overline j}}+f_{\underline S}'''^{\underline{\overline j}}=\sum_{q=1}^r\sum_{\underline i}\textcolor{red}{(-1)^{s+q-1}} C(S_0,\cdots,S_r,\underline{\overline j}) j_{i_1}^1\cdots j^r_{i_r}\iota_{\tilde F_0}(F^*(\theta^{\underline{\overline j}}\theta_{i_1}^{-1}\cdots\theta_{i_r}^{-1}\textcolor{red}{\theta_{i_q}}e_{\underline S,i_1,\cdots,\textcolor{red}{\widehat{i_q}},\cdots,i_r})).
$$
Consequently, we demonstrate that \eqref{LHS infty homotopy relation} equals the image of the right hand side of \eqref{infty homotopy relation} under $\boldsymbol{e}$. This completes the proof. \hfill\qed

\subsection{Comparison between different higher homotopies}

Following the previous subsection, we continue to use the same notation and assumptions. It is natural to ask 
\begin{question}
Is the $\infty$-homotopy $\mathrm{Ho}_{\underline\Omega}$ described in the proof of Theorem \ref{de-Hig without truncation 2} independent of the choice of the splitting $\Omega=\bigoplus_{i=1}^\beta\Omega_i$, up to homotopy? More precisely, if $\Omega=\bigoplus_{i=1}^{\beta'} \Omega'_i$, where $\mathrm{rank}(\Omega'_i)<p-\ell$, is another splitting, is $\mathrm{Ho}_{\underline\Omega}$ homotopic to $\mathrm{Ho}_{\underline\Omega'}$ ($\Omega':=(\Omega'_1,\cdots,\Omega_{\beta'}')$)?
\end{question}
This question can be resolved in the following case: there exists $1\leq o<\beta$ such that
\begin{eqnarray}\label{case i}
\Omega_i':=\left\{
\begin{matrix}
 \Omega_i,&1\leq i<o,\\
 \Omega_o\bigoplus\Omega_{o+1},&i=o,\\
 \Omega_{i+1},&o<i\leq\beta-1,
\end{matrix}
\right.
\end{eqnarray}
and $\mathrm{rank}(\Omega_o)+\mathrm{rank}(\Omega_{o+1})<p-\ell$.

\begin{proposition}\label{Ho^2}
With notation and assumptions as above, $\mathrm{Ho}_{\underline\Omega}$ is homotopic to $\mathrm{Ho}_{\underline\Omega'}$.
\end{proposition}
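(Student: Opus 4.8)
The goal is to exhibit a homotopy between the two $\mathcal{L}$-indexed $\infty$-homotopies $\mathrm{Ho}_{\underline\Omega}$ and $\mathrm{Ho}_{\underline\Omega'}$, where $\underline\Omega'$ is obtained from $\underline\Omega$ by coalescing the two adjacent blocks $\Omega_o$ and $\Omega_{o+1}$ into a single block $\Omega_o\oplus\Omega_{o+1}$ of rank still $<p-\ell$. Concretely, I must produce a degree $-1$ map of complexes of sheaves of abelian groups
\[
\mathrm{He}:\Delta_\ast(\mathcal L)\to\mathcal Hom^{\ast-1}_{\mathcal O_X}(\Omega^\ast(E,\theta),F_\ast\Omega^\ast(H,\nabla))
\]
such that $d_{\mathcal Hom}\circ\mathrm{He}+\mathrm{He}\circ\partial=\mathrm{Ho}_{\underline\Omega}-\mathrm{Ho}_{\underline\Omega'}$, i.e. for each $r\geq 0$ and each tuple $(\tilde F_0,\dots,\tilde F_r)$ of Frobenius liftings one has a bidegree-indexed collection of $\mathcal O_X$-linear maps $\psi(r,s)_{\tilde F_0,\dots,\tilde F_r}:\Omega^{r+s}(E,\theta)\to F_\ast\Omega^{s-1}(H,\nabla)$ satisfying the chain-homotopy identity termwise. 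The natural strategy is to imitate Construction~\ref{construction higher homotopy}: write down $\psi(r,s)$ by a formula of the same shape as~\eqref{key construction}, but with exactly \emph{one} of the $\Omega_o$- or $\Omega_{o+1}$-wedge factors replaced by a ``transitional'' contraction that interpolates between treating $\{o,o+1\}$ as two ordered blocks and as one merged block. The combinatorial set $T_{\underline\Omega}(r,s)$ gets enlarged by an extra marker recording at which slot the interpolation happens, and the rational coefficients $C(\cdot)$ get a correspondingly shifted product (one factor $[(j^q+s'_q)+\cdots]^{-1}$ is split into the difference that appeared already in the two displayed ``observations'' near the end of the proof of Theorem~\ref{de-Hig without truncation 2}).

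The key algebraic input is precisely those two identities used to kill $f'''_{\underline S}$: for $D(\lambda_q)=D(\lambda_{q+1})$,
\[
C(\cdots,S_{q-1}\cup S_q,\cdots)-C(\cdots,S_q\cup S_{q+1},\cdots)=(j^q+s'_q)\,C(S_0,\dots,S_r,\underline{\overline j}),
\]
and the analogous relation for $D(\lambda_q)>D(\lambda_{q+1})$. These are the ``telescoping'' relations that let a single coefficient be written as a difference, and they are exactly what makes a homotopy formula possible: the difference $\mathrm{Ho}_{\underline\Omega}-\mathrm{Ho}_{\underline\Omega'}$ will, after the reorganisation, become a telescoping sum whose $\partial$-boundary and $\nabla$-differential are absorbed by $\mathrm{He}$. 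So the plan of steps is: (1) fix a local basis $\omega_1,\dots,\omega_n$ of $\Omega$ adapted to both splittings (possible since $\underline\Omega'$ only merges two consecutive $D$-blocks, so one may choose $D'$ refining into $D$ by relabelling), and observe via Lemma~\ref{indep of basis} that everything is basis-independent; (2) define the interpolating set of data and the map $\psi(r,s)_{\tilde F_0,\dots,\tilde F_r}$ by a formula parallel to~\eqref{key construction}; (3) expand $d_{\mathcal Hom}(\psi)(r,s)=\nabla\psi(r,s-1)\pm\psi(r,s)\theta$ and $\psi\circ\partial$ exactly as in the computation of $\nabla\varphi(r,s-1)$, $\varphi(r,s)\theta$, and $\sum(-1)^{q+s}\varphi(r-1,s)$ in the proof of Theorem~\ref{de-Hig without truncation 2}, using the same relations $\zeta_{1,i}=\zeta_{0,i}+dh_{1,i}$, $h_{\tilde F_{q-1}\tilde F_{q+1},i}=h_{q,i}+h_{q+1,i}$, and $\iota_{\tilde F_1}(F^\ast e)=\iota_{\tilde F_0}(\exp(\textstyle\sum h_{1,i}F^\ast\theta_i)(F^\ast e))$; (4) match terms: the ``diagonal'' pieces reproduce $\varphi_{\underline\Omega}(r,s)-\varphi_{\underline\Omega'}(r,s)$ by the telescoping identities above, and the off-diagonal pieces cancel in pairs.

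I expect the main obstacle to be the bookkeeping in step~(3)--(4): the homotopy formula $\psi$ carries an extra interpolation marker on top of the already heavy index package $(\underline i,\underline S,\underline{\overline j})$, so $d_{\mathcal Hom}\psi$ and $\psi\partial$ each produce a large number of terms, and one must check that after applying the two coefficient-telescoping relations and the basic $h$-cocycle relations, \emph{every} term not equal to $\varphi_{\underline\Omega}-\varphi_{\underline\Omega'}$ cancels — including the boundary contributions coming from $\partial$ acting on $\Delta_\ast(\mathcal L)$, which split into the same $\underline{\overline j}(\cup q)$, $\underline S(\cup q)$ merging patterns as in~\eqref{cup j}--\eqref{cup s}. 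A secondary subtlety is ensuring the interpolation is genuinely ``local'' in the block index, i.e. that merging blocks $o,o+1$ affects only the coefficient factors $c_o$ (in the notation of~\eqref{C}) and leaves $c_i$ for $i\neq o$ untouched, so that the homotopy is supported where it should be; this is where the hypothesis $\mathrm{rank}(\Omega_o)+\mathrm{rank}(\Omega_{o+1})<p-\ell$ is used, guaranteeing all the factorials and inverse-factorials occurring in the merged block remain units in $k$. Once the termwise identity is verified on a basis, globality follows from Lemma~\ref{indep of basis} exactly as for $\mathrm{Ho}_{\underline\Omega}$ itself, and the statement that $\mathrm{Ho}_{\underline\Omega}$ is homotopic to $\mathrm{Ho}_{\underline\Omega'}$ — hence that the induced $\Phi_{(\tilde X,\tilde D),\underline\Omega}$ in $D(X')$ is independent of the ordered splitting under block-merging — is immediate.
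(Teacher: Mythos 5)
Your overall strategy coincides with the paper's: the paper also proves the proposition by writing down an explicit degree $-1$ family $\psi(r,s)_{\tilde F_0,\cdots,\tilde F_r}:\Omega^{r+s+1}(E,\theta)\to F_*\Omega^{s}(H,\nabla)$ (with $\psi_0:=0$) of the same combinatorial shape as \eqref{key construction}, indexed by an enlarged set $H_{\underline\Omega,o}(r,s)$ carrying exactly the kind of extra marker you describe (a slot $q$ together with an index $i\in D^{-1}(o)$ paired against $i_q\in D^{-1}(o+1)$), and then verifying the chain-homotopy identity \eqref{psi L}$=$\eqref{psi R} termwise in a fixed adapted basis, using Lemma~\ref{indep of basis} for basis-independence, just as you outline in steps (1)--(4).

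The genuine gap is that the heart of the proof --- the definition of the interpolating coefficient and the identities needed to verify the homotopy relation --- is left unspecified, and your guess about where it comes from is too optimistic. You assert that the coefficient is obtained by splitting one factor $[(j^q+s'_q)+\cdots]^{-1}$ via the two telescoping observations used to kill $f'''_{\underline S}$ in the proof of Theorem~\ref{de-Hig without truncation 2}, and that the difference $\mathrm{Ho}_{\underline\Omega}-\mathrm{Ho}_{\underline\Omega'}$ then telescopes away. In the paper the coefficient $C_o(q,i,\underline S,\underline i,\underline{\overline j})$ in \eqref{homotopy case 1} is not a single split factor: it is a ratio $f_\star/g_\star$ of sums of products that simultaneously involves the separate-block weights $(j^u+s'_u)$ and the merged-block weights $\Delta_u$, with support conditions ($b'_\star\le q\le a'_\star$, vanishing otherwise) that are essential for the off-diagonal cancellations. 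Moreover the verification does not reduce to the two observations you cite: one must compute the right-hand side $c^R_{\underline S,\underline i,\underline{\overline j}}$ as the explicit difference of the two splittings' inverse-product coefficients and match it against a sum of contributions $c^{L,v,1}+c^{L,v,2}+c^{L,v,3}$, which requires new polynomial identities (e.g. the displayed identity relating $\prod\sum y_v\bigl(\prod\sum(x_l+y_l)-\prod\sum y_l\bigr)$ to $z\sum f_u+\sum y_u\sum f_l$), as well as separate checks that the coefficients $c^L_{l,q,i,\cdots}$ and $c^L_{u,q,i,\cdots}$ attached to the enlarged index sets $T'_{\underline\Omega}$ and $T''_{\underline\Omega}$ vanish. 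Without producing the coefficient and these identities, the plan does not yet yield a proof; everything else in your outline (adapted basis, use of $\zeta_{1,i}=\zeta_{0,i}+dh_{1,i}$, $h_{\tilde F_{q-1}\tilde F_{q+1},i}=h_{q,i}+h_{q+1,i}$, the role of $\mathrm{rank}(\Omega_o)+\mathrm{rank}(\Omega_{o+1})<p-\ell$ in keeping the denominators invertible) is consistent with the paper.
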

To prove this lemma, for any $r>0$ and 
$s\geq0$, we utilize the following notation table, which is built upon the notable table presented in the previous subsection.
\begin{itemize}
    \item $\underline{\overline j}:=(j^1_1,\cdots,j^1_n;\cdots;j^r_1,\cdots,j^r_n)$, where $0<\sum_{l=0}^nj^q_l<p$ for any $1\leq q\leq r$.
    \item $\underline{S}:=(S_0,\cdots,S_r)$, where $S_0,\cdots,S_r\subset\{1,\cdots,n\}$ and 
    $$\#S_0+\cdots+\#S_r==\#(S_0\cup\cdots\cup S_r)=s.$$
    \item Given any pair $(\underline S,\underline{\overline j})$.
    \begin{itemize}
    
        \item For $1\leq q\leq r$, set 
        $$\lambda_q:=\max D(S_q\cup\{l: j^q_l\neq0\}).$$
        
        \item  For $1\leq i\leq \beta$ and $i\neq o,o+1$, set
$$
   a_i:=\max\{l:\lambda_l=i\},~b_i:=\min\{l:\lambda_l=i\}.
   $$ 
        \item Set
   $$
   a_\star:=\max\{l:\lambda_l\in \{o,o+1\}\},~b_\star:=\min\{l:\lambda_l\in \{o,o+1\}\}.
   $$
   
   \item  For any $1\leq q\leq r$, set
   $$
   j^q:=\left\{
  \begin{matrix}
      \sum_{i\in D^{-1}(\lambda_q)}j^q_i,&q\notin [b_\star,a_\star],\\
      \sum_{i\in D^{-1}(\{o,o+1\})}j^q_i,&q\in[b_\star,a_\star],
  \end{matrix}
  \right.   
  $$
  and
  $$
  s_q':=\left\{
  \begin{matrix}
\#S_q\cap D^{-1}(\lambda_q)),&q\notin [b_\star,a_\star],\\
\#S_q\cap D^{-1}(\{o,o+1\}),&q\in [b_\star,a_\star].
\end{matrix}
  \right.
  $$
\item   For any $b_\star\leq q\leq a_\star$, set
  $$
  \Delta_q:=\sum_{i\in D^{-1}(o+1)}j^q_i+\#S_q\cap D^{-1}(o+1).
  $$
    \end{itemize} 
    \item Let $H_{\underline\Omega,o}(r,s)$ be the set of $5$-tuples $(q,i,\underline{S},\underline i,\underline{\overline j})$, subject to the following conditions:
    \begin{itemize}

\item $1\leq q\leq r$;

\item $\underline S,\overline{\underline j}$ are defined as above;
    
 \item $\lambda_q=o+1$. Moreover, for any $1\leq l\leq r$, either $\lambda_l\geq\lambda_{l+1}$ or $\lambda_l=o, \lambda_{l+1}=o+1$;
    
    \item $i\in D^{-1}(o)\cap\{w:j^q_w\neq0\}$, $\underline i:=(i_1,\cdots,i_r)$ satisfying
 $$
  i_l\in\left\{
  \begin{matrix}
      D^{-1}(\lambda_l)\cap\{w:j^l_w\neq0\},&l\notin [b_\star,a_\star],\\
      D^{-1}(\{o,o+1\})\cap\{w:j^l_w\neq0\}, &l\in[b_\star,a_\star]-\{q\},\\
      D^{-1}(o+1)\cap\{w:j^q_w\neq0\},&l=q,
  \end{matrix}
  \right.
  $$   
  and $\#(\{i_1,\cdots,i_r\}\cup S_0\cup\cdots\cup S_r)=r+s$.
    \end{itemize}
  
  \item Given any $(q,i,\underline S,\underline i,\underline{\overline j})\in H_{\underline\Omega,o}(r,s)$. Set
  $$
  a_\star':=\max\{l:D(i_l)=o+1\},~b'_\star := \min \{l : \lambda_l=\cdots=\lambda_{a_\star'} = o+1,l\leq a'_\star\}.
  $$
  Define 
  $$
  C_o(q,i,\underline S,\underline i,\underline{\overline j}):=\left\{
\begin{matrix}
c_1\cdots c_{o-1}c_\star c_{o+2}\cdots c_\beta,&\mathrm{if}~b_\star'\leq q\leq a_\star',\\
0,&\mathrm{otherwise}.
\end{matrix}
  \right.
  $$
Here, for any $w=1,\cdots,o-1,o+2,\cdots,\beta$, we set
   \[
c_w:= 
\begin{cases}
\prod_{b_w \leq u \leq a_w}[(j^u+s'_u)+ \cdots +(j^{a_w}+s'_{a_w})]^{-1}, & \text{if } b_w > 0,~\sum_{u=b_w}^{a_w}(j^u+s'_u)<p, \\
1, & \text{otherwise}.
\end{cases}
\]  
Set
$$
c_\star:=\left\{
\begin{matrix}
 f_\star/g_\star,&\mathrm{if}~b'_\star\leq q\leq a'_\star,
~\sum_{u=b_\star}^{a_\star}(j^u+s'_u)<p,~\mathrm{and}~\lambda_u=o~\mathrm{for}~a_\star'<u\leq a_\star,\\
0,&\mathrm{otherwise},
\end{matrix}
\right.
$$
where
$$
\begin{array}{c}
f_\star:=f_{b_\star'}+\cdots+f_q,~f_l:=f_l'f_l'',\\
f_l':=\left\{
\begin{matrix}
\prod_{l<u\leq a_\star'}[(j^u+s_u')+\cdots+(j^{a_\star}+s'_{a_\star})],&b_\star'\leq l<a_\star',\\
1,&l=a_\star',
\end{matrix}
\right.\\
f_l'':=\left\{
\begin{matrix}
\prod_{b_\star'\leq u<l}(\Delta_u+\cdots+\Delta_{a'_\star}),&b_\star'<l\leq a_\star',\\
1,&l=b_\star',
\end{matrix}
\right.
\end{array}
$$
and
$$
g_\star:=\prod_{b_\star\leq u\leq a_\star}[(j^u+s_u')+\cdots+(j^{b_\star}+s'_{b_\star})]\prod_{b_\star'\leq u\leq a_\star'}(\Delta_u+\cdots+\Delta_{a'_\star}).
$$
\end{itemize}

Given any $r>0,s\geq0$ such that $r+s+1\leq n$, and any Frobenius liftings $\tilde F_0,\cdots,\tilde F_r\in\Gamma(X,\mathcal L)$. As in \eqref{D} and its corresponding neighborhood, let $\omega_1,\cdots,\omega_n$ be a basis for $\Omega^1_{X/k}(\log D)$ over $U$ associated to a nondecreasing function $D$. Let $\boldsymbol{e}\in\Omega^{r+s+1}_{X/k}(\log D)$ be any section which can be uniquely written as 
\begin{eqnarray}\label{bold e r+s+1}
\sum_{1\leq i_1<\cdots<i_{r+s+1}\leq n}e_{i_1,\cdots,i_{r+s+1}}\otimes(\omega_{i_1}\wedge\cdots\wedge\omega_{i_{r+s+1}}),~e_{i_1,\cdots,i_{r+s+1}}\in E.
\end{eqnarray}

\begin{construction}\label{case 1}
Suppose that $\Omega_i'$ is defined as in \eqref{case i}. Define $\psi(r,s)_{\tilde F_0,\cdots,\tilde F_r}(\boldsymbol{e})$ to be
\begin{eqnarray}\label{homotopy case 1}
\sum_{(q,i,\underline S,\underline i,\underline{\overline j})\in H_{\underline\Omega,o}(r,s)} C_o(q,i,\underline S,\underline i,\underline{\overline j})j^q_ij^1_{i_1}\cdots j^r_{i_r}\iota_{\tilde F_0}(F^*(\theta^{\underline{\overline j}}\theta_i^{-1}\theta_{i_1}^{-1}\cdots\theta_{i_r}^{-1}e_{i,\underline S,\underline i}))\boldsymbol{\omega}_{\underline S}h^{[\underline{\overline j}]}.
\end{eqnarray}
\end{construction}

\begin{lemma}
The construction above is independent of the choice of basis $\omega_1,\cdots,\omega_n$.
\end{lemma}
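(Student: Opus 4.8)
The plan is to reproduce, in the present setting, the argument that proved Lemma~\ref{indep of basis}. There $\varphi(r,s)_{\tilde F_0,\cdots,\tilde F_r}$ was shown to be independent of the chosen adapted basis by factoring it as $\tilde\varphi(r,s)_{\tilde F_0,\cdots,\tilde F_r}\circ\mathrm{sec}_{\underline\Omega}$, where $\tilde\varphi(r,s)$ (Construction~\ref{tilde varphi}) is assembled purely from the coordinate-free data $\Theta_1,\dots,\Theta_\beta$, $\sigma^q_i=(\mathrm{id}\otimes H^q_i)F^*\Theta_i$, the maps $\wedge^\bullet dh_q$ and $\wedge^\bullet h$, and the rearrangement sign $\epsilon(\underline{\overline s},\underline s)$, while $\mathrm{sec}_{\underline\Omega}$ is the basis-free section of $\pi$. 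So first I would build the analogous $\mathcal{O}_X$-linear lift
$$
\tilde\psi(r,s)_{\tilde F_0,\cdots,\tilde F_r}\colon E\otimes\tilde\Omega^{\otimes(r+s+1)}\longrightarrow F_*\Omega^s(H,\nabla),
$$
indexed by a set of tuples analogous to $U(r,s)$ but carrying, in addition, the two distinguished data of Construction~\ref{case 1}: one tensor slot constrained to lie in $\Omega_o$ and acted on by the operator attached to the marked index $i$, and a position $q$ with $\lambda_q=o+1$ marking where the $\Omega_{o+1}$-block is crossed, together with the divided-power coefficients these data force. I would then verify the factorization
$$
\psi(r,s)_{\tilde F_0,\cdots,\tilde F_r}=\tilde\psi(r,s)_{\tilde F_0,\cdots,\tilde F_r}\circ\mathrm{sec}_{\underline\Omega}.
$$
Since $\mathrm{sec}_{\underline\Omega}$ is basis-free (noted above Construction~\ref{tilde varphi}) and $\tilde\psi(r,s)$ is basis-free by construction, the lemma follows immediately.

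Everything except the coefficients is formally identical to the already-settled case of $\varphi(r,s)$: the operators $\Theta_i$ and $\sigma^q_i$, the maps $\wedge^\bullet dh_q$ and $\wedge^\bullet h$, and the rearrangement sign are manifestly basis-independent, and the bookkeeping of $\boldsymbol\omega_{\underline S}$, $h^{[\underline{\overline j}]}$, and the $e_\bullet$-symbols is handled exactly as in the proof of Lemma~\ref{indep of basis}. The substance of the lemma is therefore the assertion that the numerical factor produced on the lift side after applying $\mathrm{sec}_{\underline\Omega}$ --- which on each unmerged block $\Omega_i$ ($i\neq o,o+1$) is the usual multinomial $\big(\sum_{q=0}^r s^q_i+s_i\big)!/\prod_{q=0}^r s^q_i!$ times $c_i$, exactly as in Construction~\ref{tilde varphi} --- equals $C_o(q,i,\underline S,\underline i,\underline{\overline j})$ on the merged block.

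The hard part will be precisely this coefficient matching over the merged block $\Omega_o\oplus\Omega_{o+1}$. Distributing the $\tilde\Omega$-slots lying over this block between an $\Omega_o$-part and an $\Omega_{o+1}$-part, in the presence of the marked slot $i$ and the marked position $q$, produces a factor that I expect to be exactly $c_\star$: the ratio $f_\star/g_\star$ supported on $b'_\star\le q\le a'_\star$ and vanishing otherwise, with $f_\star$ and $g_\star$ the telescoping products written down after Construction~\ref{case 1}. Checking that the divided-power denominators contributed by $\mathrm{sec}_{\underline\Omega}$ collapse precisely to $g_\star$, that the surviving numerator is $f_\star$, and in particular that the support condition $b'_\star\le q\le a'_\star$ (together with the auxiliary condition $\lambda_u=o$ for $a'_\star<u\le a_\star$) emerges from the combinatorics rather than being imposed by hand, is the one genuinely technical step. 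I expect this to go by induction on the length $a_\star-b_\star$ of the merged block, reducing to the single-transposition identities for $C$ established in the proof of Theorem~\ref{de-Hig without truncation 2}; as a preliminary sanity check I would first treat the case $\mathrm{rank}(\Omega_o)=\mathrm{rank}(\Omega_{o+1})=1$ with small $r,s$, where it specializes to the \v{C}ech computation of~\cite{AS}.
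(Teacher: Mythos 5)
Your proposal follows exactly the route the paper intends: the paper's proof of this lemma is simply the remark that it is "similar to that of Lemma \ref{indep of basis}," i.e.\ factor the map through a basis-free lift on $E\otimes\tilde\Omega^{\otimes(r+s+1)}$ composed with the section $\mathrm{sec}_{\underline\Omega}$, which is precisely your plan. Your identification of the coefficient matching over the merged block $\Omega_o\oplus\Omega_{o+1}$ (producing $c_\star$) as the only technical point is consistent with the details the paper omits, so the proposal is correct and essentially identical in approach.
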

\begin{proof}
Its proof is similar to that of Lemma \ref{indep of basis}, and we omit the details.
\end{proof}

\emph{Proof of Proposition \ref{Ho^2}}. To construct a homotopy from $\mathrm{Ho}_{\underline\Omega}$ to $\mathrm{Ho}_{\underline\Omega'}$, we need to define a family of morphisms
$$
\psi_r:\Delta_r(\mathcal L)\to\mathcal Hom^{-r-1}_{\mathcal O_X}(\Omega^*(E,\theta),F_*\Omega^*(H,\nabla)),~r\geq0.
$$
They can be given as follows:
\begin{itemize}
    \item $\psi_0:=0$;
    \item For any $r>0,s\geq0$, any open subset $U\subset X$, and any $\tilde F_0,\cdots,\tilde F_r\in\Gamma(U,\mathcal L)$, set
    $$
    \psi_r(\tilde F_0,\cdots,\tilde F_r)_s:=\psi(r,s)_{\tilde F_0,\cdots,\tilde F_r}:\Omega^{r+s+1}(E,\theta)\to F_*\Omega^s(H,\nabla).
    $$
\end{itemize}
We claim that the family $\psi:=\{\psi_r\}_{r\geq0}$ constructed above is the desired homotopy. Equivalently, for any $r>0,s\geq-1$, the two expressions
\begin{eqnarray}\label{psi L}
  \nabla\psi(r,s)_{\tilde F_0,\cdots,\tilde F_r}+\sum_{q=0}^r(-1)^{s+q+1}\psi(r-1,s+1)_{\tilde F_0,\cdots,\widehat{\tilde F_q},\cdots,\tilde F_r}+\psi(r,s+1)_{\tilde F_0,\cdots,\tilde F_r}\theta\end{eqnarray}
and
\begin{eqnarray}\label{psi R}
    \mathrm{Ho}_{\underline\Omega}^{-r}(\tilde F_0,\cdots,\tilde F_r)_{s+1}-\mathrm{Ho}_{\underline\Omega'}^{-r}(\tilde F_0,\cdots,\tilde F_r)_{s+1}
    \end{eqnarray}
coincide.

Given any section \(\boldsymbol{e} \in \Omega^{r+s+1}(E, \theta)\), written as in \eqref{bold e r+s+1}, to verify that \eqref{psi L} and \eqref{psi R} coincide, it suffices to show that their images of \(\boldsymbol{e}\) are identical. Moreover, without loss of generality, we may further assume that $\underline\Omega=(\Omega_1,\Omega_2)$ and $o=1$.

\emph{Computation of the image of $\boldsymbol{e}$ under \eqref{psi L}}. It contains four steps. The first step is the expansion of $\nabla\psi(r,s)_{\tilde F_0,\cdots,\tilde F_r}(\boldsymbol{e})$, which equals the sum $M_{11}+M_{12}$, where
$$
M_{11}:=\sum\textcolor{red}{\sum_u} C_1j^q_ij^1_{i_1}\cdots j^r_{i_r}\iota_{\tilde F_0}(F^*(\theta^{\underline{\overline j}}\theta_i^{-1}\theta_{i_1}^{-1}\cdots\theta_{i_r}^{-1}\textcolor{red}{\theta_u}e_{i,\underline S,\underline i}))(\textcolor{red}{\zeta_{0,u}\wedge}\boldsymbol{\omega}_{\underline S})h^{[\underline{\overline j}]},$$
$$
M_{12}:=\sum\textcolor{red}{\sum_{w,u}}C_1j^q_ij^1_{i_1}\cdots j^r_{i_r}\iota_{\tilde F_0}(F^*(\theta^{\underline{\overline j}}\textcolor{red}{\theta^{-1}_u}\theta_i^{-1}\theta_{i_1}^{-1}\cdots\theta_{i_r}^{-1}\textcolor{red}{\theta_u}e_{i,\underline S,\underline i}))(\textcolor{red}{dh_{w,u}\wedge}\boldsymbol{\omega}_{\underline S})(\textcolor{red}{j^w_uh^{-1}_{w,u}}h^{[\underline{\overline j}]}).
$$
Here, the first summation $\sum$ in either $M_{11}$ or $M_{12}$ is taken over $(q,i,\underline i,\underline S,\underline{\overline j})\in H_{\underline\Omega,1}$, and $C_1:=C_1(q,i,\underline i,\underline S,\underline{\overline j})$. 
The red summation in $M_{13}$ is taken over $w$ satisfying $1\leq w\leq n$ and $w\neq q$.

The second step is the expansion of $\sum_{q=0}^r(-1)^{s+q+1}\psi(r-1,s+1)_{\tilde F_0,\cdots,\widehat{\tilde F_q},\cdots,\tilde F_r}(\boldsymbol{e})$.
Using the transition morphism $G_{\tilde F_0\tilde F_1}$, $\psi(r-1,s+1)_{\textcolor{red}{\widehat{\tilde F_0}},\tilde F_1,\cdots,\tilde F_r}(\boldsymbol{e})$ equals the sum
$$
 M_{20}:=\sum C_1(q,i,\underline i,\underline S\textcolor{red}{(\cup0)},\underline{\overline j}\textcolor{red}{(\cup0)})j^{q+1}_ij^2_{i_2}\cdots j^r_{i_r}\iota_{\tilde F_0}(F^*(\theta^{\underline{\overline j}}\theta_i^{-1}\theta^{-1}_{i_2}\cdots\theta^{-1}_{i_r}e_{i,\underline S,\underline i})\boldsymbol{\omega}_{\underline S}h^{\underline{\overline j}},$$
where $\underline S(\cup 0),\underline{\overline j}(\cup 0)$ are defined as in \eqref{cup j} and \eqref{cup s}, respectively. Here, the summation is taken over $$q,i,\underline i:=(i_2,\cdots,i_r),~\underline S:=(S_0,\cdots,S_r),~\underline{\overline j}:=(j^1_1,\cdots,j^1_n;\cdots;j^r_1,\cdots,j^r_n),$$ 
subject to the following conditions:
\begin{itemize}
    \item $\#S_0+\cdots+\#S_r=s+1$;
    \item $\#(\{i\}\cup\{i_2,\cdots,i_r\}\cup S_0\cup\cdots\cup S_r)=r+s+1$;
    \item $(q,i,\underline i,\underline S(\cup0),\overline{\underline j}(\cup0))\in H_{\underline\Omega,1}(r-1,s+1)$.
\end{itemize}

For $0<u<r$, using the binomial expansion $(h_{u,l}+h_{u+1,l})^{[m]}=\sum_{m_1+m_2=m}h_{u,l}^{[m_1]}h_{u+1,l}^{[m_2]}$, we obtain that $\psi(r-1,s+1)_{\tilde F_0,\cdots,\textcolor{red}{\widehat{\tilde F_u}},\cdots,\tilde F_r}(\boldsymbol{e})$ equals the sum $M_{2u}'+M_{2u}''+M_{2u}'''$, where
$$
M'_{2u}:=\sum C_1'j^q_ij^1_{i_1}\cdots\textcolor{red}{\widehat{j^u_{i_u}}}\cdots j^r_{i_r}\iota_{\tilde F_0}(F^*(\theta^{\underline{\overline j}}\theta_i^{-1}\theta_{i_1}^{-1}\cdots\textcolor{red}{\widehat{\theta_{i_u}^{-1}}}\cdots\theta_{i_r}^{-1}e_{i,\underline S,\underline i}))\boldsymbol{\omega}_{\underline S} h^{\underline{\overline j}},$$
$$
M_{2u}'':=\sum C_1''j^q_ij^1_{i_1}\cdots\textcolor{red}{\widehat{j^{u+1}_{i_{u+1}}}}\cdots j^r_{i_r}\iota_{\tilde F_0}(F^*(\theta^{\underline{\overline j}}\theta_i^{-1}\theta_{i_1}^{-1}\cdots\textcolor{red}{\widehat{\theta_{i_{u+1}}^{-1}}}\cdots\theta_{i_r}^{-1}e_{i,\underline S,\underline i}))\boldsymbol{\omega}_{\underline S} h^{\underline{\overline j}},$$
$$
M_{2u}''':=\sum C_1'''j^1_{i_1}\cdots j^r_{i_r}\iota_{\tilde F_0}(F^*(\theta^{\underline{\overline j}}\theta_{\underline i}e_{\underline S,\underline i}))\boldsymbol{\omega}_{\underline S} h^{\underline{\overline j}}.
$$
In $M_{2u}'$ or $M_{2u}''$, we write $\underline i$ as $(i_1,\cdots,\textcolor{red}{\widehat{i_u}},\cdots,i_r)$ or $(i_1,\cdots,\textcolor{red}{\widehat{i_{u+1}}},\cdots,i_r)$, respectively. Moreover, the first two summations are taken over $q,i,\underline i,\underline S,\underline{\overline j}$,
subject to the following conditions:
\begin{itemize}
    \item $\#S_0+\cdots+\#S_r=s+1$;
    \item $\#(\{i\}\cup\{\underline i\}\cup S_0\cup\cdots\cup S_r)=r+s+1$, where $\{\underline i\}$ is the set consisting of entries of $\underline i$;
    \item $(q,i,\underline i,\underline S(\cup u),\overline{\underline j}(\cup u))\in H_{\underline\Omega,1}(r-1,s+1)$.
\end{itemize}
The last summation is taken over
$$\underline i:=(i_1,\cdots,i_r),~\underline S:=(S_0,\cdots,S_r),~\underline{\overline j}:=(j^1_1,\cdots,j^1_n;\cdots;j^r_1,\cdots,j^r_n),$$
subject to the following conditions:
\begin{itemize}
\item  $\{D(i_u),D(i_{u+1})\}=\{1,2\}$;
    \item $\#S_0+\cdots+\#S_r=s+1$;
    \item $\#(\{i_1,\cdots,i_r\}\cup S_0\cup\cdots\cup S_r)=r+s+1$;
    \item Let $i\in\{i_u,i_{u+1}\}\cap D^{-1}(1)$, then
    $$(\textcolor{red}{u},\textcolor{red}{i},(i_1,\cdots,\textcolor{red}{\widehat{i}},\cdots,i_r),\underline S(\cup u),\overline{\underline j}(\cup u))\in H_{\underline\Omega,1}(r-1,s+1).$$ 
\end{itemize}
 The three coefficients $C_1',C_1'',C_1'''$ are defined as follows:
 $$
 C_1'=C_1'':=C_1(q,i,\underline i,\underline S(\cup u),\overline{\underline j}(\cup u))
 $$
and
$$
C_1''':=\left\{
\begin{matrix}
(-1)^{s+u}C_1(u,i_u,(i_1,\cdots,\widehat{i_u},\cdots,i_r),\underline S(\cup u),\overline{\underline j}(\cup u)),&i_u\in D^{-1}(1),\\
(-1)^{s+u+1}C_1(u,i_{u+1},(i_1,\cdots,\widehat{i_{u+1}},\cdots,i_r),\underline S(\cup u),\overline{\underline j}(\cup u)),&i_{u+1}\in D^{-1}(1).
\end{matrix}
\right.
$$

Set $M_{2r}:=\psi(r-1,s+1)_{\tilde F_0,\cdots,\tilde F_{r-1},\textcolor{red}{\widehat{\tilde F_r}}}(\boldsymbol{e})$. By definition, we have
$$
\sum_{(q,i,\underline S,\underline i,\underline{\overline j})\in H_{\underline\Omega,1}(r-1,s+1)} C_1(q,i,\underline S,\underline i,\underline{\overline j})j^q_ij^1_{i_1}\cdots j^r_{i_r}\iota_{\tilde F_0}(F^*(\theta^{\underline{\overline j}}\theta_i^{-1}\theta_{i_1}^{-1}\cdots\theta_{i_r}^{-1}e_{i,\underline S,\underline i}))\boldsymbol{\omega}_{\underline S}h^{\underline{\overline j}}.
$$

The third step is the expansion of $\psi(r,s+1)\theta(\boldsymbol{e})$. Define
$$
(\theta\boldsymbol{e})_{i_1,\cdots,i_{r+s+2}}:=\sum_{q=1}^{r+s+2}(-1)^{q-1}\theta_qe_{i_1,\cdots,\textcolor{red}{\widehat{i_q}},\cdots,i_{r+s+2}}.
$$
It is easy to see that
$$
\theta\boldsymbol{e}=\sum_{1\leq i_1,\cdots,i_{r+s+2}\leq n}(\theta\boldsymbol{e})_{i_1,\cdots,i_{r+s+2}}\otimes\omega_{i_1}\wedge\cdots\wedge\omega_{i_{r+s+2}},
$$
and $$(\theta\boldsymbol{e})_{i_{\sigma(1)},\cdots,i_{\sigma(r+s+2)}}=\mathrm{sgn}(\sigma)(\theta\boldsymbol{e})_{i_1,\cdots,i_{r+s+2}}$$ holds for any permutation $\sigma$ of $\{1,\cdots,r+s+2\}$. Recall that $\psi(r,s+1)\theta(\boldsymbol{e})$ equals 
\begin{eqnarray}\label{psi theta r s+1}
\sum_{(q,i,\underline S,\underline i,\underline{\overline j})\in H_{\underline\Omega,1}(r,s+1)} C_1(q,i,\underline S,\underline i,\underline{\overline j})j^q_ij^1_{i_1}\cdots j^r_{i_r}\iota_{\tilde F_0}(F^*(\theta^{\underline{\overline j}}\theta_i^{-1}\theta_{i_1}^{-1}\cdots\theta_{i_r}^{-1}(\theta\boldsymbol{e})_{i,\underline S,\underline i}))\boldsymbol{\omega}_{\underline S}h^{[\underline{\overline j}]}.
\end{eqnarray}
Dividing $(\theta\boldsymbol{e})_{i,\underline S,\underline i}$ into the following three parts:
\begin{itemize}
    \item $\theta_1e_{\underline S,\underline i}+(-1)^{s+q+1}\theta_qe_{i,\underline S,i_1,\cdots,\textcolor{red}{\widehat{i_q}},\cdots,i_r}$;
    \item $\sum_{0\leq w\leq r,u\in S_w}(-1)^{\#S_0+\cdots+\#S_{w-1}+\#\{l\in S_w:l<i_w\}+1}\theta_ue_{i,S_0,\cdots,\textcolor{red}{S_w-\{u\}},\cdots,S_r,\underline i}$;
    \item $\sum_{w\neq q}(-1)^{s+w+1}e_{i,\underline S,i_1,\cdots,\textcolor{red}{\widehat{i_w}},\cdots,i_r}$.
\end{itemize}
Let \( M_{31}, M_{32}, M_{33} \) denote the three parts of \eqref{psi theta r s+1} obtained by replacing \( (\theta\boldsymbol{e})_{i,\underline{S},\underline{i}} \) with the three corresponding parts described above.

The first three steps show that the image of $\boldsymbol{e}$ under \eqref{psi L} is
\begin{eqnarray}\label{huge summation}
\begin{array}{c}
 (M_{11}+M_{12})\\
 +[(-1)^{s+1}M_{20}+\sum_{0<u<r}(-1)^{s+1+u}(M_{2u}'+M_{2u}''+M_{2u}''')+(-1)^{s+1+r}M_{2r}]\\
 +(M_{31}+M_{32}+M_{33}).
 \end{array}
\end{eqnarray}
The final step is to restructure the summation above in a finer form. To this end, we introduce two sets. The first is $T'_{\underline\Omega}(r,s+1)$, consisting of $6$-tuples $(l,q,i,\underline i,\underline S,\overline{\underline j})$ that meet the following conditions:
\begin{itemize}

\item $1\leq l,q\leq r$ and $l\neq q$;

    \item $\underline S:=(S_0,\cdots,S_r)$, where $S_0,\cdots,S_r\subset\{1,\cdots,n\}$ and 
    $$\#S_0+\cdots+\#S_r=\#(S_0\cup\cdots\cup S_r)=s+1;$$
    
    \item $\underline{\overline j}:=(j^1_1,\cdots,j^1_n;\cdots;j^r_1,\cdots,j^r_n)$, where $0\leq j^q_l<p$;
    
 \item $i\in D^{-1}(1)$ and $j^q_i>0$;
 
    \item $\underline i:=(i_1,\cdots,\widehat{i_l},\cdots,i_r)$ satisfying $i_q\in D^{-1}(2)$, $j^w_{i_w}>0$ for any $w\neq l$, and 
    $$
   \#(\{i\}\cup\{i_1,\cdots,\widehat{i_l},\cdots,i_r\}\cup S_0\cup\cdots\cup S_r)=r+s+1.
    $$
\end{itemize}
The second is $T''_{\underline\Omega}(r,s+1)$, consisting of $6$-tuples $(u,q,i,\underline i,\underline S,\overline{\underline j})$ that meet the following conditions:
\begin{itemize}
    \item $(q,i,\underline S,\underline i,\overline{\underline j})\in H_{\underline\Omega,1}(r,s+1)$;
    \item $u\in S_0\cup\cdots\cup S_r$.
\end{itemize}
Clearly, \eqref{huge summation} can be reformulated as the sum of the following three parts:
$$
\sum_{(\underline S,\underline i,\underline{\overline j})\in T_{\Omega}(r,s+1)}c^L_{\underline i,\underline S,\underline{\overline j}}j^1_{i_1}\cdots j^r_{i_r}\iota_{\tilde F_0}(F^*(\theta^{\underline{\overline j}}\theta_{i_1}^{-1}\cdots\theta_{i_r}^{-1}e_{\underline S,\underline i}))(\boldsymbol{\omega}_{\underline S}h^{[\underline{\overline j}]}),
$$
$$
\sum_{(l,q,i,\underline S,\underline i,\underline{\overline j})\in T'_{\underline\Omega}(r,s+1)} c^L_{l,q,i,\underline S,\underline i,\underline{\overline j}}j^q_ij^1_{i_1}\cdots\textcolor{red}{\widehat{j^l_{i_l}}}\cdots j^r_{i_r}\iota_{\tilde F_0}(F^*(\theta^{\underline{\overline j}}\theta_i^{-1}\theta_{i_1}^{-1}\cdots\theta_{i_r}^{-1}e_{i,\underline S,(i_1,\cdots,\textcolor{red}{\widehat{i_l}},\cdots,i_r)}))\boldsymbol{\omega}_{\underline S}h^{[\underline{\overline j}]},
$$
$$
\sum_{(l,q,i,\underline S,\underline i,\underline{\overline j})\in T''_{\underline\Omega}(r,s+1)} c^L_{u,q,i,\underline S,\underline i,\underline{\overline j}}j^q_ij^1_{i_1}\cdots j^r_{i_r}\iota_{\tilde F_0}(F^*(\theta^{\underline{\overline j}}\theta_i^{-1}\theta_{i_1}^{-1}\cdots\theta_{i_r}^{-1}e_{i,\underline S\textcolor{red}{-\{u\}},\underline i}\boldsymbol{\omega}_{\underline S}h^{[\underline{\overline j}]}.
$$
In the first summation, \(T_\Omega(r,s+1)\) is the set of all triples $(\underline S,\underline i,\underline{\overline j})$ that meet the following conditions:
\begin{itemize}
    \item $\underline S:=(S_0,\cdots,S_r)$, where $S_0,\cdots,S_r\subset\{1,\cdots,n\}$ and 
    $$\#S_0+\cdots+\#S_r=\#(S_0\cup\cdots\cup S_r)=s+1;$$
    \item $\underline{\overline j}:=(j^1_1,\cdots,j^1_n;\cdots;j^r_1,\cdots,j^r_n)$, where $0\leq j^q_l<p$;
    \item $\underline i:=(i_1,\cdots,i_r)$ such that
    $$
    \#(\{i_1,\cdots,i_r\}\cup S_0\cup\cdots\cup S_r)=r+s+1.
    $$
\end{itemize}
In the last summation, $\underline S-\{u\}$ means $(S_0,\cdots,S_w-\{u\},\cdots,S_r)$ if $u\in S_w$.

The coefficients $c^L_{\underline S,\underline i,\underline{\overline j}},~c^L_{l,q,i,\underline S,\underline i,\underline{\overline j}}$ and $c^L_{u,q,i,\underline S,\underline i,\underline{\overline j}}$ can be initially written as
$$
\begin{array}{c}
c^L_{\underline S,\underline i,\underline{\overline j}}=\sum_{v=1}^r(c^{L,v,1}_{\underline S,\underline i,\underline{\overline j}}+c^{L,v,2}_{\underline S,\underline i,\underline{\overline j}}+c^{L,v,3}_{\underline S,\underline i,\underline{\overline j}}),\\
c^L_{l,q,i,\underline S,\underline i,\underline{\overline j}}=c^{L,1}_{l,q,i,\underline S,\underline i,\underline{\overline j}}+c^{L,2}_{l,q,i,\underline S,\underline i,\underline{\overline j}}+c^{L,3}_{l,q,i,\underline S,\underline i,\underline{\overline j}},\\
c^L_{u,q,i,\underline S,\underline i,\underline{\overline j}}=c^{L,1}_{u,q,i,\underline S,\underline i,\underline{\overline j}}+c^{L,3}_{u,q,i,\underline S,\underline i,\underline{\overline j}},
\end{array}
$$
where the superscripts $1$, $2$, and $3$ denote the contributions arising from the first, second, and third parts of \eqref{huge summation}, respectively. The explicit descriptions for the right-hand sides of the above equalities are given below. 
\begin{itemize}
    \item  If $i_v\in D^{-1}(2)$, then 
    $$c^{L,v,1}_{\underline S,\underline i,\underline{\overline j}}:=\sum \underline{\overline j}(w,u)^v_uC_1(v,u,\underline S-\{u\},\underline i,\underline{\overline j}(w,u)),$$ 
    where the summation is taken over $u$, subject to the following conditions:
    \begin{itemize}
        \item $u\in D^{-1}(1)\cap S_w$ for some $0\leq w\leq r$;
        \item $(v,u,\underline S-\{u\},\underline i,\underline{\overline j}(w,u))\in H_{\underline\Omega,1}(r,s)$.   
    \end{itemize}
    
If $i_v\in D^{-1}(1)$, then 
    $$c^{L,v,1}_{\underline S,\underline i,\underline{\overline j}}:=\sum \underline{\overline j}(w,u)^v_uC_1(v,i_v,\underline S-\{u\},\underline i(v, u),\underline{\overline j}(w,u)),$$ 
    where $\underline{i}(v,u)$ (resp. $\underline{\overline j}(w,u)$) denotes the result of substituting the $v$-th (resp. $(w,u)$-th) entry of $\underline{i}$ (resp. $\underline{\overline j}$) by $u$ (resp. $j^w_u+1$).
 The summation is taken over $u$, subject to the following conditions:
    \begin{itemize}
        \item $u\in D^{-1}(2)\cap S_w$ for some $0\leq w\leq r$;
        \item $(v,i_v,\underline S-\{u\},\underline i(v,u),\underline{\overline j}(w,u))\in H_{\underline\Omega,1}(r,s)$.
\end{itemize}
\item Set $c^{L,0,2}_{\underline S,\underline i,\underline{\overline j}}:=0$ for $v=0$ or $D(i_v)=D(i_{v-1})$, $0<v\leq r$. Otherwise, set
$$
c^{L,v,2}_{\underline S,\underline i,\underline{\overline j}}:=\left\{
\begin{matrix}
-C_1(v-1,i_{v-1},\underline S,\underline i(\widehat{v-1}),\overline{\underline j}(\cup(v-1))),&\mathrm{if}~i_{v-1}\in D^{-1}(1), i_v\in D^{-1}(2),\\

   C_1(v-1,i_v,\underline S,\underline i(\hat v),\overline{\underline j}(\cup(v-1))),&\mathrm{if}~i_{v-1}\in D^{-1}(2), i_v\in D^{-1}(1), 
\end{matrix}
\right.
$$
where $\underline i(\widehat{v}):=(i_1,\cdots,\widehat{i_v},\cdots,i_r)$ and
$$
\overline{\underline j}(\cup(v-1)):=(j^1_1,\cdots,j^1_n;\cdots;j^{v-1}_1+j^v_1,\cdots,j^{v-1}_n
+j^v_n;\cdots;j^r_1,\cdots,j^r_n).
$$
\item For $i_v\in D^{-1}(2)$, we set
$$c^{L,v,3}_{\underline S,\underline i,\underline{\overline j}}:=\sum j^v_uC_1(v,u,\underline S,\underline i,\overline{\underline j}).$$
Here, the summation is taken over $u$, subject to the following conditions:
\begin{itemize}
    \item $u\in D^{-1}(1)-S_0\cup\cdots\cup S_r$;
    \item $(v,u,\underline S,\underline i,\overline{\underline j})\in H_{\underline\Omega,1}(r,s+1)$.
\end{itemize}
For $i_v\in D^{-1}(1)$, we set
$$c^{L,v,3}_{\underline S,\underline i,\underline{\overline j}}:=-\sum j^v_uC_1(v,i_v,\underline S,\underline i(v,u),\overline{\underline j}).$$
Here, the summation is taken over $u$, subject to the following conditions:
\begin{itemize}
    \item $u\in D^{-1}(1)-S_0\cup\cdots\cup S_r$;
    \item $(v,u,\underline S,\underline i,\overline{\underline j})\in H_{\underline\Omega,1}(r,s+1)$.
\end{itemize}

\item Set 
$$c^{L,1}_{l,q,i,\underline S,\underline i,\underline{\overline j}}:=(-1)^{s+l-1}\sum \overline{\underline j}(w,u)^l_uC_1(q,i,\underline S-\{u\},(l,u,\underline i),\overline{\underline j}(w,u)),$$
where $(l,u,\underline i)$ is obtained by inserting $u$ into $\underline i$ as the $l$-th entry. The summation is taken over $u$, subject to the following conditions:
\begin{itemize}
    \item $u\in S_w$ for some $0\leq w\leq r$;
    \item $(q,i,\underline S-\{u\},(l,u,\underline i),\overline{\underline j}(w,u))\in H_{\underline\Omega,1}(r,s)$.
\end{itemize}

\item For $0\leq l<q$, set $c^{L,2}_{l,q,i,\underline S,\underline i,\underline{\overline j}}$ to be the sum 
$$
(-1)^{s+l}C_1(q-1,i,\underline S(\cup(l-1)),\underline i,\overline{\underline j}(\cup(l-1)))+(-1)^{s+l+1}C_1(q-1,i,\underline S(\cup l),\underline i,\overline{\underline j}(\cup l)).
$$
For $q<l<r$, set $c^{L,2}_{l,q,i,\underline S,\underline i,\underline{\overline j}}$ to be the sum 
$$
(-1)^{s+l}C_1(q,i,\underline S(\cup(l-1)),\underline i,\overline{\underline j}(\cup(l-1)))+(-1)^{s+l+1}C_1(q,i,\underline S(\cup l),\underline i,\overline{\underline j}(\cup l)).
$$
For $l=r$, set $c^{L,2}_{r,q,i,\underline S,\underline i,\underline{\overline j}}$ to be the sum 
$$
(-1)^{s+r}C_1(q,i,\underline S(\cup(r-1)),\underline i,\overline{\underline j}(\cup(r-1)))+(-1)^{s+r+1}\epsilon C_1(q,i,\underline S(\cup l),\underline i,\overline{\underline j}(\cup l)).
$$
Here, we stipulate that 
$$
\epsilon:=\left\{
\begin{matrix}
1,&\mathrm{if}~S_r=\emptyset, j^r_1=\cdots=j^r_n=0;\\
0,&\mathrm{otherwise}.
\end{matrix}
\right.
$$

\item Set 
$$c^{L,3}_{r,q,i,\underline S,\underline i,\underline{\overline j}}=\sum (-1)^{s+l-1}j^l_uC_1(q,i,\underline S,(l,u,\underline i),\overline{\underline j}),$$
where the summation is taken over $u\in\{1,\cdots,n\}-S_0\cup\cdots\cup S_r$.

\item Let $u\in S_w$ for some $0\leq w\leq r$, and then define
$$c^{L,1}_{u,q,i,\underline S,\underline i,\underline{\overline j}}:=(-1)^{\#S_0+\cdots+\#S_{w-1}+\#\{u'\in S_w:u'<u\}}C_1(q,i,\underline S-\{u\},\underline i,\overline{\underline j}).
$$

\item Let $u\in S_w$ for some $0\leq w\leq r$, and then define 
$$c^{L,3}_{u,q,i,\underline S,\underline i,\underline{\overline j}}:=(-1)^{\#S_0+\cdots+\#S_{w-1}+\#\{u'\in S_w:u'<u\}+1}C_1(q,i,\underline S-\{u\},\underline i,\overline{\underline j}).
$$
\end{itemize}

\emph{Computation of the image of $\boldsymbol{e}$ under \eqref{psi R}}.
It is straightforward to verify that this image is equal to
$$
\sum_{(\underline S,\underline i,\underline{\overline j})\in T_\Omega(r,s+1)}c^R_{\underline S,\underline i,\underline{\overline j}}j^1_{i_1}\cdots j^r_{i_r}\iota_{\tilde F_0}(F^* 
(\theta^{\underline{\overline j}}\theta_{i_1}^{-1}\cdots\theta_{i_r}^{-1}
e_{\underline S,\underline i}))\boldsymbol{\omega}_{\underline S}h^{\underline{\overline j}}.
$$
Here, the coefficient $c^R_{\underline S,\underline i,\underline{\overline j}}$ are given as follows.
\begin{itemize}
    \item If there exists some $1\leq a\leq r$ such that 
    $D(i_u)=2$ for $u\leq a$ and $1$ for $u>a$.
    then 
    $$
    c^R_{\underline S,\underline i,\underline{\overline j}}:=\frac{1}{\prod_{1\leq u\leq a
    }\sum_{l=u}^a\Delta_l\prod_{a<u\leq r}\sum_{l=u}^a(j^l+s_l)}-\frac{1}{\prod_{1\leq u\leq r}\sum_{l=u}^r(j^l+s_l)}.
    $$
    Here, we stipulate that $\prod_{a<u\leq a}\sum_{l=u}^a(j^l+s_l):=1$.
    \item If $D(i_1)=\cdots=D(i_r)=1$ and $j^q_i=0$ for any $1\leq q\leq r$ and any $i\in D^{-1}(2)$, then 
    $$
    c^R_{\underline S,\underline i,    \underline{\overline j}}:=0;
    $$
    \item In the remaining situation, we set
    $$
    c^R_{\underline S,\underline i,\underline{\overline j}}:=-\frac{1}{\prod_{1\leq u\leq r}\sum_{l=u}^r(j^l+s_l)}.
    $$
\end{itemize}

\emph{Comparison between \eqref{psi L} and \eqref{psi R}}.
To verify that the images of \eqref{psi L} and \eqref{psi R} under \( \boldsymbol{e} \) coincide, it suffices to check that
\[
c^L_{\underline{S},\underline{i},\underline{\overline{j}}}
= c^R_{\underline{S},\underline{i},\underline{\overline{j}}}, \quad
c^L_{l,q,i,\underline{S},\underline{i},\underline{\overline{j}}} = 0, \quad
c^L_{u,q,i,\underline{S},\underline{i},\underline{\overline{j}}} = 0
\]
hold for all \( (\underline{S}, \underline{i}, \underline{\overline{j}}) \in T_{\Omega}(r, s+1) \), all \( (l, q, i, \underline{S}, \underline{i}, \underline{\overline{j}}) \in T'_{\underline{\Omega}}(r, s+1) \), and all \( (u, q, i, \underline{S}, \underline{i}, \underline{\overline{j}}) \in T''_{\underline{\Omega}}(r, s+1) \), respectively. The last equality is obvious. 

For the first equality, we check a typical situation, namely  \( (\underline{S}, \underline{i}, \underline{\overline{j}}) \in T_{\Omega}(r, s+1) \) meets the following condition: there exist $1< a,b,c<r$ such that 
\begin{itemize}
\item $c+2\leq b\leq a$,
    \item $D(i_{c-1})=1$, $D(i_c)=\cdots=D(i_{b-2})=2$, $D(i_{b-1})=1$, $D(i_b)=\cdots=D(i_a)=2$, $D(i_{a+1})=\cdots=D(i_r)=1$, and
    \item $j^q_i=0$ for $a<q\leq r,i\in D^{-1}(2)$.
\end{itemize}
In this situation, $c^L_{\underline{S},\underline{i},\underline{\overline{j}}}$ is equal to the sum of terms
$$
c^{L,a+1,2}_{\underline{S},\underline{i},\underline{\overline{j}}}=z\frac{\sum_{b\leq u\leq a}f_u}{\prod_{1\leq u\leq r}\sum_{u\leq l\leq r}(x_l+y_l)\prod_{c\leq u\leq a}\sum_{u\leq l\leq a}y_l},
$$
$$
c^{L,q,1}_{\underline{S},\underline{i},\underline{\overline{j}}}+c^{L,q,3}_{\underline{S},\underline{i},\underline{\overline{j}}}=x_q\frac{\sum_{b\leq u\leq q}f_u}{\prod_{1\leq u\leq r}\sum_{u\leq l\leq r}(x_l+y_l)\prod_{c\leq u\leq a}\sum_{u\leq l\leq a}y_l},~b\leq q\leq a,$$
$$
c^{L,b,2}_{\underline{S},\underline{i},\underline{\overline{j}}}+
c^{L,b-1,2}_{\underline{S},\underline{i},\underline{\overline{j}}}=\frac{y_{b-1}\sum_{c\leq u\leq b-1}f_u-(\sum_{b\leq u\leq a}y_u)f_{b-1}}{\prod_{1\leq u\leq r}\sum_{u\leq l\leq r}(x_l+y_l)\prod_{c\leq u\leq a}\sum_{u\leq l\leq a}y_l},$$
$$
c^{L,b-1,1}_{\underline{S},\underline{i},\underline{\overline{j}}}+c^{L,b-1,3}_{\underline{S},\underline{i},\underline{\overline{j}}}=-y_{b-1}\frac{\sum_{c\leq u\leq b-1}f_u}{\prod_{1\leq u\leq r}\sum_{u\leq l\leq r}(x_l+y_l)\prod_{c\leq u\leq a}\sum_{u\leq l\leq a}y_l}.$$
Here, we set 
$$
f_u:=\prod_{c\leq q<u}\sum_{l=q}^ay_l\prod_{u<q\leq a}\sum_{l=q}^r(x_l+y_l),~\prod_{c\leq q<c}\sum_{l=q}^ay_l:=1,~\prod_{a<q\leq a}\sum_{l=q}^r(x_l+y_l):=1$$
$$
x_u:=\sum_{i\in D^{-1}(1)}j^u_i+\#(S_u\cap D^{-1}(1)),~y_u:=\sum_{i\in D^{-1}(2)}j^u_i+\#(S_u\cap D^{-1}(2)),~1\leq u\leq r,$$
and 
$$z=\sum_{a<u\leq r}(x_l+y_l).$$
Using the identity
$$
(\prod_{u=c}^{b-1}\sum_{u\leq v\leq a}y_v)(\prod_{u=b}^a\sum_{u\leq l\leq r}(x_l+y_l)-\prod_{u=b}^a\sum_{u\leq l\leq a}y_l)=z\sum_{b\leq u\leq a}f_u+\sum_{b\leq u\leq a}y_u\sum_{b\leq l\leq u}f_l,
$$
the first equality $c^L_{\underline{S},\underline{i},\underline{\overline{j}}}
= c^R_{\underline{S},\underline{i},\underline{\overline{j}}}$ follows.

For the second equality, we check a typical situation, namely $(l, q, i, \underline{S}, \underline{i}, \underline{\overline{j}}) \in T'_{\underline{\Omega}}(r, s+1)$ meets the following condition: there exist $1<a,b<r$ such that
\begin{itemize}
    \item $b+2<l+2\leq q<a$, 
    \item $D(i_{b-1})=1$, $D(i_b)=\cdots=D(i_a)=2$, $D(i_{a+1})=\cdots=D(i_r)=1$, and
    \item $j^w_v=0$ for $a<w\leq r$ and $v\in D^{-1}(2)$.
\end{itemize}
In this situation, $c^L_{l, q, i, \underline{S}, \underline{i}, \underline{\overline{j}}}$ is equal to the sum of terms
$$
c^{L,l+1,2}_{l, q, i, \underline{S}, \underline{i}, \underline{\overline{j}}}=(-1)^{s+l+1}\frac{\sum_{l+1\leq u\leq r}(x_u+y_u)\sum_{l+2\leq u\leq q}f_u+\sum_{l+1\leq u\leq a}y_u\sum_{b\leq u\leq l}f_u}{\prod_{1\leq u\leq r}\sum_{u\leq v\leq r}(x_v+y_v)\prod_{b\leq u\leq a}\sum_{u\leq v\leq a}y_v},$$
$$
c^{L,l,2}_{l, q, i, \underline{S}, \underline{i}, \underline{\overline{j}}}=(-1)^{s+l}\frac{\sum_{l\leq u\leq r}(x_u+y_u)\sum_{l+1\leq u\leq q}f_u+\sum_{l\leq u\leq a}y_u\sum_{b\leq u\leq l-1}f_u}{\prod_{1\leq u\leq r}\sum_{u\leq v\leq r}(x_v+y_v)\prod_{b\leq u\leq a}\sum_{u\leq v\leq a}y_v},$$
$$
c^{L,l,1}_{l, q, i, \underline{S}, \underline{i}, \underline{\overline{j}}}+c^{L,l,3}_{l, q, i, \underline{S}, \underline{i}, \underline{\overline{j}}}=(-1)^{s+l+1}\frac{(x_l+y_l)\sum_{l+1\leq u\leq q}f_u+y_l\sum_{b\leq u\leq l}f_u}{\prod_{1\leq u\leq r}\sum_{u\leq v\leq r}(x_v+y_v)\prod_{b\leq u\leq a}\sum_{u\leq v\leq a}y_v}.$$
Here, we set $x_u,y_u$ as in the verification of the first equality and reset 
$$
f_u=\prod_{b\leq v<u}\sum_{w=v}^ay_w\prod_{u<v\leq a}\sum_{w=v}^r(x_w+y_w),~\prod_{b\leq v<b}\sum_{w=v}^ay_w:=1,~\prod_{a<v\leq a}\sum_{w=v}^r(x_w+y_w):=1.$$
By direct computation, we have $c^L_{l, q, i, \underline{S}, \underline{i}, \underline{\overline{j}}}=0$, the second equality follows.

To check the last equality, it suffices to observe that 
$$
c^{L,1}_{u, q, i, \underline{S}, \underline{i}, \underline{\overline{j}}}=-c^{L,3}_{u, q, i, \underline{S}, \underline{i}, \underline{\overline{j}}}.
$$
This completes the proof of the proposition. \qed

\subsection{Compatibility}
Set $\Omega := \Omega^1_{X/k}(\log D)$. In \cite{SZ2}, Sheng and the author proved that the morphism $\mathrm{Ho}_\Omega$ is compatible with the intersection subcomplexes, namely $\Omega_{\mathrm{int}}(E, \theta)$ and $\Omega_{\mathrm{int}}(H, \nabla)$. In the previous two subsections, we introduced several constructions that depend on splittings of \( \Omega \). It is therefore natural to expect that these constructions are compatible with the intersection subcomplexes again, and even possibly with the complexes introduced in \S2.6.
 However, the incompatibility between the divisor $D$ and a general ordered splitting $\underline{\Omega}$ may cause the $\infty$-homotopy $\mathrm{Ho}_{\underline{\Omega}}$ to fail to be compatible with the intersection subcomplexes. It is therefore necessary to impose appropriate compatibility conditions on the pair $(D, \underline{\Omega})$.

\begin{definition}
Let $\underline{\Omega} = (\Omega_1, \ldots, \Omega_\beta)$ be an ordered splitting of $\Omega$. We say that $\underline{\Omega}$ is compatible with the divisor $D = \sum_{i \in I} D_i$ (where each $D_i$ is an irreducible component of $D$) if the index sets $I_1, \ldots, I_\beta$ are pairwise disjoint, where
\[
I_j := \left\{ i \in I \mid \mathrm{Res}_{D_i}|_{\Omega_j} \neq 0 \right\}, \quad 1 \leq j \leq \beta.
\]
Suppose that $g:(X,D)\to(\mathbb A_k^1,\Delta)$ is a semistable family. We say that $\underline{\Omega}$ is compatible with $g$ if there exists some $1 \leq i \leq \beta$ such that
$dg \in \Gamma(X, \Omega_i)$.

\end{definition}
Let $(K^*_\mathrm{Hig},K^*_\mathrm{dR})$ be a Hodge pair of complexes of type $(\star,\ell)$ on $(X,D)/k$, where $\star\in\{\mathrm{\uppercase\expandafter{\romannumeral1}}, \mathrm{\uppercase\expandafter{\romannumeral2}} 
 ,\mathrm{\uppercase\expandafter{\romannumeral3}}\}
$ and $0\leq \ell<p$ or $\star=\mathrm{\uppercase\expandafter{\romannumeral4}}$ and $\ell=0$ (see \S2.6).
Let $\underline{\Omega}$ be an ordered splitting of $\Omega$, and let $\underline{\Omega}'$ be as defined in \eqref{case i}. Suppose that in types $(\mathrm{\uppercase\expandafter{\romannumeral1}},\ell)$ and $(\mathrm{\uppercase\expandafter{\romannumeral2}},\ell)$, the splitting $\underline{\Omega}$ is compatible with the divisor $D$, while in types $(\mathrm{\uppercase\expandafter{\romannumeral3}},\ell)$ and $(\mathrm{\uppercase\expandafter{\romannumeral4}},0)$, it is compatible with $g:(X,D)\to(\mathbb A^1_k,\Delta)$, which is the semistable family appearing in the construction of the pair $(K^*_{\mathrm{Hig}},K^*_{\mathrm{dR}})$. 

\begin{definition}
Assume that $(K^*_{\mathrm{Hig}},K^*_{\mathrm{dR}})$ has type $(\mathrm{\uppercase\expandafter{\romannumeral4}},0)$.Let $\Delta=\sum_{i=1}^m[\lambda_i]$ with each $\lambda_i\in k$, and let
$D_i:=g^*[\lambda_i]$.
Define the formal $g$-transform along $D$ of  $(K^*_{\mathrm{Hig}},K^*_{\mathrm{dR}})$ to be the pair $(K^*_{\mathrm{Hig},f},K^*_{\mathrm{dR},f})$, where
$$
\begin{array}{c}
K^*_{\mathrm{Hig},f}:=\bigoplus_{i=1}^m((K^\bullet_{\mathrm{Hig}})_{/D_i'},-\sum_{j=0}^\infty(f'-\lambda'_i)^{p^j-1}df'\wedge),\\
K^*_{\mathrm{dR},f}:=\bigoplus_{i=1}^m((K^\bullet_{\mathrm{dR}})_{/pD_i},d-\sum_{j=1}^\infty(f'-\lambda'_i)^{p^j-1}df'\wedge).
\end{array}
$$

\end{definition}

\begin{theorem}\label{compatibility MHM}
Notation and assumptions as above. 
 Then the $\mathcal L$-indexed $\infty$-homotopy 
 $\mathrm{Ho}_{\underline\Omega}$ induces an $\mathcal L_\star$-indexed $\infty$-homotopy 
 \begin{eqnarray}\label{compatibility 1}
 \mathrm{Ho}_{\underline\Omega,\star}:\Delta_*(\mathcal L_\star)\to\left\{\begin{matrix}
 \mathcal Hom^*(\tau_{<q}K^*_\mathrm{Hig},\tau_{<q}F_*K^*_\mathrm{dR}),&\star=\mathrm{\uppercase\expandafter{\romannumeral1}}, \mathrm{\uppercase\expandafter{\romannumeral2}}, 
 \mathrm{\uppercase\expandafter{\romannumeral3}},\\
 \mathcal Hom^*(\tau_{<q}K^*_{\mathrm{Hig},f},\tau_{<q}F_*K^*_{\mathrm{dR},f}),&\star=\mathrm{\uppercase\expandafter{\romannumeral4}},
 \end{matrix}
 \right.
 \end{eqnarray}
where $q = \infty$ if $\mathrm{rank}(\underline\Omega) < p - \ell$, and $q = p - \ell$ otherwise. Moreover, $\mathrm{Ho}_{\underline\Omega,\underline\Omega'}$ induces a graded morphism  
\begin{eqnarray}\label{compatibility 2}
\mathrm{Ho}_{\underline\Omega,\underline\Omega',\star}:\Delta_\bullet(\mathcal L_\star)\to\left\{\begin{matrix}
 \mathcal Hom^{\bullet-1}(\tau_{<q}K^*_\mathrm{Hig},\tau_{<q}F_*K^*_\mathrm{dR}),&\star=\mathrm{\uppercase\expandafter{\romannumeral1}}, \mathrm{\uppercase\expandafter{\romannumeral2}}, 
 \mathrm{\uppercase\expandafter{\romannumeral3}},\\
 \mathcal Hom^{\bullet-1}(\tau_{<q}K^*_{\mathrm{Hig},f},\tau_{<q}F_*K^*_{\mathrm{dR},f}),&\star=\mathrm{\uppercase\expandafter{\romannumeral4}},
 \end{matrix}
 \right.
    \end{eqnarray}
    where $q = \infty$ if both $\mathrm{rank}(\underline{\Omega})$ and $\mathrm{rank}(\underline{\Omega}')$ are less than $p - \ell$, and $q = p - \ell$ otherwise. 

In particular, if the pair $(K^*_\mathrm{Hig},K^*_\mathrm{dR})$ has 
type $(\mathrm{\uppercase\expandafter{\romannumeral1}},\ell), (\mathrm{\uppercase\expandafter{\romannumeral2}},\ell),(\mathrm{\uppercase\expandafter{\romannumeral3}},\ell),$
or
$(\mathrm{\uppercase\expandafter{\romannumeral4}},0)$, then
$\underline\Omega$ and $\underline\Omega'$ induce the same de Rham-Higgs comparison
\begin{eqnarray}\label{de Rham-Higgs 4.3}
\tau_{<q}F_*K^*_\mathrm{dR}\cong\tau_{<q}K^*_\mathrm{Hig}~\mathrm{in}~D(X'),
\end{eqnarray}
where $q$ is as defined below \eqref{compatibility 1} (note that in the last type, $\ell=0$).
\end{theorem}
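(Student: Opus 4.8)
The plan is to combine the two preceding structural results of this section---the existence of the $\infty$-homotopy $\mathrm{Ho}_{\underline\Omega}$ (Theorem~\ref{de-Hig without truncation 2}, together with the truncated version implicit in the construction: when some $\mathrm{rank}(\Omega_i)\geq p-\ell$ one only obtains an $\infty$-homotopy after applying $\tau_{<p-\ell}$) and the homotopy between $\mathrm{Ho}_{\underline\Omega}$ and $\mathrm{Ho}_{\underline\Omega'}$ (Proposition~\ref{Ho^2})---with a verification that every morphism in sight preserves the relevant subcomplexes. Concretely, the first task is to show that for each fixed tuple $\tilde F_0,\cdots,\tilde F_r$ of local Frobenius liftings, the $\mathcal O_X$-linear maps $\varphi(r,s)_{\tilde F_0,\cdots,\tilde F_r}$ of Construction~\ref{construction higher homotopy} carry $K^{r+s}_{\mathrm{Hig}}$ into $F_*K^s_{\mathrm{dR}}$, and similarly that the maps $\psi(r,s)_{\tilde F_0,\cdots,\tilde F_r}$ of Construction~\ref{case 1} respect these subcomplexes. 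Granting this, one restricts $\mathrm{Ho}_{\underline\Omega}$ and $\mathrm{Ho}_{\underline\Omega'}$ (and the homotopy $\psi$) to the subcomplexes, obtaining \eqref{compatibility 1} and \eqref{compatibility 2}; then the standard argument of \cite[Proposition~3.2]{SZ2} turns an $\mathcal L_\star$-indexed $\infty$-homotopy into an isomorphism \eqref{de Rham-Higgs 4.3} in $D(X')$, and the existence of the homotopy \eqref{compatibility 2} shows the isomorphisms induced by $\underline\Omega$ and $\underline\Omega'$ agree.

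For the compatibility verification I would treat the four types in increasing order of difficulty. For type $(\mathrm{\uppercase\expandafter{\romannumeral1}},\ell)$ with the full de Rham/Higgs complexes there is nothing to check. For the weight filtration $W_i$, one works in local coordinates $t_1,\dots,t_n$ adapted so that $D=(t_1\cdots t_r=0)$ and the basis $\omega_1,\dots,\omega_n$ of $\Omega$ refines the ordered splitting $\underline\Omega$; the hypothesis that $\underline\Omega$ is compatible with $D=\sum_{i\in I}D_i$ means precisely that the ``logarithmic'' basis vectors $d\log t_i$ are distributed among the $\Omega_j$ in a way that does not mix a single $D_i$ across two blocks, so that the filtration by ``number of $d\log$-factors'' is respected by each $\zeta_{\tilde F_0}$, each $dh_{q,\cdot}$ and each $\iota_{\tilde F_0}$ appearing in \eqref{key construction}. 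One reads off from the shape of $\boldsymbol\omega_{\underline S}h^{[\underline{\overline j}]}$ that the wedge-length in the logarithmic directions can only decrease or stay constant, giving $\varphi(r,s)_{\tilde F_0,\cdots,\tilde F_r}(W_i K^{r+s}_{\mathrm{Hig}})\subset W_i F_*K^s_{\mathrm{dR}}$. The intersection complex case $(\mathrm{\uppercase\expandafter{\romannumeral2}},\ell)$ is the substantive one: here I would re-run the local computation of \cite[\S3]{SZ2}, using the description $\Omega^\bullet_{\mathrm{int}}(H,\nabla)=\bigoplus_I(\sum_{J\subset I}t_J\nabla_{I-J}H)\otimes d_I\log t$, and check that the extra factors $\theta^{\underline{\overline j}}\theta_{i_1}^{-1}\cdots\theta_{i_r}^{-1}$, the $F^*$-twisted coefficients, and the $h^{[\underline{\overline j}]}$-terms all live in the correct graded piece; the compatibility of $\underline\Omega$ with $D$ is again what guarantees the index set $I$ is not scrambled by the splitting. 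For the Kontsevich types $(\mathrm{\uppercase\expandafter{\romannumeral3}},\ell)$ and $(\mathrm{\uppercase\expandafter{\romannumeral4}},0)$ one uses instead the explicit generators $d\log g\wedge d_I\log t$ and $g\, d_I\log t$ of $\Omega^\bullet_{g^{-1}}$; the hypothesis $dg\in\Gamma(X,\Omega_i)$ for some $i$ ensures that the special direction ``$g$'' sits inside a single block, so that the Kontsevich condition is stable under $\varphi(r,s)_{\tilde F_0,\cdots,\tilde F_r}$ and $\psi(r,s)_{\tilde F_0,\cdots,\tilde F_r}$. Throughout, the truncation $\tau_{<q}$ with $q=p-\ell$ enters exactly when some block has rank $\geq p-\ell$: there the combinatorial coefficients $C(\underline i,\underline S,\underline{\overline j})$ and $C_o(\cdots)$ may fail to be $p$-integral beyond cohomological degree $p-\ell$, as in Theorem~\ref{de-Hig without truncation 2}, so one only gets the statement after truncating.

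For type $(\mathrm{\uppercase\expandafter{\romannumeral4}},0)$ there is an additional step: one must first pass from the global complex $(K^*_{\mathrm{dR}},K^*_{\mathrm{Hig}})$ on $(X,D)$ to the formal $g$-transform $(K^*_{\mathrm{dR},f},K^*_{\mathrm{Hig},f})$ along the fibers $D_i=g^*[\lambda_i]$, which is where the exponential twisting $e^{f}$ of \S2.7 and the formal-neighborhood Cartier quasi-isomorphism of Theorem~\ref{main thm s2} come in. Concretely, on the formal completion along each $D_i$ one uses the deformation of $\hat\theta-df\wedge$ to the Higgs field $\Theta$ (and the Artin--Hasse-exponential gauge transformation $G$) exactly as in \S3.1 to identify $C^{-1}_{\tilde F}(\hat E,\Theta)$ with $\hat\nabla+df\wedge$, and then glue the local pieces $\varphi_{\tilde F_{U_i}}$ into $\mathrm{Ho}_{\underline\Omega,\mathrm{\uppercase\expandafter{\romannumeral4}}}$; the compatibility of $\underline\Omega$ with $g$ again guarantees that this twisting does not interfere with the block structure. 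Once the $\infty$-homotopies \eqref{compatibility 1} are in place in all four types, and the graded morphism \eqref{compatibility 2} is constructed by restricting the homotopy $\psi$ of Proposition~\ref{Ho^2}, the ``in particular'' clause is immediate: $\mathrm{Ho}_{\underline\Omega,\star}$ yields an isomorphism $\Phi_{\underline\Omega,\star}\in\mathrm{Hom}_{D(X')}(\tau_{<q}F_*K^*_{\mathrm{dR}},\tau_{<q}K^*_{\mathrm{Hig}})$ via the two-arrow roof through $\check{\mathcal C}(\mathcal U',\tau_{<q}F_*K^*_{\mathrm{dR}})$, and \eqref{compatibility 2} shows $\Phi_{\underline\Omega,\star}=\Phi_{\underline\Omega',\star}$.

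\textbf{Main obstacle.} The hard part will be the intersection-complex case $(\mathrm{\uppercase\expandafter{\romannumeral2}},\ell)$: one must verify that the intricate formula \eqref{key construction}---with its coefficients $C(\underline i,\underline S,\underline{\overline j})$, its inverse Higgs factors $\theta_{i_1}^{-1}\cdots\theta_{i_r}^{-1}$, and its $F^*$-pullbacks---sends the local generators $\sum_{J\subset I}t_J\nabla_{I-J}H\otimes d_I\log t$ of $\Omega^\bullet_{\mathrm{int}}$ into the corresponding generators of $F_*\Omega^\bullet_{\mathrm{int}}$, and likewise for the homotopy $\psi$ from Construction~\ref{case 1}. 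This is a bookkeeping problem of the same flavor as the $\infty$-homotopy relation \eqref{infty homotopy relation} but complicated by the need to track which monomials $t_J$ and which powers of $\nabla_i=t_i\partial_i+F^*\theta_i$ appear after applying the Cartier-type decomposition $F_*\Omega^*_{\mathrm{int}}C^{-1}_{\tilde F}(\hat E,\Theta)\cong\bigoplus_v K^*_{\mathrm{int},v}$ of \S3.2. I expect this to reduce, after the dust settles, to the single local statement that each $\varphi_{\tilde F_{U_i}}$ and each higher term preserves the filtration by the ``intersection weight'' $\sum_{j\in J}1$, which is forced by the compatibility of $\underline\Omega$ with $D$; but writing this out carefully is where the real work lies.
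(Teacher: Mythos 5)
Your plan matches the paper's proof: restrict $\mathrm{Ho}_{\underline\Omega}$ and the homotopy of Proposition~\ref{intro thm homotopy} (Construction~\ref{case 1}) to the subcomplexes after checking that the maps $\varphi(r,s)$ preserve them, handle type $(\mathrm{\uppercase\expandafter{\romannumeral4}},0)$ via the Artin--Hasse/exponential-twisting identifications together with the formal Cartier results of \S3, and then conclude \eqref{de Rham-Higgs 4.3} and the independence of $\underline\Omega$ by the standard \cite[Proposition 3.2]{SZ2} roof argument. The only difference is that the anticipated ``main obstacle'' is lighter in the paper: it reduces the well-definedness check to the single (twisted intersection) case $\star=\mathrm{\uppercase\expandafter{\romannumeral4}}$ and settles it with a short containment argument — using the compatibility of $\underline\Omega$ with $D$ to write $d\log t_j$ as a block component plus regular forms and the base-free description of Lemma~\ref{indep of basis} — rather than a term-by-term bookkeeping of the coefficients in \eqref{key construction}.
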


\begin{proof}
Clearly, if the Hodge pair $(K^*_{\mathrm{Hig}}, K^*_{\mathrm{dR}})$ is of type $(\mathrm{\uppercase\expandafter{\romannumeral1}}, \ell)$, $(\mathrm{\uppercase\expandafter{\romannumeral2}}, \ell)$, or $(\mathrm{\uppercase\expandafter{\romannumeral3}}, \ell)$, then \eqref{de Rham-Higgs 4.3} is a direct consequence of \eqref{compatibility 1} and \eqref{compatibility 2}.
If it is of type $(\mathrm{\uppercase\expandafter{\romannumeral4}}, 0)$, then there are isomorphisms of complexes
\begin{eqnarray}\label{f-exponential}
\begin{array}{ll}
K^*_{\mathrm{Hig}} \cong K^*_{\mathrm{Hig}} \otimes \mathcal{O}_{\mathfrak{X}}, & 
F_* K^*_{\mathrm{dR}} \cong F_* K^*_{\mathrm{dR}} \otimes \mathcal{O}_{\mathfrak{X}}, \\
K^*_{\mathrm{Hig}} \otimes \mathcal{O}_{\mathfrak{X}} \cong K^*_{\mathrm{Hig}, f}, & 
F_* K^*_{\mathrm{dR}} \otimes \mathcal{O}_{\mathfrak{X}} \cong F_* K^*_{\mathrm{dR}, f}.
\end{array}
\end{eqnarray}
The first isomorphism follows from the fact that $K^*_{\mathrm{Hig}}$ is acyclic outside $D$. Combining it with Corollary~\ref{corollary 2}, we get the second isomorphism. 
The third isomorphism is given by multiplication by $\left( \sum_{i=0}^\infty f^{p^i - 1} \right)^j$ on $K^j_{\mathrm{Hig}} \otimes \mathcal{O}_{\mathfrak{X}}$, and the fourth isomorphism is given by multiplication by the Artin–Hasse exponential of $f$, namely,
\[
\mathrm{AH}(f) = \exp\left( \sum_{i=0}^\infty f^{p^i}/p^i \right),
\]
on $K^j_{\mathrm{dR}} \otimes \mathcal{O}_{\mathfrak{X}}$.
Combining these isomorphisms with \eqref{compatibility 1} and \eqref{compatibility 2} again, we conclude that \eqref{de Rham-Higgs 4.3} also holds in the case of type $(\mathrm{\uppercase\expandafter{\romannumeral4}}, 0)$.

By Theorem~\ref{main thm s2} and its proof, if \eqref{compatibility 1} is well-defined, then it is an $\infty$-homotopy. Hence, it remains to verify that \eqref{compatibility 1} and \eqref{compatibility 2} are well-defined. Furthermore, it suffices to check that \eqref{compatibility 1} is well-defined in the case of $\star = \mathrm{\uppercase\expandafter{\romannumeral4}}$, since the remaining cases can be treated similarly.

We now verify this specific case. Without loss of generality, we may assume that \( X \) admits a system of coordinates \( t_1, \cdots, t_n \) such that \( f = t_1 \cdots t_n \) and \( D = (f = 0) \), and that
\[
K^*_{\mathrm{Hig},f} = \left( \Omega^\bullet_{\mathrm{int}}(\hat E, \hat\theta), \hat\theta - \sum_{i=0}^\infty f^{p^i - 1} \, df \wedge \right), \quad
K^*_{\mathrm{dR},f} = \left( \Omega^\bullet_{\mathrm{int}}(\hat H, \hat\nabla), \hat\nabla - \sum_{i=1}^\infty f^{p^i - 1} \, df \wedge \right),
\]
where \( (\hat E, \hat\theta) := (E, \theta)_{/D} \) and \( (\hat H, \hat\nabla) := (H, \nabla)_{/D} \).
Given any \( r > 0 \), \( s \geq 0 \), and any \( \tilde F_0, \cdots, \tilde F_r \in \Gamma(\mathfrak U, \mathcal L_{\mathrm{\uppercase\expandafter{\romannumeral4}}}) \) for any open subset \( \mathfrak U \subset \mathfrak X \), it suffices to check that
\[
\varphi(r,s)_{\tilde F_0, \tilde F_1, \cdots, \tilde F_r;\underline\Omega}(K^{r+s}_{\mathrm{Hig},f}) \subset F_* K^s_{\mathrm{dR},f}.
\]
Without loss of generality, this inclusion can be checked by showing that
\begin{eqnarray}\label{compatibility inclusion}
\varphi(r,s)_{\tilde F_0, \tilde F_1, \cdots, \tilde F_r;\underline\Omega} \left( \hat\theta_1 \cdots \hat\theta_{r+s} \hat E \otimes d\log t_1 \wedge \cdots \wedge d\log t_{r+s} \right) \subset F_* K^s_{\mathrm{dR},f}.
\end{eqnarray}
where \( \theta = \sum_{i=1}^n \theta_i \otimes d\log t_i \) and \( \hat\theta_i := (\theta_i)_{/D} \). Since \( \underline\Omega \) is compatible with \( D \), we have indices \( 1 \leq i_1, \cdots, i_n \leq \beta \) such that
\[
d\log t_j = \omega^j_1 + \cdots + \omega^j_\beta, \quad \omega^j_q \in \Omega_q \cap \Omega^1_{X/k} \text{ for } q \neq i_j.
\]
Combining this decomposition of \( d\log t_j \) with the base-free description presented in the proof of Lemma \ref{indep of basis}, the left-hand side of \eqref{compatibility inclusion} is contained in
\[
\begin{array}{c}
F_* \iota_{\tilde F_0} \left( \hat\theta_1 \cdots \hat\theta_{r+s} \hat E \right) \otimes \left[ \bigoplus_{i=0}^{r+s} \wedge^i \left\{ \zeta_{\tilde F_0}(\omega_{i_1}), \cdots, \zeta_{\tilde F_0}(\omega_{i_{r+s}}) \right\} \otimes \Omega^{r+s-i}_{X/k} \right] \\
\subset F_* \iota_{\tilde F_0} \left( \hat\theta_1 \cdots \hat\theta_{r+s} \hat E \right) \otimes \left[ \bigoplus_{i=0}^{r+s} \wedge^i \left\{ \zeta_{\tilde F_0}(d\log t_1), \cdots, \zeta_{\tilde F_0}(d\log t_{r+s}) \right\} \otimes \Omega^{r+s-i}_{X/k} \right] \\
\subset F_* K^s_{\mathrm{dR},f}.
\end{array}
\]
This completes the proof.
\end{proof}
\begin{remark}
    We hope that \eqref{de Rham-Higgs 4.3} holds even when \( (K^*_{\mathrm{Hig}}, K^*_{\mathrm{dR}}) \) has type \( (\mathrm{\uppercase\expandafter{\romannumeral4}}, \ell) \) with \( \ell \neq 0 \) (in other words, \( \theta \neq 0 \)). To this end, the key point is to find an isomorphism between \( \Omega^*(\hat E, \hat\theta - df \wedge) \) and \( \Omega^*(\hat E, \hat\theta - f^{p^i - 1} df \wedge) \). Note that the desired isomorphism always exists locally due to the existence of a nice system of coordinates, as shown in the proof of Theprem \ref{main thm s2}. To glue together these local isomorphisms, we need to construct an \( \mathcal{L}_\mathrm{\uppercase\expandafter{\romannumeral4}} \)-indexed \( \infty \)-homotopy between the aforementioned complexes that is compatible with the subcomplexes, inspired by the theory of mixed Hodge modules.
\end{remark}

\subsection{Two-term truncations}

In general, it is not reasonable to expect that the logarithmic cotangent bundle $\Omega^1_{X/k}(\log D)$ splits into a direct sum of line bundles. However, this expectation can be realized locally. Once we have such an ordered splitting on some Zariski open subset $U\subset X$, say $\underline\Omega=(\Omega_1,\cdots,\Omega_n)$ with $\mathrm{rank}(\Omega_i)=1$, then \S3.1 showed that for any nilpotent Higgs sheaf $(E,\theta)$ on $(X,D)/k$ of level $\leq p-2$, there is an $\mathcal L|_U$-indexed $\infty$-homotopy
\begin{eqnarray}\label{homotopy|_U}
\mathrm{Ho}_{\underline{\Omega}}:\Delta_*(\mathcal L|_U)\to\mathcal Hom_{\mathcal O_U}^*(\Omega^*(E|_U,\theta|_U),F_*\Omega^*(H|_U,\nabla|_U)).
\end{eqnarray}
Inspired by the refinements of the classical Deligne-Illusie decomposition theorem (see Achinger-Suh \cite{AS}, Drinfeld, Bhatt-Lurie \cite{BL}, Li-Mondal \cite{LM}, and Ogus \cite{O24}), we observe that \eqref{homotopy|_U} is independent of the choice of \( \underline\Omega \) after applying the two-term truncation \( \tau_{[a, a+1]} \) and strengthening the nilpotency condition on \( (E, \theta) \) to \( \leq p-3 \). This leads to the following.

\begin{theorem}\label{two sided truncation}
Suppose that $p\geq 3$ and given any $a\geq0$. Let $(K^*_\mathrm{Hig},K^*_\mathrm{dR})$ be a Hodge pair of complexes of type $(\star,p-3)$ $(\star\in\{\mathrm{\uppercase\expandafter{\romannumeral1}}, \mathrm{\uppercase\expandafter{\romannumeral2}} 
 ,\mathrm{\uppercase\expandafter{\romannumeral3}}\})$ or $(\mathrm{\uppercase\expandafter{\romannumeral4}},0)$. Then
$$
\tau_{[a,a+1]}F_*K^*_\mathrm{dR}\cong \tau_{[a,a+1]}K^*_\mathrm{Hig}~\mathrm{in}~D(X').
$$
\end{theorem}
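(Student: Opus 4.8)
The plan is to assemble the isomorphism from local $\infty$-homotopies and to exploit the fact that, after a two-term truncation, the ambiguity coming from the choice of splitting of $\Omega^1_{X/k}(\log D)$ disappears on the nose. Choose an affine open covering $\mathcal U=\{U_i\}$ of $X$, refined enough that each $U_i$ carries a system of coordinates, a Frobenius lifting, and a splitting $\Omega^1_{X/k}(\log D)|_{U_i}=\bigoplus_j\Omega_{i,j}$ into line bundles; taking the coordinate-basis splitting we may moreover arrange that the ordered splitting $\underline\Omega_i$ is compatible with $D$ (for types $\mathrm{I},\mathrm{II}$) or with the semistable family $g$ (for types $\mathrm{III},\mathrm{IV}$). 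Since $\ell=p-3$ we have $p-\ell=3$ and every $\Omega_{i,j}$ has rank $1<p-\ell$, so Theorem \ref{de-Hig without truncation 2} produces an $\mathcal L|_{U_i}$-indexed $\infty$-homotopy $\mathrm{Ho}_{\underline\Omega_i}$ from $K^*_\mathrm{Hig}|_{U_i}$ to $F_*K^*_\mathrm{dR}|_{U_i}$, and Theorem \ref{compatibility MHM} (with $q=\infty$) shows it respects the chosen subcomplexes; in the type $(\mathrm{IV},0)$ case one first passes, via the isomorphisms \eqref{f-exponential}, to the formal $g$-transform of the pair. Applying $\tau_{[a,a+1]}$ yields local $\infty$-homotopies valued in $\mathcal Hom^*(\tau_{[a,a+1]}K^*_\mathrm{Hig},\tau_{[a,a+1]}F_*K^*_\mathrm{dR})$.

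The key point is that these truncated local $\infty$-homotopies are independent of the line-bundle splittings. Over an overlap $U_i\cap U_j$ (shrinking $\mathcal U$ if necessary) the two ordered line-bundle splittings can be joined by a finite chain of elementary moves, each replacing a consecutive pair $(\Omega_o,\Omega_{o+1})$ by another pair with the same direct sum, all compatible with $D$ (resp. $g$). Because $\mathrm{rank}(\Omega_o)+\mathrm{rank}(\Omega_{o+1})=2<p-\ell$, Proposition \ref{intro thm homotopy} and Theorem \ref{compatibility MHM} give, for consecutive splittings $\underline\Omega,\underline\Omega'$ in the chain, a homotopy $\mathrm{Ho}_{\underline\Omega,\underline\Omega',\star}$ between $\mathrm{Ho}_{\underline\Omega}$ and $\mathrm{Ho}_{\underline\Omega'}$, sending $\Delta_r(\mathcal L)$ into $\mathcal Hom^{-r-1}$. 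Now the complex $\mathcal Hom^*(\tau_{[a,a+1]}K^*_\mathrm{Hig},\tau_{[a,a+1]}F_*K^*_\mathrm{dR})$ is concentrated in cohomological degrees $-1,0,1$, and since $\psi_0=0$ while $-r-1\leq -1$ for all $r\geq 0$ with equality only when $r=0$, the homotopy $\mathrm{Ho}_{\underline\Omega,\underline\Omega',\star}$ vanishes identically after applying $\tau_{[a,a+1]}$. Hence $\tau_{[a,a+1]}\mathrm{Ho}_{\underline\Omega}=\tau_{[a,a+1]}\mathrm{Ho}_{\underline\Omega'}$, and iterating along the chain shows that the truncated local $\infty$-homotopies agree on overlaps.

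It follows that the truncated local $\infty$-homotopies glue to a single $\mathcal L$-indexed $\infty$-homotopy
$$\mathrm{Ho}_a\colon \Delta_*(\mathcal L)\to\mathcal Hom^*(\tau_{[a,a+1]}K^*_\mathrm{Hig},\tau_{[a,a+1]}F_*K^*_\mathrm{dR}),$$
whose degree-$0$ part is, on each chart and each Frobenius lifting, the truncation of the Cartier quasi-isomorphism $\varphi_{\tilde F}$ of Theorem \ref{main thm s2}. By the mechanism of \cite[Proposition 3.2]{SZ2} used in the proof of Theorem \ref{de-Hig without truncation 2}, a global $\mathcal L$-indexed $\infty$-homotopy produces an isomorphism $\tau_{[a,a+1]}F_*K^*_\mathrm{dR}\cong\tau_{[a,a+1]}K^*_\mathrm{Hig}$ in $D(X')$ (translating back through \eqref{f-exponential} in the type $(\mathrm{IV},0)$ case). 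The main obstacle is the combinatorial input used in the second paragraph: proving that any two ordered line-bundle splittings of $\Omega^1_{X/k}(\log D)$ over a small affine that are compatible with $D$ (resp. $g$) can be connected by a chain of the permitted elementary moves staying within the compatible ones — this needs a careful analysis separating the residue-carrying line bundles from the residue-free ones, and may force an additional refinement of the covering — together with the routine verification that the glued data indeed satisfy the $\infty$-homotopy relations.
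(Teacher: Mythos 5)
Your route is the paper's own: cover $X$ by affines carrying coordinates, Frobenius liftings and coordinate-induced line-bundle splittings (which for $\ell=p-3$ have rank $1<p-\ell=3$, with pairwise merges of rank $2<p-\ell$), obtain the local $\infty$-homotopies from Theorem \ref{de-Hig without truncation 2}, restrict them to the subcomplexes by Theorem \ref{compatibility MHM} (passing through the isomorphisms \eqref{f-exponential} in type $(\mathrm{\uppercase\expandafter{\romannumeral4}},0)$), and show that after $\tau_{[a,a+1]}$ the construction no longer depends on the splitting, so the local data glue and the \v{C}ech mechanism of \cite[Proposition 3.2]{SZ2} gives the isomorphism in $D(X')$; the degree-zero part is the truncation of $\varphi_{\tilde F}$ from Theorem \ref{main thm s2}, exactly as in the paper. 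One small imprecision: the equality of the truncated $\infty$-homotopies does not follow merely from "the homotopy vanishes after truncation"; what one checks is that in the homotopy identity the term $\nabla\psi(1,\cdot)$ lands in the image of the differential (so dies in the degree-$a$ quotient), the term $\psi(1,\cdot)\theta$ vanishes on the kernel in degree $a+1$, $\psi_0=0$ handles $r=0$ (where both degree-zero parts are $\varphi_{\tilde F}$ anyway), and all $r\geq 2$ components vanish for degree reasons. This yields equality on the nose, which is what the gluing over overlaps actually requires; your shortcut is fixable in one line but as written it conflates "homotopic" with "equal after truncation" without the boundary-term check.

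The genuine gap is the one you flag yourself: Proposition \ref{Ho^2} only compares $\underline\Omega$ with the single merge \eqref{case i}, so you still need to produce, over each overlap, a chain of such elementary moves joining the two coordinate-induced ordered splittings while staying compatible with $D$ (resp. $g$); your proposal asserts this chain exists but does not construct it. The paper fills exactly this step, and does so by a reduction you do not make: in type $(\mathrm{\uppercase\expandafter{\romannumeral4}},0)$ it only ever compares splittings of the special shape $(\mathcal O_X d\log f,\mathcal O_X d\log t_2,\dots)$ and $(\mathcal O_X d\log f,\mathcal O_X d\log m_2,\dots)$, so both share the distinguished first factor and $f$-compatibility is automatic along the chain; it then writes the change-of-basis matrix $A\in\mathrm{GL}_n(\Gamma(X,\mathcal O_X))$, after shrinking $X$, as a product of elementary matrices of three restricted shapes (adding a multiple of the first column to the second, swapping adjacent columns with index $\geq 2$, adding a multiple of column $j$ to column $j+1$), and converts each elementary column operation into an unmerge--merge passage through a rank-$\leq 2$ ordered splitting obtained via \eqref{case i}. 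Without this explicit reduction, your proposed "careful analysis separating residue-carrying from residue-free line bundles" is precisely the missing combinatorial content; everything else in your outline is the paper's argument.
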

\begin{proof}
We verify the last type; the first three types can be checked in a similar manner. Without loss of generality, we may assume that \( X \) admits a system of coordinates \( t_1, \cdots, t_n \) such that \( f = t_1 \cdots t_n \) and \( D = (f = 0) \), and that
\[
K^*_{\mathrm{Hig}} = (\Omega^\bullet_{f^{-1}}, -df \wedge), \quad K^*_{\mathrm{dR}} = (\Omega^\bullet_{f^{-1}}, d + df \wedge).
\]
Set $\underline\Omega:=(\Omega_1,\cdots,\Omega_n)$, where $\Omega_1:=\mathcal O_Xd\log f$ and $\Omega_i=\mathcal O_Xd\log t_i$ for $2\leq i\leq n$. Obviously, $\underline\Omega$ is compatible with $f$. Taking into account Theorem \ref{compatibility MHM}, we have an \( \mathcal{L}_{\mathrm{\uppercase\expandafter{\romannumeral4}}} \)-indexed \( \infty \)-homotopy
$$
\mathrm{Ho}_{\underline\Omega,\mathrm{\uppercase\expandafter{\romannumeral4}}}:\Delta_*(\mathcal L_\mathrm{\uppercase\expandafter{\romannumeral4}})\to\mathcal Hom^*(K^*_{\mathrm{Hig},f},F_*K^*_{\mathrm{dR},f}),
$$
which induces another \( \mathcal{L}_{\mathrm{\uppercase\expandafter{\romannumeral4}}} \)-indexed \( \infty \)-homotopy
$$
\mathrm{Ho}_{\underline\Omega,\mathrm{\uppercase\expandafter{\romannumeral4}},a}:\Delta_*(\mathcal L_\mathrm{\uppercase\expandafter{\romannumeral4}})\to\mathcal Hom^*(\tau_{[a,a+1]}K^*_{\mathrm{Hig},f},\tau_{[a,a+1]}F_*K^*_{\mathrm{dR},f}).
$$
To prove this theorem, it suffices to show that this $\infty$-homotopy is independent of the choice of $\underline\Omega$.

Let $m_1,\cdots,m_n$ be another system of coordinates on $X/k$ such that $f=m_1\cdots m_n$. Set $\underline\Omega':=(\Omega_1',\cdots,\Omega_n')$, where $\Omega_1':=\mathcal O_Xd\log f$ and $\Omega_i':=\mathcal O_Xd\log m_i$ for $2\leq i\leq n$.  Now, the aforementioned independence reduces to the equality 
$$
\mathrm{Ho}_{\underline\Omega,\mathrm{\uppercase\expandafter{\romannumeral4}},a}=\mathrm{Ho}_{\underline\Omega',\mathrm{\uppercase\expandafter{\romannumeral4}},a}.
$$
One can check that if there exists a homotopy between \( \mathrm{Ho}_{\underline\Omega, \mathrm{\uppercase\expandafter{\romannumeral4}}} \) and \( \mathrm{Ho}_{\underline\Omega', \mathrm{\uppercase\expandafter{\romannumeral4}}} \), then the above equality follows. According to Theorem \ref{compatibility MHM} again, to construct such a homotopy, it suffices to find a chain of \( f \)-compatible ordered splittings of \( \Omega^1_{X/k}(\log D) \) between \( \underline\Omega \) and \( \underline\Omega' \), say
\begin{eqnarray}\label{chain ordered splitting}
\underline\Omega_0 := \underline\Omega, \ \underline\Omega_1, \ \cdots, \ \underline\Omega_c := \underline\Omega',
\end{eqnarray}
such that \( \mathrm{rank}(\underline\Omega) \leq 2 \) for \( 1 \leq i \leq c \), and for \( 1 \leq i < c \), either \( \underline\Omega_i \) is constructed from \( \underline\Omega_{i+1} \) or vice versa, via \eqref{case i}.

We show the existence of the chain in \eqref{chain ordered splitting}. Set
\[
(d\log f, d\log m_2, \cdots, d\log m_n) = (d\log f, d\log t_2, \cdots, d\log t_n) A, \quad A \in \mathrm{GL}_n(\Gamma(X, \mathcal{O}_X)).
\]
By basic linear algebra and shrinking \( X \) if necessary, we may assume that \( A \) can be decomposed as a product of elementary matrices, namely
\begin{eqnarray}\label{decomposition A}
A = A_1 \cdots A_s.
\end{eqnarray}
Moreover, we can further assume that each \( A_i \) is one of the following:
\begin{itemize}
    \item The elementary matrix obtained by adding \( \lambda \) times the first column of the identity matrix \( I_n \) to its second column;
    \item The elementary matrix obtained by interchanging the \( j \)-th and \( (j+1) \)-th columns of \( I_n \) (\( 2 \leq j < n \));
    \item The elementary matrix obtained by adding \( \lambda \) times the \( j \)-th column of the identity matrix \( I_n \) to its \( (j+1) \)-th column.
\end{itemize}
Consequently, the decomposition of \( A \) yields a chain of bases for \( \Omega^1_{X/k}(\log D) \) over \( \mathcal{O}_X \), namely
\[
(\omega_1^i, \cdots, \omega_n^i), \quad 0 \leq i \leq s,
\]
subject to the following conditions:
\begin{itemize}
    \item \( (\omega_1^0, \cdots, \omega_n^0) = (d\log f, d\log t_2, \cdots, d\log t_n) \);
    \item \( (\omega_1^s, \cdots, \omega_n^s) = (d\log f, d\log m_2, \cdots, d\log m_n) \);
    \item \( (\omega_1^i, \cdots, \omega_n^i) = (\omega_1^{i-1}, \cdots, \omega_n^{i-1}) A_i \), where \( 1 \leq i \leq s \).
\end{itemize}
Clearly, the basis \( (\omega_1^i, \cdots, \omega_n^i) \) gives rise to an $f$-compatible ordered splitting of \( \Omega^1_{X/k}(\log D) \), denoted by \( \underline\Omega_{2i} \). For \( 1 \leq i \leq s \), denote by \( \underline\Omega_{2i-1} \) the ordered splitting constructed from \( \underline\Omega_{2i} \) via \eqref{case i} by setting
\[
o := \left\{
\begin{matrix}
1, & A_i \text{ satisfies the first case}; \\
j, & \text{ otherwise}.
\end{matrix}
\right.
\]
Finally, it can be verified that \( \underline\Omega_0, \cdots, \underline\Omega_{2s} \) forms the desired chain of \( f \)-compatible ordered splittings. This completes the proof.
\end{proof}

\section{Further consequences}

\subsection{Kunneth formula}
Given $(\tilde X_i,\tilde D_i)/W_2(k)$ for $i=1,2$, where $\tilde X_i$ is a smooth $W_2(k)$-scheme, $\tilde D_i\subset\tilde X_i$ a simple normal crossing divisor relative to $W_2(k)$. Let $(X_i,D_i)/k$ be the mod $p$ reduction of $(\tilde X_i,\tilde D_i)/W_2(k)$ and suppose that $\dim X_1+\dim X_2<p$. Set 
$$\tilde X_0:=\tilde X_1\times_{W_2(k)}\tilde X_2,~\tilde D_0:=\tilde D_1\times_{W_2(k)}\tilde X_2+\tilde X_1\times_{W_2(k)}\tilde D_2,$$
and $(X_0,D_0)$ to be the mod $p$ reduction of $(\tilde X_0,\tilde D_0)$. Clearly, there are natural isomorphisms
\begin{eqnarray}\label{kunneth DR}
\Omega^*_{X_0/k}(\log D_0)\cong\Omega^*_{X_1/k}(\log D_1)\boxtimes\Omega^*_{X_2/k}(\log D_2)
\end{eqnarray}
and
\begin{eqnarray}\label{kunneth Higgs}
\bigoplus_{i=0}^{\infty}\Omega^i_{X_0/k}(\log D_0)[-i]\cong\bigoplus_{i=0}^{\infty}\Omega^i_{X_1/k}(\log D_1)[-i]\boxtimes\bigoplus_{i=0}^{\infty}\Omega^i_{X_2/k}(\log D_2)[-i].
\end{eqnarray}
On the other hand, \cite[Th\'eorème 2.1]{DI} shows that for $i=0,1,2$, there are decompositions
$$
\Phi_{(\tilde X_i,\tilde D_i)}:F_*\Omega^*_{X_i/k}(\log D_i)\cong\bigoplus_{i=0}^{\infty}\Omega^i_{X_i/k}(\log D_i)[-i]~\mathrm{in}~D(X_i).
$$

Inspired by the Kunneth formula in any good cohomology theory, we establish the Kunneth formula for the Deligne-Illlusie decomposition. 
\begin{theorem}
Notation and assumptions as above. Under the isomorphisms \eqref{kunneth DR} and \eqref{kunneth Higgs}, we have
$$
\Phi_{(\tilde X_0,\tilde D_0)}=\Phi_{(\tilde X_1,\tilde D_1)}\boxtimes\Phi_{(\tilde X_2,\tilde D_2)}.
$$
\end{theorem}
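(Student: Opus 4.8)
The plan is to reduce the statement to a chain-level identity of \v{C}ech cocycles and then invoke the homotopy-invariance of $\varphi_{(\tilde X,\tilde D),\underline\Omega}$ under refinement of the ordered splitting. Recall that $\Phi_{(\tilde X_i,\tilde D_i)}$ is, by construction, the morphism in the derived category produced by the zig-zag
$$
\bigoplus_{j}\Omega^j_{X_i/k}(\log D_i)[-j]\xrightarrow{\ \varphi_{(\tilde X_i,\tilde D_i)}\ }\check{\mathcal C}(\mathcal U_i,F_*\Omega^*_{X_i/k}(\log D_i))\xleftarrow{\ \rho_i\ }F_*\Omega^*_{X_i/k}(\log D_i),
$$
where $\mathcal U_i$ is an open affine covering of $X_i$, $\rho_i$ the augmentation, and $\varphi_{(\tilde X_i,\tilde D_i)}=\varphi_{\mathrm{SZ}}$ is the $\infty$-homotopy of \cite{SZ1,SZ2} (defined without truncation here since $\dim X_i<p$). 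Consequently, under the identifications \eqref{kunneth DR}--\eqref{kunneth Higgs}, the external product $\Phi_{(\tilde X_1,\tilde D_1)}\boxtimes\Phi_{(\tilde X_2,\tilde D_2)}$ is represented by the external tensor of these two zig-zags composed with the cup product $\cup$ of \v{C}ech complexes attached to $\mathcal U_0:=\mathcal U_1\times\mathcal U_2$, which is a quasi-isomorphism and is compatible with $\rho_0$ in $D(X_0)$ (the standard K\"unneth formula for \v{C}ech cohomology, compatible with cup products). Hence it suffices to establish the chain-level identity
$$
\varphi_{(\tilde X_0,\tilde D_0),\underline\Omega}=\cup\circ\bigl(\varphi_{(\tilde X_2,\tilde D_2)}\boxtimes\varphi_{(\tilde X_1,\tilde D_1)}\bigr),\qquad \underline\Omega:=\bigl(\pi_1^*\Omega^1_{X_1/k}(\log D_1),\ \pi_2^*\Omega^1_{X_2/k}(\log D_2)\bigr),
$$
together with the fact that $\varphi_{(\tilde X_0,\tilde D_0),\underline\Omega}$ and $\varphi_{(\tilde X_0,\tilde D_0)}$ induce the same isomorphism in $D(X_0)$.

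For the chain-level identity I would work with product data. Since products of affine opens form a basis of $X_0$, we may take $\mathcal U_0=\mathcal U_1\times\mathcal U_2$, and on each member we may choose a Frobenius lifting of product form $\tilde F=\tilde F_1\times\tilde F_2$. With such choices the local maps $\zeta_{\tilde F}$ and $h_{\tilde F\tilde F'}$ are block diagonal with respect to $\Omega^1_{X_0/k}(\log D_0)=\pi_1^*\Omega^1_{X_1/k}(\log D_1)\oplus\pi_2^*\Omega^1_{X_2/k}(\log D_2)$: on the first summand they coincide with $\pi_1^*\zeta_{\tilde F_1}$, $\pi_1^*h_{\tilde F_1\tilde F_1'}$ and annihilate the second, and symmetrically. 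Feeding this into Construction~\ref{construction higher homotopy} (equivalently its basis-free form, Construction~\ref{tilde varphi}) relative to the ordered splitting $\underline\Omega$, every ingredient of $\varphi(r,s)_{\tilde F_0,\cdots,\tilde F_r}$ --- the forms $\boldsymbol{\omega}_{\underline S}$, the factors $h^{[\underline{\overline{j}}]}$, the (here trivial, $\theta=0$) Higgs powers, the combinatorial weights $C(\underline i,\underline S,\underline{\overline{j}})$ and the normalizations $1/q_i!$ --- splits as a ``first-block part'' times a ``second-block part'' because the function $D$ of \eqref{D} now has image $\{1,2\}$ aligned with the two factors. Matching $T_{\underline\Omega}(r,s)$ with pairs of index sets for $\tilde X_1$ and $\tilde X_2$ and tracking the Koszul signs of the \v{C}ech cup product (which is exactly what produces the reversed order $\varphi_{(\tilde X_2,\tilde D_2)}\boxtimes\varphi_{(\tilde X_1,\tilde D_1)}$ once one accounts for the shift $[-r]$) then yields the asserted identity; this is precisely the general K\"unneth formula of \S\,1.3.1 specialized to $(E_i,\theta_i)=(\mathcal O_{X_i},0)$ and $\beta=2$.

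Finally, to replace $\varphi_{(\tilde X_0,\tilde D_0),\underline\Omega}$ by $\varphi_{(\tilde X_0,\tilde D_0)}$: here $\ell=0$ and $\mathrm{rank}\,\Omega_1+\mathrm{rank}\,\Omega_2=\dim X_1+\dim X_2<p=p-\ell$, so Proposition~\ref{intro thm homotopy} applies with $o=1$ and shows that $\varphi_{(\tilde X_0,\tilde D_0),\underline\Omega}$ is homotopic (for $q=\infty$) to $\varphi_{(\tilde X_0,\tilde D_0),\underline\Omega'}$ with $\underline\Omega'=\bigl(\Omega^1_{X_0/k}(\log D_0)\bigr)$ the trivial splitting, i.e. to $\varphi_{\mathrm{SZ}}=\varphi_{(\tilde X_0,\tilde D_0)}$. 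Homotopic quasi-isomorphisms of complexes define the same morphism in the derived category, so $\Phi_{(\tilde X_0,\tilde D_0)}$ is equally induced by $\varphi_{(\tilde X_0,\tilde D_0),\underline\Omega}$; combining this with the chain-level identity above gives $\Phi_{(\tilde X_0,\tilde D_0)}=\Phi_{(\tilde X_1,\tilde D_1)}\boxtimes\Phi_{(\tilde X_2,\tilde D_2)}$ in $D(X_0)$.

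I expect the only real difficulty to be the combinatorial bookkeeping in the chain-level identity: verifying that the normalization constants $C(\underline i,\underline S,\underline{\overline{j}})$ of Construction~\ref{construction higher homotopy} are genuinely multiplicative across the two blocks of $\underline\Omega$, and that the signs generated by the \v{C}ech cup product match those hard-wired into $\varphi_{(\tilde X_0,\tilde D_0),\underline\Omega}$. The compatibility of $\rho_0$ with $\cup\circ(\rho_1\boxtimes\rho_2)$ and the acyclicity/quasi-isomorphism statements for the \v{C}ech cup product are routine and pose no obstacle.
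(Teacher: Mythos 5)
Your proposal follows essentially the same route as the paper: represent $\Phi_{(\tilde X_1,\tilde D_1)}\boxtimes\Phi_{(\tilde X_2,\tilde D_2)}$ by the external product of the two zig-zags composed with the \v{C}ech cup product over $\mathcal U_0=\mathcal U_1\times\mathcal U_2$, identify this chain map with $\varphi_{(\tilde X_0,\tilde D_0),\underline\Omega}$ for the two-block splitting by a direct computation with product Frobenius liftings (block-diagonality of $\zeta$ and $h$), and then invoke the homotopy-invariance result (Proposition~\ref{Ho^2}) to merge the two blocks and conclude equality with $\Phi_{(\tilde X_0,\tilde D_0)}$ in $D(X_0)$. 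This matches the paper's argument, including the order reversal coming from the cup product, so the proposal is correct.
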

\begin{proof}
Let $\mathcal U_i:=\{(U_{i,j}\}_{j\in J_i}$ be an open affine covering of $X_i$ and let $\{\tilde F_{i,j}:(\tilde U_{i,j},\tilde D_i|_{\tilde U_{i,j}})\to(\tilde U_{i,j},\tilde D_i|_{\tilde U_{i,j}})\}_{j\in J_i}$ be a family of local logarithmic Frobenius liftings on $(\tilde X_i,\tilde D_i)$ for $i=1,2$. Set 
$$\mathcal U_0:=\{U_{1,j_1}\times U_{2,j_2}\}_{(j_1,j_2)\in J_1\times J_2},~\{\tilde F_{j_1,j_2}:=\tilde F_{1,j_1}\times_{W_2(k)}\tilde F_{2,j_2}\}_{(j_1,j_2)\in J_1\times J_2}
$$ to be the induced open affine covering of $X$ and  the induced family of local logarithmic Frobenius liftings on $(\tilde X_0,\tilde D_0)$, respectively.
For $i=1,2,3$, we set
$$
\mathcal H^*_i:=\bigoplus_{j=0}^\infty\Omega^j_{X_i/k}(\log D_i)[-j],~\mathcal K_i^*:=\check{\mathcal C}(\mathcal U_i,F_*\Omega^*_{X_i/k}(\log D_i)),~\mathcal R^*_i:=F_*\Omega^*_{X_i/k}(\log D_i).
$$
Let $\pi_i:X_0\to X_i$ be the natural projection for $i=1,2$. 

Consider the commutative diagram in $D(X_0)$:
$$
\xymatrix{
\mathcal H^*_0 \ar[r]\ar[rrrd]_-{\varphi}
  & \pi_1^*\mathcal H^*_1 \otimes \pi_2^*\mathcal H^*_2 
    \ar[rr]^-{\pi_1^*\varphi_1 \otimes \pi_2^*\varphi_2} 
  && \pi_1^*\mathcal K^*_1 \otimes \pi_2^*\mathcal K^*_2 
    \ar[d]^-{-\cup-} 
  && \pi_1^*\mathcal R_1^* \otimes \pi_2^*\mathcal R_2^* 
    \ar[ll]_-{\pi_1^*\rho_1 \otimes \pi_2^*\rho_2} 
  & \mathcal R^*_0 
    \ar[l]
    \ar@/_3pc/[llllll]_-{\Phi_{(\tilde X_1,\tilde D_1)} \boxtimes \Phi_{(\tilde X_2,\tilde D_2)}} 
    \ar[llld]^-{\rho_0} \\
&&& \mathcal K^*_0&&& 
},
$$
where $\varphi_i$ denotes the quasi-isomorphism $\varphi_{(\tilde X_i,\tilde D_i)}$ attached to the family $\{\tilde F_j\}_{j \in J_i}$, $\rho_i$ denotes the natural augmentation, and the cup product $-\cup-$ is defined as 
$$
\begin{array}{c}
\pi^*_1\check{\mathcal C}^{r_1}(\mathcal U_1,F_*\Omega^{s_1}_{X_1/k}(\log D_1))\otimes\pi_2^*\check{\mathcal C}^{r_2}(\mathcal U_2,F_*\Omega^{s_2}_{X_2/k}(\log D_2))\to\check{\mathcal C}^{r_1+r_2}(\mathcal U_0,F_*\Omega^{s_1+s_2}_{X_0/k}(\log D_0)),\\
\pi_1^*\{\beta_{j^0_1,\cdots,j_1^{r_1}}\in F_*\Omega^{s_1}_{X_1/k}(\log D_1)\}_{j^0_1,\cdots,j_1^{r_1}\in J_1}\bigotimes\pi_2^*\{\beta_{j_2^0,\cdots,j_2^{r_2}}\in F_*\Omega^{s_2}_{X_2/k}(\log D_2)\}_{j_2^0,\cdots,j_2^{r_2}\in J_2}\\
\tikz[baseline={(0,-0.5ex)}]{
  \draw[|->](0,0) -- (0,-0.5);
}\\
\{(-1)^{r_1s_2}\pi_1^*\beta_{j^0_1,\cdots,j_1^{r_1}}\bigwedge\pi_2^*\beta_{j_2^{r_1},\cdots,j_2^{r_1+r_2}}\in F_*\Omega^{s_1+s_2}_{X_0/k}(\log D_0)\}_{(j^0_1,j_2^0),\cdots,(j_1^{r_1+r_2},j_2^{r_1+r_2})\in J_1\times J_2}.
\end{array}
$$
It follows that $\Phi_{(\tilde X_1,\tilde D_1)}\boxtimes\Phi_{(\tilde X_2,\tilde D_2)}$ can be alternatively represented by the diagram
$$
\mathcal H^*_0\stackrel{\varphi}{\rightarrow}\mathcal K^*_0\stackrel{\rho_0}{\leftarrow}\mathcal R^*_0.
$$

We claim that $\varphi$ is nothing but $\varphi_{\underline\Omega}$ attached to the family $\{\tilde F_{(j_1,j_2)}\}_{(j_1,j_2)\in J_1\times J_2}$, where $\underline{\Omega}:=(\Omega_1,\Omega_2)$ with
$$
\Omega_1:=\pi^*_2\Omega^1_{X_2/k}(\log D_2)\subset\Omega^1_{X_0/k}(\log D_0),~\Omega_2:=\pi^*_1\Omega^1_{X_1/k}(\log D_1)\subset\Omega^1_{X_0/k}(\log D_0).
$$
Combining this fact with Proposition \ref{Ho^2}, we conclude that $\varphi$ is homotopic to $\varphi_{(\tilde X_0,\tilde D_0)}$, the quasi-isomorphism attached to the family $\{\tilde F_{(j_1,j_2)}\}_{(j_1,j_2)\in J_1\times J_2}$ again. Consequently, the desired equality follows. 

It remains to verify the claim. Let $r,s>0$, $(j_1^0,j_2^0),\cdots,(j^r_1,j^r_2)\in J_1\times J_2$, and let
$$\omega_1^1,\cdots,\omega_1^{m_1}\in\Omega^1_{X_1/k}(\log D_1),~\omega_2^1,\cdots,\omega_2^{m_2}\in\Omega^1_{X_2/k}(\log D_2),~m_1+m_2=r+s.
$$
Using the definition of the cup product $-\cup-$ described above, together with the following easily checked facts:
$$
\begin{array}{c}
\pi_1^*h_{\tilde F_{j_1^i},\tilde F_{j_1^{i+1}}}(\omega_1^l)=h_{\tilde F_{(j_1^i,j_2^i)},\tilde F_{(j_1^{i+1},j_2^{i+1})}}(\pi^*_1\omega_1^l),~\pi_2^*h_{\tilde F_{j_2^i},\tilde F_{j_2^{i+1}}}(\omega_2^l)=h_{\tilde F_{(j_1^i,j_2^i)},\tilde F_{(j_1^{i+1},j_2^{i+1})}}(\pi^*_2\omega_2^l),\\
\pi_2^*\zeta_{\tilde F_{j_2^i}}(\omega_2^l)=\zeta_{\tilde F_{(j^0_1,j_2^0)}}(\pi_2^*\omega_2^l)+h_{\tilde F_{(j_1^0,j_2^0)},\tilde F_{(j_1^1,j_2^1)}}(\pi^*_2\omega_2^l)+\cdots+h_{\tilde F_{(j_1^{i-1},j_2^{i-1})},\tilde F_{(j_1^i,j_2^i)}}(\pi^*_2\omega_2^l),
\end{array}
$$
we have
$$
\begin{array}{c}
\varphi(r,s)_{(j_1^0,j_2^0),\cdots,(j_1^r,j_2^r)}(\pi_1^*\omega_1^1\wedge\cdots\wedge\pi_1^*\omega_1^{m_1}
\wedge\pi_2^*\omega_2^1\wedge\cdots\wedge\pi_2^*\omega_2^{m_2})\\
=\bigoplus\pi_1^*\varphi_1(r_1,s_1)_{j^0_1,\cdots,j^{r_1}_1}(\omega_1^1\wedge\cdots\wedge\omega_1^{m_1})\bigcup\pi_2^*\varphi_2(r_2,s_2)_{j^{r_1}_2,\cdots,j^{r_1+r_2}_2}(\omega_1^1\wedge\cdots\wedge\omega_1^{m_2})\\
=\varphi_{\underline\Omega}(r,s)_{\tilde F_{(j^0_1,j^0_2)},\cdots,\tilde F_{(j^r_1,j^r_2)}}(\pi_1^*\omega_1^1\wedge\cdots\wedge\pi_1^*\omega_1^{m_1}
\wedge\pi_2^*\omega_2^1\wedge\cdots\wedge\pi_2^*\omega_2^{m_2}).
\end{array}
$$
Here, the direct sum in the second equality is taken over $r_1,s_1,r_2,s_2$, subject to the following conditions:
$$
r_1+r_2=r,~s_1+s_2=s,~r_1+s_1=m_1,~r_2+s_2=m_2.
$$
This completes the proof.
\end{proof}

\subsection{Inverse Cartier transform of $p$-connections}
 Let $Y$ be a smooth scheme over $W(k)$ of relative dimension $n$, 
 and let $\mathcal D_{Y/W(k)}$ be the sheaf of rings of relative differential operators on $Y/W(k)$. In \cite{Fa}, Faltings introduced the abelian category $\mathrm{MF}^\nabla_{[0,p-2]}(Y/W(k))$ of Fontaine-Faltings modules on $Y/W(k)$ of Hodge-Tate weight $\leq p-2$. A key ingredient in his construction is a procedure that relates $p$-connections to integrable connections. In what follows, we present a slight generalization of this procedure, which can be viewed as an inverse Cartier transform of 
$p$-connections.

\begin{definition}
Let $p\mathcal D_{Y/W(k)}\subset\mathcal D_{Y/W(k)}$ be the subring locally defined by
$$
p\mathcal D_{Y/W(k)}:=\bigoplus_{I\in\mathbb N^n}\mathcal O_Y(p\partial)^I,\quad (p\partial)^I:=\prod_{j=1}^n(p\partial_j)^{i_j},\quad I=(i_1,\cdots,i_n).
$$
Here, $\partial_1,\cdots,\partial_n$ are dual basis of $dt_1,\cdots,dt_n$ for a system of local coordinates $t_1,\cdots,t_n$ on $Y/W(k)$.

Let \( 0 \leq \ell \leq p - 2\). Define the subring \( p\mathcal{D}_{Y/W(k)}^\ell \subset \mathcal{D}_{Y/W(k)} \) locally by
\[
p\mathcal{D}_{Y/W(k)}^\ell := 
\left( \bigoplus_{|I| \leq \ell} \mathcal{O}_Y \cdot (p\partial)^I \right)
\oplus 
\left( \bigoplus_{|I| > \ell} \frac{1}{p^{|I| - \ell}} \mathcal{O}_Y \cdot (p\partial)^I \right).
\] 
\end{definition}
Clearly, giving a \( W(k) \)-linear \( p \)-connection on an \( \mathcal{O}_Y \)-module \( E \) is equivalent to endowing \( E \) with a  \( p\mathcal{D}_{Y/W(k)} \)-module structure.
\begin{definition}
Let \( E \) be a \( p \)-torsion \( \mathcal{O}_Y \)-module; that is, \( p^N E = 0 \) for some \( N > 0 \). We say that a \( W(k) \)-linear \( p \)-connection on \( E \) is \( p \)-nilpotent of level \( \leq \ell \) if the corresponding \( p\mathcal{D}_{Y/W(k)} \)-action on \( E \) extends to a \( p\mathcal{D}^\ell_{Y/W(k)} \)-action. Denote by \( p\mathrm{MIC}_{\leq \ell}(Y/W(k)) \) the category of \( p \)-torsion \( \mathcal{O}_Y \)-modules equipped with a \( p \)-connection that is \( p \)-nilpotent of level \( \leq \ell \).
\end{definition}
Let $\{U_\alpha\}$ be an open affine covering of $Y$, and let $F_\alpha:\hat U_\alpha\to \hat U_\alpha$ be a Frobenius lifting on the $p$-adic completion of $U_\alpha$. Let $(E,\theta)\in p\mathrm{MIC}_{\leq p-1}(Y/W(k))$. We glue the family \( \{ F_\alpha^* E|_{U_\alpha} \} \) using Taylor's formula. More precisely, for any $e\in E|_{U_i}$, we identify $F^*_\alpha e$ with a section of $F^*_\beta E|_{U_\beta}$ described as follows:
\begin{eqnarray}\label{Taylor's formula}
\sum_{I\in\mathbb N^n}h_{\beta\alpha}(t)^IF_\beta^*(\frac{\theta^I}{I!}(m)),
\end{eqnarray}
where
$$
h_{\beta\alpha}(t)^I:=\prod_{j=1}^nh_{\beta\alpha}(t_j)^{i_j},\quad h_{\beta\alpha}:=\frac{F_\alpha^*-F_\beta^*}{p},\quad \theta^I:=\prod_{j=1}^n\theta_j^{i_j},\quad I!:=\prod_{j=1}^ni_j!,\quad I=(i_1,\cdots,i_n).
$$
Here, we assume that $U_\alpha \cap U_\beta$ admits a system of coordinates $t_1, \dots, t_n$ over $W(k)$, and write the $p$-connection as $\theta = \sum_{j=1}^n \theta_j \otimes dt_j$. Let $H$ denote the $\mathcal{O}_Y$-module obtained by gluing the aforementioned family. We construct an integrable connection $\nabla$ on $H$ as follows: for any $e\in E|_{U_\alpha}$, set 
$$
\nabla(F^*_\alpha e):=(\mathrm{id}\otimes\frac{dF^*_\alpha}{p})(F^*_\alpha(\theta e)),
$$
where
$$
\frac{dF^*_\alpha}{p}:F^*_\alpha\Omega^1_{Y/W(k)}\to\Omega^1_{Y/W(k)},\quad dx\mapsto \frac{dF^*_\alpha(x)}{p},\quad x\in\mathcal O_Y.
$$

\begin{definition}
The pair $(H, \nabla)$ is called the inverse Cartier transform of $(E, \theta)$, and is denoted by $(H, \nabla) = C^{-1}(E, \theta)$. In other words, there is an inverse Cartier transform between categories 
$$
C^{-1}:p\mathrm{MIC}_{\leq p-1}(Y/W(k))\to\mathrm{MIC}(Y/W(k)).
$$
\end{definition}
This inverse Cartier transform is closely related to the work of Shiho \cite{Shi}, Xu \cite{X}, Ogus \cite{O24}, and Wang \cite{W}.

\begin{lemma}
Let $(H,F^\bullet H,\nabla,\varphi)\in\mathrm{MF}^\nabla_{[0,p-2]}(Y/W(k))$ and let $(E,\theta)$ be the corresponding torsion  $p$-connection. Then $(E,\theta)\in p\mathrm{MIC}_{\leq p-1}(Y/W(k))$ and $(H,\nabla)=C^{-1}(E,\theta)$.
\end{lemma}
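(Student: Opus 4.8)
\subsection*{Proof proposal}

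The plan is to unwind the two equivalent local descriptions of an object of $\mathrm{MF}^\nabla_{[0,p-2]}(Y/W(k))$ and to match them term by term. Recall that, Zariski-locally on $Y$ and after fixing a system of coordinates $t_1,\dots,t_n$ over $W(k)$, a Fontaine--Faltings module $(H,F^\bullet H,\nabla,\varphi)$ is given by a ``strongly divisible'' presentation: $F^0H=H\supset F^1H\supset\cdots\supset F^{p-1}H=0$ is a filtration by coherent $\mathcal O_Y$-submodules obeying Griffiths transversality $\nabla_j(F^iH)\subset F^{i-1}H\otimes\Omega^1_{Y/W(k)}$, the torsion $p$-connection $(E,\theta)$ is extracted from $(H,F^\bullet H,\nabla)$ by the Rees-type construction that absorbs the filtration (so that $\theta$ is $p$ times $\nabla$ on the bottom step and lowers the filtration level by one at each application), and $\varphi$ becomes, relative to any Frobenius lifting $F_\alpha$ on $\hat U_\alpha$, a $\nabla$-horizontal isomorphism $\varphi_\alpha\colon F_\alpha^*E|_{U_\alpha}\xrightarrow{\ \sim\ }H|_{U_\alpha}$, where the source carries exactly the connection $F_\alpha^*e\mapsto(\mathrm{id}\otimes\tfrac{dF_\alpha^*}{p})\big(F_\alpha^*(\theta e)\big)$ used in the definition of $C^{-1}$. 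With this presentation in place, the lemma reduces to the two assertions (i) $(E,\theta)\in p\mathrm{MIC}_{\leq p-1}(Y/W(k))$ and (ii) $C^{-1}(E,\theta)=(H,\nabla)$, which I would prove in this order.

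For (i) I would check that the $p\mathcal D_{Y/W(k)}$-action on $E$ extends to a $p\mathcal D^{p-1}_{Y/W(k)}$-action, i.e.\ that for every multi-index $I$ with $|I|>p-1$ the operator $\theta^I$ is divisible by $p^{|I|-p+1}$ on $E$. The mechanism is that $\theta$ drops the filtration level by exactly one, so on the $(p-1)$-step associated graded one has $\theta^I|_{\mathrm{Gr}_F}=0$ once $|I|\geq p-1$; feeding this vanishing into the Rees model for $E$ --- or, equivalently, running a d\'evissage along the short exact sequences $0\to F^{i+1}H\to F^iH\to\mathrm{Gr}_F^iH\to0$ together with the five lemma --- converts it into the required $p$-divisibility of $\theta^I$ on $E$ for $|I|\geq p$. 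Because $E$ is $p$-power torsion, the phrases ``$\theta^I/p^{|I|-p+1}$ is well defined'' have to be read inside the Faltings model rather than after inverting $p$, and the bookkeeping of filtration degrees against multi-index weights is the delicate point; I regard this as the main obstacle, although it may be streamlined by invoking Faltings' local presentation of $\mathrm{MF}^\nabla_{[0,p-2]}$ directly.

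For (ii) the identification is essentially definitional once the presentation above is fixed. By construction $C^{-1}(E,\theta)$ is the $\mathcal O_Y$-module obtained by gluing the local modules $F_\alpha^*E|_{U_\alpha}$ --- each with the connection recalled above --- along the Taylor cocycle \eqref{Taylor's formula}. On the other hand the $\varphi_\alpha$ are isomorphisms of precisely these connection modules onto $H|_{U_\alpha}$, so it suffices to verify that they are compatible with the transition automorphisms used by $C^{-1}$, namely that $\varphi_\beta^{-1}\circ\varphi_\alpha=\sum_I h_{\beta\alpha}(t)^I F_\beta^*\!\big(\tfrac{\theta^I}{I!}(-)\big)$ on $U_\alpha\cap U_\beta$. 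This in turn follows from the $\nabla$-horizontality of $\varphi$: since $F_\alpha^*-F_\beta^*$ is divisible by $p$ and $\theta$ is the associated $p$-connection, horizontality forces the comparison of $\varphi_\alpha$ and $\varphi_\beta$ to be given by exactly that $p$-adic Taylor expansion, which is well defined thanks to (i). Gluing the $\varphi_\alpha$ then yields an isomorphism $C^{-1}(E,\theta)\xrightarrow{\ \sim\ }(H,\nabla)$ of modules with integrable connection, and a final check that it is the identity on the underlying glued data gives $C^{-1}(E,\theta)=(H,\nabla)$. The remaining verifications --- that $\nabla(\varphi_\alpha(F_\alpha^*e))=(\mathrm{id}\otimes\tfrac{dF_\alpha^*}{p})F_\alpha^*(\theta e)$ and that the sign and normalization conventions for $\varphi$ match those in \S2.5 --- are routine but notation-heavy.
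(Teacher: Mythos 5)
The paper states this lemma without any proof: it is offered as a direct unpacking of Faltings' definition of $\mathrm{MF}^\nabla_{[0,p-2]}(Y/W(k))$ together with the gluing definition of $C^{-1}$ given immediately above it, so there is no argument in the paper to compare with line by line. Your overall plan is exactly the intended one — pass to a local presentation with coordinates and Frobenius lifts $F_\alpha$, note that the Frobenius structure gives horizontal isomorphisms $\varphi_\alpha\colon F_\alpha^*E\to H$ whose source carries precisely the connection used in the definition of $C^{-1}$, and check the nilpotency condition so that the Taylor-formula transition maps converge. Two steps, however, need tightening before this is a proof.

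First, in (i) the d\'evissage along $0\to F^{i+1}H\to F^iH\to\mathrm{Gr}_F^iH\to0$ plus the five lemma does not produce divisibility of the operator $\theta^I$; and on a $p$-torsion module the phrase ``$\theta^I$ is divisible by $p^{|I|-p+1}$'' must in any case be realized by exhibiting a well-defined operator, not by dividing. The argument that actually works is the explicit Rees computation you allude to: $E$ is the cokernel of $\bigoplus_i F^{i+1}H\to\bigoplus_i F^iH$, $x\mapsto(\iota(x),-px)$, Griffiths transversality gives $\theta([x]_i)=[\nabla x]_{i-1}$ with the convention $[y]_{i}=p[y]_{i+1}$ for $y\in F^{i+1}H$ (so $\theta$ is $p\nabla$ on the bottom step), whence $\theta^I([x]_i)=p^{|I|-i}[\nabla^I x]_0$ for $|I|\ge i$. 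Since $i\le p-2$, the assignment $[x]_i\mapsto p^{(p-2)-i}[\nabla^I x]_0$ defines the action of $p^{-(|I|-(p-2))}(p\partial)^I$ directly on $E$, which gives membership in $p\mathrm{MIC}_{\le p-1}$ (indeed level $\le p-2$). Second, in (ii) the claim that $\nabla$-horizontality of $\varphi$ ``forces'' $\varphi_\beta^{-1}\circ\varphi_\alpha$ to equal the Taylor cocycle is not correct as stated: a horizontal isomorphism between $(F_\alpha^*E,\nabla_\alpha)$ and $(F_\beta^*E,\nabla_\beta)$ is unique only up to horizontal automorphisms, so horizontality alone does not pin it down. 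What you should invoke instead is that in Faltings' definition the relative Frobenius is given on the crystal, i.e. the family $\{\varphi_\alpha\}$ is required to be compatible with the canonical Taylor-formula identifications of $F_\alpha^*E$ and $F_\beta^*E$ (well defined thanks to (i)); with that cited, step (ii) is, as you say, definitional, and gluing the $\varphi_\alpha$ gives $C^{-1}(E,\theta)\cong(H,\nabla)$.
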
  
Motivated by \cite[Corollary 2.27]{OV}, one naturally expects a comparison between $\Omega^*(H,\nabla)$ and  $\Omega^*(E,\theta)$. In this paper, we establish such a comparison in the case where 
$$Y = Y_1 \times_{W(k)} \cdots \times_{W(k)} Y_n,$$ 
with each $Y_i$ a smooth $W(k)$-scheme of relative dimension one.

\begin{theorem}
Notation and assumption as above. Let $(E,\theta)\in p\mathrm{MIC}_{\leq p-2}(Y/W(k))$ and let $(H,\nabla)=C^{-1}(E,\theta)$. Then 
\begin{eqnarray}\label{de Rham-p comparison}
\Omega^*(H,\nabla)\cong\Omega^*(E,\theta)~\mathrm{in}~D(\mathrm{Ab}(Y)),
\end{eqnarray}
where $\mathrm{Ab}(Y)$ is the category of sheaves of abelian groups on $Y$.
\end{theorem}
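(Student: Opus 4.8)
The plan is to reduce the $n$-variable comparison to the one-variable case by a Künneth-type argument, mirroring the structure already established for the mod $p$ setting in \S4. First I would treat the base case $n=1$: here $Y$ is a smooth $W(k)$-curve, and for a local coordinate $t$ the de Rham complex is the two-term complex $[E\xrightarrow{\nabla}E\otimes\Omega^1_{Y/W(k)}]$ and similarly for $\theta$. Fix a Frobenius lifting $F$ on the $p$-adic completion of an affine open $U=\operatorname{Spec} A$, write $\theta=\theta_1\otimes dt$ and $\zeta:=\tfrac{dF^*}{p}(dt)$, which is a unit times $dt$ since $F^*(t)\equiv t^p$. I would write down the explicit Cartier-type quasi-isomorphism
\[
\varphi_F\colon \Omega^*(E,\theta)|_U\longrightarrow \Omega^*(H,\nabla)|_U,\qquad e\mapsto F^*e,\quad e\otimes dt\mapsto F^*(\theta_1^{?}\cdots e)\otimes\zeta,
\]
decomposing $\Omega^*(H,\nabla)|_U = \Omega^*(F^*E,(\mathrm{id}\otimes\zeta)F^*\theta)|_U$ into its weight pieces under the $\mathbb F_p$-grading of $F_*\mathcal O_U$ (the $p$-torsion hypothesis $p^NE=0$ guarantees the relevant operators $j\,\mathrm{id}+\cdots$ with $j\not\equiv0$ are automorphisms, as in the proof of Theorem~\ref{main thm s2}, Step 2); the weight-zero piece is precisely $\varphi_F(\Omega^*(E,\theta))$ and the others are acyclic. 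This gives the comparison over $U$; globalizing requires comparing the $\varphi_{F}$ attached to different Frobenius liftings, which is handled exactly as in \S3.1, using Taylor's formula \eqref{Taylor's formula} to glue — this is where the $p$-nilpotence of level $\leq p-2$ (rather than $p-1$) is used, since the homotopies involve divided powers $h^{[j]}=h^j/j!$ with $j\le p-2$.

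Second, I would set up the Künneth step. Writing $Y=Y_1\times_{W(k)}Y'$ with $Y'=Y_2\times_{W(k)}\cdots\times_{W(k)}Y_n$, and assuming (after shrinking) that $(E,\theta)$ splits as an external tensor product $(E_1,\theta_1)\boxtimes(E',\theta')$ — which one reduces to by filtering, since the statement is about the derived category of abelian sheaves and can be checked on associated graded pieces of a suitable filtration of $E$ — one has natural identifications $\Omega^*(H,\nabla)\cong\Omega^*(H_1,\nabla_1)\boxtimes\Omega^*(H',\nabla')$ and likewise for the Higgs side, where $(H_i,\nabla_i)=C^{-1}(E_i,\theta_i)$; this compatibility of $C^{-1}$ with external products follows by inspecting the gluing data \eqref{Taylor's formula} and the formula for $\nabla$ on a product, since a product Frobenius lifting $F_1\times F'$ induces $h_{\beta\alpha}$, $\zeta$ componentwise. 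The external tensor product of the one-variable comparison $\Omega^*(H_1,\nabla_1)\cong\Omega^*(E_1,\theta_1)$ with the inductively-obtained comparison $\Omega^*(H',\nabla')\cong\Omega^*(E',\theta')$ then yields \eqref{de Rham-p comparison}. The key subtlety, exactly as in the mod $p$ Künneth theorem proved above, is that the two comparison isomorphisms must be realized by honest quasi-isomorphisms of complexes (via the $\check{\mathrm C}$ech/higher-homotopy construction, or here more simply via the explicit $\varphi_F$ and a chosen product covering) so that their tensor product makes sense and is again a quasi-isomorphism; one cannot tensor morphisms that only live in $D(\mathrm{Ab}(Y))$.

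Third, one should reduce the general torsion module $(E,\theta)$ to the externally-split case. Here I would argue that, Zariski-locally, any $(E,\theta)\in p\mathrm{MIC}_{\leq p-2}(Y/W(k))$ admits a finite filtration whose graded pieces are of the form $\mathcal O_Y/p^m$-modules that are externally split, or — more robustly — argue that both sides of \eqref{de Rham-p comparison} are computed by complexes that, locally on the product $Y=\prod Y_i$, decompose according to the factorization of the Frobenius lifting and the $\mathbb F_p^n$-weight grading of $F_*\mathcal O_Y$, giving directly a decomposition
\[
F_*\Omega^*(H,\nabla)\big|_U\;\cong\;\bigoplus_{v\in(\mathbb Z/p)^n}K_v^*,\qquad K_0^*\cong\Omega^*(E,\theta)|_U,\quad K_v^*\ \text{acyclic for }v\neq0,
\]
just as in Theorem~\ref{main thm s2}; the product structure of $Y$ is what makes the relevant Koszul-type differentials in each direction independent, so that acyclicity in at least one direction forces $K_v^*$ to be acyclic. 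Then one glues these local decompositions using the higher-homotopy formalism of \S3.1 adapted verbatim to $p$-connections (replacing $F_*\Omega^*_{X/k}$-type objects by $\Omega^*(H,\nabla)$ over $W(k)$ and noting $p\cdot\mathcal O_X$ is not zero here, but the only divisions used are by $j!$, $j\le p-2$, which are units in $W(k)$).

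\textbf{Main obstacle.} I expect the genuine difficulty to lie not in the one-variable case (which is essentially the classical Cartier isomorphism with coefficients, already implicit in \cite{Fa}) but in making the gluing and the Künneth tensor product rigorous at the level of complexes over $W(k)$: over a field of characteristic $p$ one has the clean $\infty$-homotopy machinery of \cite{SZ2} and \S3.1, whereas over $W(k)$ the sheaves are only $p$-torsion and the Frobenius liftings interact with the $p$-adic topology, so one must check that Taylor's formula \eqref{Taylor's formula} converges (it does, as $h_{\beta\alpha}$ is divisible by $p$ modulo the level, and $E$ is $p$-torsion) and that the resulting higher-homotopy $\mathrm{Ho}$ is well-defined with the divided-power denominators staying $\le p-2$. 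This last point is precisely why the theorem is stated for level $\leq p-2$ and for a product of curves: restricting to curves keeps the Koszul complexes in each direction of length two, so no binomial coefficient of size $\ge p-1$ ever appears, and the $p$-nilpotence of level $\le p-2$ bounds the powers of $\theta$ that occur. I would also need to double-check the compatibility of $C^{-1}$ with base change along the projections $Y\to Y_i$, which is straightforward from the construction but must be verified to license the external-tensor identification.
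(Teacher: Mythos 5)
Your primary route---induction on the number of curve factors via a K\"unneth argument after reducing the coefficients to external tensor products---has a genuine gap at the reduction step. A general object of $p\mathrm{MIC}_{\leq p-2}(Y/W(k))$ does not, even Zariski-locally, admit a finite filtration whose graded pieces are of the form $(E_1,\theta_1)\boxtimes(E',\theta')$: the underlying $p$-torsion module lives on (an infinitesimal thickening of) the special fibre $X_1\times\cdots\times X_n$, and coherent sheaves on a product of curves are in general not iterated extensions of external products (think of the structure sheaf of a curve not of product type, or of a rank-one module whose $p$-connection genuinely mixes the factors). Moreover, even if such a filtration existed, an isomorphism in $D(\mathrm{Ab}(Y))$ cannot be ``checked on associated graded pieces'': to pass from comparisons of the graded pieces to a comparison of the total complexes you must exhibit a morphism of filtered complexes inducing them, which is precisely the global quasi-isomorphism you are trying to build. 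So the K\"unneth step, as formulated, does not prove the theorem for arbitrary $(E,\theta)$; in the paper the K\"unneth statement is a separate consequence (for the mod~$p$ Deligne--Illusie comparison), not an ingredient of this proof.

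Your fallback (``more robustly\ldots'') is, in substance, the paper's actual argument, and it is the route you should commit to for all coefficients at once, with no induction on $n$: take the product covering $\mathcal U=\prod_i\mathcal U_i$ and product Frobenius liftings, and define a single \v{C}ech-level quasi-isomorphism $\varphi_{\underline\Omega}:\Omega^*(E,\theta)\to\check{\mathcal C}(\mathcal U,\Omega^*(H,\nabla))$ for the ordered splitting $\underline\Omega=(\pi_1^*\Omega^1_{Y_1/W(k)},\ldots,\pi_n^*\Omega^1_{Y_n/W(k)})$ by transplanting Construction~\ref{construction higher homotopy} to $p$-connections (the rank-one pieces keep all divided powers $h^{[j]}$ in the range $j\leq p-1$, where $j!$ is a unit in $W(k)$), and compare it with the augmentation $\rho:\Omega^*(H,\nabla)\to\check{\mathcal C}(\mathcal U,\Omega^*(H,\nabla))$. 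The local quasi-isomorphism property is exactly your $\mathbb F_p^n$-weight decomposition of $F_{\underline q*}\Omega^*(H,\nabla)$ into Koszul complexes $\mathrm{Kos}(E;\beta_1+\theta_1,\ldots,\beta_n+\theta_n)$, with the zero-weight piece isomorphic to $\Omega^*(E,\theta)$ and the others acyclic because $\beta_i+\theta_i$ is invertible on the $p$-torsion, $p$-nilpotent $E$. One step the paper carries out that your sketch omits: the formula for $\varphi_{\underline\Omega}(r,s)$ is written in terms of chosen coordinates $t_i$ on the curve factors, so its independence of these choices must be verified; the paper reduces this to a one-variable identity checked by a Taylor-expansion (formal/analytic) argument, and without some such check the glued map is not well defined. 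With that addition, and dropping the external-splitting reduction, your proposal coincides with the paper's proof.
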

\begin{proof}
Let $\mathcal U_i:=\{U_i^q\}_{q\in Q_i}$ be an open affine affine covering of $Y_i$, and let $F_i^q$ be a Frobenius lifting on the $p$-adic completion of $U_i^q$. These coverings give rise to an open affine covering of $Y$, namely
$$
\mathcal U:=\{U_{(q_1,\cdots,q_n)}\}_{(q_1,\cdots,q_n)\in Q_1\times\cdots\times Q_n},\quad U_{(q_1,\cdots,q_n)}:=U_1^{q_1}\times_{W(k)}\cdots\times_{W(k)}U_n^{q_n}.
$$
Let $F_{(q_1,\cdots,q_n)}:=F_1^{q_1}\times_{W(k)}\cdots\times_{W(k)}F_n^{q_n}$, which is a Frobenius lifting on $p$-adic completion of $U^{(q_1,\cdots,q_n)}$.
As usual, we construct the desired isomorphism \eqref{de Rham-p comparison} as follows:
$$
\xymatrix{\Omega^*(H,\nabla)\ar[r]^-{\rho}&\check{\mathcal C}(\mathcal U,\Omega^*(H,\nabla))&\Omega^*(E,\theta)\ar[l]_-{\varphi_{\underline{\Omega}}}},
$$
where $\rho$ is the natural augmentation and 
$$\underline\Omega:=(\pi_1^*\Omega^1_{Y_1/W(k)},\cdots,\pi_n^*\Omega^1_{Y_n/W(k)}),\quad \pi_i:Y\to Y_i.$$
Clearly, the construction of the quasi-isomorphism $ \varphi_{\underline{\Omega}}$ reduces to define
\begin{eqnarray}\label{varphi p-connection}
   \varphi_{\underline{\Omega}}(r,s)_{ F_{\underline q_1},\cdots, F_{\underline q_r}}:\Omega^{r+s}(E,\theta)|_{U^{\underline q_0}\cap\cdots\cap U^{\underline q_r}}\to\Omega^s(H,\nabla)|_{U^{\underline q_0}\cap\cdots\cap U^{\underline q_r}}
\end{eqnarray}
for any $r,s\geq0$ and any $\underline q_0,\cdots,\underline q_r\in Q_1\times\cdots\times Q_n$. 

This morphism was essentially given in Construction \ref{construction higher homotopy}. For the reader's convenience, we recall it below with a slight modification. Without loss of generality, we may assume  that each \( Y_i/W(k) \) admits a coordinate \( t_i \).
 Write $\theta=\sum_{i=1}^n\theta_i\otimes dt_i$. Given any section
$$
\boldsymbol{e}=\sum_{1\leq i_1<\cdots<i_{r+s}\leq n}e_{i_1,\cdots,i_{r+s}}\otimes dt_{i_1}\wedge\cdots\wedge dt_{i_{r+s}},\quad e_{i_1,\cdots,i_{r+s}}\in E|_{U^{\underline q_0}\cap\cdots\cap U^{\underline q_r}}.
$$
Then \eqref{varphi p-connection} is defined by
\begin{eqnarray}\label{varphi p r s}
\boldsymbol{e}\mapsto \sum_{(\underline i,\underline S,\underline{\overline j})\in T_{\underline\Omega}(r,s)}\iota_{F^{\underline q_0}}(F^{\underline q_0*}(\theta^{\underline{\overline j}}\theta_{i_1}^{-1}\cdots\theta_{i_r}^{-1}e_{\underline S,\underline i}))(\boldsymbol{\omega}_{\underline S}h^{[\underline{\overline j}]}),
\end{eqnarray}
where
$$
\zeta_{F_{\underline q_0}}(dt_j):=\frac{dF^*_{\underline q_0}(t_j)}{p},\quad h_{i,j}:=\frac{F^*_{\underline q_i}(t_j)-F^*_{\underline q_{i-1}}(t_j)}{p}.
$$

We need to show that \eqref{varphi p r s} is independent of the choice of coordinates on \( Y_i/W(k) \). Without loss of generality, we may assume that \( n = 1 \) and \( (E, \theta) = (\mathcal{O}_X, 0) \). It then suffices to check that \( \varphi_{\underline\Omega}(1, 0) \) is independent of the choice of coordinates on \( Y/W(k) \).
Since the verification can be carried out formally, we may reduce the independence to a statement over \( \mathbb{C} \): for any holomorphic function \( f \in \mathcal{O}_{\mathbb{C}^{\mathrm{an}}} \), any holomorphic coordinate functions \( t \) and \( w \) on \( \mathbb{C}^{\mathrm{an}} \), and any points \( P_1, P_2 \in \mathbb{C}^{\mathrm{an}} \), we have the identity
\[
\sum_{j=1}^\infty (\partial_w^{j-1} f)(P_1) \cdot (w(P_2) - w(P_1))^{[j]} 
= \sum_{j=1}^\infty \left( \partial_t^j \left( f \frac{\partial w}{\partial t} \right) \right)(P_1) \cdot (t(P_2) - t(P_1))^{[j]}.
\]
By the basic theory of one-variable complex analysis, there exists a holomorphic function \( F \) locally such that \( dF = f\,dw \). Combining this with Taylor's formula, the desired identity follows. 

It remains to show that $\varphi_{\underline{\Omega}}$ is a quasi-isomorphism. To this end, we need to check that
$$
\varphi_{\underline\Omega}(0,*)_{F_{\underline q}}:\Omega^*(E,\theta)|_{U_{\underline q}}\to\Omega^*(H,\nabla)|_{U_{\underline q}}
$$
is a quasi-isomorphism. Without loss of generality, we may assume that each 
$Y_i/W(k)$ admits a coordinate $t_i$ such that
$F^*_{\underline q}(t_i)=t_i^p$ and
$$
\Omega^1_{U_{\underline q}/W(k)}\cong\bigoplus_{i=1}^n\mathcal O_{U_{\underline q}}d\log t_i.
$$ 
Under these assumptions, we have
$$
F_{\underline q*}\Omega^*(H,\nabla)|_{\hat U_{\underline q}}\cong\bigoplus_{\beta_1,\cdots,\beta_n\in\{0,\cdots,p-1\}}\mathrm{Kos}(E|_{U_{\underline q}};\beta_1+\theta_1,\cdots,\beta_n+\theta_n),
$$
where $\theta=\sum_{i=1}^n\theta_i\otimes d\log t_i$.
Clearly, the Koszul complex $\mathrm{Kos}(E|_{U_{\underline q}};\beta_1+\theta_1,\cdots,\beta_n+\theta_n)$ is acyclic for $(\beta_1,\cdots,\beta_n)\neq0$, and $\varphi_{\underline\Omega}(0,*)_{F_{\underline q}}$ induces an isomorphism from $\Omega^*(E,\theta)|_{ U_{\underline q}}$ onto its image $\mathrm{Kos}(E|_{U_{\underline q}};\theta_1,\cdots,\theta_n)$. Consequently, we conclude that $\varphi_{\underline\Omega}(0,*)_{F_{\underline q}}$ is a quasi-isomorphism. This completes the proof.
\end{proof}

We proceed to extend the above theorem to the logarithmic setting. Let $D = \sum_{i=1}^n D_i$ be a divisor on $Y$, where each $D_i$ is a relative simple normal crossing divisor on $Y_i$ over $W(k)$. One can similarly define the category $p\mathrm{MIC}_{\leq p-2}((Y,D)/W(k))$ and the inverse Cartier transform between categories
$$
C^{-1}:p\mathrm{MIC}_{\leq p-2}((Y,D)/W(k))\to\mathrm{MIC}((Y,D)/W(k)).
$$

\begin{theorem}
Let $(E,\theta)\in p\mathrm{MIC}_{\leq p-2}((Y,D)/W(k))$ and let $(H,\nabla)=C^{-1}(E,\theta)$. Then 
\begin{eqnarray}\label{log de Rham-p}
\Omega^*(H,\nabla)\cong\Omega^*(E,\theta)~\mathrm{in}~D(\mathrm{Ab}(Y)),
\end{eqnarray}
which restricts to an isomorphism of intersection subcomplexes
\begin{eqnarray}\label{int de Rham-p}
\Omega_\mathrm{int}^*(H,\nabla)\cong\Omega_\mathrm{int}^*(E,\theta)~\mathrm{in}~D(\mathrm{Ab}(Y)).
\end{eqnarray}
\end{theorem}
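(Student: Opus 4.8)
The plan is to reduce the logarithmic comparison to the non-logarithmic one already established, by the same device used throughout the paper: pass to a formal neighborhood where nice Frobenius liftings and adapted coordinates exist, decompose the Frobenius pushforward into a direct sum of (mostly acyclic) Koszul-type complexes, and verify that the explicit quasi-isomorphism $\varphi_{\underline\Omega}$ built from the $p$-connection glues globally and restricts to the intersection subcomplexes. First I would fix an open affine covering $\mathcal{U} = \{U_{(q_1,\dots,q_n)}\}$ of $Y$ coming from coverings $\mathcal{U}_i$ of the curves $Y_i$, together with Frobenius liftings $F^q_i$ on the $p$-adic completions that are \emph{logarithmic} in the sense $F^{q*}_i(\mathfrak D_i) = p\mathfrak D_i$ (such liftings exist on a curve since one can take $F^*(\tilde t_i) = \tilde t_i^p$ locally where $\tilde t_i$ cuts out $D_i$); these produce $F_{(q_1,\dots,q_n)}$ on $Y$ respecting $D$. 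Then, exactly as in the proof of the non-logarithmic theorem, I would construct
\[
\varphi_{\underline\Omega}: \Omega^*(E,\theta) \to \check{\mathcal{C}}(\mathcal{U}, \Omega^*(H,\nabla)), \qquad
\underline\Omega := (\pi_1^*\Omega^1_{Y_1/W(k)}(\log D_1),\dots,\pi_n^*\Omega^1_{Y_n/W(k)}(\log D_n)),
\]
using the same formula \eqref{varphi p r s}, now with $\zeta$ and $h$ taken with respect to $d\log t_i$ rather than $dt_i$; the augmentation $\rho: \Omega^*(H,\nabla) \to \check{\mathcal{C}}(\mathcal{U},\Omega^*(H,\nabla))$ then gives \eqref{log de Rham-p} in $D(\mathrm{Ab}(Y))$ once $\varphi_{\underline\Omega}$ is shown to be a quasi-isomorphism and independent of coordinate choices.

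The quasi-isomorphism property is checked stalk-wise on the covering as before: on each $U_{\underline q}$ with adapted coordinates $t_i$ satisfying $F^*_{\underline q}(t_i) = t_i^p$, one has
\[
F_{\underline q *}\Omega^*(H,\nabla)|_{\hat U_{\underline q}} \cong \bigoplus_{\beta_1,\dots,\beta_n \in \{0,\dots,p-1\}} \mathrm{Kos}(E|_{U_{\underline q}}; \beta_1 + \theta_1, \dots, \beta_n + \theta_n),
\]
where now $\theta = \sum_i \theta_i \otimes d\log t_i$, and the summands with $(\beta_1,\dots,\beta_n)\ne 0$ are acyclic because at least one $\beta_i + \theta_i$ is invertible (here I use $p$-nilpotence of level $\le p-2$, so that $\theta_i$ is nilpotent of the appropriate order); the $\beta=0$ summand $\mathrm{Kos}(E|_{U_{\underline q}};\theta_1,\dots,\theta_n)$ is precisely the image of $\varphi_{\underline\Omega}(0,*)_{F_{\underline q}}$, which is an isomorphism of complexes onto it. The coordinate-independence is verified formally by the same one-variable complex-analytic identity as in the non-logarithmic case, now applied to $\omega = f\, d\log w$ versus $f\,d\log w$ rewritten in the $t$-coordinate.

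For the restriction \eqref{int de Rham-p} to the intersection subcomplexes, the strategy mirrors \S2.1, Step 3 and the compatibility arguments of \S4.3: I would show that on each $U_{\underline q}$, the decomposition of $F_{\underline q*}\Omega^*_{\mathrm{int}}(H,\nabla)$ into Koszul-type pieces restricts the one above — this is the content of \cite[Lemma 3.14]{SZ2}, adapted to the $p$-connection setting — so that $\Omega^*_{\mathrm{int}}(H,\nabla)$ decomposes as an acyclic complex plus a summand isomorphic to $\Omega^*_{\mathrm{int}}(E,\theta)$ via $\varphi_{\underline\Omega}$. Concretely one checks $\varphi_{\underline\Omega}(r,s)_{F_{\underline q_0},\dots,F_{\underline q_r}}$ sends $\Omega^{r+s}_{\mathrm{int}}(E,\theta)$ into $\check{\mathcal{C}}^r(\mathcal{U}, \Omega^s_{\mathrm{int}}(H,\nabla))$, using that $\underline\Omega$ is tautologically compatible with $D = \sum D_i$ (each $\pi_i^*\Omega^1_{Y_i/W(k)}(\log D_i)$ carries exactly the residue along $D_i$), together with the base-free rewriting of $\varphi_{\underline\Omega}$ via $\mathrm{sec}_{\underline\Omega}$ as in the proof of Lemma~\ref{indep of basis}.

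\textbf{The main obstacle} I anticipate is not the gluing or the Koszul decomposition, which are routine once the $p$-adic Taylor formula \eqref{Taylor's formula} and the acyclicity input are in place, but rather the careful bookkeeping needed to confirm that $\varphi_{\underline\Omega}$ \emph{exactly} preserves the intersection subcomplexes over the overlaps — i.e. that every term $C(\underline i,\underline S,\underline{\overline j})\,j^1_{i_1}\cdots j^r_{i_r}\,\iota_{F^{\underline q_0}}(F^{\underline q_0*}(\theta^{\underline{\overline j}}\theta_{i_1}^{-1}\cdots\theta_{i_r}^{-1}e_{\underline S,\underline i}))$ lands in $\sum_{J\subset I}t_J\nabla_{I-J}H$ when $e$ does. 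This requires tracking how the weights $t_i$ and the operators $\theta_i$ interact under the inverse Cartier transform of a $p$-connection (the $p$-adic analogue of the identity $\nabla_i = t_i\Delta_i + \vartheta_i$ and Deligne's formula used in \S2.1), and I expect it to occupy the bulk of the technical work; a cleaner route, if available, would be to invoke the local comparison Theorem~\ref{main thm s2}-style statement directly in the formal neighborhood of each stratum $D_I$ and then assemble via the $5$-lemma as in \S2.1, Step 3 for $W_q\Omega^*$.
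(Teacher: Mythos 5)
Your proposal follows essentially the same route as the paper: the paper's own proof is the single sentence "Its proof is similar to that of the former theorem," i.e.\ one repeats the \v{C}ech construction with divisor-adapted Frobenius liftings, the Koszul decomposition for acyclicity, and the intersection-compatibility arguments in the style of \cite[Lemma 3.14]{SZ2} and \S2--\S4, exactly as you outline. Your fleshed-out version (including the logarithmic $\zeta$, $h$ and the check that $\varphi_{\underline\Omega}$ preserves $\Omega^*_{\mathrm{int}}$) is consistent with the paper's intended argument, so there is nothing substantively different to compare.
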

\begin{proof}
Its proof is similar to that of the former theorem.
\end{proof}

\begin{corollary}
  Let $(H,F^\bullet H,\nabla,\varphi)\in\mathrm{MF}^\nabla_{[0,p-2]}((Y,D)/W(k))$. Then the spectral sequence
  $$
  E^{i,j}_1=\mathbb H^{i+j}(Y,\mathrm{Gr}_F^i\Omega^*_\mathrm{int}(H,\nabla))\Rightarrow\mathbb H^{i+j}(Y,\Omega^*_\mathrm{int}(H,\nabla))
  $$
  degenerates at the $E_1$-page.
\end{corollary}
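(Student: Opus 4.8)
The plan is to transfer the $E_1$-degeneration to the $p$-connection complex, where it becomes essentially formal, using the comparison isomorphism \eqref{int de Rham-p} of the preceding theorem. First I would equip $\Omega^*_{\mathrm{int}}(H,\nabla)$ with the Hodge filtration induced by $F^\bullet H$, setting $F^a\Omega^q_{\mathrm{int}}(H,\nabla):=\Omega^q_{\mathrm{int}}(H,\nabla)\cap\big(F^{a-q}H\otimes\Omega^q_{Y/W(k)}(\log D)\big)$; Griffiths transversality of a Fontaine--Faltings module makes $\nabla$ a morphism of filtered complexes, the $i$-th graded piece $\mathrm{Gr}^i_F\Omega^*_{\mathrm{int}}(H,\nabla)$ is the intersection Higgs complex attached to $\mathrm{Gr}_F(H,\nabla)$, and the spectral sequence of this filtered complex is exactly the one in the statement. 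On the other side, the $p$-connection $(\tilde M,\tilde\nabla)$ carries the Rees filtration $F^\bullet\tilde M$ recovering $\mathrm{Gr}^\bullet_F H$, together with an internal weight grading; one gets an induced Hodge filtration on $\Omega^*_{\mathrm{int}}(\tilde M,\tilde\nabla)$, and the quasi-isomorphism $\varphi_{\underline\Omega}\colon\Omega^*_{\mathrm{int}}(\tilde M,\tilde\nabla)\to\check{\mathcal C}(\mathcal U,\Omega^*_{\mathrm{int}}(H,\nabla))$ produced in the proof of that theorem uses the splitting $\underline\Omega=(\pi_1^*\Omega^1_{Y_1/W(k)}(\log D_1),\dots,\pi_n^*\Omega^1_{Y_n/W(k)}(\log D_n))$ of $\Omega^1_{Y/W(k)}(\log D)$ into line bundles.

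The second step is to upgrade $\varphi_{\underline\Omega}$ (and the augmentation) to a \emph{strictly filtered} quasi-isomorphism. Because every $\pi_i^*\Omega^1_{Y_i/W(k)}(\log D_i)$ has rank $1$, the explicit formula \eqref{varphi p r s} contains only the products $h^{[\underline{\overline j}]}$ and no denominators $q!$ with $q\ge 2$ --- the same phenomenon that made Theorem \ref{de-Hig without truncation 2} work without truncation; a term-by-term inspection, modelled on the compatibility verification of Theorem \ref{compatibility MHM}, then shows that $\varphi_{\underline\Omega}$ carries $F^a$ of $\Omega^{r+s}_{\mathrm{int}}(\tilde M,\tilde\nabla)$ into $F^a$ of the target \v{C}ech complex. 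Since $\Omega^*_{\mathrm{int}}$ is not an exact functor, this has to be checked directly on the sheaf-level formula rather than abstractly in a filtered derived category; but the formula only involves $\iota$, $F^*$, the $\zeta$'s and the $h$'s, each of which shifts the Hodge degree in a controlled way, so the verification is routine once the bookkeeping is in place.

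For the third step, the Hodge spectral sequence of $\Omega^*_{\mathrm{int}}(\tilde M,\tilde\nabla)$ degenerates for an elementary reason. Reducing modulo $p$, the $p$-connection $\tilde\nabla$ becomes $\mathcal O_Y$-linear; combined with the weight grading on $\tilde M$ coming from the Rees construction, this makes $\Omega^*_{\mathrm{int}}(\tilde M,\tilde\nabla)\otimes_{W(k)}\mathbb F_p$ a direct sum of subcomplexes indexed by total weight, with the mod-$p$ Hodge filtration equal to the filtration associated to that decomposition; in particular the mod-$p$ filtered complex is split. A d\'evissage on the $p$-power torsion of $\tilde M$ (or, if one prefers, a direct length count in hypercohomology) then gives that $\mathbb H^m(Y,F^a\Omega^*_{\mathrm{int}}(\tilde M,\tilde\nabla))\to\mathbb H^m(Y,\Omega^*_{\mathrm{int}}(\tilde M,\tilde\nabla))$ is injective for all $a,m$. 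Transporting through the strictly filtered quasi-isomorphism of the second step, the analogous injectivity holds for $\Omega^*_{\mathrm{int}}(H,\nabla)$, which is precisely $E_1$-degeneration.

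The main obstacle is the strictness in the second step: one must see that the Deligne--Illusie-type homotopy $\varphi_{\underline\Omega}$ respects the two Hodge filtrations \emph{on the nose}, not merely up to homotopy. This sits at the confluence of the two delicate features of \S4 --- the compatibility of $\varphi_{\underline\Omega}$ with intersection subcomplexes (automatic here, since $\underline\Omega$ is the product splitting, hence compatible with $D$ in the sense of Theorem \ref{compatibility MHM}) and the absence of large denominators forced by the rank-one splitting of $\Omega^1_{Y/W(k)}(\log D)$ --- and, because $\Omega^*_{\mathrm{int}}$ is not exact, it cannot be reduced to abstract nonsense about filtered derived categories; it has to be read off from the defining formula \eqref{varphi p r s}. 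Once strictness is established, the degeneration of the third step is automatic.
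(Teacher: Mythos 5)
Your pivotal second step — upgrading $\varphi_{\underline\Omega}$ to a \emph{strictly filtered} quasi-isomorphism between the Hodge filtration on $\Omega^*_{\mathrm{int}}(\tilde M,\tilde\nabla)$ and the Hodge filtration on $\check{\mathcal C}(\mathcal U,\Omega^*_{\mathrm{int}}(H,\nabla))$ — is not available, and the argument breaks there. The identification $(H,\nabla)\cong C^{-1}(\tilde M,\tilde\nabla)$ encodes the Frobenius structure of the Fontaine--Faltings module: by strong divisibility the divided Frobenii satisfy $\sum_i\varphi_i(F^iM)=M$, so the image of the Rees filtration step $F^a\tilde M$ under the local maps of \eqref{varphi p r s}, which involve $F^*_{\underline q_0}$ and $\zeta_{F_{\underline q_0}}=dF^*_{\underline q_0}/p$, spreads over all of $H$ and is in no way contained in $F^aH$. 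The comparison \eqref{int de Rham-p} is the exact analogue of the Deligne--Illusie comparison between $\bigoplus_i\Omega^i[-i]$ and $F_*\Omega^*_{\mathrm{dR}}$: a semilinear quasi-isomorphism from a Rees-type object to the full de Rham complex, never a map of Hodge-filtered complexes; degeneration is extracted from it by a counting/injectivity argument, not by strictness. Indeed, if such a strict filtered quasi-isomorphism existed, $E_1$-degeneration would follow formally with no arithmetic input, whereas already Faltings's theorem genuinely uses the Frobenius. Your third step has independent problems: the grading of the Rees module does not survive the specialization $u=p$ (it reappears mod $p$ only when $pM=0$), and the d\'evissage on the $p$-power torsion of $\tilde M$ cannot be pushed through $\Omega^*_{\mathrm{int}}$, since this functor is not exact — a point the paper itself stresses — so the length count you sketch is not justified for the intersection subcomplex.

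The paper's actual route keeps the semilinear, non-filtered nature of $\varphi_{\underline\Omega}$ and exploits it. First one proves an \emph{intersection adaptedness} statement (the analogue of \cite[Theorem 4.12]{SZ2}): $\Omega^*_{\mathrm{int}}(\tilde M,\tilde\nabla)$ is the colimit of the system
$$
\cdots\rightarrow F^i\Omega_\mathrm{int}^*(H,\nabla)\xleftarrow{p}F^i\Omega_\mathrm{int}^*(H,\nabla)\rightarrow F^{i-1}\Omega_\mathrm{int}^*(H,\nabla)\xleftarrow{p}\cdots,
$$
built from the Hodge filtration on the de Rham side. Combining this with \eqref{int de Rham-p}, the triple $\bigl(R\pi_*\Omega^*_{\mathrm{int}}(H,\nabla),\,R\pi_*F^i\Omega^*_{\mathrm{int}}(H,\nabla),\,\varphi_{\underline\Omega}\bigr)$, with $\pi:Y\to\mathrm{Spec}(W(k))$, becomes a Fontaine complex over $W(k)$ in the sense of \cite[Definition 5.3.6]{Ogus}, and \cite[Corollary 5.3.7]{Ogus} then yields the injectivity of $H^m(R\pi_*F^i\Omega^*_{\mathrm{int}}(H,\nabla))\to H^m(R\pi_*\Omega^*_{\mathrm{int}}(H,\nabla))$ for all $i,m$, which is exactly $E_1$-degeneration. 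To repair your proposal, replace your second and third steps by this colimit identification together with Ogus's injectivity criterion (or an equivalent Fontaine--Laffaille-type length argument); the first step of your write-up, identifying the graded pieces and the spectral sequence, can be kept as is.
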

\begin{proof}
Let $(E,\theta)$ be the $p$-connection corresponding to $(H,F^\bullet H,\nabla,\varphi)$.
Following a similar argument as in \cite[Theorem 4.12]{SZ2}, we obtain the following \emph{intersection adaptedness}: 
$$
\Omega_\mathrm{int}^*(E,\theta)\cong\mathrm{colim}(\cdots\rightarrow F^i\Omega_\mathrm{int}^*(H,\nabla)\xleftarrow{p}F^i\Omega_\mathrm{int}^*(H,\nabla)\rightarrow F^{i-1}\Omega_\mathrm{int}^*(H,\nabla)\xleftarrow{p}\cdots).
$$
Combing this fact with \eqref{int de Rham-p}, we get a $W(k)$-semilinear isomorphism from 
$$
\mathrm{colim}(\cdots\rightarrow R\pi_*F^i\Omega_\mathrm{int}^*(H,\nabla)\xleftarrow{p}R\pi_*F^i\Omega_\mathrm{int}^*(H,\nabla)\rightarrow R\pi_*F^{i-1}\Omega_\mathrm{int}^*(H,\nabla)\xleftarrow{p}\cdots).
$$
to $R\pi_*\Omega_\mathrm{int}^*(H,\nabla)$, where $\pi:Y\to\mathrm{Spec}(W(k))$ is the structural morphism. In other words, 
the triple
$$(R\pi_*\Omega_\mathrm{int}^*(H,\nabla),R\pi_*F^i\Omega_\mathrm{int}^*(H,\nabla),\varphi_{\underline{\Omega}})$$
is a Fontaine complex over $W(k)$ in the sense of \cite[Definition 5.3.6]{Ogus}. By \cite[Corollary 5.3.7]{Ogus}, for any $i$ and any $m$, the map
$$
H^m(R\pi_*F^i\Omega_\mathrm{int}^*(H,\nabla))\to H^m(R\pi_*\Omega_\mathrm{int}^*(H,\nabla))
$$
is injective. From which this theorem follows. 
\end{proof}

\subsection{$E_1$-degeneration and vanishing theorem}

\begin{theorem}
Keep the notation and assumption as in \S3.3 and additionally assume that $f$ is proper.
\begin{itemize}
\item[$\mathrm{(i)}$\ ] If the logarithmic cotangent bundle $\Omega^1_{X/k}(\log D)$ splits into a direct sum of subbundles of rank $<p$, then for $\star=W_i\Omega^*,\Omega^*_{f^{-1}}$ we have
$$
\dim_k\mathbb H^j(X,\star(\mathcal O_X,d+df\wedge))=\dim_k\mathbb H^j(X,\star(\mathcal O_X,-df\wedge))<\infty,~\forall j.
$$

\item[$\mathrm{(ii)}$\ ] Let $(H,\nabla,\mathrm{Fil},\psi)$ be a Fontaine module on $(\tilde X,\tilde D)/W_2(k)$ and set $(E,\theta):=\mathrm{Gr}_\mathrm{Fil}(H,\nabla)$. Suppose that $f$ has only isolated singularities, then
$$
\dim_k\mathbb H^j(X,\Omega^*(H,\nabla+df\wedge))=\dim_k\mathbb H^j(X,\Omega^*(E,\theta-df\wedge))<\infty,~\forall j.
$$
\end{itemize}
\end{theorem}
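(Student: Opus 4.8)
The plan is to reduce the characteristic-zero statements to their mod $p$ counterparts via spreading out, and then to invoke the comparisons established in \S3 together with standard degeneration arguments. For part $\mathrm{(i)}$: first I would spread out the data $(X, D, f)/k$ over a finitely generated $\mathbb{Z}$-subalgebra of $k$, and for a dense set of closed points (equivalently, for all sufficiently large primes $p$) obtain a mod $p$ model $(X_p, D_p, f_p)/k_p$ admitting a $W_2(k_p)$-lifting, with $f_p$ proper. The hypothesis that $\Omega^1_{X/k}(\log D)$ splits as a direct sum of subbundles of rank $<p$ spreads out as well (for $p$ large, the generic splitting persists). Then Theorem~\ref{two sided truncation} — more precisely the $\infty$-homotopy construction of \S4.3 applied with the splitting $\underline{\Omega}$ and the trivial Higgs bundle $(E,\theta)=(\mathcal{O}_{X_p},0)$, combined with the exponential twisting conventions recorded in \S2.7 — yields a quasi-isomorphism
\[
F_*(\star(\mathcal{O}_{X_p}, d + df_p\wedge)) \;\cong\; \star(\mathcal{O}_{X_p}, -df_p'\wedge) \quad \text{in } D(X_p')
\]
for $\star = W_i\Omega^*$ and $\star = \Omega^*_{f_p^{-1}}$, without truncation (here one uses that the relevant ranks are all $<p$, so $q=\infty$). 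Taking hypercohomology over the proper $X_p$ (properness of $f_p$ guaranteeing finiteness of the $\mathbb{H}^j$), the projection formula for the finite morphism $F$ gives $\dim_{k_p}\mathbb{H}^j(X_p, \star(\mathcal{O}_{X_p}, d+df_p\wedge)) = \dim_{k_p}\mathbb{H}^j(X_p, \star(\mathcal{O}_{X_p}, -df_p'\wedge))$. Finally, since these dimensions are constant over a dense open of the spreading-out base (by generic flatness and semicontinuity applied to both the de Rham and Higgs sides), and since the right-hand side base-changes to the characteristic-zero Higgs hypercohomology, the equality lifts to $\mathbb{C}$ (or to $k$), proving $\mathrm{(i)}$.

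For part $\mathrm{(ii)}$: the Fontaine module $(H, \nabla, \mathrm{Fil}, \psi)$ spreads out, and for $p$ large its mod $p$ reduction $(H_p, \nabla_p)$ is, by Faltings \cite{Fa} or Ogus--Vologodsky \cite{OV}, the inverse Cartier transform of $(E_p, \theta_p) = \mathrm{Gr}_{\mathrm{Fil}}(H_p, \nabla_p)$, which is nilpotent of level $\leq \mathrm{rank}(H_p) - 1$; for $p \gg 0$ this level is $< p - \dim X$, so all the machinery of \S3 applies. Here, however, I do \emph{not} assume a splitting of the cotangent bundle; instead the hypothesis "$f$ has only isolated singularities" is exactly what feeds into Corollary~\ref{corollary 2} (via Theorem~\ref{main thm s2}). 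Concretely, $Z_p$, the subscheme cut out by the non-unit components of $f_p$ in the local expansion \eqref{decom f}, meets $X_p - D_p$ in a finite set when $f_p$ has isolated singularities, and off $D_p$ one has $K^*_{\mathrm{dR}} = \Omega^*(H_p, \nabla_p)$; the support of $\mathcal{H}^j(\Omega^\bullet(E_p, \theta_p - df_p'\wedge))$ on $X_p - D_p$ is contained in the critical locus of $f_p$, hence finite, so Corollary~\ref{corollary 2} upgrades the local Cartier quasi-isomorphism to a genuine isomorphism
\[
F_*(\Omega^\bullet(H_p, \nabla_p + df_p\wedge)) \cong \Omega^\bullet(E_p, \theta_p - df_p'\wedge) \quad \text{in } D(X_p).
\]
Wait — one must be careful: Corollary~\ref{corollary 2} as stated requires the full support (including contributions along $D_p$) to be finite, or else $D_p = \emptyset$. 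So in this part I would either restrict to the case $D = \emptyset$ (which is consistent with "$f$ proper with isolated singularities" being the natural hypothesis and matches the setup of the Theorem in \S3.3 where $K^*_{\mathrm{dR}}$ reduces to $\Omega^*$ when $D$ is trivial), or argue that the twisted de Rham complex $\Omega^*(H_p, \nabla_p+df_p\wedge)$ is, in the logarithmic case, still supported on a finite set because the intersection/Kontsevich truncation forces acyclicity away from the singular locus — this is the delicate point and I would spell it out using the local structure from \S3.1. Granting the global isomorphism, taking hypercohomology over the proper $X_p$ and applying the projection formula gives the dimension equality mod $p$, and spreading out / semicontinuity again transports it to characteristic zero.

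\textbf{Main obstacle.} The principal difficulty is the passage from the \emph{local} (formal-neighborhood) Cartier quasi-isomorphism of Theorem~\ref{main thm s2} to a \emph{global} isomorphism in the derived category in part $\mathrm{(ii)}$: one needs the cohomology sheaves of the twisted complexes to be supported on a finite set so that Corollary~\ref{corollary 2}'s patching argument applies. For $D = \emptyset$ this is exactly "isolated singularities of $f$", but in the presence of a logarithmic pole $D$ one must verify that the extra hypotheses (semistability of $f$ along $D$, the precise shape of $K^*_{\mathrm{dR}}$ near each fiber) ensure no contribution to the support from $D$ itself — this requires a local computation of $\mathcal{H}^*(\Omega^*_{\mathrm{int}}(\hat E, \hat\theta \pm df\wedge))$ near $D$, of the kind carried out in \S3.2 when proving acyclicity of the complexes $K_v^*$, $K^*_{g,v}$, $K^*_{\mathrm{int},v}$ for $v \neq 0$. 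A secondary (but purely bookkeeping) obstacle is ensuring the spreading-out constants and the lower bound on $p$ are uniform across the two degeneration statements and compatible with the nilpotency-level constraint $\ell < p - \dim X$; this is routine but must be stated carefully. Everything else — the projection formula, finiteness from properness, semicontinuity of cohomology dimensions, and the base change of the Higgs side to characteristic zero — is standard.
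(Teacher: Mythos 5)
The central problem is that you have misread the ambient setting: the statement is already a positive--characteristic statement. In the notation of \S3 the field $k$ is a fixed perfect field of characteristic $p$, the hypothesis that $\Omega^1_{X/k}(\log D)$ splits into subbundles of rank $<p$ and the Fontaine module on $(\tilde X,\tilde D)/W_2(k)$ in (ii) only make sense for that fixed $p$, and the conclusion is an equality of $\dim_k\mathbb H^j(X,\cdot)$ over that same $k$. Your entire spreading-out/semicontinuity wrapper --- choosing a finitely generated $\mathbb Z$-algebra, passing to ``sufficiently large $p$'', imposing level $<p-\dim X$, and ``lifting the equality to $\mathbb C$ (or to $k$)'' at the end --- therefore addresses a different statement (namely the characteristic-zero theorems at the end of \S5.3, which the paper deduces \emph{from} this theorem, not the other way around), and it cannot be executed here: there is no characteristic-zero model to spread out, and ``for $p\gg0$'' is meaningless when $p$ is part of the data. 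Stripped of that wrapper, your key inputs are the right ones and are what the paper intends: for (i), the rank $<p$ splitting yields, via Theorem \ref{compatibility MHM} with $q=\infty$ together with the exponential-twisting identifications of \S2.7 and \eqref{f-exponential} --- not Theorem \ref{two sided truncation}, which only concerns two-term truncations --- an untruncated isomorphism $F_*(K^\bullet_{\mathrm{dR}},d+df\wedge)\cong(K^\bullet_{\mathrm{Hig}},-df'\wedge)$ in $D(X')$; for (ii), the isolated singularities of $f$ make the support of the cohomology sheaves of the twisted Higgs complex finite, so Corollary \ref{corollary 2} (resting on Theorem \ref{main thm s2}) gives $F_*\Omega^\bullet(H,\nabla+df\wedge)\cong\Omega^\bullet(E,\theta-df'\wedge)$ in $D(X')$, with no constraint on $p$ beyond the level $\leq p-2$ automatic for a Fontaine module. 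Taking hypercohomology then gives the dimension equalities directly over $k$.

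Two further points need repair even in the corrected framework. First, finiteness: you say ``taking hypercohomology over the proper $X_p$'', but $X$ is not proper --- only $f:X\to\mathbb A^1_k$ is --- so $\dim_k\mathbb H^j<\infty$ is not automatic; one must use that the cohomology sheaves of the twisted complexes are coherent and supported on a closed subset proper over $k$ (the degeneration locus of $df$, which by properness of $f$ lies over finitely many points of $\mathbb A^1$), or equivalently push forward along $f$ and argue on $\mathbb A^1$. Second, the issue you flag about support along $D$ in part (ii) is genuine --- with logarithmic poles, $df$ can vanish as a section of $\Omega^1_{X/k}(\log D)$ along $D$ even where $f$ is smooth --- and your proposal leaves it unresolved; a complete argument must either take $D=\emptyset$, interpret ``isolated singularities'' relative to the log structure, or carry out the local acyclicity computation near $D$ in the style of \S3.2 before Corollary \ref{corollary 2} can be applied.
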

In the next two theorems, let $\tilde f:(\tilde X,\tilde D)\to(\mathbb P^1_{W_2(k)},\tilde E)$ be a projective semistable family over $W_2(k)$ along an effective divisor $\tilde E$ containing $\infty$ on $\mathbb P^1_{W_2(k)}$, and let $f:(X,D)\to(\mathbb P^1_k,E)$ be its mod $p$ reduction. Let $(H,\nabla,\mathrm{Fil},\psi)$ be a Fontaine module on $(\tilde X,\tilde D)/W_2(k)$ and set $(E,\theta):=\mathrm{Gr}_\mathrm{Fil}(H,\nabla)$.
\begin{theorem}
    If the logarithmic cotangent bundle $\Omega^1_{X/k}(\log D)$ splits into a direct sum of subbundles of rank $\leq p-\mathrm{rank}(H)$, then for $\star=W_i\Omega^*,\Omega^*_\mathrm{int},\Omega^*_f$ we have
$$
\dim_k\mathbb H^j(X,\star(H,\nabla))=\dim_k\mathbb H^j(X,\star(E,\theta)).
$$
In other words, the Hodge to de Rham spectral sequence 
$$
E_1^{r,s}:=\mathbb H^{r+s}(X,\mathrm{Gr}_\mathrm{Fil}^r\star(H,\nabla))\Rightarrow\mathbb H^{r+s}(X,\star(H,\nabla))
$$
degenerates at $E_1$.
\end{theorem}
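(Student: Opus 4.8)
The plan is to deduce both assertions from the non-truncated de Rham--Higgs comparison of Theorem~\ref{compatibility MHM}, followed by a routine dimension count. Since $(H,\nabla,\mathrm{Fil},\varphi)$ is a Fontaine module of Hodge--Tate weight $\leq\ell$ on $(\tilde X,\tilde D)/W_2(k)$, Faltings~\cite{Fa} (equivalently Ogus--Vologodsky~\cite[Theorem~4.17]{OV}) shows that $(H,\nabla)=C^{-1}_{(\tilde X,\tilde D)/W_2(k)}(E,\theta)$ with $(E,\theta)=\mathrm{Gr}_{\mathrm{Fil}}(H,\nabla)$, and the Hodge filtration has length $\leq\ell+1$, so $\theta$ is nilpotent of level $\leq\ell\leq\mathrm{rank}(H)-1$. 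Hence the hypothesis that $\Omega^1_{X/k}(\log D)$ splits into subbundles of rank $\leq p-\mathrm{rank}(H)$ forces every summand to have rank $<p-\ell$: this is precisely the regime $q=\infty$ in which Theorem~\ref{compatibility MHM} provides a comparison \emph{without} truncation.

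First I would choose an ordered splitting $\underline\Omega=(\Omega_1,\dots,\Omega_\beta)$ of $\Omega^1_{X/k}(\log D)$ realizing the hypothesis, and arrange it to be compatible with the geometric datum required by the type of $\star$: with the divisor $D$ when $\star=W_i\Omega^*$ or $\Omega^*_{\mathrm{int}}$ (types $(\mathrm{I},\ell)$, $(\mathrm{II},\ell)$), and with the semistable family $f$ when $\star=\Omega^*_f$ (type $(\mathrm{III},\ell)$), in the sense of the definition preceding Theorem~\ref{compatibility MHM}. With such an $\underline\Omega$, Theorem~\ref{compatibility MHM} yields an isomorphism $F_*\star(H,\nabla)\cong\star(E,\theta)$ in $D(X')$; here all the relevant complexes form Hodge pairs for $(H,\nabla)$ and $(E,\theta)$ precisely because $(H,\nabla)=C^{-1}(E,\theta)$, and for type $(\mathrm{III},\ell)$ the $f$-transform occurring in Theorem~\ref{compatibility MHM} reduces to the identity because $df=0$ in the untwisted setting.

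Next I would apply hypercohomology. As $F$ is finite, $F_*$ is exact and $\mathbb H^j(X',F_*\star(H,\nabla))=\mathbb H^j(X,\star(H,\nabla))$; since $X$ is proper over $k$ (because $\tilde f$ is projective and $\mathbb P^1_{W_2(k)}$ is proper over $W_2(k)$), all these groups are finite-dimensional, and the Frobenius twist $X'\to X$ is an abstract isomorphism of $k$-schemes, so $\dim_k\mathbb H^j(X',\star(E,\theta))=\dim_k\mathbb H^j(X,\star(E,\theta))$. Combining these with the isomorphism of the previous paragraph gives $\dim_k\mathbb H^j(X,\star(H,\nabla))=\dim_k\mathbb H^j(X,\star(E,\theta))$ for all $j$. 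For the $E_1$-degeneration, observe that $\mathrm{Fil}$ induces a filtration on the complex $\star(H,\nabla)$ whose associated graded complex is $\star(E,\theta)$, so $\sum_{r+s=j}\dim_kE_1^{r,s}=\dim_k\mathbb H^j(X,\star(E,\theta))$, while $\sum_{r+s=j}\dim_kE_\infty^{r,s}=\dim_k\mathbb H^j(X,\star(H,\nabla))$. Since $\dim_kE_\infty^{r,s}\leq\dim_kE_1^{r,s}$ for every $(r,s)$, the equality of the two totals forces $E_\infty^{r,s}=E_1^{r,s}$ termwise, i.e.\ degeneration at $E_1$.

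The main obstacle is the compatibility step: Theorem~\ref{compatibility MHM} does not apply to an arbitrary splitting, only to one compatible with $D$ (types I, II) or with $f$ (type III), so one must show that the mere existence of a splitting into subbundles of rank $\leq p-\mathrm{rank}(H)$ already yields a \emph{compatible} such splitting --- equivalently, that the residue index sets $I_1,\dots,I_\beta$ can be made pairwise disjoint, resp.\ that $df$ can be absorbed into a single summand. Zariski-locally this is immediate from adapted coordinate systems, and refining a splitting only decreases the ranks of its summands, so the point is to globalize this choice; I expect this, together with the bookkeeping of the Frobenius twist $X'\to X$ when identifying the complexes, to be the only non-formal part of the argument.
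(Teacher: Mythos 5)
Your overall route is the one the paper intends: feed the splitting hypothesis into the non-truncated comparison of Theorem~\ref{compatibility MHM} (via $\ell\leq\mathrm{rank}(H)-1$, so each summand has rank $<p-\ell$ and one is in the $q=\infty$ regime), then run the standard Deligne--Illusie dimension count on the proper $X$ to get both the equality of hypercohomology dimensions and $E_1$-degeneration. The numerics, the use of finiteness of $F$, and the count $\sum\dim E_1^{r,s}=\sum\dim E_\infty^{r,s}\Rightarrow$ degeneration are all fine.

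The genuine gap is exactly the step you defer at the end: Theorem~\ref{compatibility MHM} does not apply to an arbitrary splitting, only to an \emph{ordered splitting compatible with $D$} (for $W_i\Omega^*$, $\Omega^*_{\mathrm{int}}$) or \emph{compatible with the semistable function} (for $\Omega^*_f$), i.e.\ the residue index sets $I_1,\dots,I_\beta$ must be pairwise disjoint, resp.\ $df$ must land in a single summand. The hypothesis of the theorem only provides \emph{some} splitting into subbundles of rank $\leq p-\mathrm{rank}(H)$, and your proposed remedy --- choose adapted splittings Zariski-locally and note that refining only decreases ranks --- does not close this: refining a summand can spread the residue of a single component of $D$ (or the form $df$) over several new summands, so refinement tends to destroy rather than create compatibility, and there is no evident global mechanism for regrouping a given splitting into a compatible one. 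Without this, you cannot invoke the restriction of $\mathrm{Ho}_{\underline\Omega}$ to $K^*_{\mathrm{dR}}$, $K^*_{\mathrm{Hig}}$ (the counterexample-free part of your argument only yields the comparison for the full complexes $\Omega^*(H,\nabla)$, $\Omega^*(E,\theta)$ via Theorem~\ref{de-Hig without truncation 2}, which needs no compatibility). So either the compatibility has to be added as a hypothesis (which is how the paper's machinery is actually set up) or it must be proved in the specific geometric situation $D=f^*\Delta$; your proposal does neither. A smaller point: in the degeneration step you assert that $\mathrm{Gr}_{\mathrm{Fil}}\star(H,\nabla)\cong\star(E,\theta)$; this is immediate for $W_i\Omega^*$ and $\Omega^*_f$ (the subcomplexes are $H\otimes(\text{subsheaves of }\Omega^\bullet_{X/k}(\log D))$), but for $\Omega^*_{\mathrm{int}}$ the terms involve $\nabla$-images and the identification is the non-trivial ``intersection adaptedness'' statement (cf.\ \cite[Theorem 4.12]{SZ2}, which the paper invokes in the analogous argument of \S5.2), so it should at least be cited rather than treated as an observation.
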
 
\begin{theorem}
If the logarithmic cotangent bundle $\Omega^1_{X/k}(\log D)$ splits into a direct sum of subbundles of rank $\leq p-\mathrm{rank}(H)$, then for $\star=W_i\Omega^*,\Omega^*_\mathrm{int},\Omega^*_f$  and any ample line bundle $L$ on $X$, we have
$$
\mathbb H^j(X,\star(E,\theta)\otimes L)=0,~j>\dim X.
$$
\end{theorem}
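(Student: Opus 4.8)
The plan is to deduce this Arapura-type vanishing from the non-truncated de Rham--Higgs comparison already available in the paper, via a twisting argument in the spirit of Esnault--Viehweg, Ogus--Vologodsky \S4.7, and Sheng and the author \S5.

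First I would record the reduction to the comparison. Since $(H,\nabla,\mathrm{Fil},\psi)$ is a Fontaine module of Hodge--Tate weight $\leq p-2$, the associated graded $(E,\theta)=\mathrm{Gr}_\mathrm{Fil}(H,\nabla)$ is a nilpotent Higgs bundle whose level $\ell$ satisfies $\ell\leq\mathrm{rank}(H)-1$ (at most $\mathrm{rank}(H)$ graded pieces can support a nonzero Higgs field). Hence the hypothesis that $\Omega^1_{X/k}(\log D)$ splits into subbundles of rank $\leq p-\mathrm{rank}(H)$ forces each such rank to be $<p-\ell$. Choosing (or refining to) a splitting compatible with $D$ when $\star=W_i\Omega^*,\Omega^*_\mathrm{int}$ and with the semistable family $f$ when $\star=\Omega^*_f$, Theorem~\ref{compatibility MHM} with $q=\infty$ gives an isomorphism $F_*\star(H,\nabla)\cong\star(E,\theta)$ in $D(X')$, and moreover one compatible with $\mathrm{Fil}$ so that, as in the previous subsection, $R\pi_*\star(H,\nabla)$ underlies a Fontaine complex over $W_2(k)$. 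Alternatively one may first reduce to the case $\star=\Omega^*$ via the residue short exact sequences for $W_i\Omega^*$ and the exact sequence $0\to\Omega^*_f\to\Omega^*_{X/k}(\log D)\to\overline\Omega^*_{X/k}(\log D)\to0$, descend down the boundary strata and the singular fibres by Noetherian induction on $\dim X$, and invoke Theorem~\ref{de-Hig without truncation 2} together with the arguments of Schepler and \S2.

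Next comes the twisting step. Identify $X\cong X'$ through the perfection of $k$ and write $L'$ for the Frobenius twist of $L$, an ample line bundle on $X'$. Since $F$ is affine and $F^*L'^{\otimes m}\cong L^{\otimes pm}$ (equipped with its canonical connection), the projection formula gives
\[
\mathbb H^j\bigl(X',\star(E,\theta)\otimes L'^{\otimes m}\bigr)\;\cong\;\mathbb H^j\bigl(X,\star(H,\nabla)\otimes L^{\otimes pm}\bigr),\qquad m\geq1.
\]
For $m\gg0$, Serre vanishing applied to the finitely many coherent sheaves $\star^i(H,\nabla)$, $0\leq i\leq n$, annihilates all higher cohomology, so the right-hand side is computed by the complex of global sections $\Gamma\bigl(X,\star^\bullet(H,\nabla)\otimes L^{\otimes pm}\bigr)$, which is concentrated in degrees $[0,n]$. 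Hence $\mathbb H^j\bigl(X',\star(E,\theta)\otimes L'^{\otimes m}\bigr)=0$ for all $j>n$ and all $m\gg0$.

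Finally I would descend from large $m$ down to $m=1$, which is where the real work lies. The filtration $\mathrm{Fil}$ induces a spectral sequence $E_1^{a,b}=\mathbb H^{a+b}\bigl(X,\mathrm{Gr}^a_\mathrm{Fil}\star(H,\nabla)\otimes L^{\otimes pm}\bigr)\Rightarrow\mathbb H^{a+b}\bigl(X,\star(H,\nabla)\otimes L^{\otimes pm}\bigr)$, and since $\mathrm{Gr}_\mathrm{Fil}\star(H,\nabla)\cong F^*\star(E,\theta)$ as complexes with $\mathcal O_X$-linear differential, the $E_1$-terms are again controlled by $\star(E,\theta)\otimes L'^{\otimes m}$ after unwinding $F_*F^*(-)\cong(-)\otimes F_*\mathcal O_X$ and the Deligne--Illusie-type filtration of $F_*\mathcal O_X$ by line bundles; feeding this together with Serre duality (trading vanishing of $\mathbb H^{>n}$ twisted by $L'^{\otimes m}$ against vanishing of $\mathbb H^{<n}$ of the Serre-dual complex twisted by $L'^{\otimes -m}$, and using Serre vanishing on the ample $L'^{\otimes m}$ for $m\gg0$ to kill the surviving top-degree dual terms) propagates the vanishing from $m\gg0$ to $m=1$. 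The main obstacle is exactly this last step --- establishing the ``$E_1$-degeneration with arbitrary line-bundle coefficients'' and keeping track of $F_*\mathcal O_X$ for the subcomplexes $\star$ rather than only for $\Omega^*$; I expect the cleanest route is to package the right-hand side of the twisting identity as a Fontaine complex over $W_2(k)$ (as in the preceding subsection), quote Arapura's vanishing machine in that form, and check that it is compatible with the residue and Kontsevich exact sequences so that the $W_i\Omega^*$, $\Omega^*_\mathrm{int}$ and $\Omega^*_f$ cases follow from the case of $\Omega^*$ by the same induction on $\dim X$ used in the reduction step.
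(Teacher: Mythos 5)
Your first two steps are the right ones and match the intended (standard Deligne--Illusie style) argument: the level of $(E,\theta)=\mathrm{Gr}_{\mathrm{Fil}}(H,\nabla)$ is at most $\mathrm{rank}(H)-1$, so the splitting hypothesis puts you in the range $q=\infty$ of Theorem~\ref{compatibility MHM}, and the projection formula together with $F^*L'\cong L^{\otimes p}$ gives $\mathbb H^j(X',\star(E,\theta)'\otimes L')\cong\mathbb H^j(X,\star(H,\nabla)\otimes L^{\otimes p})$, while Serre vanishing handles large twists. The gap is in your last step. No descent from $m\gg 0$ to $m=1$ is needed, and the machinery you propose for it does not work: $E_1$-degeneration of the Hodge spectral sequence with an ample twist is not available (and is false in general), $F_*\mathcal O_X$ carries no filtration by line bundles on a higher-dimensional $X$, and semicontinuity/Serre duality do not convert vanishing at high powers of $L'$ into vanishing at $L'$ itself. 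Moreover the identification you invoke is off: $\mathrm{Gr}_{\mathrm{Fil}}\star(H,\nabla)\cong\star(E,\theta)$, with no Frobenius pullback; the $F^*$ belongs to the $p$-curvature isomorphism, not to the associated graded of $\mathrm{Fil}$.

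The correct closing move runs in the increasing direction and uses exactly the identification above. From the comparison and the filtration spectral sequence one gets, for every $j$,
\[
\dim_k\mathbb H^j\bigl(X,\star(E,\theta)\otimes L\bigr)
=\dim_k\mathbb H^j\bigl(X,\star(H,\nabla)\otimes L^{\otimes p}\bigr)
\le\sum_a\dim_k\mathbb H^j\bigl(X,\mathrm{Gr}^a_{\mathrm{Fil}}\star(H,\nabla)\otimes L^{\otimes p}\bigr)
=\dim_k\mathbb H^j\bigl(X,\star(E,\theta)\otimes L^{\otimes p}\bigr),
\]
(up to the harmless base change along the Frobenius of $k$). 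Iterating gives $\dim_k\mathbb H^j(X,\star(E,\theta)\otimes L)\le\dim_k\mathbb H^j(X,\star(E,\theta)\otimes L^{\otimes p^N})$ for all $N$, and for $N\gg 0$ Serre vanishing applied to the finitely many terms $\star^i(E,\theta)\otimes L^{\otimes p^N}$, $0\le i\le\dim X$, shows the right-hand side vanishes for $j>\dim X$. This closes the proof with no descent, no Serre duality, and no degeneration statement with twisted coefficients; it is the argument the paper alludes to via \cite{DI}, \cite[\S4.7]{OV} and \cite[\S5]{SZ2}. (One further point worth making explicit, which you only gesture at: Theorem~\ref{compatibility MHM} requires the ordered splitting to be compatible with $D$, respectively with the semistable family, so your ``choosing or refining to a compatible splitting'' needs to be justified or added as a hypothesis.)
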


Finally, we turn to complex Hodge theory. Let $X$ be a smooth quasi-projective algebraic variety over $\mathbb C$ endowed with a simple normal crossing divisor $D$. Let $(H,\nabla,\mathrm{Fil})$ be a Gauss-Manin system associated to a smooth projective family $g:Y\to X$, namely there is some $m\geq0$ such that
$$
H=R^mg_*\Omega^*_{Y/X},~\nabla=\nabla^{GM},~\mathrm{Fil}^p=\mathrm{Im}(Rg_*\Omega_{Y/X}^{\geq p}\to Rg_*\Omega_{Y/X}^*).
$$
Set $(E,\theta):=\mathrm{Gr}_\mathrm{Fil}(H,\nabla)$.

By combing spreading-out technique and the above theorems, we can give the following vanishing theorems an alternative mod $p$ proof, respectively.

\begin{theorem}
 Assume that $D=\emptyset$. Let $f:X\to\mathbb A^1_\mathbb{C}$ be a projective morphism such that it has only isolated singularities. Then
 $$
 \dim_\mathbb{C}\mathbb H^i(X-D,\Omega^*(H,\nabla+df\wedge))=\dim_\mathbb{C}\mathbb H^i(X-D,\Omega^*(E,\theta-df\wedge)),~\forall i.
 $$
\end{theorem}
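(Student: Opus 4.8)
The plan is to deduce this statement from the positive-characteristic results already established, via the standard spreading-out machinery, exactly as indicated in the penultimate sentence of the excerpt. First I would choose a finitely generated $\mathbb{Z}$-subalgebra $R \subset \mathbb{C}$ over which all the data are defined: the smooth affine variety $X$, the projective morphism $f\colon X \to \mathbb{A}^1$, the Gauss--Manin system $(H,\nabla,\mathrm{Fil})$ arising from $g\colon Y \to X$, and the Hodge filtration. After possibly inverting finitely many elements of $R$, one arranges that $\mathcal{X}/R$ is smooth affine, $\mathcal{Y}/R$ smooth projective over $\mathcal{X}$, the relative Hodge--de Rham spectral sequence degenerates and $H_R = R^m g_* \Omega^*_{\mathcal{Y}/\mathcal{X}}$ is locally free with the $\mathrm{Fil}$-steps locally free direct summands, and $f_R\colon \mathcal{X}_R \to \mathbb{A}^1_R$ is projective with isolated singularities fiberwise. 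The hypercohomology groups $\mathbb{H}^i(\mathcal{X}, \Omega^*(H,\nabla+df\wedge))$ and $\mathbb{H}^i(\mathcal{X}, \Omega^*(E,\theta-df\wedge))$ are then finitely generated $R$-modules whose formation commutes with base change after further localization (by generic flatness and the theorem on cohomology and base change), so it suffices to compare their ranks, i.e.\ to compare the $\kappa$-dimensions of these hypercohomologies for $\kappa = R/\mathfrak{m}$ a residue field of sufficiently large characteristic $p$.

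Next, for such a closed point, we must verify that the reduction $(H_\kappa,\nabla_\kappa)$ is the inverse Cartier transform of $(E_\kappa,\theta_\kappa)$ with respect to a $W_2(\kappa)$-lifting. This is where I would invoke the geometric origin: by Faltings \cite[Theorem 4.1$^*$]{Fa} or Ogus--Vologodsky \cite[Theorem 4.17]{OV}, the Gauss--Manin connection on a smooth projective family over $\mathcal{X}_\kappa$ (which lifts to $W_2$ because $R/\mathfrak{m}^2$ or a suitable finite flat extension gives the lift) has inverse Cartier transform equal to its associated graded Higgs bundle; moreover $(E_\kappa,\theta_\kappa)$ is nilpotent of level $\leq \dim Y_{x}/X = m$, and after shrinking $R$ we may assume $p > m + \dim X$ so the nilpotency bound $\ell < p$ needed for $C^{-1}$ is satisfied. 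Then $\mathrm{Supp}$ of the Higgs complex $(K^\bullet_{\mathrm{Hig}}, \theta_\kappa - df_\kappa'\wedge)$ with $K^* = \Omega^*$: here I would note that since $f$ has only isolated critical points on $X - D = X$, the twisted Higgs complex $(\Omega^*(E_\kappa,\theta_\kappa), \theta_\kappa - df_\kappa'\wedge)$ is exact away from the (finite) critical locus of $f_\kappa$ — one checks this on the formal completion at a smooth point, where $df \neq 0$ contributes an invertible term to a Koszul-type differential. Hence the support is a finite set of closed points, and Corollary \ref{corollary 2} (the second statement) applies to give
$$
F_*(\Omega^\bullet(H_\kappa,\nabla_\kappa), \nabla_\kappa + df_\kappa\wedge) \cong (\Omega^\bullet(E_\kappa,\theta_\kappa), \theta_\kappa - df_\kappa'\wedge) \quad \text{in } D(X_\kappa').
$$

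From this derived-category isomorphism, taking hypercohomology over $X_\kappa$ (using that $F$ is affine, hence $F_*$ exact and cohomology-preserving, and that $X_\kappa' \cong X_\kappa$ as abstract schemes) yields $\mathbb{H}^i(X_\kappa, \Omega^*(H_\kappa,\nabla_\kappa + df_\kappa\wedge)) \cong \mathbb{H}^i(X_\kappa, \Omega^*(E_\kappa,\theta_\kappa - df_\kappa'\wedge))$ as $\kappa$-vector spaces, in particular equal dimensions; since $df' = df$ on cohomology dimensions after the Frobenius twist (the twist $X' \to X$ only relabels the base field via $\sigma$, not the dimension count), I get the desired equality of $\kappa$-dimensions for $p \gg 0$. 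Combining with the base-change-invariance of the ranks over $R$ and the fact that $\mathrm{rank}_R$ of a finitely generated module equals $\dim_\kappa$ of its fiber at any point in a dense open, the equality $\dim_\mathbb{C} \mathbb{H}^i(X, \Omega^*(H,\nabla+df\wedge)) = \dim_\mathbb{C} \mathbb{H}^i(X, \Omega^*(E,\theta-df\wedge))$ follows, and finiteness over $\mathbb{C}$ is automatic since $f$ is projective (so the complexes have coherent hypercohomology on the base $\mathbb{A}^1$, which with the exponential twist forces finite-dimensionality — alternatively transport finiteness from the $R$-module statement). The main obstacle I anticipate is the bookkeeping in the spreading-out step: ensuring simultaneously that (a) the critical locus of $f$ stays finite and disjoint from bad fibers after reduction mod $p$, (b) the Fontaine/Gauss--Manin structure and its $W_2$-lift behave well, and (c) $p$ exceeds both $\dim X$ and the nilpotency level — each of these is a nonempty-open condition on $\mathrm{Spec}\,R$, so their intersection is still dense, but verifying that the inverse Cartier transform identification of \cite{Fa, OV} is genuinely compatible with the chosen $W_2$-lifting (rather than some abstract one) requires care; this is exactly the point where one uses that $(H,\nabla)$ "comes from geometry."
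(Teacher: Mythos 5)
Your proposal follows essentially the same route the paper intends: spread out over a finitely generated subring, reduce modulo a large prime, use the geometric origin of the Gauss--Manin system (Faltings/Ogus--Vologodsky) to identify $(H_\kappa,\nabla_\kappa)$ with the inverse Cartier transform of $(E_\kappa,\theta_\kappa)$, and then invoke the isolated-support de Rham--Higgs comparison (Corollary 3.2, equivalently the characteristic-$p$ theorem of \S5.3(ii)) before transporting the dimension equality back to characteristic $0$. The paper itself records no more detail than "combine the spreading-out technique and the above theorems," so your write-up is, if anything, more explicit about the bookkeeping (flatness, $W_2$-lifts, finiteness of the critical locus, size of $p$) than the paper's own argument.
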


\begin{theorem}
Suppose that there is a semistable family $f:(X,D)\to(\mathbb P^1_{\mathbb C},E)$ along a divisor $E$ containing $\infty$ on $\mathbb P^1_{\mathbb C}$.
We regard $(E,\theta)$ as a Higgs bundle on $(X,D)/\mathbb C$. Then for $\star=W_i\Omega^*,\Omega^*_\mathrm{int},\Omega^*_f$ and any ample line bundle $L$ on $X$, we have
 $$
 \mathbb H^i(X,\star(E,\theta)\otimes L)=0,~i>\dim X.
 $$

\end{theorem}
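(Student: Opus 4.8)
The plan is to spread the whole picture out over a finitely generated $\mathbb{Z}$-algebra, reduce modulo a large prime, invoke the positive-characteristic vanishing theorem established above, and then descend back to $\mathbb{C}$ by an upper-semicontinuity argument. First I would choose a finitely generated $\mathbb{Z}$-subalgebra $A\subset\mathbb{C}$, which after inverting finitely many integers we may take to be a smooth domain, over which all the data are defined: a smooth projective $\mathfrak{X}/A$ with relative simple normal crossing divisor $\mathfrak{D}$, a projective semistable family $\mathfrak{f}\colon(\mathfrak{X},\mathfrak{D})\to(\mathbb{P}^1_A,\mathfrak{E})$, a smooth projective $\mathfrak{g}\colon\mathfrak{Y}\to\mathfrak{X}$, an ample line bundle $\mathcal{L}$ on $\mathfrak{X}/A$, and the associated Gauss--Manin data $(\mathfrak{H},\nabla,\mathrm{Fil})=(R^m\mathfrak{g}_*\Omega^*_{\mathfrak{Y}/\mathfrak{X}},\nabla^{GM},\mathrm{Fil})$ with $(\mathfrak{E},\theta)=\mathrm{Gr}_{\mathrm{Fil}}(\mathfrak{H},\nabla)$, all recovering the original objects after $\otimes_A\mathbb{C}$. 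Shrinking $\mathrm{Spec}(A)$, I would arrange that the family stays semistable over every point (so that $W_i\Omega^*$, $\Omega^*_{\mathrm{int}}$, $\Omega^*_f$ make sense fiberwise), and that $\star(\mathfrak{E},\theta)\otimes\mathcal{L}$ is a complex of $A$-flat coherent sheaves with $A$-linear differentials.

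Next I would reduce modulo $p$. Fix a prime $p$ with $p>\dim X+\mathrm{rank}(\mathfrak{H})$, large enough that the Hodge--Tate weights of $(\mathfrak{H},\nabla,\mathrm{Fil})$ remain $\leq p-2$, and a closed point $s\in\mathrm{Spec}(A)$ over $p$; its residue field $k(s)$ is finite, hence perfect. Since $\mathrm{Spec}(A)\to\mathrm{Spec}(\mathbb{Z})$ is smooth, the infinitesimal lifting criterion applied to the square-zero extension $W_2(k(s))\twoheadrightarrow k(s)$ yields a ring map $A\to W_2(k(s))$ lifting $A\twoheadrightarrow k(s)$; base change along it produces a $W_2(k(s))$-lift $(\widetilde{X}_s,\widetilde{D}_s)$ of $(X_s,D_s)$ together with a lift $\widetilde{f}_s\colon(\widetilde{X}_s,\widetilde{D}_s)\to(\mathbb{P}^1_{W_2(k(s))},\widetilde{E}_s)$ of the semistable family. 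By Faltings \cite[Theorem 4.1*]{Fa}, for such $p$ the data $(H_s,\nabla_s,\mathrm{Fil}_s)$ underlie a Fontaine module on $(\widetilde{X}_s,\widetilde{D}_s)/W_2(k(s))$ with $\mathrm{Gr}_{\mathrm{Fil}}(H_s,\nabla_s)=(E_s,\theta_s)$ and inverse Cartier transform $(H_s,\nabla_s)$. Since $p>\dim X+\mathrm{rank}(H_s)$, the trivial one-term splitting of $\Omega^1_{X_s/k(s)}(\log D_s)$ already satisfies the rank hypothesis of the positive-characteristic vanishing theorem proved above, which therefore gives
$$
\mathbb{H}^j\bigl(X_s,\star(E_s,\theta_s)\otimes L_s\bigr)=0,\qquad j>\dim X_s=\dim X,
$$
for each $\star=W_i\Omega^*,\Omega^*_{\mathrm{int}},\Omega^*_f$, where $L_s$ is the reduction of $\mathcal{L}$.

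Finally I would descend to $\mathbb{C}$. Along the proper morphism $\mathfrak{X}\to\mathrm{Spec}(A)$ I would represent $R\Gamma\bigl(\star(\mathfrak{E},\theta)\otimes\mathcal{L}\bigr)$ by a bounded complex $P^\bullet$ of finite locally free $A$-modules compatible with arbitrary base change; then $s\mapsto\dim_{k(s)}H^j(P^\bullet\otimes_A k(s))$ is upper semicontinuous, so the locus $\{s:\mathbb{H}^j(X_s,\star(E_s,\theta_s)\otimes L_s)=0\}$ is open in $\mathrm{Spec}(A)$. This open locus contains every closed point lying over a prime $p\gg0$, and such points are dense in the irreducible scheme $\mathrm{Spec}(A)$; hence it contains the generic point. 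Thus $\mathbb{H}^j(X_\eta,\star(E,\theta)\otimes L)=0$ over $\mathrm{Frac}(A)$, and flat base change along $\mathrm{Frac}(A)\hookrightarrow\mathbb{C}$ gives $\mathbb{H}^j(X,\star(E,\theta)\otimes L)=0$ for $j>\dim X$, as claimed.

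The main obstacle here is bookkeeping rather than new geometry: one must choose $A$ so that semistability of $f$, ampleness of $L$, and the Fontaine-module structure of the Gauss--Manin system all specialize simultaneously, and one must check that the residue data defining $W_i\Omega^*$ and $\Omega^*_{\mathrm{int}}$, and the Kontsevich subcomplex $\Omega^*_f$, are compatible with base change. The genuinely nontrivial input, the mod $p$ vanishing, is already available from the theorem above; the gain in characteristic zero comes precisely from the observation that for $p$ large the rank constraint $\mathrm{rank}\,\Omega^1_{X/k}(\log D)<p-\mathrm{rank}(H)$ is automatic, so no splitting of the logarithmic cotangent bundle needs to be produced.
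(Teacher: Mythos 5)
Your proposal is correct and follows essentially the same route the paper intends: spread out over a finitely generated $\mathbb{Z}$-algebra, lift to $W_2(k(s))$ at a large prime using smoothness of $\mathrm{Spec}(A)$, invoke Faltings to put the Fontaine-module structure on the reduced Gauss--Manin system, apply the positive-characteristic vanishing theorem (whose splitting hypothesis is vacuous for $p>\dim X+\mathrm{rank}(H)$ via the trivial one-summand splitting), and conclude over $\mathbb{C}$ by semicontinuity and flat base change. This matches the paper's stated "spreading-out" proof of the theorem, so no further comparison is needed.
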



\begin{thebibliography}{X-X00}
 	\addcontentsline{toc}{chapter}{Bibliography}
 
 \bibitem[AS]{AS}
P. Achinger, J. Suh, Some refinements of the Deligne-Illusie theorem, Algebra Number Theory17(2023), no.2, 465–496.

\bibitem[AWZ]{AWZ}
P. Achinger, J. Witaszek, M. Zdanowicz, 
Global Frobenius liftability I,
J. Eur. Math. Soc. (JEMS) 23 (2021), no. 8, 2601–2648.

\bibitem[Ar]{Arapura}
 	D. Arapura, Kodaira-Saito vanishing theorem via Higgs bundles in positive characteristic, Journal f\"{u}r angewandte und reine Mathematik 755 (2019), 293-312.
	
\bibitem[ACH]{ACH}
D. Arinkin, A. C\u{a}ld\u{a}raru, M. Hablicsek, Derived intersections and the Hodge theorem, Algebr. Geom. 4 (2017), no. 4, 394–423.

\bibitem[Bea]{Bea}
A. Beauville, Complex manifolds with split tangent bundle, In Complex analysis
 and algebraic geometry, pages 61–70, de Gruyter, Berlin, 2000.
    
\bibitem[BL]{BL}
B. Bhatt J. Lurie, The prismatization of $p$-adic formal schemes, 2022, arXiv:2201.06124.

\bibitem[Bei]{Bei}
	A. Beilinson, How to glue perverse sheaves, in K-theory, arithmetic and geometry (Moscow, 1984-1986), Lect. Notes in Math., vol. 1289, Springer, 1987, p. 42–51.	
	
	\bibitem[Ber]{Ber}
 	 P. Berthelot, Cohomologie cristalline des sch\'emas de caract\'eristique $p>0$. (French) Lecture Notes in Mathematics, Vol. 407. Springer-Verlag, Berlin-New York, 1974. 604 pp.

     \bibitem[BPT]{BPT}
M. Brunella, J. Pereira, F. Touzet. K\"ahler manifolds with split
 tangent bundle. preprint, to appear in Bull. Soc. Math. France, 2004.
 
     \bibitem[CP]{CP}
 	F. Campana and T. Peternell. Projective manifolds with splitting tangent
 bundle, I,  Math. Z., 241(3):613–637, 2002.
 	\bibitem[CEGT]{CEGT}
 	E. Cattani, F. El Zein, P. Griffiths, L.-D. Tr\'{a}ng, Hodge theory, Mathematical Notes 49, Princeton University Press, 2014.
	
 	\bibitem[CKS]{CKS}
 	E. Cattani, A. Kaplan, W. Schmid, $L^2$ and intersection cohomologies for a polarizable variation of Hodge structure. Invent. Math. 87, 217-252, 1987,
	
	
	
	
	\bibitem[Del]{Del}
	P. Deligne, Equations diff\'erentielles \`a points singuliers r\'eguliers, Lect. Notes in Math. 163, Springer, Berlin, 1970.  	

 	\bibitem[DI]{DI}
 	P. Deligne, L. Illusie, Rel\`{e}vements modulo $p^2$ et decomposition du complexe de de Rham, Invent. Math. 89 (1987), 247-270.

\bibitem[Dru]{Dru}
     S. Druel, Variétés algébriques dont le fibré tangent est totalement décomposé,
 J. Reine Angew. Math., 522:161–171, 2000.

	 \bibitem[ESY]{ESY} 
	 H. Esnault, Y. Sabbah, J. Yu, $E_1$-degeneration of the irregular Hodge filtration. With an appendix by Morihiko Saito, J. Reine Angew. Math. 729 (2017), 171-227. 	
	
 	\bibitem[EV]{EV}
 	H. Esnault, E. Viehweg, Lectures on vanishing theorems. DMV Seminar, 20. Birkh\"{a}user Verlag, Basel, 1992.
 	
 	\bibitem[Fa]{Fa}
 	G. Faltings, Crystalline cohomology and $p$-adic Galois-representations, Algebraic analysis, geometry, and number theory (Baltimore, MD, 1988), 25-80, Johns Hopkins Univ. Press,
 	Baltimore, MD, 1989.
 	\bibitem[FL]{FL}
 	J.-M. Fontaine, G. Laffaille, Construction de repr\'{e}sentation
 	$p$-adiques, Ann. Sci. Ec. Norm. Sup. 15 (1982), 547-608.

     \bibitem[J]{J}
K. Joshi, On varieties with trivial tangent bundle in characteristic $p>0$, Nagoya Math. J. 242 (2021), 35–51.
    
 	\bibitem[H]{H} 
 	A. Höring, Uniruled varieties with split tangent bundle, Math. Z. 256 (2007), no. 3, 465–479.
 	\bibitem[I90]{IL90}
 	L. Illusie, R\'eduction semistable et d\'ecomposition de complexes de de Rham
 	\`a\ coefficients, Duke Math. J., vol. 60, 1990, no. 1, 139-185.
 	
 	
 	\bibitem[I03]{IL03}
 	L. Illusie, Perversit\'e et variation, Manuscripta math. 112 (2003), 271-295.
	\bibitem[I17]{I17}
	L. Illusie, Around the Thom-Sebastiani theorem, with an appendix by Weizhe Zheng, 
Manuscripta Math.152(2017), no.1-2, 61-125.	
 	\bibitem[KK86]{KK86}
 	M. Kashiwara, T. Kawai, Hodge structure and holonomic systems, Proc. Japan Acad. Ser. A Math. Sci., Volume 62, Number 1 (1986), 1-4.
 	
 	\bibitem[KK]{KK}
 	M. Kashiwara, T. Kawai, The Poincar\'e lemma for a variation of Hodge structure. Publ. RIMS, Kyoto University 23, 345-407, 1987.
 	
 	\bibitem[Ka]{Kato}
 	K. Kato, Logarithmic structures of Fontaine-Illusie, Algebraic
 	analysis, geometry, and number theory (Baltimore, MD, 1988),
 	191-224, Johns Hopkins Univ. Press, Baltimore, MD, 1989.
	\bibitem[Kat]{Katz}
	N. Katz, Nilpotent connections and the monodromy theorem: Applications of a result of Turrittin, Inst. Hautes Études Sci. Publ. Math.(1970), no.39, 175–232.	
 \bibitem[KKP]{KKP}L. Katzarkov, M. Kontsevich, T. Pantev, Bogomolov–Tian–Todorov theorems for Landau
Ginzburg models, J. Differential Geom. 105 (2017), 55–117.

 	\bibitem[LSYZ]{LSYZ}
 	G.-T. Lan, M. Sheng, Y.-H. Yang, K. Zuo, Uniformization of $p$-adic curves via Higgs-de Rham flows. Journal f\"{u}r angewandte und reine Mathematik, Vol.747, pp 63-108, 2019.
 	
 	\bibitem[LSZ14]{LSZ}
 	G.-T. Lan, M. Sheng, K. Zuo, Nonabelian Hodge theory in positive characteristic via exponential twisting, Mathematical Research Letter, Vol. 22, Pages 859-879, 2015.
 	
 	\bibitem[LSZ]{LSZ1}
 	G.-T. Lan, M. Sheng, K. Zuo, Semistable Higgs bundles,
 	periodic Higgs bundles and representations of algebraic fundamental
 	groups, Journal of European Mathematical Society, Vol.21, pp 3053-3112, 2019.

     \bibitem[Li]{Li}
K. Li, Actions of group schemes. Compositio Math., 80:55–74, 1991.
     
	
\bibitem[LM]{LM}
S. Li, S. Mondal, On endomorphisms of the de Rham cohomology functor.(English summary)Geom. Topol.28(2024), no.2, 759–802.
\bibitem[LZ]{LZ}
 	S. Li, D. Zhang, Exponentially twisted de Rham cohomology and rigid cohomology, arXiv:2111.05689.

\bibitem[Mum]{Mumford}
D. Mumford, Abelian varieties, Tata Institute of Fundamental Research
Studies in Mathematics, No. 5, Oxford University Press, London 1970.

       \bibitem[MS]{MS}
    V. Mehta, V. Srinivas, Varieties in positive characteristic with trivial tangent bundle,
 Comp. Math., 64:191–212, 1987.

\bibitem[Oda]{Oda}

     T. Oda, The first de Rham cohomology group and Dieudonné modules, Ann. scient.
 Éc. Norm. Sup. (4), 2, (1969), 63–135.
 	\bibitem[O94]{Ogus}
 	A. Ogus, F-crystals, Griffiths transversality, and the Hodge decomposition, Ast\'{e}risque 221, 1994.
 	
 	\bibitem[O04]{Ogus04}
 	A. Ogus, Higgs cohomology, $p$-curvature, and the Cartier isomorphism. Compositio Math. 140 (2004), 145-164.
	
	\bibitem[O18]{O18}
	 A. Ogus, Lectures on logarithmic algebraic geometry, Cambridge Studies in Advanced Mathematics, 178. Cambridge University Press, Cambridge, 2018. xviii+539 pp.  	
       \bibitem[O24]{O24}
       A. Ogus, Crystalline prisms: Reflections and diffractions, present and past, arXiv:2204.06621.

     
 	\bibitem[OV]{OV}
 	A. Ogus, V. Vologodsky, Nonabelian Hodge theory in characteristic $p$,
 	Publ. Math. Inst. Hautes \'{e}tudes Sci. 106 (2007), 1-138.
    \bibitem[P1]{P1}
A. Petrov. Non-decomposability of the de Rham complex and non-semisimplicity of
the Sen operator, arXiv:2302.11389.
\bibitem[P2]{P2}
A. Petrov, Decomposition of de Rham complex for quasi-F-split varieties, arXiv:2502.13356.

	\bibitem[Sah1]{Sah1} 
	 C. Sabbah, On a twisted de Rham complex, Tohoku Math. J. (2) 51 (1999), no. 1, 125–140.
	
	\bibitem[Sah2]{Sah2}
	C. Sabbah, Duality for Landau-Ginzburg models,  arXiv:2212.07745. 
	\bibitem[SS]{SS}
	C. Sabbah,  M. Saito,
Kontsevich's conjecture on an algebraic formula for vanishing cycles of local systems,
Algebr. Geom.1(2014), no.1, 107–130.
	
 	
	\bibitem[Sai]{Saito} M. Saito, Mixed Hodge modules, Publ. Res. Inst. Math. Sci. 26 (1990), no. 2, 221-333.
	\bibitem[TS]{TS}
	M. Sebastiani, R. Thom, Unr\'esultat sur la monodromie, Invent. Math. 13 (1971), 90-96.	
 	
 	\bibitem[Sche]{Schepler}
 	D. Schepler,  Logarithmic nonabelian Hodge theory in characteristic $p$. arXiv:0802.1977.
	\bibitem[SGA7]{SGA}
	SGA 7, Groupes de monodromie en g\'eom\'etrie alg\'ebrique, S\'eminaire de G\'eom\'etrie Alg\'ebrique du Bois-Marie 1967-1969, I dirig\'e par A. Grothendieck, II par P. Deligne et N. Katz, Lecture Notes in Math. 288, 340, Springer-Verlag 1972, 1973.
       \bibitem[SZ1]{SZ1}
        M. Sheng, Z. Zhang, An explicit infinite homotopy in nonabelian Hodge theory in positive characteristic, arXiv:2302.09231.
       
       \bibitem[SZ2]{SZ2}
       M. Sheng, Z. Zhang, Intersection de Rham complexes in positive characteristic, J. Differential Geom. 127(2): 551-602 (June 2024). DOI: 10.4310/jdg/1717772421.

       \bibitem[Shi]{Shi}
       A. Shiho, Notes on generalizations of local Ogus-Vologodsky correspondence. J. Math. Sci. Univ. Tokyo, 22:798–875, 2015.
  
 	\bibitem[Sim]{Sim}
 	C. Simpson, Higgs bundles and local systems, Inst. Hautes \'{E}tudes Sci. Publ.
 	Math. No. 75 (1992), 5-95.
	\bibitem[SP]{SP}
	The Stacks Project Authors Stacks Project. https://stacks.math.columbia.edu.
	
	\bibitem[Tsu]{Tsu}
	T. Tsuji, Poincaré duality for logarithmic crystalline cohomology, Compositio Math. 118 (1999), no. 1, 11–41. 

       \bibitem[W]{W}
Y. Wang, Prismatic crystals for smooth schemes in characteristic $p$ with Frobenius lifting mod $p^2$, arXiv:2402.02109.
        
       \bibitem[X]{X}
 	D. Xu, Lifting the Cartier transform of Ogus-Vologodsky modulo $p^n$, Mém. Soc. Math. Fr. (N.S.) No. 163 (2019), 133 pp.
\bibitem[Y]{Y}
F. Yobuko, Quasi-F-split and Hodge-Witt, arXiv:2312.00682.

\bibitem[Z]{Z}
Z. Zhang, A note on the filtered decomposition theorem, Commun. Math. Stat.11(2023), no.3, 519–539.

 \end{thebibliography}
\end{document}